\providecommand{\tabularnewline}{\\}
\numberwithin{section}{chapter}
\numberwithin{equation}{section}
\numberwithin{figure}{section}
\theoremstyle{plain}
\newtheorem{thm}{Theorem}
\newtheorem{thm}{Theorem}[chapter]
  \theoremstyle{definition}
  \newtheorem{defn}[thm]{Definition}
  \theoremstyle{remark}
  \newtheorem{rem}[thm]{Remark}
  \theoremstyle{plain}
  \newtheorem{lem}[thm]{Lemma}
  \theoremstyle{remark}
  \newtheorem*{rem*}{Remark}
  \theoremstyle{plain}
  \newtheorem{cor}[thm]{Corollary}
 \theoremstyle{definition}
  \newtheorem{example}[thm]{Example}
  \theoremstyle{definition}
  \newtheorem{xca}[thm]{Exercise}
  \theoremstyle{plain}
  \newtheorem{prop}[thm]{Proposition}
  \theoremstyle{definition}
  \newtheorem*{example*}{Example}
  \theoremstyle{remark}
  \newtheorem*{note*}{Note}
  \theoremstyle{remark}
  \newtheorem{note}[thm]{Note}
  \theoremstyle{definition}
  \newtheorem*{xca*}{Exercise}
  \theoremstyle{plain}
  \newtheorem*{cor*}{Corollary}
  \theoremstyle{plain}
  \newtheorem{conjecture}[thm]{Conjecture}
  \newcounter{casectr}
  \newenvironment{caseenv}
  {\begin{list}{{\itshape\ Case} \arabic{casectr}.}{%
   \setlength{\leftmargin}{\labelwidth}
   \addtolength{\leftmargin}{\parskip}
   \setlength{\itemindent}{\listparindent}
   \setlength{\itemsep}{\medskipamount}
   \setlength{\topsep}{\itemsep}}
   \setcounter{casectr}{0}
   \usecounter{casectr}}
  {\end{list}}
\providecommand{\norm}[1]{\lVert#1\rVert}
\providecommand{\abs}[1]{\lvert#1\rvert}
\providecommand{\fourier}[1]{\hat{#1}}
\providecommand{\iprod}[2]{\langle #1, #2\rangle}
\providecommand{\bra}[1]{\left\langle #1\right|}
\providecommand{\ket}[1]{\left| #1\right\rangle}
\providecommand{\braket}[2]{\langle #1, #2\rangle}
\providecommand{\ketbra}[2]{\left| #1 \rangle \langle #2 \right|}
\begin{document}
\begin{longtable}{c}
\textbf{\Huge Functional Analysis}\tabularnewline
\tabularnewline
\tabularnewline
{\Large Feng Tian, and Palle Jorgensen}\tabularnewline
\tabularnewline
\tabularnewline
\tabularnewline
\tabularnewline
Department of Mathematics\tabularnewline
14 MLH \tabularnewline
The University of Iowa\tabularnewline
Iowa City, IA 52242-1419\tabularnewline
 USA.\tabularnewline
\end{longtable}

\pagebreak{}

Notes from a course taught by Palle Jorgensen in
the fall semester of 2009. The course covered central themes in functional
analysis and operator theory, with an emphasis on topics of special
relevance to such applications as representation theory, harmonic
analysis, mathematical physics, and stochastic integration.

\pagebreak{}

These are the lecture notes I took from a topic course taught by Professor
Jorgensen during the fall semester of 2009. The course started with
elementary Hilbert space theory, and moved very fast to spectral theory,
completely positive maps, Kadison-Singer conjecture, induced representations,
self-adjoint extensions of operators, etc. It contains a lot of motivations
and illuminating examples. 

I would like to thank Professor Jorgensen for teaching such a wonderful
course. I hope students in other areas of mathematics would benefit
from these lecture notes as well. 

Unfortunately, I have not been able to fill in all the details. The
notes are undergoing editing. I take full responsibility for any errors
and missing parts.\\
\\

Feng Tian

03. 2010\pagebreak{}

\tableofcontents{}

\chapter{Elementary Facts}

\section{Transfinite induction}

Let $(X,\leq)$ be a paritially ordered set. A sebset $C$ of $X$
is said to be a chain, or totally ordered, if $x,y$ in $C$ implies
that either $x\leq y$ or $y\leq x$. Zorn's lemma says that if every
chain has a majorant then there exists a maximal element in $X$. 
\begin{thm}
(Zorn) Let $(X,\leq)$ be a paritially ordered set. If every chain
$C$ in $X$ has a majorant (or upper bound), then there exists an
element $m$ in $X$ so that $x\ge m$ implies $x=m$, for all $x$
in $X$.
\end{thm}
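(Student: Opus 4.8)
The plan is to deduce the statement from the Axiom of Choice by a contradiction argument. First I would record the trivial reduction: the empty set is a chain, so by hypothesis it has a majorant, and in particular $X\neq\emptyset$. Now suppose, toward a contradiction, that $X$ has no maximal element. The crucial observation is that then \emph{every} chain $C\subseteq X$ has a \emph{strict} upper bound, i.e.\ an element $c$ with $c>x$ for all $x\in C$: by hypothesis $C$ has a majorant $b$, and since $b$ is not maximal there is $c$ with $b<c$, whence $x\le b<c$ for each $x \in C$, and $x=c$ is impossible, so $x<c$. Invoking the Axiom of Choice, I would fix once and for all a function $h$ assigning to each chain $C$ in $X$ such a strict upper bound $h(C)\in X$.

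Next I would build an ``ever-increasing'' transfinite sequence. By transfinite recursion on the ordinals, define $a_\alpha=h\bigl(\{a_\beta:\beta<\alpha\}\bigr)$. To justify this one proves by transfinite induction that at every stage the set $\{a_\beta:\beta<\alpha\}$ is a chain --- indeed that $\beta<\gamma$ forces $a_\beta<a_\gamma$, because $a_\gamma$ is a strict upper bound of a set containing $a_\beta$ --- so that $h$ is legitimately applicable and the recursion never stalls. Consequently $\alpha\mapsto a_\alpha$ is strictly increasing, hence injective on the entire class of ordinals.

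Finally I would harvest the contradiction: an injection of all the ordinals into the fixed set $X$ cannot exist. Concretely, Hartogs' theorem supplies an ordinal $\kappa$ that admits no injection into $X$, yet $\alpha\mapsto a_\alpha$ restricted to $\kappa$ is exactly such an injection; equivalently, the image being a subset of $X$ is a set, so by Replacement the class of all ordinals would be a set, contradicting Burali-Forti. This absurdity refutes the assumption, so $X$ has a maximal element $m$, and by definition $x\ge m$ implies $x=m$.

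The only genuinely delicate step, I expect, is the transfinite recursion itself: it is recursion along the proper class of all ordinals, so it formally invokes the class version of the recursion theorem, and one must verify that the single property defining $h$ --- that it returns a \emph{strict} upper bound --- is simultaneously what lets the recursion proceed past every ordinal and what produces the injectivity that collides with Hartogs' bound. If one wishes to avoid ordinals altogether, the same engine can be repackaged as the Bourbaki--Witt fixed-point theorem applied to the set of all chains of $X$ partially ordered by inclusion (a poset closed under unions of chains): there the assignment $C\mapsto C\cup\{h(C)\}$ would be an inflationary map with no fixed point, contradicting that theorem. I would present whichever of these two framings best matches the machinery already developed in the surrounding text.
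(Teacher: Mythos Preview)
Your argument is a correct and standard derivation of Zorn's lemma from the Axiom of Choice via transfinite recursion and Hartogs' theorem; the alternative Bourbaki--Witt packaging you mention is also legitimate. There is nothing to fault in the mathematics.

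However, the paper does not actually prove this theorem. It states Zorn's lemma as Theorem~1.1 and then immediately remarks that it ``lies at the foundation of set theory'' and ``is in fact an axiom and is equivalent to the axiom of choice and Hausdorff's maximality principle.'' No proof is offered; the result is taken as a primitive tool and then \emph{applied} (for instance, in the proof of Theorem~\ref{thm:ONB} on the existence of an orthonormal basis). So your proposal goes well beyond what the paper does: you supply a genuine set-theoretic derivation where the paper is content to treat the statement as axiomatic. If the surrounding text is these notes, the expected ``proof'' is simply to invoke the equivalence with the Axiom of Choice and move on.
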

An illuminating example of a partially ordered set is the binary tree
model. Another example is when $X$ is a family of subsets of a given
set, partially ordered by inclusion. Zorn's lemma lies at the foundation
of set theory. It is in fact an axiom and is equivalent to the axiom
of choice and Hausdorff's maximality principle. 
\begin{thm}
(Hausdorff Maximality Principle) Let $(X,\leq)$ be a paritially ordered
set, then there exists a maximal totally ordered subset $L$ in $X$. 
\end{thm}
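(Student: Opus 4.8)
The plan is to deduce the Hausdorff Maximality Principle from Zorn's Lemma (Theorem above) by applying Zorn not to $X$ itself but to the family of all chains in $X$, ordered by inclusion. So first I would introduce
\[
  \mathcal{T} \;=\; \{\, C \subseteq X : C \text{ is totally ordered by } \leq \,\},
\]
and equip $\mathcal{T}$ with the partial order given by set inclusion. Note $\mathcal{T} \neq \varnothing$ since $\varnothing \in \mathcal{T}$ (and every singleton lies in $\mathcal{T}$ as well). A maximal element of $(\mathcal{T},\subseteq)$ is exactly a maximal totally ordered subset of $X$, which is what we want to produce.

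Next I would verify the hypothesis of Zorn's Lemma for $(\mathcal{T},\subseteq)$: every chain in $\mathcal{T}$ has an upper bound in $\mathcal{T}$. Let $\mathcal{C} \subseteq \mathcal{T}$ be a chain, i.e. a collection of $\leq$-chains in $X$ that is totally ordered by inclusion. The natural candidate for an upper bound is $M := \bigcup_{C \in \mathcal{C}} C$. The one thing that genuinely needs checking is that $M$ is again totally ordered by $\leq$, i.e. that $M \in \mathcal{T}$. For this, take $x,y \in M$; then $x \in C_1$ and $y \in C_2$ for some $C_1, C_2 \in \mathcal{C}$, and since $\mathcal{C}$ is a chain under inclusion we may assume $C_1 \subseteq C_2$, so $x,y \in C_2$. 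As $C_2$ is totally ordered, $x \leq y$ or $y \leq x$. Hence $M \in \mathcal{T}$, and clearly $C \subseteq M$ for every $C \in \mathcal{C}$, so $M$ is the required upper bound (when $\mathcal{C} = \varnothing$ one takes $M = \varnothing \in \mathcal{T}$).

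With the hypothesis verified, Zorn's Lemma yields an element $L \in \mathcal{T}$ that is maximal with respect to $\subseteq$: if $L' \in \mathcal{T}$ and $L \subseteq L'$, then $L = L'$. Unwinding the definition of $\mathcal{T}$, $L$ is a totally ordered subset of $X$ that is not properly contained in any larger totally ordered subset — precisely a maximal totally ordered subset of $X$. This completes the argument.

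I do not expect a serious obstacle here; the only real content is the idea of moving from $X$ to the poset of chains, and the one routine-but-essential verification is that a union of an inclusion-chain of $\leq$-chains is itself a $\leq$-chain (the "two elements come from comparable members" trick above). Everything else is bookkeeping about the translation between "maximal in $\mathcal{T}$" and "maximal totally ordered subset of $X$."
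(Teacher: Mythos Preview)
Your argument is correct and is the standard derivation of the Hausdorff Maximality Principle from Zorn's Lemma: apply Zorn to the poset $(\mathcal{T},\subseteq)$ of all $\leq$-chains in $X$, check that the union of an inclusion-chain of chains is again a chain, and conclude.

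However, the paper does not actually give a proof of this theorem. It simply states Zorn's Lemma, the Hausdorff Maximality Principle, and the Axiom of Choice in succession, remarking that Zorn's Lemma ``is in fact an axiom and is equivalent to the axiom of choice and Hausdorff's maximality principle,'' and then moves on to applications. So there is no proof in the paper to compare yours against; you have supplied one where the paper chose to take the result as a foundational equivalence. Your proof is fine as written, and the verification that $M = \bigcup_{C\in\mathcal{C}} C$ is totally ordered is exactly the point that needs checking --- in fact the paper uses this same ``two elements come from comparable members'' trick later in its proof of Theorem~\ref{thm:ONB} on the existence of an orthonormal basis.
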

The axiom of choice is equivalent to the following statement on infinite
product, which itself is extensively used in functional analysis.
\begin{thm}
\label{thm:aoc}(axiom of choice) Let $A_{\alpha}$ be a family of
nonempty sets indexed by $\alpha\in I$. Then the infinite Cartesian
product $\Omega=\prod_{\alpha}A_{\alpha}$ is nonempty.
\end{thm}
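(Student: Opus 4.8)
The plan is to derive this statement from Zorn's lemma (the first numbered theorem above), so that the argument fits squarely inside the equivalence chain already advertised in the text. The idea is to construct a choice function one coordinate at a time and then invoke Zorn's lemma to pass to a maximal stage, which is forced to be defined on all of $I$.

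First I would let $\mathcal{P}$ denote the collection of all pairs $(J,f)$ with $J\subseteq I$ and $f\colon J\to\bigcup_{\alpha\in I}A_{\alpha}$ a function satisfying $f(\alpha)\in A_{\alpha}$ for every $\alpha\in J$; one should think of $(J,f)$ as a ``partial section'' of the product. Partially order $\mathcal{P}$ by declaring $(J,f)\le(J',f')$ when $J\subseteq J'$ and $f'|_{J}=f$. The set $\mathcal{P}$ is nonempty, since $(\varnothing,\varnothing)\in\mathcal{P}$.

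Next I would check the hypothesis of Zorn's lemma, namely that every chain $\mathcal{C}=\{(J_{i},f_{i})\}_{i}$ in $\mathcal{P}$ has a majorant. Put $J_{\ast}=\bigcup_{i}J_{i}$ and let $f_{\ast}$ be the union of the graphs of the $f_{i}$. The point to verify is that $f_{\ast}$ is a well-defined function with the required property: if $\alpha\in J_{i}\cap J_{k}$ then, $\mathcal{C}$ being a chain, one of $(J_{i},f_{i})$, $(J_{k},f_{k})$ extends the other, so $f_{i}(\alpha)=f_{k}(\alpha)$; and this common value lies in $A_{\alpha}$. Hence $(J_{\ast},f_{\ast})\in\mathcal{P}$, and it dominates every element of $\mathcal{C}$. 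By Zorn's lemma, $\mathcal{P}$ has a maximal element $(J^{\ast},f^{\ast})$.

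Finally I would argue $J^{\ast}=I$, which completes the proof since then $f^{\ast}\in\prod_{\alpha}A_{\alpha}=\Omega$, so $\Omega\neq\varnothing$. If $J^{\ast}\neq I$, fix $\alpha_{0}\in I\setminus J^{\ast}$. Since $A_{\alpha_{0}}\neq\varnothing$ we may pick a single element $a\in A_{\alpha_{0}}$ — selecting one element from one nonempty set needs no appeal to the statement being proved — and set $J'=J^{\ast}\cup\{\alpha_{0}\}$ and $f'=f^{\ast}\cup\{(\alpha_{0},a)\}$. Then $(J',f')\in\mathcal{P}$ strictly dominates $(J^{\ast},f^{\ast})$, contradicting maximality. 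The only genuinely delicate point I expect is the well-definedness of the union of functions along a chain (together with the observation that enlarging the domain by one coordinate is harmless); the rest is bookkeeping.
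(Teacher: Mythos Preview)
Your argument is correct and is the standard derivation of the axiom of choice from Zorn's lemma: partial choice functions ordered by extension, unions along chains as upper bounds, and maximality forcing the domain to be all of $I$. The paper, however, does not prove this statement at all; it is presented as an \emph{axiom} (hence the label), and the text merely asserts its equivalence with Zorn's lemma and Hausdorff's maximality principle without supplying any of the implications. So there is no proof in the paper to compare against --- you have in fact supplied one direction of the equivalence that the paper only states.
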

$\Omega$ can be seen as the set of functions $\{(x_{\alpha}):x_{\alpha}\in A_{\alpha}\}$
from $I$ to $\cup A_{\alpha}$. The point of using the axiom of choice
is that if the index set is uncountable, there is no way to verify
whether $(x_{\alpha})$ is in $\Omega$ or not. It is just impossible
to check for each $\alpha$ that $x_{\alpha}$ in contained in $A_{\alpha}$,
for some coordinates will be unchecked. The power of transfinite induction
is that it applies to uncountable sets as well. In case the set is
countable, we simply apply the down to earth standard induction. The
standard mathematical induction is equivalent to the Peano's axiom
which states that every nonempty subset of of the set of natural number
has a unique smallest element.

The key idea in applications of the transfinite induction is to cook
up in a clear way a partially ordered set, so that the maximum element
turns out to be the object to be constructed. Examples include Hahn-Banach
extension theorem, Krein-Millman's theorem on compact convex set,
existance of orthonoral basis in Hilbert space, Tychnoff's theorem
on infinite Cartesian product of compact spaces, where the infinite
product space is nonempty follows imediately from the axiom of choice. 
\begin{thm}
(Tychonoff) Let $A_{\alpha}$ be a family of compact sets indexed
by $\alpha\in I$. Then the infinite Cartesian product $\prod_{\alpha}A_{\alpha}$
in compact with respect to the product topology.
\end{thm}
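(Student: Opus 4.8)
The plan is to deduce the theorem from the ultrafilter criterion for compactness, since this is exactly where the transfinite machinery of this chapter — Zorn's lemma and the axiom of choice — enters, and it makes the product argument almost mechanical. Recall the criterion: a topological space $K$ is compact if and only if every ultrafilter on $K$ converges to some point of $K$. I would first establish this. For the ``only if'' direction: if an ultrafilter $\mathcal{U}$ on $K$ converged to no point, then every $x\in K$ would have an open neighborhood $U_x\notin\mathcal{U}$, hence $K\setminus U_x\in\mathcal{U}$ (ultrafilter dichotomy); compactness gives a finite subcover $U_{x_1},\dots,U_{x_n}$, and then $\bigcap_{i=1}^n (K\setminus U_{x_i})=\emptyset$ lies in $\mathcal{U}$, which is absurd. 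For the ``if'' direction one argues via the finite intersection property: a family of closed sets with the FIP generates a proper filter, one extends it to an ultrafilter, and any limit point of that ultrafilter lies in the intersection. The extension step is the Zorn's lemma application advertised in this chapter: order the proper filters containing the given family by inclusion, note that the union of a chain of proper filters is again a proper filter (it contains no $\emptyset$ and is closed under finite intersections and supersets), and take a maximal element, which is easily seen to be an ultrafilter.

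Next I would set up the product. Write $X=\prod_{\alpha\in I}A_\alpha$ with coordinate projections $\pi_\alpha\colon X\to A_\alpha$. If some $A_\alpha$ is empty the statement is vacuous, so assume all $A_\alpha\neq\emptyset$; then $X\neq\emptyset$ by the axiom of choice (Theorem \ref{thm:aoc}). Let $\mathcal{U}$ be an arbitrary ultrafilter on $X$; by the criterion just proved it suffices to produce a point of $X$ to which $\mathcal{U}$ converges. For each $\alpha$ consider the pushforward $\pi_\alpha(\mathcal{U})=\{B\subseteq A_\alpha:\pi_\alpha^{-1}(B)\in\mathcal{U}\}$. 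This is again an ultrafilter, because for any $B\subseteq A_\alpha$ exactly one of $\pi_\alpha^{-1}(B)$ and $\pi_\alpha^{-1}(A_\alpha\setminus B)$ belongs to $\mathcal{U}$. Since $A_\alpha$ is compact, $\pi_\alpha(\mathcal{U})$ converges to at least one point; the set of such limit points is nonempty, so another appeal to the axiom of choice (Theorem \ref{thm:aoc}, now indexed by $\alpha\in I$) lets me select a limit point $x_\alpha\in A_\alpha$ simultaneously for every $\alpha$.

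Finally I would verify that $x=(x_\alpha)_{\alpha\in I}\in X$ is a limit of $\mathcal{U}$. A basic open neighborhood of $x$ has the form $W=\bigcap_{\alpha\in F}\pi_\alpha^{-1}(U_\alpha)$ with $F\subseteq I$ finite and each $U_\alpha$ an open neighborhood of $x_\alpha$. Because $\pi_\alpha(\mathcal{U})\to x_\alpha$, each $U_\alpha\in\pi_\alpha(\mathcal{U})$, i.e. $\pi_\alpha^{-1}(U_\alpha)\in\mathcal{U}$; a filter is closed under finite intersections, so $W\in\mathcal{U}$. Hence every neighborhood of $x$ belongs to $\mathcal{U}$, that is $\mathcal{U}\to x$. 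Since $\mathcal{U}$ was arbitrary, the ultrafilter criterion gives compactness of $X$ in the product topology.

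The main obstacle is the preparatory Step on the ultrafilter criterion — specifically the existence of an ultrafilter extending a filter with the finite intersection property, which is the genuine transfinite-induction content and requires the careful chain argument above; everything afterward is bookkeeping. A secondary point worth flagging explicitly, rather than glossing, is that the axiom of choice is invoked \emph{twice} (nonemptiness of $X$, and the simultaneous selection of the coordinate limits $x_\alpha$); this is unavoidable, since Tychonoff's theorem is in fact equivalent to the axiom of choice. One could alternatively run the argument through the Alexander subbase lemma applied to the subbasis $\{\pi_\alpha^{-1}(U):U\subseteq A_\alpha\text{ open}\}$, but the ultrafilter route fits the present chapter best.
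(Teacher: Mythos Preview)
Your argument is correct and is the standard ultrafilter proof of Tychonoff's theorem; the logic is clean, and you are right to flag both appeals to the axiom of choice. There is nothing to compare it against, however: the paper merely \emph{states} Tychonoff's theorem as background (alongside Zorn's lemma, Hausdorff maximality, and the axiom of choice) and immediately moves on to applying transfinite induction to the existence of an orthonormal basis, without ever proving Tychonoff. So you have supplied a proof where the paper gives none.
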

We will apply the transfinite induction to show that every infinite
dimensional Hilbert space has an orthonormal basis (ONB). 

Classical functional analysis roughly divides into two branches
\begin{itemize}
\item study of function spaces (Banach space, Hilbert space)
\item applications in physics and engineering
\end{itemize}
Within pure mathematics, it is manifested in 
\begin{itemize}
\item representation theory of groups and algebras
\item $C^{*}$-algebras, Von Neumann algebras
\item wavelets theory
\item harmonic analysis
\item analytic number theory\end{itemize}
\begin{defn}
Let $X$ be a vector space over $\mathbb{C}$. $\norm{\cdot}$ is
a norm on $X$ if for all $x,y$ in $X$ and $c$ in $\mathbb{C}$
\begin{itemize}
\item $\norm{cx}=c\norm{x}$
\item $\norm{x}\geq0$; $\norm{x}=0$ implies $x=0$
\item $\norm{x+y}\leq\norm{x}+\norm{y}$
\end{itemize}
$X$ is a Banach space if it is complete with respect to the metric
induced by $\norm{\cdot}$. 
\begin{defn}
Let $X$ be vector space over $\mathbb{C}$. An inner product is a
function $\iprod{\cdot}{\cdot}:X\times X\rightarrow\mathbb{C}$ so
that for all $x,y$ in $H$ and $c$ in $\mathbb{C}$,
\begin{itemize}
\item $\iprod{x}{\cdot}$ is linear (linearity)
\item $\iprod{x}{y}=\overline{\iprod{y}{x}}$ (conjugation) 
\item $\iprod{x}{x}\geq0$; and $\iprod{x}{x}=0$ implies $x=0$ (positivity)
\end{itemize}
\end{defn}
In that case $\sqrt{\iprod{x}{x}}$ defines a norm on $H$ and is
denote by $\norm{x}$. $X$ is said to be an inner product space is
an inner product is defined. A Hilbert space is a complete inner product
space. 

\end{defn}
\begin{rem}
The abstract formulation of Hilbert was invented by Von Neumann in
1925. It fits precisely with the axioms of quantum mechanics (spectral
lines, etc.) A few years before Von Neumann's formulation, Heisenberg
translated Max Born's quantum mechanics into mathematics.
\end{rem}
For any inner product space $H$, observe that the matrix \[
\left[\begin{array}{cc}
\iprod{x}{x} & \iprod{x}{y}\\
\iprod{y}{x} & \iprod{y}{y}\end{array}\right]\]
is positive definite by the positivity axiom of the definition of
an inner product. Hence the matrix has positive determinant, which
gives rises to the famous Cauchy-Schwartz inequality \[
\abs{\iprod{x}{y}}^{2}\leq\iprod{x}{x}\iprod{y}{y}.\]

An extremely useful way to construct a Hilbert space is the GNS construction,
which starts with a semi-positive definite funciton defined on a set
$X$. $\varphi:X\times X\rightarrow\mathbb{C}$ is said to be semi-positive
definite, if for finite collection of complex nubmers $\{c_{x}\}$,
\[
\sum\bar{c}_{x}c_{y}\varphi(x,y)\geq0.\]
Let $H_{0}$ be the span of $\delta_{x}$ where $x\in X$, and define
a sesiquilinear form $\iprod{\cdot}{\cdot}$ on $H_{0}$ as \[
\iprod{\sum c_{x}\delta_{x}}{\sum c_{y}\delta_{y}}:=\sum\bar{c}_{x}c_{y}\varphi(x,y).\]
However, the positivity condition may not be satisfied. Hence one
has to pass to a quotient space by letting $N=\{f\in H_{0},\iprod{f}{f}=0\}$,
and $\tilde{H}_{0}$ be the quotient space $H_{0}/N$. The fact that
$N$ is really a subspace follows from the Cauchy-Schwartz inequality
above. Therefore, $\iprod{\cdot}{\cdot}$ is an inner product on $\tilde{H}_{0}$.
Finally, let $H$ be the completion of $\tilde{H}_{0}$ under $\iprod{\cdot}{\cdot}$
and $H$ is a Hilbert space.
\begin{defn}
Let $H$ be a Hilbert space. A family of vectors $\{u_{\alpha}\}$
in $H$ is said to be an orthonormal basis of $H$ if 
\begin{enumerate}
\item $\iprod{u_{\alpha}}{u_{\beta}}=\delta_{\alpha\beta}$ and 
\item $\overline{span}\{u_{\alpha}\}=H$.
\end{enumerate}
We are ready to prove the existance of an orthonormal basis of a Hilbert
space, using transfinite induction. Again, the key idea is to cook
up a partially ordered set satisfying all the requirments in the transfinite
induction, so that the maximum elements turns out to be an orthonormal
basis. Notice that all we have at hands are the abstract axioms of
a Hilbert space, and nothing else. Everything will be developed out
of these axioms.\end{defn}
\begin{thm}
\label{thm:ONB}Every Hilbert space $H$ has an orthonormal basis.
\end{thm}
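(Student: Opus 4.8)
The plan is to cook up a partially ordered set whose maximal elements are precisely the orthonormal bases and then apply Zorn's lemma. Let $\mathcal{F}$ be the collection of all orthonormal subsets of $H$, i.e.\ all $S\subseteq H$ with $\iprod{u}{v}=\delta_{uv}$ for every pair $u,v\in S$, partially ordered by inclusion. First I would note that $\mathcal{F}\neq\emptyset$: the empty set already lies in $\mathcal{F}$ (so the statement is trivial when $H=\{0\}$), and if $x\in H$ is any nonzero vector then $\{x/\norm{x}\}\in\mathcal{F}$.

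Next I would check the hypothesis of Zorn's lemma: every chain $C\subseteq\mathcal{F}$ has an upper bound in $\mathcal{F}$. The candidate is $S_{0}:=\bigcup_{S\in C}S$. If $u,v\in S_{0}$, pick $S,S'\in C$ with $u\in S$ and $v\in S'$; since $C$ is totally ordered one of $S,S'$ contains the other, so $u,v$ lie in a single member of $C$ and therefore $\iprod{u}{v}=\delta_{uv}$. Hence $S_{0}\in\mathcal{F}$, and it dominates every element of $C$. Zorn's lemma now yields a maximal element $M\in\mathcal{F}$.

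It remains to show that any maximal $M\in\mathcal{F}$ satisfies $\overline{span}\{u:u\in M\}=H$; together with orthonormality this gives the two defining properties of an ONB. I would argue by contradiction: assume $\overline{span}(M)\neq H$ and produce a unit vector orthogonal to all of $M$, contradicting maximality. Choose $x\in H$ with $x\notin\overline{span}(M)$. Expanding $\norm{x-\sum_{u\in F}\iprod{x}{u}u}^{2}\geq0$ for a finite $F\subseteq M$ yields Bessel's inequality $\sum_{u\in M}\abs{\iprod{x}{u}}^{2}\leq\norm{x}^{2}$, so all but countably many coefficients $\iprod{x}{u}$ vanish and, by completeness of $H$, the series $p:=\sum_{u\in M}\iprod{x}{u}u$ converges to a vector of $\overline{span}(M)$. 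One checks directly that $\iprod{x-p}{u}=0$ for all $u\in M$, while $x-p\neq0$ since $x\notin\overline{span}(M)$; hence $M\cup\{(x-p)/\norm{x-p}\}$ is an orthonormal set properly containing $M$ --- a contradiction.

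I expect the last step to be the crux. The first two steps are just the standard ``Zorn machinery,'' valid in any family of subsets closed under unions of chains; by contrast, showing that a maximal orthonormal set is total genuinely uses the Hilbert-space axioms --- Bessel's inequality and, above all, completeness, which is what makes the Fourier series $\sum_{u\in M}\iprod{x}{u}u$ converge in $H$. (If the orthogonal-projection / nearest-point theorem were already available one could instead let $p$ be the projection of $x$ onto the closed subspace $\overline{span}(M)$; the self-contained route through Bessel's inequality sidesteps that.)
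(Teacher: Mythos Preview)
Your proof is correct and follows essentially the same route as the paper: apply Zorn's lemma to the family of orthonormal subsets ordered by inclusion, then show a maximal element must be total. The only difference is in the last step---the paper invokes Lemma~\ref{lem:dense} (equivalence of $\overline{span}\,S=H$ and $S^{\perp}=\{0\}$) to produce a nonzero vector orthogonal to the maximal set, whereas you construct that vector explicitly via Bessel's inequality and completeness; your version is slightly more self-contained, but the overall architecture is identical.
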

To start out, we need the following lemmas. 
\begin{lem}
\label{lem:dense}Let $H$ be a Hilbert space and $S\subset H$. Then
the following are equivalent:
\begin{enumerate}
\item $x\perp S$ implies $x=0$
\item $\overline{span}\{S\}=H$\end{enumerate}
\begin{lem}
\label{lem:Gram-Schmidt}(Gram-Schmidt) Let $\{u_{n}\}$ be a sequence
of linearly independent vectors in $H$ then there exists a sequence
$\{v_{n}\}$ of unit vectors so that $\iprod{v_{i}}{v_{j}}=\delta_{ij}$.
\begin{rem*}
The Gram-Schmidt orthogonalization process was developed a little
earlier than Von Neumann's formuation of abstract Hilbert space.
\end{rem*}
\end{lem}
\end{lem}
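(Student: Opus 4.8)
The plan is to build the sequence $\{v_n\}$ recursively. Since the index set is at most countable, this is an ordinary induction --- no transfinite apparatus is needed at this stage. It is convenient to prove a slightly stronger statement, carrying along the auxiliary claim that $\mathrm{span}\{v_1,\dots,v_n\}=\mathrm{span}\{u_1,\dots,u_n\}$ for every $n$; this is the form in which the lemma actually gets used later, and it is what keeps the recursion going.

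First, for the base case I would set $v_1:=u_1/\norm{u_1}$. This makes sense because linear independence forces $u_1\neq 0$, and visibly $\norm{v_1}=1$ and $\mathrm{span}\{v_1\}=\mathrm{span}\{u_1\}$.

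Then, for the inductive step, assume $v_1,\dots,v_n$ have already been produced with $\iprod{v_i}{v_j}=\delta_{ij}$ for $i,j\le n$ and with $\mathrm{span}\{v_1,\dots,v_n\}=\mathrm{span}\{u_1,\dots,u_n\}$. I would form the residual of $u_{n+1}$ after subtracting its orthogonal projection onto $\mathrm{span}\{v_1,\dots,v_n\}$,
\[
w_{n+1}:=u_{n+1}-\sum_{k=1}^{n}\iprod{v_k}{u_{n+1}}v_k ,
\]
where the coefficients are placed in the slot in which $\iprod{\cdot}{\cdot}$ is linear, so that a one-line computation using the induction hypothesis gives $\iprod{v_j}{w_{n+1}}=0$ for all $j\le n$. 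The crucial observation is that $w_{n+1}\neq 0$: otherwise $u_{n+1}$ would lie in $\mathrm{span}\{v_1,\dots,v_n\}=\mathrm{span}\{u_1,\dots,u_n\}$, contradicting the linear independence of $\{u_n\}$. Hence I may set $v_{n+1}:=w_{n+1}/\norm{w_{n+1}}$; it has norm $1$, is orthogonal to each of $v_1,\dots,v_n$, and since $w_{n+1}$ differs from $u_{n+1}$ only by an element of $\mathrm{span}\{v_1,\dots,v_n\}$, adjoining it preserves the span identity. This closes the induction and produces the required orthonormal sequence.

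There is essentially no obstacle here: the argument is purely algebraic inside a fixed finite-dimensional subspace at each stage, completeness of $H$ is never invoked, and the only point that genuinely requires a hypothesis is the non-vanishing of $w_{n+1}$, which is exactly where linear independence is consumed. The one thing to be careful about is the bookkeeping around the conjugate-linear convention in the definition of the inner product, so that the projection coefficients sit in the argument in which $\iprod{\cdot}{\cdot}$ is linear.
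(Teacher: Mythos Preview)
Your argument is correct and is essentially the same as what the paper has in mind. The paper does not give a formal proof at the point where the lemma is stated; a few pages later, in Dirac's notation, it summarizes the induction step as normalizing $x-P_{F}x$, where $P_{F}=\sum_{v_i\in F}\ketbra{v_i}{v_i}$ is the projection onto the span of the previously constructed orthonormal vectors---which is exactly your $w_{n+1}=u_{n+1}-\sum_{k=1}^{n}\iprod{v_k}{u_{n+1}}v_k$ written out coordinatewise.
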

\begin{proof}
we now prove theorem (\ref{thm:ONB}). If $H$ is empty then we are
finished. Otherwise, let $u_{1}\in H$. If $\norm{u_{1}}\neq1$, we
may consider $u_{1}/\norm{u_{1}}$ which is a normalized vector. Hence
we may assume $\norm{u_{1}}=1$. If $span\{u_{1}\}=H$ we are finished
again, otherwise there exists $u_{2}\notin span\{u_{1}\}$. By lemma
(\ref{lem:Gram-Schmidt}), we may assume $\norm{u_{2}}=1$ and $u_{1}\perp u_{2}$.
By induction, we get a collection $S$ of orthonormal vectors in $H$. 

Consider $\mathbb{P}(S)$ partially order by set inclusion. Let $C\subset\mathbb{P}(S)$
be a chain and let $M=\cup_{E\in C}E$. $M$ is clearly a majorant
of $C$. We claim that $M$ is in the partially ordered system. In
fact, for all $x,y\in M$ there exist $E_{x}$ and $E_{y}$ in $C$
so that $x\in E_{x}$ and $y\in E_{y}$. Since $C$ is a chain, we
may assume $E_{x}\leq E_{y}$. Hence $x,y\in E_{2}$ and $x\perp y$,
which shows that $M$ is in the partially ordered system.

By Zorn's lemma, there exists a maximum element $m\in S$. It suffices
to show that the closed span of $m$ is $H$. Suppose this is false,
then by lemma (\ref{lem:dense}) there exists $x\in H$ so that $x\perp M$.
Since $m\cup\{x\}\geq m$ and $m$ is maximal, it follows that $x\in m$,
which implies $x\perp x$. By the positivity axiom of the definition
of Hilbert space, $x=0$.\end{proof}
\begin{cor}
Let $H$ be a Hilbert space, then $H$ is isomorphic to the $l^{2}$
space of the index set of an ONB of $H$.
\begin{rem}
There seems to be just one Hilbert space, which is true in terms of
the Hilbert space structure. But this is misleading, because numerous
interesting realizations of an abstract Hilbert space come in when
we make a choice of the ONB. The question as to which Hilbert space
to use is equivalent to a good choice of an ONB. This is simiar to
the argument that there is just one set for each given cardinality
in terms of set structure, but there are numerous choices of elements
in the sets making questions interesting.

Suppose $H$ is separable, for instance let $H=L^{2}(\mathbb{R})$.
Then $H\cong l^{2}(\mathbb{N})\cong l^{2}(\mathbb{N}\times\mathbb{N})$.
It follows that potentially we could choose a doublely indexed basis
$\{\psi_{jk}:j,k\in\mathbb{N}\}$ for $L^{2}$. It turns out that
this is precisely the setting of wavelet basis! What's even better
is that in $l^{2}$ space, there are all kinds of diagonalized operators,
which correspond to self-adjoint (or normal) operators in $L^{2}$.
Among these operators in $L^{2}$, we single out the scaling ($f(x)\mapsto f(2^{j}x)$)
and translation ($f(x)\mapsto f(x-k)$) operators, which are diagonalized,
NOT \emph{simultaneously} though.
\end{rem}
\end{cor}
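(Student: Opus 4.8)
The plan is to fix an orthonormal basis $\{u_{\alpha}\}_{\alpha\in A}$ of $H$, which exists by Theorem \ref{thm:ONB}, and to build an explicit isomorphism $\Phi\colon H\to l^{2}(A)$, namely the Fourier coefficient map $\Phi(x)=\left(\iprod{u_{\alpha}}{x}\right)_{\alpha\in A}$. Four things must be checked: that $\Phi(x)$ actually lies in $l^{2}(A)$, that $\Phi$ is isometric (hence injective), that $\Phi$ is onto, and that it carries the inner product of $H$ to that of $l^{2}(A)$. Linearity of $\Phi$ is immediate from linearity of $\iprod{u_{\alpha}}{\cdot}$.

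First I would establish Bessel's inequality: for any finite $F\subset A$, expanding $0\le\norm{x-\sum_{\alpha\in F}\iprod{u_{\alpha}}{x}u_{\alpha}}^{2}$ and using $\iprod{u_{\alpha}}{u_{\beta}}=\delta_{\alpha\beta}$ gives $\sum_{\alpha\in F}\abs{\iprod{u_{\alpha}}{x}}^{2}\le\norm{x}^{2}$. Taking the supremum over finite $F$ shows $\Phi(x)\in l^{2}(A)$; moreover each set $\{\alpha:\abs{\iprod{u_{\alpha}}{x}}>1/n\}$ is finite, so $\iprod{u_{\alpha}}{x}\ne0$ for at most countably many $\alpha$.

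Next comes the heart of the matter, Parseval's identity $\sum_{\alpha}\abs{\iprod{u_{\alpha}}{x}}^{2}=\norm{x}^{2}$, which upgrades Bessel to an isometry. Enumerate the (at most countably many) indices with nonzero coefficient as $\alpha_{1},\alpha_{2},\dots$; Bessel shows the partial sums $s_{N}=\sum_{n\le N}\iprod{u_{\alpha_{n}}}{x}u_{\alpha_{n}}$ are Cauchy, so by completeness of $H$ they converge to some $y\in H$, and continuity of the inner product gives $\iprod{u_{\beta}}{y}=\iprod{u_{\beta}}{x}$ for every $\beta\in A$, i.e. $x-y\perp\{u_{\alpha}\}$. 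This is exactly where totality of the ONB is used: condition (2) of the definition of an ONB says $\overline{span}\{u_{\alpha}\}=H$, so Lemma \ref{lem:dense} forces $x-y=0$. Hence $x=\sum_{n}\iprod{u_{\alpha_{n}}}{x}u_{\alpha_{n}}$ and, by Pythagoras applied to the $s_{N}$, $\norm{x}^{2}=\lim_{N}\norm{s_{N}}^{2}=\sum_{\alpha}\abs{\iprod{u_{\alpha}}{x}}^{2}$. I expect this passage — from merely orthonormal to spanning-a-dense-subspace-hence-everything, via Lemma \ref{lem:dense} — to be the only genuine obstacle; the remaining steps are bookkeeping about convergence over an index set.

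Finally, for surjectivity, given $(c_{\alpha})\in l^{2}(A)$ only countably many $c_{\alpha}$ are nonzero, and the same Cauchy-plus-completeness argument produces $x=\sum_{\alpha}c_{\alpha}u_{\alpha}\in H$ with $\Phi(x)=(c_{\alpha})$. Preservation of the inner product then follows either by polarization from the norm identity already proved, or directly: $\iprod{x}{x'}=\lim_{N}\iprod{s_{N}}{s'_{N}}=\sum_{\alpha}\overline{\iprod{u_{\alpha}}{x}}\,\iprod{u_{\alpha}}{x'}=\iprod{\Phi(x)}{\Phi(x')}_{l^{2}(A)}$. Thus $\Phi$ is a bijective linear isometry respecting inner products, so $H\cong l^{2}(A)$. (The degenerate case $H=\{0\}$ has empty ONB and $l^{2}(\emptyset)=\{0\}$, so it is covered too.)
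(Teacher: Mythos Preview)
Your proof is correct and complete. The paper actually states this corollary without supplying a proof, treating it as an immediate consequence of the existence of an ONB; your argument via Bessel's inequality, Parseval's identity (invoking Lemma~\ref{lem:dense} for the totality step), and the explicit coefficient map $\Phi$ is precisely the standard way to fill in those details, so there is nothing to compare.
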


\subsection{path space measures}

Let $\Omega=\prod_{k=1}^{\infty}\{1,-1\}$ be the infinite Cartesian
product of $\{1,-1\}$ with the product topology. $\Omega$ is compact
and Hausdorff by Tychnoff's theorem. 

For each $k\in\mathbb{N}$, let $X_{k}:\Omega\rightarrow\{1,-1\}$
be the $k^{th}$ coordinate projection, and assign probability measures
$\mu_{k}$ on $\Omega$ so that $\mu_{k}\circ X_{k}^{-1}\{1\}=a$
and $\mu_{k}\circ X_{k}^{-1}\{-1\}=1-a$, where $a\in(0,1)$. The
collection of measures $\{u_{k}\}$ satisfies the consistency conditiond,
i.e. $\mu_{k}$ is the restriction of $\mu_{k+1}$ onto the $k^{th}$
coordinate space. By Kolomogorov's extension theorem, there exists
a unique probability measure $P$ on $\Omega$ so that the restriction
of $P$ to the $k^{th}$ coordinate is equal to $\mu_{k}$. 

It follows that $\{X_{k}\}$ is a sequence of independent identically
distributed (i.i.d.) random variables in $L^{2}(\Omega,P)$ with $\mathbb{E}[X_{k}]=0$
and $Var[X_{k}]=1$; and $L^{2}(\Omega,P)=\overline{span}\{X_{k}\}$.

Let $H$ be a separable Hilbert space with an orthonormal basis $\{u_{k}\}$.
The map $\varphi:u_{k}\mapsto X_{k}$ extends linearly to an isometric
embedding of $H$ into $L^{2}(\Omega,P)$. Moreover, let $\mathcal{F}_{+}(H)$
be the symmetric Fock space. $\mathcal{F}_{+}(H)$ is the closed span
of the the algebraic tensors $u_{k_{1}}\otimes\cdots\otimes u_{k_{n}}$,
thus $\varphi$ extends to an isomorphism from $\mathcal{F}_{+}(H)$
to $L^{2}(\Omega,P)$.

\section{Dirac's notation}

P.A.M Dirac was every efficient with notations, and he introduced
the {}``bra-ket'' vectors. Let $H$ be a Hilbert space with inner
product $\iprod{\cdot}{\cdot}:H\times H\rightarrow\mathbb{C}$. We
denote by {}``bra'' for vectors $\bra{x}$ and {}``ket'' for vectors
$\ket{y}$ where $x,y\in H$.

With Dirac's notation, our first observation is the followsing lemma.
\begin{lem}
Let $v\in H$ be a unit vector. The operator $x\mapsto\iprod{v}{x}v$
can be written as $P_{v}=\ketbra{v}{v}$. $P_{v}$ is a rank-one self-adjoint
projection.\end{lem}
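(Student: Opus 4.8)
The plan is to verify the three assertions in the lemma — that $P_v = \ketbra{v}{v}$ represents the map $x \mapsto \iprod{v}{x} v$, that $P_v$ is self-adjoint, and that $P_v$ is a rank-one projection — by unwinding Dirac's notation against the defining axioms of the inner product. First I would fix the interpretation: the symbol $\ketbra{v}{v}$ denotes the composition in which $\bra{v}$ acts on an input vector $x$ to produce the scalar $\iprod{v}{x}$, and then $\ket{v}$ scales the vector $v$ by that scalar, so that $\ketbra{v}{v}\,x = \iprod{v}{x}\,v$ by definition. This is essentially a notational identity rather than something requiring work, so the content of the lemma is really in the last two claims.

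Next I would check that $P_v$ is idempotent: compute $P_v^2 x = P_v(\iprod{v}{x} v) = \iprod{v}{x}\, P_v v = \iprod{v}{x}\iprod{v}{v}\, v$, and since $v$ is a unit vector $\iprod{v}{v} = 1$, this collapses to $\iprod{v}{x} v = P_v x$. Hence $P_v^2 = P_v$. For self-adjointness I would verify $\iprod{P_v x}{y} = \iprod{x}{P_v y}$ for all $x, y \in H$: the left side is $\iprod{\iprod{v}{x} v}{y} = \overline{\iprod{v}{x}}\,\iprod{v}{y}$ using conjugate-linearity in the first slot, while the right side is $\iprod{x}{\iprod{v}{y} v} = \iprod{v}{y}\,\iprod{x}{v} = \iprod{v}{y}\,\overline{\iprod{v}{x}}$ using linearity in the second slot and the conjugation axiom; the two agree. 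Boundedness (needed to call $P_v$ an operator in the usual sense) follows from Cauchy--Schwarz: $\norm{P_v x} = \abs{\iprod{v}{x}}\,\norm{v} \le \norm{x}$.

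Finally, for the rank-one claim I would observe that the range of $P_v$ is contained in the one-dimensional span of $v$, and that $P_v v = v \ne 0$, so the range is exactly $\mathbb{C}v$, which has dimension one. Together with idempotency and self-adjointness this shows $P_v$ is a rank-one self-adjoint projection, i.e. the orthogonal projection onto $\mathbb{C}v$.

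Honestly there is no serious obstacle here; the only subtlety worth flagging is bookkeeping about which slot of $\iprod{\cdot}{\cdot}$ is linear, since the paper's convention puts linearity in the first argument of $\iprod{x}{\cdot}$ — so one must be consistent in reading $\bra{v}$ as the functional $x \mapsto \iprod{v}{x}$ and track the complex conjugates accordingly in the self-adjointness computation. Everything else is a direct application of the inner-product axioms and Cauchy--Schwarz as recorded earlier in the excerpt.
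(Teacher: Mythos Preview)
Your proof is correct and follows essentially the same approach as the paper: verify $P_v^2 = P_v$ directly and check self-adjointness via $\iprod{P_v x}{y} = \iprod{x}{P_v y}$ using the conjugation axiom. You are a bit more thorough than the paper (explicitly checking boundedness and the rank-one claim, which the paper leaves implicit), but the argument is the same.
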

\begin{proof}
$P_{v}^{2}=(\ketbra{v}{v})(\ketbra{v}{v})=\ketbra{v}{v}=P_{v}$. Since
\[
\iprod{x}{P_{v}y}=\iprod{x}{v}\iprod{v}{y}=\iprod{\overline{\iprod{x}{v}}v}{y}=\iprod{\iprod{v}{x}v}{y}=\iprod{P_{v}x}{y}\]
so $P_{v}=P_{v}^{*}$.
\end{proof}
More generally, any rank-one operator can be wrritten as $\ketbra{u}{v}$
sending $x\in H$ to $<v,x>u$. With the bra-ket notation, it's easy
to verify that the set of rank-one operators forms an algebra, which
easily follows from the fact that $(\ketbra{v_{1}}{v_{2}})(\ketbra{v_{3}}{v_{4}})=\ketbra{v_{1}}{v_{4}}$.
$ $The moment that an orthonormal basis is selected, the algebra
of operators on $H$ will be translated to the algebra of matrices
(infinite). Every Hilbert space has an ONB, but it does not mean in
pratice it is easy to select one that works well for a particular
problem.

It's also easy to see that the operator \[
P_{F}=\sum_{v_{i}\in F}\ketbra{v_{i}}{v_{i}}\]
where $F$ is a finite set of orthonormal vectors in $H$, is a self-adjoint
projection. This follows, since\[
P_{F}^{2}=\sum_{v_{i},v_{j}\in F}(\ketbra{v_{i}}{v_{i}})(\ketbra{v_{j}}{v_{j}})=\sum_{v_{i}\in F}\ketbra{v_{i}}{v_{i}}\]
and $P_{F}^{*}=P_{F}$.

The Gram-Schmidt orthogonalization process may now be written in Dirac's
notation so that the induction step is really just \[
\frac{x-P_{F}x}{\norm{x-P_{F}x}}\]
which is a unit vector and orthogonal to $P_{F}H$. Notice that if
$H$ is non separable, the standard induction does not work, and the
transfinite induction is needed.

\subsection{connection to quamtum mechanics}

Quamtum mechanics was born during the years from 1900 to 1913. It
was created to explain phenomena in black body radiation, hydrogen
atom, where a discrete pattern occurs in the frequences of waves in
the radiation. The radiation energy $E=\nu\hbar$, with $\hbar$ being
the Plank's constant. Classical mechanics runs into trouble. 

During the years of 1925\textasciitilde{}1926, Heisenberg found a
way to represent the energy $E$ as a matrix, so that the matrix entries
$<v_{j},Ev_{i}>$ represents the transition probability from energy
$i$ to energy $j$. A foundamental relation in quantum mechenics
is the commutation relation satisfied by the momentum operator $P$
and the position operator $Q$, where \[
PQ-QP=\frac{1}{i}I.\]
Heisernberg represented the operators $P,Q$ by matrices, although
his solution is not real matrices. The reason is for matrices, there
is a trace operation where $trace(AB)=trace(BA)$. This implies the
trace on the left-hand-side is zero, while the trace on the righ-hand-side
is not. This suggests that there is no finite dimensional solution
to the commutation relation above, and one is forced to work with
infinite dimensional Hilbert space and operators on it. Notice also
that $P,Q$ do not commute, and the above commutation relation leads
to the uncertainty principle (Hilbert, Max Born, Von Neumann worked
out the mathematics), which says that the statistical variance $\triangle P$
and $\triangle Q$ satisfy $\triangle P\triangle Q\geq\hbar/2$ .
We will come back to this later.

However, Heisenberg found his {}``matrix'' solutions, where \[
P=\left[\begin{array}{ccccc}
0 & 1\\
1 & 0 & \sqrt{2}\\
 & \sqrt{2} & 0 & \sqrt{3}\\
 &  & \sqrt{3} & 0 & \ddots\\
 &  &  & \ddots & \ddots\end{array}\right]\]
 and \[
Q=\frac{1}{i}\left[\begin{array}{ccccc}
0 & 1\\
-1 & 0 & \sqrt{2}\\
 & -\sqrt{2} & 0 & \sqrt{3}\\
 &  & -\sqrt{3} & 0 & \ddots\\
 &  &  & \ddots & \ddots\end{array}\right]\]
the complex $i$ in front of $Q$ is to make it self-adjoint.

A selection of ONB makes a connection to the algebra operators acting
on $H$ and infinite matrices. We check that using Dirac's notation,
the algebra of operators really becomes the algebr of infinite matrices.

Pick an ONB $\{u_{i}\}$ in $H$, $A,B\in B(H)$. We denote by $M_{A}=A_{ij}:=\iprod{u_{i}}{Au_{j}}$
the matrix of $A$ under the ONB. We compute $\iprod{u_{i}}{ABu_{j}}$.\begin{eqnarray*}
(M_{A}M_{B})_{ij} & = & \sum_{k}A_{ik}B_{kj}\\
 & = & \sum_{k}\iprod{u_{i}}{Au_{k}}\iprod{u_{k}}{Bu_{j}}\\
 & = & \sum_{k}\iprod{A^{*}u_{i}}{u_{k}}\iprod{u_{k}}{Bu_{j}}\\
 & = & \iprod{A^{*}u_{i}}{Bu_{j}}\\
 & = & \iprod{u_{i}}{ABu_{j}}\end{eqnarray*}
where $I=\sum\ketbra{u_{i}}{u_{i}}$.

Let $w$ be a unit vector in $H$. $w$ represents a quantum state.
Since $\norm{w}^{2}=\sum\abs{\iprod{u_{i}}{w}}^{2}=1$, the numbers
$\abs{\iprod{u_{i}}{w}}^{2}$ represent a probability distribution
over the index set. If $w$ and $w'$ are two states, then \[
\iprod{w}{w'}=\sum\iprod{w}{u_{i}}\iprod{u_{i}}{w'}\]
which has the interpretation so that the transition from $w'$ to
$w$ may go through all possible intermediate states $u_{i}$. Two
states are uncorrelated if and only if they are orthogonal.

\section{Operators in Hilbert space}
\begin{defn}
Let $A$ be a linear operator on a Hilbert space $H$. 
\begin{enumerate}
\item $A$ is self-adjoint if $A^{*}=A$
\item $A$ is normal if $AA^{*}=A^{*}A$
\item $A$ is unitary if $AA^{*}=A^{*}A=I$
\item $A$ is a self-adjoint projection if $A=A^{*}=A^{2}$
\end{enumerate}
\end{defn}
Let $A$ be an operator, then we have $R=(A+A^{*})/2$, $S=(A-A^{*})/2i$
which are both self-adjoint, and $A=R+iS$. This is similar the to
decomposition of a complex nubmer into its real and imaginary parts.
Notice also that $A$ is normal if and only if $R$ and $S$ commute.
Thus the study of a family of normal operators is equivalent to the
study of a family of commuting self-adjoint operators. 
\begin{lem}
Let $z$ be a complex number, and $P$ be a self-adjoint projection.
Then $U(z)=zP+(I-P)$ is unitary if and only if $\abs{z}=1$.\end{lem}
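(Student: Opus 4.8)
The plan is to compute $U(z)U(z)^{*}$ directly using the defining properties $P=P^{*}=P^{2}$ and the fact that $P$ and $I-P$ are orthogonal complementary projections, i.e. $P(I-P)=(I-P)P=0$. First I would record that $U(z)^{*}=\bar{z}P+(I-P)$, since $P$ is self-adjoint and $I-P$ is self-adjoint. Then I would expand
\[
U(z)U(z)^{*}=\bigl(zP+(I-P)\bigr)\bigl(\bar{z}P+(I-P)\bigr)=\abs{z}^{2}P+(I-P),
\]
where the cross terms vanish because $P(I-P)=0$, and $P^{2}=P$, $(I-P)^{2}=I-P$. By the symmetric computation (or because $z\mapsto\bar z$ is an involution), $U(z)^{*}U(z)=\abs{z}^{2}P+(I-P)$ as well, so $U(z)$ is automatically normal and it suffices to analyze when $\abs{z}^{2}P+(I-P)=I$.

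For the "if" direction: if $\abs{z}=1$ then $\abs{z}^{2}P+(I-P)=P+(I-P)=I$, so $U(z)$ is unitary. For the "only if" direction: suppose $U(z)U(z)^{*}=I$, i.e. $\abs{z}^{2}P+(I-P)=I$, which rearranges to $(\abs{z}^{2}-1)P=0$. Here I need to use that $P$ is genuinely nonzero — the statement is implicitly about a nontrivial projection, since for $P=0$ we get $U(z)=I$ for every $z$. Assuming $P\neq 0$, pick a unit vector $v$ in the range of $P$, so $Pv=v$; then $(\abs{z}^{2}-1)v=0$ forces $\abs{z}^{2}=1$, hence $\abs{z}=1$.

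I do not expect any real obstacle here; the only subtlety worth flagging in the write-up is the degenerate case $P=0$ (and symmetrically $P=I$), where the equivalence as literally stated fails, so the hypothesis should be read as $P$ being a nontrivial projection. If one wants to avoid that caveat, an alternative is to phrase the "only if" direction contrapositively at the level of operators: $U(z)$ unitary implies $U(z)$ is isometric, so $\norm{U(z)v}=\norm{v}$ for all $v$; testing on $v\in\operatorname{ran}P$ gives $\abs{z}\norm{v}=\norm{v}$ and hence $\abs{z}=1$ as soon as $\operatorname{ran}P\neq\{0\}$. Either route is a few lines; the computation of $U(z)U(z)^{*}$ via the orthogonality of $P$ and $I-P$ is the heart of it.
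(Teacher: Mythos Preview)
Your proposal is correct and follows essentially the same route as the paper: compute $U(z)U(z)^{*}=U(z)^{*}U(z)=\abs{z}^{2}P+(I-P)$ using $P=P^{*}=P^{2}$, then read off both directions, with the ``only if'' requiring $P$ to be nondegenerate. The paper makes exactly the same nondegeneracy caveat you flagged; your write-up is simply a bit more explicit about why the cross terms vanish and about the edge case $P=0$.
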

\begin{proof}
Since $P$ is a self-adjoint projection, \[
U(z)U(z)^{*}=U(z)^{*}U(z)=\left(zP+(I-P)\right)\left(\bar{z}P+(I-P)\right)=\abs{z}^{2}P+(I-P).\]
If $\abs{z}=1$ then $U(z)U(z)^{*}=U(z)^{*}U(z)=I$ and $U(z)$ is
unitary. Conversely, if $\abs{z}^{2}P+(I-P)=I$ then it follows that
$\left(\abs{z}^{2}-1\right)P=0$. If we assume that $P$ is nondegenerate,
then $\abs{z}=1$.\end{proof}
\begin{defn}
Let $A$ be a linear operator on a Hilbert space $H$. The resolvent
$R(A)$ is defined as\[
R(A)=\{\lambda\in\mathbb{C}:(\lambda I-A)^{-1}\text{ exists}\}\]
and the spectrum of $A$ is the complement of $R(A)$, and it is denoted
by $sp(A)$ or $\sigma(A)$. 
\begin{defn}
Let $\mathfrak{B}(\mathbb{C})$ be the Borel $\sigma$-algebra of
$\mathbb{C}$. $H$ is a Hilbert space. $P:\mathfrak{B}(\mathbb{C})\rightarrow H$$ $
is a projection-valued measure, if 
\begin{enumerate}
\item $P(\phi)=0$, $P(\mathbb{C})=I$
\item $P(A\cap B)=P(A)P(B)$
\item $P(\sum E_{k})=\sum P(E_{k})$, $E_{k}\cap E_{j}=\phi$ if $k\neq j$.
The convergence is in terms of the strong operator topology.
\end{enumerate}
\end{defn}
\end{defn}
Von Neumann's spectral theorem states that an operator $A$ is normal
if and only if there exits a projection-valued measure on $\mathbb{C}$
so that $A=\int_{sp(A)}zP(dz)$, i.e. $A$ is represented as an integral
again the projection-valued measure $P$ over its spectrum. 

In quamtum mechanics, an observable is represented by a self-adjoint
operator. Functions of observables are again observables. This is
reflected in the spectral theorem as the functional calculus, where
we may define $f(A)=\int_{sp(A)}f(z)P(dz)$ using the spectral representation
of $A$. 

The stardard diagonalization of Hermitian matrix in linear algebra
is a special case of the spectral theorem. Recall that if $A$ is
a Hermitian matrix, then $A=\sum_{k}\lambda_{k}P_{k}$ where $\lambda_{k}'s$
are the eigenvalues of $A$ and $P_{k}'s$ are the self-adjoint projections
onto the eigenspace associated with $\lambda_{k}'s$. The projection-valued
measure in this case can be written as $P(E)=\sum_{\lambda_{k}\in E}P_{k}$,
i.e. the counting measure supported on $\lambda_{k}'s$.

Hersenberg's commutation relation $PQ-QP=-iI$ is an important example
of two non-commuting self-adjoint operators. When $P$ is a self-adjoint
projection acting on a Hilbert space $H$, $\braket{f}{Pf}$ is a
real number and it represents observation of the observable $P$ prepared
in the state $\ket{f}$. Quantum mechanics is stated using an abstract
Hilbert space as the state space. In practice, one has freedom to
choose exactly which Hilbert space to use for a particular problem.
The physics remains to same when choosing diffenent realizations of
a Hilbert space. The concept needed here is unitary equivalence.

Suppose $U:H_{1}\rightarrow H_{2}$ is a unitary operator, $P:H_{1}\rightarrow H_{1}$
is a self-adjoint projection. Then $UPU^{*}:H_{2}\rightarrow H_{2}$
is a self-adjoint projection on $H_{2}$. In fact, $ $$(UPU^{*})(UPU^{*})=UPU^{*}$
where we used $UU^{*}=U^{*}U=I$, as $U$ is unitary. Let $\ket{f_{1}}$
be a state in $H_{1}$ and $\ket{Uf_{1}}$ be the corresponding state
in $H_{2}$. Then\[
\braket{f_{2}}{UPU^{*}f_{2}}=\braket{U^{*}f_{2}}{PU^{*}f_{2}}=\braket{f_{1}}{Pf_{1}}\]
i.e. the observable $P$ has the same expectation value. Since every
self-adjoint operator is, by the spectral theorem, decomposed into
self-adjoint projections, it follows the expectation value of any
observable remains unchanged under unitary transformation.
\begin{defn}
Let $A:H_{1}\rightarrow H_{1}$ and $B:H_{2}\rightarrow H_{2}$ be
operators. $A$ is unitarily equivalent to $B$ is there exists a
unitary operator $U:H_{1}\rightarrow H_{2}$ so that $B=UAU^{*}$.\end{defn}
\begin{example}
Fourier transform $U:L^{2}(\mathbb{R})\rightarrow L^{2}(\mathbb{R})$,
\[
(Uf)(t)=\fourier{f}(t)=\frac{1}{\sqrt{2\pi}}\int e^{-itx}f(x)dx.\]
The operators $Q=M_{x}$ and $P=-id/dx$ are both densely defined
on the Schwartz space $\mathcal{S}\subset L^{2}(\mathbb{R})$. $P$
and $Q$ are unitary equivalent via the Fourier transform, \[
P=\mathcal{F}^{*}Q\mathcal{F}.\]
Apply Gram-Schmidt orthogonalization to polynomials against the measrue
$e^{-x^{2}/2}dx$, and get orthognoal polynomials. There are the Hermite
polynomials (Hermit functions). \[
h_{n}=e^{-x^{2}/2}P_{n}=e^{-x^{2}}\left(\frac{d}{dx}\right)^{n}e^{x^{2}/2}.\]
The Hermite functions form an orthonormal basis (normalize it) and
transform $P$ and $Q$ to Heisenberg's infinite matrices. Some related
operators: $H:=(Q^{2}+P^{2}-1)/2$. It can be shown that\[
Hh_{n}=nh_{n}\]
or equivalently,\[
(P^{2}+Q^{2})h_{n}=(2n+1)h_{n}\]
$n=0,1,2,\ldots$. $H$ is called the energy operator in quantum mechanics.
This explains mathematically why the energy levels are discrete, being
a multiple of $\hbar$.
\end{example}
A multiplication operator version is also available which works especially
well in physics. It says that $A$ is a normal operator in $H$ if
and only if $A$ is unitarily equivalent to the operator of multiplication
by a measurable function $f$ on $L^{2}(M,\mu)$ where $M$ is compact
and Hausdorff. We will see how the two versions of the spectral theorem
are related after first introduing the concept of transformation of
measure.

\subsection{Transformation of measure}

Let $(X,S)$ and $(Y,T)$ be two measurable spaces with $\sigma$-algebras
$S$ and $T$ respectively. Let $\varphi:X\rightarrow Y$ be a measurable
function. Suppose there is a measure $\mu$ on $(X,S)$. Then $\mu_{\varphi}(\cdot):=\mu\circ\varphi^{-1}(\cdot)$
defines a measure on $Y$. $\mu_{\varphi}$ is the transformation
measure of $\mu$ under $\varphi$.

Notice that if $E\in T$, then $\varphi(x)\in E$ if and only if $x\in\varphi^{-1}(E)\in S$.
Hence \[
\chi_{E}\circ\varphi(x)=\chi_{\varphi^{-1}(E)}.\]
It follows that for simple function $s(\cdot)=\sum c_{i}\chi_{E_{i}}(\cdot)=\sum c_{i}\chi_{\varphi^{-1}(E_{i})}$,
and \[
\int s\circ\varphi d\mu=\int\sum c_{i}\chi_{E_{i}}\circ\varphi(x)d\mu=\int\sum c_{i}\chi_{\varphi^{-1}(E_{i})}(\cdot)d\mu=\int sd(\mu\circ\varphi^{-1}).\]
With a standard approximation of measurable functions by simple functions,
we have for any measurable function $f:X\rightarrow Y$,\[
\int f(\varphi(\cdot))d\mu=\int f(\cdot)d(\mu\circ\varphi^{-1}).\]
The above equation is a generalization of the substitution formula
in calculus.

The multiplication version of the spectral theory states that every
normal operator $A$ is unitarily equivalent to the operator of multiplication
by a measurable function $M_{f}$ on $L^{2}(M,\mu)$ where $M$ is
compact and Hausdorff. With transformation of measure, we can go one
step further and get that $A$ is unitarily equivalent to the operator
of multiplication by the independent variable on some $L^{2}$ space.
Notice that if $f$ is nesty, even if $\mu$ is a nice measure (say
the Lebesgue measure), the transformation meaure $\mu\circ f^{-1}$
can still be nesty, it could even be singular.

Let's assume we have a normal operator $M_{\varphi}:L^{2}(M,\mu)\rightarrow L^{2}(M,\mu)$
given by multiplication by a measurable function $\varphi$. Define
an operator $U:L^{2}(\varphi(M),\mu\circ\varphi^{-1})\rightarrow L^{2}(M,\mu)$
by\[
(Uf)(\cdot)=f(\varphi(\cdot)).\]
$U$ is unitary, since \[
\int\abs{f}^{2}d(\mu\circ\varphi^{-1})=\int\abs{f(\varphi)}^{2}d\mu=\int\abs{Uf}^{2}d\mu.\]
Claim also that\[
M_{\varphi}U=UM_{t}.\]
To see this, let $f$ be a $\mu_{\varphi}$-measurable function. Then\begin{eqnarray*}
M_{\varphi}Uf & = & \varphi(t)f(\varphi(t))\\
UM_{t}f & = & U(tf(t))\\
 & = & \varphi(t)f(\varphi(t))\end{eqnarray*}

Recall we have stated two versions of the spectral theorem. (multiplication
operator and projection-valued measure) Consider the simplest case
for the projection-valued measure, where we work with $L^{2}(X,\mu)$.
Claim that $P(E):=M\chi_{E}$, i.e. the operator of multiplication
by $\chi_{E}$ on the Hilbert space $L^{2}(X,\mu)$, is a projection-valued
measure. 

Apply this idea to $P$ and $Q$, the momemtum and positon operators
in quantum mechanics. $P=-id/dx$, $Q=M_{x}$. As we discussed before,
\[
P=\mathcal{F}^{-1}Q\mathcal{F}\]
in other words, $Q$ and $P$ are unitarily equivalent via the Fourier
transform, which diagonalizs $P$. Now we get a projection-valued
measure (PVM) for $P$ by\[
E(\cdot):=\mathcal{F}^{-1}M_{\chi_{\{\cdot\}}}\mathcal{F}.\]
This can be seen as the convolution operator with respect to the inverse
Fourier transform of $\chi_{\{\cdot\}}$.

\section{Lattice structure of projections}

We first show some examples of using Gram-Schmidt orthoganoliztion
to obtain orthonormal bases for a Hilbert space.
\begin{example}
$H=L^{2}[0,1]$. The polynomials $\{1,x,x^{2},\ldots\}$ are linearly
independent in $H$, since if\[
\sum c_{k}x^{k}=0\]
then as an analytic function, the left-hand-side must be identically
zero. By Stone-Weierstrass theorem, $span\{1,x,x^{2},\ldots\}$ is
dense in $C([0,1])$ under the $\norm{\cdot}_{\infty}$ norm. Since
$\norm{\cdot}_{L^{2}}\leq\norm{\cdot}_{\infty}$, it follows that
$span\{1,x,x^{2},\ldots\}$ is also dense in $H$. By Gram-Schmidt,
we get a sequence $\{V_{n}\}$ of finite dimensional subspaces in
$H$, where $V_{n}$ has an orthonormal basis $\{h_{0},\ldots,h_{n-1}\}$,
so that $spanV_{n}=span\{1,x,\ldots,x^{n-1}\}$. Define \[
h_{n+1}=\frac{x^{n+1}-P_{n}x^{n+1}}{\norm{x^{n+1}-P_{n}x^{n+1}}}.\]
The set $\{h_{n}\}$ is dense in $H$, since $span\{h_{n}\}=span\{1,x,\ldots\}$
and the latter is dense in $H$. Therefore, $\{h_{n}\}$ forms an
orthonormal basis of $H$.
\begin{example}
$H=L^{2}[0,1]$. Consider the set of complex exponentials $\{e^{i2\pi nx}\}_{n=0}^{\infty}$.
This is already an ONB for $H$ and leads to Fourier series. Equivalently,
may also consider $\{\cos2\pi nx,\sin2\pi nx\}_{n=0}^{\infty}$.
\end{example}
\end{example}
The next example constructs the Haar wavelet.
\begin{example}
$H=L^{2}[0,1]$. Let $\varphi_{0}$ be the characteristic function
of $[0,1]$. Define $\varphi_{1}=\varphi_{0}(2x)-\varphi_{0}(2x-1)$
and $\psi_{jk}=2^{k/2}\varphi_{1}(2^{k}x-l)$. For fixed $k$ and
$j_{1}\neq j_{2}$, $\iprod{\psi_{jk_{1}}}{\psi_{jk_{2}}}=0$ since
they have disjoint support. \end{example}
\begin{xca}
Let $M_{t}:L^{2}[0,1]\rightarrow L^{2}[0,1]$ be the operator of multiplication
by $t$. Compute the matrix of $M_{t}$ under wavelet basis. (this
is taken from Joel Anderson, who showed the $A=A^{*}$ implies that
$A=D+K$ where $D$ is a diagonal operator and $K$ is a compact perturbation.d)\end{xca}
\begin{thm}
Let $H$ be a Hilbert space. There is a one-to-one correspondence
between self-adjoint projections and closed subspaces of $H$. \end{thm}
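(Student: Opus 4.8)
The plan is to exhibit the bijection explicitly: send a self-adjoint projection $P$ to its range $PH$, send a closed subspace $M$ to the orthogonal projection onto $M$, and then check that the two assignments are mutually inverse. The only nontrivial analytic input will be the orthogonal decomposition theorem, which I will have to prove from the axioms since it has not yet appeared in the text.

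First I would show that for a self-adjoint projection $P$ the set $PH$ is a closed subspace. The key observation is that $PH=\{x\in H:Px=x\}$: the inclusion $\supseteq$ is trivial, and if $x=Py$ then $Px=P^{2}y=Py=x$. Hence $PH=\ker(I-P)$, which is closed because $I-P$ is bounded (from $\norm{Px}^{2}=\iprod{x}{P^{*}Px}=\iprod{x}{Px}\le\norm{x}\norm{Px}$ one even gets $\norm{P}\le1$, but any bound suffices for continuity). I would also record that $\ker P=(PH)^{\perp}$, since $Py\perp z\Leftrightarrow\iprod{y}{Pz}=0$.

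Second, and this is the substantive step, I would establish the decomposition $H=M\oplus M^{\perp}$ for an arbitrary closed subspace $M$. Given $x\in H$, set $d=\inf\{\norm{x-m}:m\in M\}$ and take a minimizing sequence $(m_{n})$; the parallelogram law applied to $x-m_{n}$ and $x-m_{k}$ gives $\norm{m_{n}-m_{k}}^{2}\le2\norm{x-m_{n}}^{2}+2\norm{x-m_{k}}^{2}-4d^{2}\to0$, so $(m_{n})$ is Cauchy and, by completeness of $H$ and closedness of $M$, converges to some $m\in M$ attaining the infimum. A one-variable variational argument (the function $t\mapsto\norm{x-m-tm'}^{2}$ has a minimum at $t=0$ for every $m'\in M$) then forces $x-m\perp M$, and uniqueness follows since $M\cap M^{\perp}=\{0\}$ by positivity. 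Define $P_{M}x:=m$; linearity follows from uniqueness of the decomposition, $P_{M}^{2}=P_{M}$ because each $m\in M$ is its own nearest point, and $\iprod{P_{M}x}{y}=\iprod{P_{M}x}{P_{M}y}=\iprod{x}{P_{M}y}$ (using $x-P_{M}x\perp M$ and $y-P_{M}y\perp M$) gives self-adjointness. One could instead build $P_{M}$ as the strong limit of partial sums $\sum_{v_{i}\in F}\ketbra{v_{i}}{v_{i}}$ over an orthonormal basis of $M$ supplied by Gram--Schmidt and transfinite induction, using Bessel's inequality for convergence, but the nearest-point construction is cleaner and basis-free.

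Finally I would verify the two maps invert each other. For a closed subspace $M$, the range of $P_{M}$ is contained in $M$ by construction and contains every $m\in M$ since $P_{M}m=m$, so $(P_{M})H=M$. For a self-adjoint projection $P$, I claim $P_{PH}=P$: by the first step $x=Px+(x-Px)$ with $Px\in PH$ and $x-Px\in\ker P=(PH)^{\perp}$, so this is precisely the orthogonal decomposition of $x$ along $PH$, whence $Px=P_{PH}x$. The main obstacle is the existence half of the orthogonal decomposition — the minimization argument resting on the parallelogram identity and completeness — while everything else is bookkeeping with the defining relations $P=P^{*}=P^{2}$.
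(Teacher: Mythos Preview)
Your proposal is correct and follows essentially the same route as the paper: the paper also identifies $PH=\{x:Px=x\}$ as closed and, for the converse, runs the nearest-point argument via the parallelogram law on a minimizing sequence. If anything, your write-up is more complete --- the paper stops after showing the minimizing sequence is Cauchy and refers the reader to Rudin or Nelson for the rest, whereas you carry out the variational orthogonality step, verify $P_M=P_M^*=P_M^2$, and check that the two assignments are mutual inverses.
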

\begin{proof}
Let $P$ be a self-adjoint projection in $H$. i.e. $P^{2}=P=P^{*}$.
Then $PH=\{x\in H:Px=x\}$ is a closed subspace. Denote by $P^{\perp}$
the completement of $P$, i.e. $P^{\perp}=1-P$. Then $P^{\perp}H=\{x\in H:P^{\perp}x=x\}=\{x\in H:Px=0\}$.
Since $PP^{\perp}=P(1-P)=P-P^{2}=P-P=0$, therefore $PH\perp P^{\perp}H$. 

Conversely, let $W$ be a closed subspace in $H$. First notice that
the parallelogram law is satisfied in a Hilbert space, where for any
$x,y\in H$, $\norm{x+y}^{2}+\norm{x-y}^{2}=2(\norm{x}^{2}+\norm{y}^{2})$.
Let $x\in H\backslash W$, define $d=\inf_{w\in W}\norm{x-w}$. By
definition, there exists a sequence $w_{n}$ in $W$ so that $\norm{w_{n}-x}\rightarrow0$
as $n\rightarrow\infty$. Apply the parallelogram law to $x-w_{n}$
and $x-w_{m}$,\[
\norm{(x-w_{n})+(x-w_{m})}^{2}+\norm{(x-w_{n})-(x-w_{m})}^{2}=2(\norm{x-w_{n}}^{2}+\norm{x-w_{m}}^{2})\]
which simplies to\[
4\norm{x-\frac{w_{n}+w_{m}}{2}}^{2}+\norm{w_{n}-w_{m}}^{2}=2(\norm{x-w_{n}}^{2}+\norm{x-w_{m}}^{2}).\]
Notice here all we require is $(w_{n}+w_{m})/2$ lying in the subspace
$W$, hence it suffices to require simply that $W$ is a convex subset
in $H$. see Rudin or Nelson page 62 for more details.
\end{proof}
Von Neumann invented the abstract Hilbert space in 1928 as shown in
one of the earliest papers. He work was greatly motivated by quantum
mechanics. In order to express quantum mechanics logic operations,
he created lattices of projections, so that everything we do in set
theory with set operation has a counterpart in the operations of projections.

\begin{longtable}{|c|c|c|c|}
\hline 
SETS & CHAR & PROJECTIONS & DEFINITIONS\tabularnewline
\hline
\hline 
$A\cap B$ & $\chi_{A}\chi_{B}$ & $P\wedge Q$ & $PH\cap PQ$\tabularnewline
\hline 
$A\cup B$ & $\chi_{A\cup B}$ & $P\vee Q$ & $\overline{span}\{PH\cup QH\}$\tabularnewline
\hline 
$A\subset B$ & $\chi_{A}\chi_{B}=\chi_{A}$ & $P\leq Q$ & $PH\subset QH$ \tabularnewline
\hline 
$A_{1}\subset A_{2}\subset\cdots$ & $\chi_{A_{i}}\chi_{A_{i+1}}=\chi_{A_{i}}$ & $P_{1}\leq P_{2}\leq\cdots$ & $P_{i}H\subset P_{i+1}H$\tabularnewline
\hline 
$\bigcup_{k=1}^{\infty}A_{k}$ & $\chi_{\cup_{k}A}$ & $\vee_{k=1}^{\infty}P_{k}$ & $\overline{span}\{\bigcup_{k=1}^{\infty}P_{k}H\}$\tabularnewline
\hline 
$\bigcap_{k=1}^{\infty}A_{k}$ & $\chi_{\cap_{k}A_{k}}$ & $\wedge_{k=1}^{\infty}P_{k}$ & $\bigcap_{k=1}^{\infty}P_{K}H$\tabularnewline
\hline 
$A\times B$ & $\left(\chi_{A\times X}\right)\left(\chi_{X\times B}\right)$ & $P\otimes Q$ & $P\otimes Q\in proj(H\otimes K)$\tabularnewline
\hline
\end{longtable}

$PH\subset QH\Leftrightarrow P=PQ$. This is similar to set operation
where $A\cap B=A\Leftrightarrow A\subset B$. In general, product
and sum of projections are not projections. But if $PH\subset QH$
then the product is in fact a projection. Taking adjoint, one get
$P^{*}=(PQ)^{*}=Q^{*}P^{*}=QP$. It follows that $PQ=QP=P$. i.e.
\textit{containment implies the two projections commute}.

During the same time period as Von Neumann developed his Hilbert space
theory, Lesbegue developed his integration theory which extends the
classical Riemann integral. The motone sequence of sets $A_{1}\subset A_{2}\subset\cdots$
in Lebesgue's integration theory also has a counterpart in the theory
of Hilbert space. To see what happens here, let $P_{1}\leq P_{2}$
and we show that this implies $\norm{P_{1}x}\leq\norm{P_{2}x}$. 
\begin{lem}
$P_{1}\leq P_{2}\Rightarrow\norm{P_{1}x}\leq\norm{P_{2}x}$.\end{lem}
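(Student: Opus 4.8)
The plan is to exploit the fact already recorded just above this lemma, namely that the containment $P_{1}H\subset P_{2}H$ forces $P_{1}=P_{1}P_{2}=P_{2}P_{1}$ (containment implies the two projections commute, with product $P_{1}$), together with the elementary identity $\norm{Px}^{2}=\iprod{x}{Px}$ valid for every self-adjoint projection $P$. The latter follows at once from $P=P^{*}=P^{2}$, since $\norm{Px}^{2}=\iprod{Px}{Px}=\iprod{x}{P^{*}Px}=\iprod{x}{P^{2}x}=\iprod{x}{Px}$; in particular $\iprod{x}{Px}\geq 0$ for all $x\in H$.

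Next I would verify that $Q:=P_{2}-P_{1}$ is itself a self-adjoint projection. Self-adjointness is immediate. For idempotency, expand $Q^{2}=P_{2}^{2}-P_{1}P_{2}-P_{2}P_{1}+P_{1}^{2}$ and substitute $P_{1}P_{2}=P_{2}P_{1}=P_{1}$ and $P_{i}^{2}=P_{i}$; this collapses to $Q^{2}=P_{2}-P_{1}=Q$. Applying the norm-square identity to $Q$ then yields $\iprod{x}{(P_{2}-P_{1})x}=\norm{Qx}^{2}\geq 0$, hence $\iprod{x}{P_{2}x}\geq\iprod{x}{P_{1}x}$; using the identity once more for $P_{1}$ and $P_{2}$ separately turns this into $\norm{P_{2}x}^{2}\geq\norm{P_{1}x}^{2}$, and taking square roots gives the claim.

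I do not expect a genuine obstacle here. The only point needing care is the justification that $P_{1}P_{2}=P_{2}P_{1}=P_{1}$, but that is exactly the observation ``containment implies the two projections commute'' stated just before the lemma, so it may be cited directly rather than reproved. An even shorter route, which I would also mention, avoids $Q$ entirely: since $P_{1}=P_{1}P_{2}$ and $\norm{P_{1}}\leq 1$ (any self-adjoint projection has operator norm at most $1$), one gets $\norm{P_{1}x}=\norm{P_{1}(P_{2}x)}\leq\norm{P_{1}}\,\norm{P_{2}x}\leq\norm{P_{2}x}$ in a single line; the $Q$-based argument is preferable mainly because it also exhibits the precise defect $\norm{P_{2}x}^{2}-\norm{P_{1}x}^{2}=\norm{(P_{2}-P_{1})x}^{2}$, which is convenient for the monotone-convergence discussion that follows.
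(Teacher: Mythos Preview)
Your proposal is correct. Your shorter second route---writing $P_{1}x=P_{1}P_{2}x$ and using $\norm{P_{1}}\leq 1$---is exactly the paper's argument; the paper writes it as the chain $\norm{P_{1}x}^{2}=\iprod{x}{P_{1}x}=\iprod{x}{P_{2}P_{1}x}\leq\norm{P_{1}P_{2}x}^{2}\leq\norm{P_{2}x}^{2}$, which is the same factorization dressed in inner-product form.

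Your first route, showing that $Q=P_{2}-P_{1}$ is itself a self-adjoint projection and hence $\iprod{x}{Qx}\geq 0$, is a genuinely different (and equally clean) argument. As you note, it has the bonus of producing the exact identity $\norm{P_{2}x}^{2}-\norm{P_{1}x}^{2}=\norm{(P_{2}-P_{1})x}^{2}$, which the paper's proof does not make explicit; this is indeed the natural form for the monotone-limit discussion that follows.
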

\begin{proof}
It follows from \[
\norm{P_{1}x}^{2}=\iprod{P_{1}x}{P_{1}x}=\iprod{x}{P_{1}x}=\iprod{x}{P_{2}P_{1}x}\leq\norm{P_{1}P_{2}x}^{2}\leq\norm{P_{2}x}^{2}.\]

\end{proof}
As a consequence, $P_{1}\leq P_{2}\leq\cdots$ implies $\norm{P_{k}x}$
forms a monotone increasing sequence in $\mathbb{R}$, and the sequence
is bounded by $\norm{x}$, since $\norm{P_{k}x}\leq\norm{x}$ for
all $k$. Therefore the sequence $P_{k}$ converges to $P$, in symbols\[
\vee P_{k}=\lim_{k}P_{k}=P\]
in the sense that (strongly convergent) for all $x\in H$, there exists
a vector, which we denote by $Px$ so that \[
\lim_{k}\norm{P_{k}x-Px}=0\]
and $P$ really defines a self-adjoint projection.

The examples using Gram-Schmidt can now be formulated in the lattice
of projections. We have $V_{n}$ the $n$-dimensional subspaces and
$P_{n}$ the orthogonal projection onto $V_{n}$, where\begin{eqnarray*}
V_{n}\subset V_{n+1}\rightarrow\cup V_{n} & \sim & P_{n}\leq P_{n+1}\rightarrow P\\
 &  & P_{n}^{\perp}\geq P_{n+1}^{\perp}\rightarrow P^{\perp}.\end{eqnarray*}
Since $\cup V_{n}$ is dense in $H$, it follows that $P=I$ and $P^{\perp}=0$.
We may express this in the lattice notations by\begin{eqnarray*}
\vee P_{n}=\sup P_{n} & = & I\\
\wedge P_{n}^{\perp}=\inf P_{n} & = & 0.\end{eqnarray*}

The tensor product construction fits with composite system in quamtum
mechanics.
\begin{lem}
$P,Q\in proj(H)$. Then $P+Q\in proj(H)$ if and only if $PQ=QP=0$.
i.e. $P\perp Q$.
\begin{proof}
Notice that \[
(P+Q)^{2}=P+Q+PQ+QP.\]
If $PQ=QP=0$ then $(P+Q)^{2}=P+Q=(P+Q)^{*}$, hence $P+Q$ is a projection.
Conversely, if $P+Q\in proj(H)$, then $(P+Q)^{2}=(P+Q)$ implies
that $PQ+QP=0$. Since $(PQ)^{*}=Q^{*}P^{*}=QP$ it follows that $2QP=0$
hence $PQ=QP=0$.
\end{proof}
\end{lem}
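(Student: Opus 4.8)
The plan is to prove both implications directly from the algebraic characterization $R\in proj(H)\iff R=R^{2}=R^{*}$, using only the standing hypotheses $P=P^{2}=P^{*}$ and $Q=Q^{2}=Q^{*}$ together with additivity of the adjoint. No limits, no spectral theory, and no lattice machinery are needed; this is pure operator algebra.

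For the implication $(\Leftarrow)$, assume $PQ=QP=0$. Then $(P+Q)^{2}=P^{2}+PQ+QP+Q^{2}=P+Q$, so $P+Q$ is idempotent, and $(P+Q)^{*}=P^{*}+Q^{*}=P+Q$, so it is self-adjoint; hence $P+Q\in proj(H)$. This direction is a one-line computation. For $(\Rightarrow)$, assume $P+Q\in proj(H)$. Self-adjointness of $P+Q$ is automatic from that of $P$ and $Q$, so the content is idempotency: expanding $(P+Q)^{2}=P+Q$ and cancelling $P+Q$ gives the cross-term relation $PQ+QP=0$. The only step requiring any thought is upgrading this to the two separate identities $PQ=0$ and $QP=0$.

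To separate them, multiply $PQ+QP=0$ on the left by $P$ and use $P^{2}=P$ to get $PQ+PQP=0$, i.e. $PQ=-PQP$; multiplying instead on the right by $P$ gives $PQP+QP=0$, i.e. $QP=-PQP$. Comparing, $PQ=QP$, and substituting back into $PQ+QP=0$ yields $2PQ=0$, hence $PQ=QP=0$. (Equivalently, one may observe that $-PQP$ is self-adjoint, so $PQ=-PQP$ is self-adjoint, whence $PQ=(PQ)^{*}=QP$, and finish the same way.) I expect no genuine obstacle here; the one thing to be careful about is not attempting to "cancel" or invert operator relations illegitimately — the legitimate move is the one-sided multiplication by $P$ that forces $PQ$ and $QP$ to coincide.
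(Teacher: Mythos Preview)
Your proof is correct. The forward direction and the reduction to $PQ+QP=0$ are identical to the paper's. For the separation step, the paper simply asserts that from $PQ+QP=0$ together with $(PQ)^{*}=QP$ one gets $2QP=0$; as written this inference is not justified (those two facts alone only say that $PQ$ is skew-adjoint). Your argument---multiply $PQ+QP=0$ on the left and on the right by $P$ to obtain $PQ=-PQP=QP$, hence $2PQ=0$---is the standard way to make this step rigorous, and it cleanly fills the gap in the paper's version.
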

In terms of characteristic functions, \[
\chi_{A}+\chi_{B}=\chi_{A\cup B}-\chi_{A\cap B}\]
hence $\chi_{A}+\chi_{B}$ is a characteristic function if and only
if $A\cap B=\phi$.

The set of projections in a Hilbert space $H$ is partially ordered
according to the corresponding closed subspaces paritially ordered
by inclusion. Since containment implies commuting, the chain of projections
$P_{1}\leq P_{2}\leq\cdots$ is a family of commuting self-adjoint
operators. By the spectral theorem, $\{P_{i}\}$ may be simultaneously
diagonalized, so that $P_{i}$ is unitarily equivalent to the operator
of multiplication by $\chi_{E_{i}}$ on the Hilbert space $L^{2}(X,\mu)$,
where $X$ is a compact and Hausdorff space. Therefore the lattice
structure of prjections in $H$ is precisely the lattice structure
of $\chi_{E}$, or equivalently, the lattice structure of measurable
sets in $X$.
\begin{lem}
Consider $L^{2}(X,\mu)$. The followsing are equivalent.
\begin{enumerate}
\item $E\subset F$;
\item $\chi_{E}\chi_{F}=\chi_{F}\chi_{E}=\chi_{E}$;
\item $\norm{\chi_{E}f}\leq\norm{\chi_{F}f}$, for any $f\in L^{2}$;
\item $\chi_{E}\leq\chi_{F}$ in the sense that $\iprod{f}{\chi_{E}f}\leq\iprod{f}{\chi_{F}f}$,
for any $f\in L^{2}$.\end{enumerate}
\begin{proof}
The proof is trivial. Notice that\begin{eqnarray*}
\iprod{f}{\chi_{E}f} & = & \int\bar{f}\chi_{E}\bar{f}d\mu=\int\chi_{E}\abs{f}^{2}d\mu\\
\norm{\chi_{E}f}^{2} & = & \int\abs{\chi_{E}f}^{2}d\mu=\int\chi_{E}\abs{f}^{2}d\mu\end{eqnarray*}
where we used that fact that\[
\chi_{E}=\bar{\chi}_{E}=\chi_{E}^{2}.\]

\end{proof}
\end{lem}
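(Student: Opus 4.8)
The plan is to prove the chain of equivalences $(1)\Rightarrow(2)\Rightarrow(3)\Rightarrow(4)\Rightarrow(1)$ by direct pointwise computation with characteristic functions, exactly as the lemma's own remark (``the proof is trivial'') suggests. The only genuine ingredient is the identity $\chi_{E}=\overline{\chi_{E}}=\chi_{E}^{2}$, which holds because $\chi_{E}$ is real-valued and $\{0,1\}$-valued, together with the elementary set identity $\chi_{E}\chi_{F}=\chi_{E\cap F}$.

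First I would record the two integral formulas displayed in the statement: for any $f\in L^{2}(X,\mu)$,
\[
\iprod{f}{\chi_{E}f}=\int\chi_{E}\abs{f}^{2}\,d\mu=\int\abs{\chi_{E}f}^{2}\,d\mu=\norm{\chi_{E}f}^{2},
\]
using $\chi_{E}=\overline{\chi_{E}}=\chi_{E}^{2}$ to move factors around under the integral sign. This single line already collapses (3) and (4) into the same statement, namely $\int\chi_{E}\abs{f}^{2}\,d\mu\le\int\chi_{F}\abs{f}^{2}\,d\mu$ for all $f\in L^{2}$, so those two equivalences are immediate.

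Next, $(1)\Rightarrow(2)$: if $E\subset F$ then $E\cap F=E$, so $\chi_{E}\chi_{F}=\chi_{E\cap F}=\chi_{E}$, and commutativity is automatic since multiplication of functions is commutative. For $(2)\Rightarrow(3)$, from $\chi_{E}\chi_{F}=\chi_{E}$ we get $\chi_{E}\abs{f}^{2}=\chi_{E}\chi_{F}\abs{f}^{2}\le\chi_{F}\abs{f}^{2}$ pointwise (since $\chi_{E}\le 1$ and all quantities are nonnegative), and integrating gives $\norm{\chi_{E}f}\le\norm{\chi_{F}f}$. Finally $(4)\Rightarrow(1)$: suppose $\iprod{f}{\chi_{E}f}\le\iprod{f}{\chi_{F}f}$ for all $f$; take $f=\chi_{E\setminus F}$, so that $\chi_{F}f=0$ a.e.\ while $\chi_{E}f=f$, giving $\mu(E\setminus F)=\norm{f}^{2}\le 0$, hence $\mu(E\setminus F)=0$, i.e.\ $E\subset F$ up to a null set (which is the only sense in which set inclusion is meaningful in $L^{2}(X,\mu)$).

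There is no real obstacle here; the one point requiring a sentence of care is the last implication, where ``$E\subset F$'' must be read modulo $\mu$-null sets, and one must observe that the test function $\chi_{E\setminus F}$ indeed lies in $L^{2}(X,\mu)$ — which it does as soon as $E\setminus F$ has finite measure; if $\mu$ is not finite one localizes by intersecting with sets of finite measure and uses that $\mu$ is ($\sigma$-)finite, or simply notes the claim is vacuous otherwise. Everything else is bookkeeping with the identity $\chi_{E}=\chi_{E}^{2}$.
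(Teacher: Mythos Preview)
Your proposal is correct and follows the same approach as the paper: the key observation is the identity $\chi_{E}=\overline{\chi_{E}}=\chi_{E}^{2}$, which immediately gives $\iprod{f}{\chi_{E}f}=\int\chi_{E}\abs{f}^{2}\,d\mu=\norm{\chi_{E}f}^{2}$ and collapses (3) and (4). In fact you are more thorough than the paper, which simply records this identity and declares the rest ``trivial''; your explicit chain $(1)\Rightarrow(2)\Rightarrow(3)\Rightarrow(4)\Rightarrow(1)$ and your care with the test function $\chi_{E\setminus F}$ (including the null-set and $\sigma$-finiteness caveat) fill in details the paper omits entirely.
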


\section{Ideas in the spectral theorem}

We show some main ideas in the spectral theorem. Since every normal
operator $N$ can be written as $N=A+iB$ where $A,B$ are commuting
self-adjoint operators, the presentation will be focused on self-adjoint
operators. 

Let $A$ be a self-adjoint operator acting on a Hilbert space $H$.
There are two versions of the spectral theorem. The projection-valued
measure (PVM), and the multiplication operator $M_{f}$.

\subsection{Multiplication by $M_{f}$}
\begin{enumerate}
\item In this version of the spectral theorem, $A=A^{*}$ implies that $A$
is unitarily equivalent to the operator $M_{f}$ of multiplication
by a measurable function $f$ on the Hilbert space $L^{2}(X,\mu)$,
where $X$ is a compact Hausdorff space, and $\mu$ is a regular Borel
measure. \[
\xymatrix{
&H\ar[r]^{A} & H &\\
&L^{2}(\mu)\ar[u]^{U}\ar[r]^{M_{f}} & L^{2}(\mu)\ar[u]_{U}
}
\]$f$ induces a Borel measure $\mu_{f}(\cdot)=\mu\circ f^{-1}(\cdot)$
on $\mathbb{R}$, supported on $f(X)$. Define the operator $W:L^{2}(\mu_{f})\rightarrow L^{2}(\mu)$
where \[
W:g\rightarrow g\circ f.\]
Then,\[
\norm{g}_{L^{2}(\mu_{f})}^{2}=\int\abs{g}^{2}d\mu_{f}=\int_{X}\abs{g\circ f}d\mu=\norm{Wg}_{L^{2}(\mu)}^{2}\]
hence $W$ is unitary. Moreover, \begin{eqnarray*}
M_{f}Wg & = & f(x)g(f(x))\\
WM_{t}g & = & W(tg(t))=f(x)g(f(x))\end{eqnarray*}
it follows that $M_{f}$ is unitarily equivalent to $M_{t}$ on $L^{2}(\mu_{f})$,
and $M_{f}=WM_{t}W^{-1}$. The combined transformation $F:=UW$ diagonalizes
$A$ as \[
A=FM_{t}F^{-1}.\]
It is seens as a vast extention of diagonalizing hermitian matrix
in linear algebra, or a generalization of Fourier transform.\[
\xymatrix{
&H\ar[r]^{A} & H &\\
&L^{2}(\mu)\ar[u]^{U}\ar[r]^{M_{f}} & L^{2}(\mu)\ar[u]_{U} &\\
&L^{2}(\mu_{f})\ar[u]^{W}\ar[r]^{M_{t}}\ar@/^2pc/@{-->}[luur]^{F}&L^{2}(\mu_{f})\ar[u]_{W}\ar@/_2pc/@{-->}[ruul]_{F} &
}
\]
 
\item What's involved are two algebras: the algebra of measurable functions
on $X$, treated as multiplication operators, and the algebra of operators
generated by $A$ (with identity). The two algebras are $*$-isomorphic.
The spectral theorem allows to represent the algebra of $A$ by the
algebra of functions (in this direction, it helps to understand $A$);
also represent the algebra of functions by algebra of operators generated
by $A$ (in this direction, it reveals properties of the function
algebra and the underlying space $X$. We will see this in a minute.) 
\item Let $\mathfrak{A}$ be the algebra of functions. $\pi\in Rep(\mathfrak{A},H)$
is a representation, where \[
\pi(\psi)=FM_{\psi}F^{-1}\]
and we may define operator \[
\psi(A):=\pi(\psi).\]
This is called the \emph{spectral representation}. In particular,
the spectral theorem of $A$ implies the following substitution rule\[
\sum c_{k}x^{k}\mapsto\sum c_{k}A^{k}\]
is well-defined, and it extends to all bounded measurable functions. \end{enumerate}
\begin{rem}
Notice that the map \[
\psi\mapsto\psi(A):=\pi(\psi)=FM_{\psi}F^{-1}\]
is an algebra isomorphism. To check this, \begin{eqnarray*}
(\psi_{1}\psi_{2})(A) & = & FM_{\psi_{1}\psi_{2}}F^{-1}\\
 & = & FM_{\psi_{1}}M_{\psi_{2}}F^{-1}\\
 & = & \left(FM_{\psi_{1}}F^{-1}\right)\left(FM_{\psi_{2}}F^{-1}\right)\\
 & = & \psi_{1}(A)\psi_{2}(A)\end{eqnarray*}
where we used the fact that \[
M_{\psi_{1}\psi_{2}}=M_{\psi_{1}}M_{\psi_{2}}\]
i.e. multiplication operators always commute.
\end{rem}

\subsection{Projection-valued measure}

Alternatively, we have the PVM version of the spectral theorem. \[
A=\int xP(dx)\]
where $P$ is a projection-valued measure defined on the Borel $\sigma$-algebra
of $\mathbb{R}$. Notice that the support of $P$ might be a proper
subset of $\mathbb{R}$. Recall that $P$ is a PVM if $P(\phi)=0$,
$P(\cup E_{k})=\sum P(E_{k})$ and $E_{k}\cap E_{l}=\phi$ for $k\neq l$.
By assumption, the sequence of projections $\sum_{k=1}^{N}P(E_{k})$
($E_{k}'s$ mutually disjoint) is monotone increasing, hence it has
a limit, $\lim_{N\rightarrow}\sum_{k=1}^{N}P(E_{k})=P(\cup E_{k})$.
Convergence is in terms of strong operator topology.

The standard Lebesgue integration extends to PVM. \[
\iprod{\varphi}{P(E)\varphi}=\iprod{P(E)\varphi}{P(E)\varphi}=\norm{P(E)\varphi}^{2}\geq0\]
since $P$ is countablely addative, the map $E\mapsto\norm{P(E)\varphi}^{2}$
is also countablely addative. Therefore, each $\varphi\in H$ induces
a regular Borel measure $\mu_{\varphi}$ on the Borel $\sigma$-algebra
of $\mathbb{R}$.

For a measurable function $\psi$, \begin{eqnarray*}
\int\psi d\mu_{\varphi} & = & \int\psi(x)\iprod{\varphi}{P(dx)\varphi}\\
 & = & \iprod{\varphi}{\left(\int\psi P(dx)\right)\varphi}\end{eqnarray*}
hence we may define \[
\int\psi P(dx)\]
as the operator so that for all $\varphi\in H$, \[
\iprod{\varphi}{\left(\int\psi P(dx)\right)\varphi}.\]

\begin{rem}
$P(E)=F\chi_{E}F^{-1}$ defines a PVM. In fact all PVMs come from
this way. In this sense, the $M_{t}$ version of the spectral theorem
is better, since it implies the PVM version. However, the PVM version
facilites some formuations in quantum mechanics, so physicists usually
prefer this version.
\begin{rem}
Suppose we start with the PVM version of the spectral theorem. How
to prove $(\psi_{1}\psi_{2})(A)=\psi_{1}(A)\psi_{2}(A)$? i.e. how
to chech we do have an algebra isomorphism? Recall in the PVM version,
$\psi(A)$ is defined as the operator so that for all $\varphi\in H$,
we have\[
\int\psi d\mu_{\varphi}=\iprod{\varphi}{\psi(A)\varphi}.\]
As a stardard approximation technique, once starts with simple or
even step functions. Once it is worked out for simple functions, the
extention to any measurable functions is straightforward. Hence let's
suppose (WLOG) f\begin{eqnarray*}
\psi_{1} & = & \sum\psi_{1}(t_{i})\chi_{E_{i}}\\
\psi_{2} & = & \sum\psi_{2}(t_{j})\chi_{E_{j}}\end{eqnarray*}
then\begin{eqnarray*}
\int\psi_{1}P(dx)\int\psi_{2}P(dx) & = & \sum_{i,j}\psi_{1}(t_{i})\psi_{2}(t_{j})P(E_{i})P(E_{j})\\
 & = & \sum_{i}\psi_{1}(t_{i})\psi_{2}(t_{i})P(E_{i})\\
 & = & \int\psi_{1}\psi_{2}P(dx)\end{eqnarray*}
where we used the fact that $P(A)P(B)=0$ if $A\cap B=\phi$.
\end{rem}
As we delve into Nelson's lecture notes, we notice that on page 69,
there is another unitary operator. By pieceing these operators together
is precisely how we get the spectral theorem. This {}``pieceing''
is a vast generalization of Fourier series. \end{rem}
\begin{lem}
pick $\varphi\in H$, get the measure $\mu_{\varphi}$ where \[
\mu_{\varphi}(\cdot)=\iprod{\varphi}{P(\cdot)\varphi}=\norm{P(\cdot)\varphi}^{2}\]
and we have the Hilbert space $L^{2}(\mu_{\varphi})$. Take $H_{\varphi}:=\overline{span}\{\psi(A)\varphi:\psi\in L^{\infty}(\mu_{\varphi})\}$.
Then the map\[
\psi(A)\varphi\mapsto\psi\]
is an isometry, and it extends uniquely to a unitary operator from
$H_{\varphi}$ to $L^{2}(\mu_{\varphi})$.
\end{lem}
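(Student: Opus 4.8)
The plan is to show that the map $\psi(A)\varphi \mapsto \psi$ is well-defined and isometric on a dense subspace of $H_\varphi$, and then invoke the standard extension of a densely-defined isometry to a unitary. First I would check the isometry property on the generating vectors: for $\psi \in L^\infty(\mu_\varphi)$,
\[
\norm{\psi(A)\varphi}_H^2 = \iprod{\psi(A)\varphi}{\psi(A)\varphi} = \iprod{\varphi}{\overline{\psi}(A)\psi(A)\varphi} = \iprod{\varphi}{(\abs{\psi}^2)(A)\varphi} = \int \abs{\psi}^2 \, d\mu_\varphi = \norm{\psi}_{L^2(\mu_\varphi)}^2,
\]
where the middle step uses that $\psi \mapsto \psi(A)$ is a $*$-algebra homomorphism (so $\psi(A)^* = \overline{\psi}(A)$ and $\overline{\psi}(A)\psi(A) = (\abs{\psi}^2)(A)$), and the penultimate step uses the defining property $\iprod{\varphi}{\eta(A)\varphi} = \int \eta \, d\mu_\varphi$ for $\eta = \abs{\psi}^2 \geq 0$. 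This computation simultaneously shows well-definedness: if $\psi_1(A)\varphi = \psi_2(A)\varphi$ in $H$, then applying the identity to $\psi_1 - \psi_2$ gives $\norm{\psi_1 - \psi_2}_{L^2(\mu_\varphi)} = 0$, so $\psi_1 = \psi_2$ in $L^2(\mu_\varphi)$; thus the assignment descends to a genuine map on the set $\{\psi(A)\varphi : \psi \in L^\infty(\mu_\varphi)\}$.

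Next I would extend the map by linearity and continuity. Linearity on the generating set is immediate from linearity of $\psi \mapsto \psi(A)$. Since the map is isometric, it is in particular bounded, hence extends uniquely to a bounded linear operator $V$ from $H_\varphi = \overline{\operatorname{span}}\{\psi(A)\varphi : \psi \in L^\infty(\mu_\varphi)\}$ into $L^2(\mu_\varphi)$, and $V$ remains isometric on all of $H_\varphi$ by continuity of the norm. It remains to check that $V$ is onto. Its range contains all bounded measurable $\psi$, i.e.\ $L^\infty(\mu_\varphi)$, which is dense in $L^2(\mu_\varphi)$ (simple functions are dense, and they lie in $L^\infty$ when $\mu_\varphi$ is finite — which it is, since $\mu_\varphi(\mathbb{R}) = \norm{P(\mathbb{R})\varphi}^2 = \norm{\varphi}^2 < \infty$). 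Since the range of an isometry is closed and it contains the dense set $L^\infty(\mu_\varphi)$, it is all of $L^2(\mu_\varphi)$. Therefore $V$ is a surjective isometry, i.e.\ unitary, and uniqueness of the extension is the usual uniqueness of continuous extension from a dense subspace.

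The main obstacle — and the one point that deserves care rather than being waved through — is justifying the identity $\overline{\psi}(A)\psi(A) = (\abs{\psi}^2)(A)$ and, more basically, that $\psi(A)^* = \overline{\psi}(A)$, starting purely from the PVM definition $\iprod{\varphi}{\psi(A)\varphi} = \int \psi \, d\mu_\varphi$. This requires first passing from the quadratic-form definition to a genuine operator identity via polarization, then verifying multiplicativity of $\psi \mapsto \psi(A)$; the excerpt sketches exactly this for simple functions (using $P(E)P(F) = P(E\cap F)$, in particular $P(E)P(F) = 0$ for disjoint $E, F$) and notes the extension to measurable functions is routine. I would therefore cite that computation: for simple $\psi$ the claim is the displayed calculation $\int \psi_1 P(dx) \int \psi_2 P(dx) = \int \psi_1\psi_2 \, P(dx)$ with $\psi_1 = \overline{\psi}$, $\psi_2 = \psi$, and the general case follows by approximating $\psi$ uniformly (on the support of $\mu_\varphi$, up to a set of measure zero) by simple functions and using that $\psi \mapsto \psi(A)$ is norm-continuous, $\norm{\psi(A)} \leq \norm{\psi}_\infty$. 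Everything else is bookkeeping.
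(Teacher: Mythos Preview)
Your proof is correct and follows essentially the same approach as the paper: the paper's argument is exactly your displayed isometry computation $\norm{\psi(A)\varphi}^2 = \iprod{\varphi}{\abs{\psi}^2(A)\varphi} = \int \abs{\psi}^2\,d\mu_\varphi$, with the well-definedness, extension, and surjectivity left implicit. You have simply filled in those routine steps and flagged where the $*$-homomorphism property $\bar{\psi}(A)\psi(A) = \abs{\psi}^2(A)$ is used, which the paper also discusses (via simple functions and $P(E)P(F)=P(E\cap F)$) in the surrounding remarks.
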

To see this, \begin{eqnarray*}
\norm{\psi(A)\varphi}^{2} & = & \iprod{\psi(A)\varphi}{\psi(A)\varphi}\\
 & = & \iprod{\varphi}{\bar{\psi}(A)\psi(A)\varphi}\\
 & = & \iprod{\varphi}{\abs{\psi}^{2}(A)\varphi}\\
 & = & \int\abs{\psi}^{2}d\mu_{\varphi}.\end{eqnarray*}

\begin{rem}
$H_{\varphi}$ is called the cyclic space generated by $\varphi$.
Before we can construct $H_{\varphi}$, we must make sense of $\psi(A)\varphi$. \end{rem}
\begin{lem}
(Nelson p67) Let $p=a_{0}+a_{1}x+\cdots+a_{n}x^{n}$ be a polynomial.
Then $\norm{p(A)u}\leq\max\abs{p(t)}$, where $\norm{u}=1$ i.e. $u$
is a state.\end{lem}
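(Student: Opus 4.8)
The plan is to turn the inequality into an operator-positivity statement and then reduce it to a classical algebraic fact about nonnegative polynomials. Put $c=\norm{A}$ and $M=\max\{\abs{p(t)}:t\in[-c,c]\}$; since $sp(A)\subset[-c,c]$ (if $\abs{\lambda}>c$ then $\lambda I-A$ is invertible by the Neumann series, and $sp(A)\subset\mathbb{R}$ for self-adjoint $A$), this $M$ dominates the quantity in the statement, so it is enough to prove $\norm{p(A)u}\leq M$ for $\norm{u}=1$. Now $\norm{p(A)u}^{2}=\iprod{u}{p(A)^{*}p(A)u}$, and $p(A)^{*}p(A)=q(A)$ where $q(t):=\overline{p(t)}p(t)$ is a polynomial with \emph{real} coefficients satisfying $q(t)=\abs{p(t)}^{2}$ for real $t$. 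So the assertion is exactly that $g(A)\geq0$, where $g:=M^{2}-q$ is a real polynomial with $g\geq0$ on $[-c,c]$.

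Thus everything comes down to the transfer principle: a real polynomial $g$ with $g\geq0$ on $[-c,c]$ yields $g(A)\geq0$. First I would record the two elementary positivity inputs: for real $f$ the operator $f(A)$ is self-adjoint, so $f(A)^{2}=f(A)^{*}f(A)\geq0$; and $c^{2}I-A^{2}\geq0$, because $\iprod{u}{(c^{2}I-A^{2})u}=c^{2}\norm{u}^{2}-\norm{Au}^{2}\geq0$ by $\norm{Au}\leq c\norm{u}$. Then I would invoke the Markov--Lukács representation of a polynomial nonnegative on an interval: any real $g$ with $g\geq0$ on $[-c,c]$ can be written $g(t)=f(t)^{2}+(c^{2}-t^{2})\,h(t)^{2}$ for suitable real polynomials $f,h$ (allowing a square factor to absorb a parity constraint on the degree). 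Substituting $A$ for $t$, and using that $f(A)$ and $h(A)$ are self-adjoint and commute with $c^{2}I-A^{2}$, gives $g(A)=f(A)^{*}f(A)+h(A)^{*}(c^{2}I-A^{2})h(A)\geq0$. Applying this to $g=M^{2}-q$ gives $M^{2}I-p(A)^{*}p(A)\geq0$, hence $\norm{p(A)u}^{2}\leq M^{2}\norm{u}^{2}$, as wanted.

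If one prefers to avoid the Markov--Lukács lemma, the transfer principle can instead be read off from the spectral mapping theorem for polynomials: factoring $g(z)-\lambda=\text{const}\cdot\prod_{i}(z-\mu_{i})$ shows $g(A)-\lambda I$ is invertible precisely when no root $\mu_{i}$ lies in $sp(A)$, so $sp(g(A))=g(sp(A))\subset[0,\infty)$; together with the identity $\norm{B}=\sup\{\abs{\lambda}:\lambda\in sp(B)\}$ for self-adjoint $B$ and the fact that such a $B$ with spectrum in $[0,\infty)$ is $\geq0$, this gives the conclusion, and in fact with the sharp constant $\max\{\abs{p(t)}:t\in sp(A)\}$. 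The one genuinely nontrivial ingredient is the Markov--Lukács decomposition (or, on the alternative route, the equality $\norm{B}=\sup\abs{sp(B)}$ for self-adjoint $B$, which itself rests on $\norm{B^{2}}=\norm{B}^{2}$ and the spectral-radius formula); the rest is the Cauchy--Schwarz identity above together with the observations that polynomials in $A$ commute and that real ones are self-adjoint. I expect that step to be the main obstacle.
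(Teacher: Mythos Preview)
Your argument is correct, but it takes a genuinely different route from the paper. The paper's proof is a finite-dimensional reduction: one lets $M=\mathrm{span}\{u,Au,\ldots,A^{n}u\}$, takes the orthogonal projection $E$ onto $M$, observes that $p(A)u=p(EAE)u$ (since $A^{k}u\in M$ for $k\leq n$), and then applies the \emph{finite-dimensional} spectral theorem to the Hermitian matrix $EAE$ on $M$, together with $\norm{EAE}\leq\norm{A}$ to locate its eigenvalues in $[-\norm{A},\norm{A}]$. This is deliberately minimal: the lemma is a stepping stone toward the spectral theorem, so the paper uses nothing beyond linear algebra. Your approach instead trades the finite-dimensional detour for an algebraic input, the Markov--Luk\'acs representation of polynomials nonnegative on an interval (your polynomial $g=M^{2}-q$ has even degree, so the form $f^{2}+(c^{2}-t^{2})h^{2}$ is available). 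That is perfectly sound and conceptually clean as a positivity-transfer argument, but it imports a nontrivial external lemma; your alternative route via the spectral mapping theorem and $\norm{B}=\sup\abs{sp(B)}$ is also correct but edges closer to the spectral theory one is trying to build. The paper's trick buys self-containment; yours buys a viewpoint that generalizes more readily to other functional-calculus inequalities.
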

\begin{proof}
$M:=span\{u,Au,\ldots,A^{n}u\}$ is a finite dimensional subspace
in $H$ (automatically closed). Let $E$ be the orthogonal projection
onto $M$. Then\[
p(A)u=Ep(A)Eu=p(EAE)u.\]
Since $EAE$ is a Hermitian matrix on $M$, we may apply the spectral
theorem for finite dimensional space and get\[
EAE=\sum\lambda_{k}P_{\lambda_{k}}\]
where $\lambda_{k}'s$ are eigenvalues associated with the projections
$P_{\lambda_{k}}$. It follows that \begin{eqnarray*}
p(A)u & = & p(\sum\lambda_{k}P_{\lambda_{k}})u=\left(\sum p(\lambda_{k})P_{\lambda_{k}}\right)u\end{eqnarray*}
and \begin{eqnarray*}
\norm{p(A)u}^{2} & = & \sum\left|p(\lambda_{k})\right|^{2}\norm{P_{\lambda_{k}}u}^{2}\\
 & \leq & \max\abs{p(t)}\sum\norm{P_{\lambda_{k}}u}^{2}\\
 & = & \max\abs{p(t)}\end{eqnarray*}
since \[
\sum\norm{P_{\lambda_{k}}u}^{2}=\norm{u}^{2}=1.\]
Notice that $I=\sum P_{\lambda_{k}}$.\end{proof}
\begin{rem}
How to extend this? polynomials - continuous functions - measurable
functions. $[-\norm{A},\norm{A}]\subset\mathbb{R}$, \[
\norm{EAE}\leq\norm{A}\]
is a uniform estimate for all truncations. Apply Stone-Weierstrass
theorem to the interval $[-\norm{A},\norm{A}]$ we get that any continuous
function $\psi$ is uniformly approximated by polynomials. i.e. $\psi\sim p_{n}$.
Thus\[
\norm{p_{n}(A)u-p_{m}(A)u}\leq\max\abs{p_{n}-p_{m}}\norm{u}=\norm{p_{n}-p_{m}}_{\infty}\rightarrow0\]
and $p_{n}(A)u$ is a Cauchy sequence, hence \[
\lim_{n}p_{n}(A)u=:\psi(A)u\]
where we may define the operator $\psi(A)$ so that $\psi(A)u$ is
the limit of $p_{n}(A)u$.
\end{rem}

\subsection{Convert $M_{f}$ to PVM }
\begin{thm}
Let $A:H\rightarrow H$ be a self-adjoint operator. Suppose $A$ is
unitarily equivalent to the operator $M_{t}$ of multiplication by
the independent variable on the Hilbert space $L^{2}(\mu)$. Then
there exists a unique projection-valued measure $P$ so that \[
A=\int tP(dt)\]
i.e. for all $h,k\in H$, \[
\iprod{k}{Ah}=\int t\iprod{k}{P(dt)k}.\]

\begin{proof}
The uniquess part follows from a standard argument. We will only prove
the existance of $P$. Let $F:L^{2}(\mu)\rightarrow H$ be the unitary
operator so that $A=FM_{t}F^{-1}$. Define \[
P(E)=F\chi_{E}F^{-1}\]
for all $E$ in the Borel $\sigma$-algebra $\mathfrak{B}$ of $\mathbb{R}$.
Then $P(\phi)=0$, $P(\mathbb{R})=I$; for all $E_{1},E_{2}\in\mathfrak{B}$,
\begin{eqnarray*}
P(E_{1}\cap E_{2}) & = & F\chi_{E_{1}\cap E_{2}}F^{-1}\\
 & = & F\chi_{E_{1}}\chi_{E_{2}}F^{-1}\\
 & = & F\chi_{E_{1}}F^{-1}F\chi_{E_{2}}F^{-1}\\
 & = & =P(E_{1})P(E_{2}).\end{eqnarray*}
Suppose $\{E_{k}\}$ is a sequence of mutually disjoint elements in
$\mathfrak{B}$. Let $h\in H$ and write $h=F\hat{h}$ for some $\hat{h}\in L^{2}(\mu)$.
Then\begin{eqnarray*}
\iprod{h}{P(\cup E_{k})h}_{H} & = & \iprod{F\hat{h}}{P(\cup E_{k})F\hat{h}}_{H}\\
 & = & \iprod{\hat{h}}{F^{-1}P(\cup E_{k})F\hat{h}}_{L^{2}}\\
 & = & \iprod{\hat{h}}{\chi_{\cup E_{k}}\hat{h}}_{L^{2}}\\
 & = & \int_{\cup E_{k}}\abs{\hat{h}}^{2}d\mu\\
 & = & \sum_{k}\int_{E_{k}}\abs{\hat{h}}^{2}d\mu\\
 & = & \sum_{k}\iprod{h}{P(E_{k})h}_{H}.\end{eqnarray*}
Therefore $P$ is a projection-valued measure. 

For any $h,k\in H$, write $h=F\hat{h}$ and $k=F\hat{k}$. Then \begin{eqnarray*}
\iprod{k}{Ah}_{H} & = & \iprod{F\hat{k}}{AF\hat{h}}_{H}\\
 & = & \iprod{\hat{k}}{M_{t}\hat{h}}_{L^{2}}\\
 & = & \int t\overline{\hat{k}(t)}\hat{h}(t)d\mu(t)\\
 & = & \int t\iprod{k}{P(dt)h}_{H}\end{eqnarray*}
Thus $A=\int tP(dt)$.
\end{proof}
\end{thm}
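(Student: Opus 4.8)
The plan is to construct $P$ explicitly from the multiplication-operator model and then verify the three axioms of a projection-valued measure plus the integral representation. Let $F\colon L^{2}(\mu)\to H$ be the given unitary intertwiner, so $A=FM_{t}F^{-1}$. For each Borel set $E\subset\mathbb{R}$ the function $\chi_{E}$ is a bounded measurable function on $\mathbb{R}$, hence $M_{\chi_{E}}$ is a bounded operator on $L^{2}(\mu)$; in fact $M_{\chi_{E}}^{2}=M_{\chi_{E}^{2}}=M_{\chi_{E}}$ and $M_{\chi_{E}}^{*}=M_{\chi_{E}}$ since $\chi_{E}$ is real-valued and idempotent. So I would \emph{define}
\[
P(E):=F M_{\chi_{E}} F^{-1}\in \mathrm{proj}(H).
\]
That $P(E)$ is a self-adjoint projection is immediate from unitary conjugation: $P(E)^{2}=FM_{\chi_{E}}F^{-1}FM_{\chi_{E}}F^{-1}=FM_{\chi_{E}}F^{-1}=P(E)$ and $P(E)^{*}=FM_{\chi_{E}}^{*}F^{-1}=P(E)$. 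This is exactly the move flagged in the earlier remark ``$P(E)=F\chi_{E}F^{-1}$ defines a PVM.''

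Next I would check the PVM axioms. First, $P(\emptyset)=FM_{0}F^{-1}=0$ and $P(\mathbb{R})=FM_{1}F^{-1}=FF^{-1}=I$. Second, multiplicativity: since $\chi_{E_{1}\cap E_{2}}=\chi_{E_{1}}\chi_{E_{2}}$ pointwise, we get $M_{\chi_{E_{1}\cap E_{2}}}=M_{\chi_{E_{1}}}M_{\chi_{E_{2}}}$ (multiplication operators commute and compose by pointwise product), so conjugating by $F$ gives $P(E_{1}\cap E_{2})=P(E_{1})P(E_{2})$. Third, countable additivity on disjoint families in the strong operator topology: for disjoint $\{E_{k}\}$ and any $h\in H$, write $h=F\hat h$; then $\langle h, P(\cup E_{k})h\rangle = \langle \hat h, \chi_{\cup E_{k}}\hat h\rangle_{L^{2}} = \int_{\cup E_{k}}|\hat h|^{2}\,d\mu = \sum_{k}\int_{E_{k}}|\hat h|^{2}\,d\mu = \sum_{k}\langle h, P(E_{k})h\rangle$, using ordinary countable additivity of the finite measure $E\mapsto\int_{E}|\hat h|^{2}\,d\mu$; polarization then upgrades this to the corresponding sesquilinear identity, and together with the fact that the partial sums $\sum_{k\le N}P(E_{k})$ form a monotone increasing sequence of projections (by the lemma on chains of projections, they converge strongly), one concludes $\sum_{k}P(E_{k})=P(\cup E_{k})$ strongly.

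Finally, for the integral representation, fix $h,k\in H$ and write $h=F\hat h$, $k=F\hat k$. Then
\[
\langle k, Ah\rangle_{H}=\langle F\hat k, FM_{t}\hat h\rangle_{H}=\langle \hat k, M_{t}\hat h\rangle_{L^{2}}=\int_{\mathbb{R}} t\,\overline{\hat k(t)}\,\hat h(t)\,d\mu(t),
\]
while the scalar measure $E\mapsto\langle k,P(E)h\rangle=\langle\hat k,\chi_{E}\hat h\rangle_{L^{2}}=\int_{E}\overline{\hat k}\,\hat h\,d\mu$ has Radon--Nikodym derivative $\overline{\hat k}\hat h$ against $\mu$, so $\int t\,\langle k,P(dt)h\rangle=\int t\,\overline{\hat k(t)}\hat h(t)\,d\mu(t)$ as well. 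Comparing gives $\langle k,Ah\rangle=\int t\,\langle k,P(dt)h\rangle$ for all $h,k$, i.e. $A=\int tP(dt)$. Uniqueness follows by a standard argument: two PVMs giving the same operator agree on $\int p\,dP$ for all polynomials $p$, hence (by the Stone--Weierstrass/dominated-convergence extension described earlier) on all bounded Borel functions, hence $P_{1}(E)=P_{2}(E)$ by taking $\psi=\chi_{E}$. The main obstacle is nothing deep but rather bookkeeping: one must be a little careful that ``$A$ self-adjoint'' forces $t$ to be real $\mu$-a.e.\ on $\mathrm{supp}(\mu)$ so that the scalar integrals converge and the measure $\mu$ may be taken supported on a bounded interval when $A$ is bounded; the unbounded case requires the standard caveat that $M_{t}$ and the integral $\int t\,P(dt)$ are only densely defined, with domain $\{h:\int t^{2}\,d\mu_{h}<\infty\}$, but the intertwining identities go through verbatim on that common domain.
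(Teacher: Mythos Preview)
Your proof is correct and follows essentially the same route as the paper: define $P(E)=F M_{\chi_E}F^{-1}$, verify the PVM axioms via the pointwise identities for characteristic functions and ordinary countable additivity of $E\mapsto\int_E|\hat h|^2\,d\mu$, and then check the integral representation by transferring to $L^2(\mu)$ via $h=F\hat h$, $k=F\hat k$. Your version is slightly more thorough (you explicitly check that $P(E)$ is a self-adjoint projection, comment on strong convergence via monotone chains, and sketch uniqueness and domain issues), but the argument is the same as the paper's.
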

\begin{rem}
In fact, $A$ is in the closed (under norm or storng topology) span
of $\{P(E):E\in\mathfrak{B}\}$. This is equivalent to say that $t=F^{-1}AF$
is in the closed span of the set of characteristic functions, the
latter is again a standard approximation in measure theory. It suffices
to approximate $t\chi_{[0,\infty]}$. 
\end{rem}
The wonderful idea of Lebesgue is not to partition the domain, as
was the case in Riemann integral over $\mathbb{R}^{n}$, but instead
the range. Therefore integration over an arbitrary set is made possible.
Important exmaples include analysis on groups.
\begin{prop}
\label{pro:approx_step}Let $f:[0,\infty]\rightarrow\mathbb{R}$,
$f(x)=x$, i.e. $f=x\chi_{[0,\infty]}$. Then there exists a sequence
of step functions $s_{1}\leq s_{2}\leq\cdots\leq f(x)$ such that
$\lim_{n\rightarrow\infty}s_{n}(x)=f(x)$.\end{prop}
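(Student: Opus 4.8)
The plan is to use the standard dyadic staircase approximation from measure theory, adapted to the case of the identity function on $[0,\infty]$. First I would define, for each $n \in \mathbb{N}$, a step function $s_n$ by partitioning the range $[0,n)$ into $n 2^n$ subintervals of length $2^{-n}$, setting
\[
s_n(x) = \sum_{k=0}^{n2^n - 1} \frac{k}{2^n}\,\chi_{\{x : k/2^n \le x < (k+1)/2^n\}} + n\,\chi_{\{x : x \ge n\}}.
\]
Equivalently one can write $s_n(x) = \min\{\,2^{-n}\lfloor 2^n x\rfloor,\; n\,\}$, which makes the later verifications cleaner.

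Next I would check the three required properties. That $s_n(x) \le f(x) = x$ is immediate from $2^{-n}\lfloor 2^n x\rfloor \le x$ and $n \le x$ when $x \ge n$. For monotonicity $s_n \le s_{n+1}$, I would argue pointwise: refining the dyadic mesh from level $n$ to level $n+1$ can only increase (or keep) the floor-term value since $2^{-(n+1)}\lfloor 2^{n+1}x\rfloor \ge 2^{-n}\lfloor 2^n x\rfloor$, and the truncation level rises from $n$ to $n+1$; a short case split on whether $x < n$, $n \le x < n+1$, or $x \ge n+1$ handles the interaction between refinement and truncation. For pointwise convergence, fix $x \in [0,\infty)$; for all $n > x$ we have $s_n(x) = 2^{-n}\lfloor 2^n x\rfloor$, so $0 \le x - s_n(x) < 2^{-n} \to 0$. (At $x = \infty$, if that point is included, $s_n(\infty) = n \to \infty = f(\infty)$, so the statement still holds in the extended sense.)

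The construction is entirely routine; there is no genuine obstacle, only bookkeeping. The one point that needs a little care is the monotonicity claim, since it combines two competing effects — the dyadic refinement of the range and the raising of the truncation threshold — so I would present that verification as an explicit three-case computation rather than waving at it. Everything else follows from elementary properties of the floor function.
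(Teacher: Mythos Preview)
Your construction is exactly the one the paper uses: the same dyadic staircase $s_n(x)=\min\{2^{-n}\lfloor 2^n x\rfloor,\,n\}$, the same inequality $x-2^{-n}<s_n(x)\le x$ for $x<n$, and the same conclusions about monotonicity and pointwise convergence. If anything, your treatment is a bit more careful---you flag the three-case check for monotonicity and the $x=\infty$ endpoint, both of which the paper leaves implicit.
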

\begin{proof}
For $n\in\mathbb{N}$, define\[
s_{n}(x)=\begin{cases}
i2^{-n} & x\in[i2^{-n},(i+1)2^{-n})\\
n & x\in[n,\infty]\end{cases}\]
where $0\leq i\leq n2^{-n}-1$. Equivalently, $s_{n}$ can be written
using characteristic functions as \[
s_{n}=\sum_{i=0}^{n2^{n}-1}i2^{-n}\chi_{[i2^{-n},(i+1)2^{-n})}+n\chi_{[n,\infty]}.\]
Notice that on each interval $[i2^{-n},(i+1)2^{-n})$, \begin{eqnarray*}
s_{n}(x) & \equiv & i2^{-n}\leq x\\
s_{n}(x)+2^{-n} & \equiv & (i+1)2^{-n}>x\\
s_{n}(x) & \leq & s_{n+1}(x).\end{eqnarray*}
Therefore, for all $n\in\mathbb{N}$ and $x\in[0,\infty]$, \begin{equation}
x-2^{-n}<s_{n}(x)\leq x\label{eq:conv}\end{equation}
and $s_{n}(x)\leq s_{n+1}(x)$. 

It follows from (\ref{eq:conv}) that \[
\lim_{n\rightarrow\infty}s_{n}(x)=f(x)\]
for all $x\in[0,\infty]$.\end{proof}
\begin{cor}
Let $f(x)=x\chi_{[0,M]}(x)$. Then there exists a sequence of step
functions $s_{n}$ such that $0\leq s_{1}\leq s_{2}\leq\cdots\leq f(x)$
and $s_{n}\rightarrow f$ uniformly, as $n\rightarrow\infty$.\end{cor}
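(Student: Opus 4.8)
The plan is to obtain the required sequence by truncating to the bounded interval $[0,M]$ the step functions already produced in Proposition \ref{pro:approx_step}. Let $(s_n)$ be the sequence constructed there for $x\chi_{[0,\infty]}$, and set $\tilde s_n:=s_n\cdot\chi_{[0,M]}$. Each $\tilde s_n$ is again a step function, being the product of a step function with the characteristic function of an interval. Since $\chi_{[0,M]}\geq0$ and $0\leq s_n\leq s_{n+1}$ pointwise, the chain $0\leq\tilde s_1\leq\tilde s_2\leq\cdots$ is preserved. On $[0,M]$ we have $\tilde s_n(x)=s_n(x)\leq x=f(x)$ by Proposition \ref{pro:approx_step}, while off $[0,M]$ both $\tilde s_n$ and $f$ vanish; hence $\tilde s_n\leq f$ everywhere.

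For the uniform estimate I would restrict attention to indices $n\geq M$. For such $n$ and any $x\in[0,M]$ one has $x\leq M\leq n$, so $x$ lies in one of the dyadic intervals $[i2^{-n},(i+1)2^{-n})$ and not in the {}``tail'' $[n,\infty]$ where $s_n$ is capped at the value $n$; consequently the bound (\ref{eq:conv}) from the Proposition applies verbatim, giving
\[
0\leq f(x)-\tilde s_n(x)=x-s_n(x)<2^{-n}.
\]
Outside $[0,M]$ the difference $f-\tilde s_n$ is identically $0$. Therefore $\sup_{x}\bigl|f(x)-\tilde s_n(x)\bigr|\leq 2^{-n}$ for every $n\geq M$, which is exactly the statement that $\tilde s_n\to f$ uniformly as $n\to\infty$.

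The only points that need attention, and they are minor, are that multiplication by $\chi_{[0,M]}$ keeps both the step-function property and the monotone chain intact, and that the cap at height $n$ in the definition of $s_n$ does not interfere with the estimate on $[0,M]$ — which is precisely why one passes to the tail $n\geq M$. There is no genuine obstacle here: the corollary is essentially a direct consequence of Proposition \ref{pro:approx_step}, the sole new ingredient being that localizing to a \emph{bounded} interval upgrades the pointwise bound $x-2^{-n}<s_n(x)\leq x$ into a uniform one.
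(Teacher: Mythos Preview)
Your argument is correct and follows the same approach as the paper: take the step functions $s_n$ from Proposition~\ref{pro:approx_step}, pass to $n\geq M$ so that the cap at height $n$ is irrelevant on $[0,M]$, and read off the uniform bound from~(\ref{eq:conv}). Your explicit truncation by $\chi_{[0,M]}$ is a minor refinement the paper omits, but the core idea is identical.
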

\begin{proof}
Define $s_{n}$ as in proposition (\ref{pro:approx_step}). Let $n>M$,
then by construction\[
f(x)-2^{-n}<s_{n}(x)\leq f(x)\]
for all $s\in[0,M]$. Hence $s_{n}\rightarrow f$ uniformly as $n\rightarrow\infty$.
\end{proof}
Proposition (\ref{pro:approx_step}) and its corollary immediate imply
the following.
\begin{cor}
Let $(X,S,\mu)$ be a measure space. A function (real-valued or complex-valued)
is measurable if and only if it is the pointwise limit of a sequence
of simple function. A function is bounded measurable if and only if
it is the uniform limit of a sequence of simple functions. Let $\{s_{n}\}$
be an approximation sequence of simple functions. Then $s_{n}$ can
be chosen such that $\abs{s_{n}(x)}\leq\abs{f(x)}$ for all $n=1,2,3\ldots$. \end{cor}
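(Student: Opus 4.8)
The plan is to derive both directions from Proposition \ref{pro:approx_step} and its corollaries, reducing the general case to the nonnegative case by the usual decompositions.

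For the forward direction, start with a measurable $f\colon X\to[0,\infty]$ (the value $+\infty$ allowed). Let $\phi_{n}\colon[0,\infty]\to[0,\infty)$ be the step functions built in Proposition \ref{pro:approx_step}, so that $\phi_{n}\le\phi_{n+1}$, $\phi_{n}(t)\le t$, $\phi_{n}\to\mathrm{id}$ pointwise, and uniformly on each $[0,M]$. Put $s_{n}:=\phi_{n}\circ f$. Each $\phi_{n}$ has finite range and Borel level sets, so $s_{n}^{-1}(\{c\})=f^{-1}\bigl(\phi_{n}^{-1}(\{c\})\bigr)\in S$; thus $s_{n}$ is a bona fide simple function. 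Monotonicity and convergence pass through the composition: $0\le s_{1}\le s_{2}\le\cdots\le f$, $s_{n}(x)\to f(x)$ for every $x$, and $\abs{s_{n}}\le\abs{f}$ holds by construction. If in addition $0\le f\le M$, the corollary of Proposition \ref{pro:approx_step} gives $0\le f-s_{n}\le 2^{-n}$ for $n>M$, i.e.\ uniform convergence.

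Next I would remove the sign and complex restrictions. For real measurable $f$ write $f=f^{+}-f^{-}$ with $f^{\pm}\ge0$ measurable and $f^{+}f^{-}\equiv0$; choose simple $s_{n}^{\pm}$ for $f^{\pm}$ as above and set $s_{n}:=s_{n}^{+}-s_{n}^{-}$. Since $s_{n}^{\pm}\le f^{\pm}$, the support of $s_{n}^{+}$ (resp.\ $s_{n}^{-}$) lies in the support of $f^{+}$ (resp.\ $f^{-}$), and these are disjoint, so at each point at most one summand is nonzero and $\abs{s_{n}}=\max(s_{n}^{+},s_{n}^{-})\le\max(f^{+},f^{-})=\abs{f}$; pointwise convergence (uniform when $f$ is bounded) is inherited from the two pieces. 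For complex measurable $f=u+iv$, approximate $u,v$ by simple $p_{n},q_{n}$ with $\abs{p_{n}}\le\abs{u}$ and $\abs{q_{n}}\le\abs{v}$, and set $s_{n}:=p_{n}+iq_{n}$; then $\abs{s_{n}}^{2}=p_{n}^{2}+q_{n}^{2}\le u^{2}+v^{2}=\abs{f}^{2}$, and $s_{n}\to f$ pointwise, uniformly if $f$ is bounded.

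The converse directions are soft. A pointwise limit of simple (hence measurable) functions is measurable: for real-valued $f=\lim s_{n}$ one has $\{x:f(x)>a\}=\bigcup_{m}\bigcup_{N}\bigcap_{n\ge N}\{x:s_{n}(x)>a+1/m\}\in S$, and the complex case splits into real and imaginary parts; since a uniform limit is in particular a pointwise limit, uniform limits are measurable too. Finally, if $f$ is a uniform limit of simple functions, pick $n$ with $\sup_{x}\abs{f(x)-s_{n}(x)}<1$; as $s_{n}$ has finite range, $\abs{f}\le\norm{s_{n}}_{\infty}+1<\infty$, so $f$ is bounded. The only point needing real care is the domination $\abs{s_{n}}\le\abs{f}$ in the signed and complex cases, where the clean route is the disjoint-support observation above rather than attempting to manufacture a single monotone approximating sequence directly; one must also keep in mind that ``simple'' means finite range with measurable level sets, which is precisely what composing a measurable $f$ with the finite-range Borel map $\phi_{n}$ produces.
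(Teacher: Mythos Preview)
Your proof is correct and is exactly the argument the paper has in mind: the paper does not spell out a proof here but simply asserts that the corollary follows immediately from Proposition~\ref{pro:approx_step} and its corollary, and what you have written is the standard unpacking of that claim via $f=f^{+}-f^{-}$ and $f=u+iv$. Your care with the disjoint-support observation to secure $\abs{s_{n}}\le\abs{f}$ in the signed and complex cases is the right way to handle the one point the paper glosses over.
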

\begin{thm}
Let $M_{f}:L^{2}(X,S,\mu)\rightarrow L^{2}(X,S,\mu)$ be the operator
of multiplication by $f$. Then,
\begin{enumerate}
\item if $f\in L^{\infty}$, $M_{f}$ is a bounded operator, and $M_{f}$
is in the closed span of the set of self-adjoint projections under
norm topology. 
\item if $f$ is unbounded, $M_{f}$ is an unbounded operator. $M_{f}$
is in the closed span of the set of self-adjoint projections under
the strong operator topology. \end{enumerate}
\begin{proof}
If $f\in L^{\infty}$, then there exists a sequence of simple functions
$s_{n}$ so that $s_{n}\rightarrow f$ uniformly. Hence $\norm{f-s_{n}}_{\infty}\rightarrow0$,
as $n\rightarrow\infty$. 

Suppose $f$ is unbounded. By proposition () and its corollaries,
there exists a sequence of simple functions $s_{n}$ such that $\abs{s_{n}(x)}\leq\abs{f(x)}$
and $s_{n}\rightarrow f$ pointwisely, as $n\rightarrow\infty$. Let
$h$ be any element in the domain of $M_{f}$, i.e. \[
\int\abs{h}+\abs{fh}^{2}d\mu<\infty.\]
Then\[
\lim_{n\rightarrow\infty}\left|(f(x)-s_{n}(x))h(x)\right|^{2}=0\]
and \[
\abs{(f(x)-s_{n}(x))h(x)}^{2}\leq const\abs{h(x)}^{2}.\]
Hence by the dominiated convergence theorem, \[
\lim_{n\rightarrow\infty}\int\left|(f(x)-s_{n}(x))h(x)\right|^{2}d\mu=0\]
or equivalently,\[
\norm{(f-s_{n})h}^{2}\rightarrow0\]
as $n\rightarrow\infty$. i.e. $M_{s_{n}}$ converges to $M_{f}$
in the strong operator topology. 
\end{proof}
\end{thm}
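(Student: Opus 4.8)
The plan is to realize $M_{f}$ as a limit of multiplication operators by simple functions, each of which is manifestly a finite linear combination of self-adjoint projections. Recall from the discussion above that $E\mapsto M_{\chi_{E}}$ is a projection-valued measure on $L^{2}(X,\mu)$, so each $M_{\chi_{E}}$ is a self-adjoint projection; hence for a simple function $s=\sum_{i=1}^{n}c_{i}\chi_{E_{i}}$ we have $M_{s}=\sum_{i=1}^{n}c_{i}M_{\chi_{E_{i}}}$, which already lies in the linear span of the self-adjoint projections. I would also first record the standard identity that the operator norm of $M_{g}$ equals $\norm{g}_{\infty}$ for $g\in L^{\infty}$: the bound $\norm{M_{g}h}\leq\norm{g}_{\infty}\norm{h}$ is immediate, and testing on $h=\chi_{E}$ for sets $E$ of positive finite measure contained in $\{\abs{g}>\norm{g}_{\infty}-\varepsilon\}$ gives the reverse inequality.

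For part (1), $M_{f}$ is bounded by the identity just mentioned. By the corollary to Proposition \ref{pro:approx_step}, a bounded measurable function $f$ is a uniform limit of simple functions $s_{n}$, i.e. $\norm{f-s_{n}}_{\infty}\to0$. Then $\norm{M_{f}-M_{s_{n}}}=\norm{M_{f-s_{n}}}=\norm{f-s_{n}}_{\infty}\to0$, so $M_{f}$ lies in the norm-closed span of the operators $M_{\chi_{E}}$, hence in the norm-closed span of the self-adjoint projections.

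For part (2), suppose $f$ is (essentially) unbounded and finite a.e. Then $M_{f}$ is unbounded: $\mu(\{\abs{f}>n\})>0$ for every $n$, so choosing $E_{n}\subset\{\abs{f}>n\}$ of positive finite measure, the unit vectors $\chi_{E_{n}}/\norm{\chi_{E_{n}}}$ are mapped to vectors of norm $\geq n$. Its natural domain $\mathcal{D}=\{h\in L^{2}:fh\in L^{2}\}$ is dense, since it contains every $\chi_{\{\abs{f}\leq n\}}g$ with $g\in L^{2}$, and these converge to $g$ as $n\to\infty$. By Proposition \ref{pro:approx_step} and its corollary, choose simple functions $s_{n}$ with $\abs{s_{n}}\leq\abs{f}$ and $s_{n}\to f$ pointwise. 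Fix $h\in\mathcal{D}$: then $(f-s_{n})h\to0$ pointwise, while $\abs{(f-s_{n})h}^{2}\leq(2\abs{f}\abs{h})^{2}=4\abs{fh}^{2}\in L^{1}$, so the dominated convergence theorem gives $\norm{M_{s_{n}}h-M_{f}h}=\norm{(f-s_{n})h}_{2}\to0$. Since each $M_{s_{n}}$ is a finite linear combination of self-adjoint projections and $M_{s_{n}}h\to M_{f}h$ for every $h\in\mathcal{D}$, the operator $M_{f}$ lies in the strong-operator closure of the span of the self-adjoint projections.

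The one place that needs care is part (2): because $M_{f}$ is unbounded one must pin down what ``$M_{s_{n}}\to M_{f}$ in the strong operator topology'' means here, namely pointwise convergence on the dense domain $\mathcal{D}$ of $M_{f}$, and then one must be sure the approximating simple functions can be taken with $\abs{s_{n}}\leq\abs{f}$, so that $4\abs{fh}^{2}$ (integrable precisely because $h\in\mathcal{D}$) dominates the integrands and dominated convergence applies. Part (1) is entirely routine once the operator-norm identity $\norm{M_{g}}=\norm{g}_{\infty}$ is in hand.
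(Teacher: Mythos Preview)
Your proof is correct and follows essentially the same approach as the paper: uniform approximation by simple functions for the bounded case, and pointwise approximation with $\abs{s_{n}}\leq\abs{f}$ together with dominated convergence for the unbounded case. Your write-up is in fact more careful than the paper's in a couple of places---you make explicit the identity $\norm{M_{g}}=\norm{g}_{\infty}$ needed to pass from $\norm{f-s_{n}}_{\infty}\to 0$ to operator-norm convergence, and you give the correct integrable majorant $4\abs{fh}^{2}$ (the paper writes $const\,\abs{h}^{2}$, which does not dominate when $f$ is unbounded).
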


\section{Spectral theorem for compact operators}

\chapter{GNS, Representations}

\section{Representations, GNS, primer of multiplicity}

\subsection{Decomposition of Brownian motion}

The integral kernel $K:[0,1]\times[0,1]\rightarrow\mathbb{R}$ \[
K(s,t)=s\wedge t\]
is a compact operator on $L^{2}[0,1]$, where\[
Kf(x)=\int(x\wedge y)f(y)dy.\]
$Kf$ is a solution to the differential equation \[
-\frac{d^{2}}{dx^{2}}u=f\]
with zero boundary conditions. 

$K$ is also seen as the covariance functions of Brownian motion process.
A stochastic process is a family of measurable functions $\{X_{t}\}$
defined on some sample probability space $(\Omega,\mathfrak{B},P)$,
where the parameter $t$ usually represents time. $\{X_{t}\}$ is
a Brownian motion process if it is a mean zero Gaussian process such
that \[
E[X_{s}X_{t}]=\int_{\Omega}X_{s}X_{t}dP=s\wedge t.\]
It follows that the corresponding increament process $\{X_{t}-X_{s}\}\sim N(0,t-s)$.
$P$ is called the Wiener measure.

Building $(\Omega,\mathfrak{B},P)$ is a fancy version of Riesz's
representation theorem as in Theorem 2.14 of Rudin's book. It turns
out that \[
\Omega=\prod_{t}\bar{\mathbb{R}}\]
which is a compact Hausdorff space. \[
X_{t}:\Omega\rightarrow\mathbb{R}\]
is defined as\[
X_{t}(\omega)=\omega(t)\]
i.e. $X_{t}$ is the continuous linear functional of evluation at
$t$ on $\Omega$.

For Brownian motion, the increament of the process $\triangle X_{t}$,
in some statistical sense, is proportional to $\sqrt{\triangle t}$.
i.e. \[
\triangle X_{t}\sim\sqrt{\triangle t}.\]
It it this property that makes the set of differentiable functions
have measure zero. In this sense, the trajectory of Brownian motion
is nowhere differentiable. 

An very important application of the spectral theorem of compact operators
in to decompose the Brownian motion process. \[
B_{t}(\omega)=\sum\lambda_{n}\sin(n\pi t)Z_{n}(\omega)\]
where \[
s\wedge t=\sum\lambda_{n}\sin(n\pi t)\]
and $Z_{n}\sim N(0,1)$.

\subsection{Idea of multiplicity}

Recall that $A=A^{*}$ if and only if \[
A=\int_{sp(A)}tP(dt)\]
where $P$ is a projection-valued measure (PVM). The simplest example
of a PVM is $P(E)=\chi_{E}$. The spectral theorem states that all
PVMs come this way.

Let $A$ be a positive compact operator on $H$. $A$ is positive
means $\iprod{x}{Ax}\geq0$ for all $x\in H$. The spectral theorem
of $A$ states that \[
A=\sum\lambda_{n}P_{n}\]
where $\lambda_{n}'s$ are the eigenvalues of $A$, such that $\lambda_{1}\geq\lambda_{2}\geq\cdots\lambda_{n}\rightarrow0$.
and $P_{n}'s$ are self-adjoin projections onto the corresponding
finite dimensional eigenspace of $\lambda_{n}$. $P$ is a PVM supported
on $\{1,2,3,\ldots\}$, so that $P_{n}=P(\{n\})$.

\emph{Question}: what does $A$ look like if it is represented as
the operator of multiplication by the independent variable? 

We may arrange the eigenvalues of $A$ such that\[
\overset{s_{1}}{\overbrace{\lambda_{1}=\cdots=\lambda_{1}}}>\overset{s_{2}}{\overbrace{\lambda_{2}=\cdots=\lambda_{2}}}>\cdots>\overset{s_{n}}{\overbrace{\lambda_{n}=\cdots=\lambda_{n}}}>\cdots\rightarrow0.\]
We say that $\lambda_{i}$ has multiplicity $s_{i}$. The dimension
of the eigen space of $\lambda_{i}$ is $s_{i}$, and \[
dimH=\sum s_{i}.\]

\begin{example}
We represent $A$ as the operator $M_{f}$ of multiplication by $f$
on $L^{2}(X,\mu)$. Let $E_{k}=\{x_{k,1},\ldots,x_{k,s_{k}}\}\subset X$,
and let $H_{k}=span\{\chi_{\{x_{k,j}\}}:j\in\{1,2,\ldots,s_{k}\}\}$.
Let \[
f=\sum_{k=1}^{\infty}\lambda_{k}\chi_{E_{k}},\quad(\lambda_{1}>\lambda_{2}>\cdots>\lambda_{n}\rightarrow0)\]
Notice that $\chi_{E_{k}}$ is a rank $s_{1}$ projection. $M_{f}$
is compact if and only if it is of the given form.
\begin{example}
Follow the previous example, we represent $A$ as the operator $M_{t}$
of multiplication by the independent variable on some Hilbert space
$L^{2}(\mu_{f})$. For simplicity, let $\lambda>0$ and \[
f=\lambda\chi_{\{x_{1},x_{2}\}}=\lambda\chi_{\{x_{1}\}}+\lambda\chi_{\{x_{2}\}}\]
i.e. $f$ is compact since it is $\lambda$ times a rank-2 projection;
$f$ is positive since $\lambda>0$. The eigenspace of $\lambda$
has two dimension, \[
M_{f}\chi_{\{x_{i}\}}=\lambda\chi_{\{x_{i}\}},\quad i=1,2.\]
Define $\mu_{f}(\cdot)=\mu\circ f^{-1}(\cdot)$, then \[
\mu_{f}=\mu(\{x_{1}\})\delta_{\lambda}\oplus\mu(\{x_{2}\})\delta_{\lambda}\oplus\text{cont. sp }\delta_{0}\]
and \[
L^{2}(\mu_{f})=L^{2}(\mu(\{x_{1}\})\delta_{\lambda})\oplus L^{2}(\mu(\{x_{2}\})\delta_{\lambda})\oplus L^{2}(\text{cont. sp }\delta_{0}).\]
Define $U:L^{2}(\mu)\rightarrow L^{2}(\mu_{f})$ by \[
(Ug)=g\circ f^{-1}.\]
$U$ is unitary, and the following diagram commute. \[
\xymatrix{
&L^{2}(X,\mu)\ar[d]^{U}\ar[r]^{M_f} & L^{2}(X,\mu)\ar[d]^{U} &\\
&L^{2}(\mathbb{R},\mu_{f})\ar[r]^{M_{t}} & L^{2}(\mathbb{R},\mu_{f})
}
\]\\
\\
To check $U$ preserves the $L^{2}$-norm,\begin{eqnarray*}
\norm{Ug}^{2} & = & \int\norm{g\circ f^{-1}(\{x\})}^{2}d\mu_{f}\\
 & = & \norm{g\circ f^{-1}(\{\lambda\})}^{2}+\norm{g\circ f^{-1}(\{0\})}^{2}\\
 & = & \abs{g(x_{1})}^{2}\mu(\{x_{1}\})+\abs{g(x_{2})}^{2}\mu(\{x_{2}\})+\int_{X\backslash\{x_{1},x_{2}\}}\abs{g(x)}^{2}d\mu\\
 & = & \int_{X}\abs{g(x)}^{2}d\mu\end{eqnarray*}
To see $U$ diagonalizes $M_{f}$, \begin{eqnarray*}
M_{t}Ug & = & \lambda g(x_{1})\oplus\lambda g(x_{2})\oplus0g(t)\chi_{X\backslash\{x_{1},x_{2}\}}\\
 & = & \lambda g(x_{1})\oplus\lambda g(x_{2})\oplus0\\
UM_{f}g & = & U(\lambda g(x)\chi_{\{x_{1},x_{2}\}})\\
 & = & \lambda g(x_{1})\oplus\lambda g(x_{2})\oplus0\end{eqnarray*}
Thus \[
M_{t}U=UM_{f}.\]

\end{example}
\end{example}
\begin{rem}
Notice that $f$ should really be written as\[
f=\lambda\chi_{\{x_{1},x_{2}\}}=\lambda\chi_{\{x_{1}\}}+\lambda\chi_{\{x_{2}\}}+0\chi_{X\backslash\{x_{1},x_{2}\}}\]
since $0$ is also an eigenvalue of $M_{f}$, and the corresponding
eigenspace is the kernel of $M_{f}$. \end{rem}
\begin{example}
diagonalize $M_{f}$ on $L^{2}(\mu)$ where $f=\mathbf{\chi_{[0,1]}}$
and $\mu$ is the Lebesgue measure on $\mathbb{R}$.
\begin{example}
diagonalize $M_{f}$ on $L^{2}(\mu)$ where \[
f(x)=\begin{cases}
2x & x\in[0,1/2]\\
2-2x & x\in[1/2,1]\end{cases}\]
and $\mu$ is the Lebesgue measure on $[0,1]$.
\end{example}
\end{example}
\begin{rem}
see direc integral and disintegration of measures.
\end{rem}
In general, let $A$ be a self-adjoint operator acting on $H$. Then
there exists a second Hilbert space $K$, a measure $\nu$ on $\mathbb{R}$,
and unitary transformation $F:H\rightarrow L_{K}^{2}(\mathbb{R},\nu)$
such that\[
M_{t}F=FA\]
for measurable function $\varphi:\mathbb{R}\rightarrow K$, \[
\norm{\varphi}_{L_{K}^{2}(\nu)}=\int\norm{\varphi(t)}_{K}^{2}d\nu(t)<\infty.\]

\subsection{GNS}

The multiplication version of the spectral theorm is an exmaple of
representation of the algebra of $L^{\infty}$ functions (or $C(X)$)
as operators acting on a Hilbert space. \[
\pi:L^{\infty}\rightarrow g(A)\in B(H)\]
where $\pi(fg)=\pi(f)\pi(g)$ and $\pi(\bar{f})=\pi(f)^{*}$.

The general question is representation of algebras $B(H)$. The GNS
construction was developed about 60 years ago for getting representations
from data in typical applications, especially in quantum mechanics.
It was developed independently by I. Gelfand, M. Naimark, and I. Segal.

Let $\mathfrak{A}$ be a $*$-algebra with identity. A representation
of $\mathfrak{A}$ is a map $\pi$ on $\mathfrak{A}$, a Hilbert space
$\mathcal{H}_{\pi}$ \[
\pi:\mathfrak{A}\rightarrow B(\mathcal{H}_{\pi})\]
so that \begin{eqnarray*}
\pi(AB) & = & \pi(A)\pi(B)\\
\pi(A^{*}) & = & \pi(A)^{*}.\end{eqnarray*}
The $*$ operation is given on $\mathfrak{A}$ so that $A^{**}=A$,
$(AB)^{*}=B^{*}A^{*}$.

The question is given any $*$-algebra, where to get such a representation?
The answer is given by states. A state $\omega$ on $\mathfrak{A}$
is a linear functional $\omega:\mathfrak{A}\rightarrow\mathbb{C}$
such that \[
\omega(A^{*}A)\geq0,\;\omega(1_{\mathfrak{A}})=1.\]
For example, if $\mathfrak{A}=C(X)$ where $X$ is a compact Hausdorff
space, then \[
\omega(f)=\int fd\mu_{\omega}\]
is a state, where $\mu_{\omega}$ is a Borel probability measure.
\emph{In fact, in the abelian case, states are Borel probability measures.}
\begin{thm}
(GNS) There is a bijection between states $\omega$ and representation
$(\pi,\mathcal{H},\Omega)$ where $\norm{\Omega}=1$, and \[
\omega(A)=\iprod{\Omega}{\pi(A)\Omega}.\]

\end{thm}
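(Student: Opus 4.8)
The plan is to establish the bijection in both directions and check the naturality. First I would prove the forward direction: given a representation $(\pi,\h H,\Omega)$ with $\norm{\Omega}=1$, define $\omega(A):=\iprod{\Omega}{\pi(A)\Omega}$. Linearity of $\omega$ follows from linearity of $\pi$ and of the inner product. Positivity follows since $\omega(A^{*}A)=\iprod{\Omega}{\pi(A^{*}A)\Omega}=\iprod{\Omega}{\pi(A)^{*}\pi(A)\Omega}=\iprod{\pi(A)\Omega}{\pi(A)\Omega}=\norm{\pi(A)\Omega}^{2}\geq0$, and $\omega(1_{\mathfrak A})=\iprod{\Omega}{\pi(1_{\mathfrak A})\Omega}=\iprod{\Omega}{\Omega}=1$ since a unital representation sends $1_{\mathfrak A}$ to $I$.

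The substantive direction is the converse: from a state $\omega$, construct a representation. This is a GNS-type construction entirely parallel to the GNS construction already described in the excerpt for semi-positive definite functions. I would put a sesquilinear form on $\mathfrak A$ itself by $\iprod{A}{B}_{\omega}:=\omega(A^{*}B)$; conjugate-symmetry comes from $\omega(B^{*}A)=\overline{\omega(A^{*}B)}$ (which one gets by polarizing the positivity condition, exactly as positivity of a matrix was used earlier), and positivity $\iprod{A}{A}_{\omega}=\omega(A^{*}A)\geq0$ is the state axiom. Then Cauchy-Schwarz shows $N:=\{A\in\mathfrak A:\omega(A^{*}A)=0\}$ is a subspace, and in fact a left ideal: if $A\in N$ then for any $B$, $\omega((BA)^{*}(BA))=\omega(A^{*}B^{*}BA)\leq \norm{B^{*}B}\,\omega(A^{*}A)$ after a further Cauchy-Schwarz estimate, so $BA\in N$. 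Passing to the quotient $\mathfrak A/N$ and completing gives a Hilbert space $\h H_{\pi}$; write $[A]$ for the image of $A$. Define $\pi(B)[A]:=[BA]$; this is well-defined precisely because $N$ is a left ideal, and one checks $\pi(BC)=\pi(B)\pi(C)$ and $\pi(B^{*})=\pi(B)^{*}$ from $\iprod{[A]}{\pi(B^{*})[C]}=\omega(A^{*}B^{*}C)=\omega((BA)^{*}C)=\iprod{\pi(B)[A]}{[C]}$. Set $\Omega:=[1_{\mathfrak A}]$; then $\norm{\Omega}^{2}=\omega(1_{\mathfrak A}^{*}1_{\mathfrak A})=\omega(1_{\mathfrak A})=1$ and $\iprod{\Omega}{\pi(A)\Omega}=\iprod{[1_{\mathfrak A}]}{[A]}=\omega(A)$, recovering $\omega$. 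Finally, $\Omega$ is cyclic since $\pi(\mathfrak A)\Omega=\{[A]:A\in\mathfrak A\}$ is dense by construction.

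To get an honest bijection one should identify representations up to the natural notion of equivalence: two cyclic representations with cyclic vectors giving the same state are unitarily equivalent via the map $\pi_{1}(A)\Omega_{1}\mapsto\pi_{2}(A)\Omega_{2}$, which is isometric because both sides have norm-squared $\omega(A^{*}A)$, and intertwines the representations. I would state this uniqueness-up-to-equivalence as part of the theorem so that "bijection" is literally correct.

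The main obstacle is the boundedness of $\pi(B)$: a priori $\pi(B)$ is only a linear map on a dense subspace, and to extend it to $\h H_{\pi}$ one needs $\norm{\pi(B)[A]}^{2}=\omega(A^{*}B^{*}BA)\leq C_{B}\,\omega(A^{*}A)$. For a $C^{*}$-algebra this follows from $\norm{B^{*}B-B^{*}B}\leq\dots$ spectral-radius arguments (using that $\norm{B^{*}B}\cdot 1_{\mathfrak A}-B^{*}B$ is a positive element, hence a square, so $\omega$ is positive on it); for a general $*$-algebra with identity one either restricts to elements with this property or assumes enough structure (e.g. that $\omega$ is a \emph{bounded} state, or that the algebra is a Banach $*$-algebra). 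I would handle the $C^{*}$-case cleanly and remark on what is needed in general, since that is exactly the point where the positivity axiom of a state must be leveraged beyond the formal manipulations.
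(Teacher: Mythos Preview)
Your proposal is correct and follows essentially the same route as the paper's sketch: put the sesquilinear form $\iprod{A}{B}=\omega(A^{*}B)$ on $\mathfrak{A}$, quotient by the null space, complete, let $\pi$ act by left multiplication, and take $\Omega$ to be the class of $1_{\mathfrak{A}}$. You are in fact more thorough than the paper on several points it leaves implicit: you check the forward direction, you observe that the null space is a left ideal (needed for $\pi$ to be well-defined), you supply the uniqueness-up-to-unitary-equivalence argument that makes ``bijection'' literal, and you flag the boundedness of $\pi(B)$ as the genuine analytic issue, which the paper's sketch does not mention at all.
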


\section{States, dual and pre-dual}

Banach space
\begin{itemize}
\item vector space over $\mathbb{C}$
\item norm $\norm{\cdot}$
\item complete
\end{itemize}
Let $V$ be a vector space. 

Let $V$ be a Banach space. $l\in V^{*}$ if $l:V\rightarrow\mathbb{C}$
such that\[
\norm{l}:=\sup_{\norm{v}=1}\abs{l(v)}<\infty.\]
Hahn-Banach theorem implies that for all $v\in V$, $\norm{v}\neq0$,
there exists $l_{v}\in V^{*}$ such that $l(v)=\norm{v}$. The construction
is to define $l_{v}$ on one vector, then use transfinite induction
to extend to all vectors in $V$. Notice that $V^{*}$ is always complete,
even $V$ is an incomplete normed space. i.e. $V^{*}$ is always a
Banach space.

$V$ is embedded in to $V^{**}$ (we always do this). The embedding
is given by\[
v\mapsto\psi(v)\in V^{*}\]
where \[
\psi(v)(l):=l(v).\]

\begin{example}
Let $X$ be a compact Hausdorff space. $C(X)$ with the sup norm is
a Banach space. $(l^{p})^{*}=l^{q}$, $(L^{p})^{*}=L^{q}$, for $1/p+1/q=1$
and $p<\infty$. If $1<p<\infty$, then $(l^{p})^{**}=l^{p}$, i.e.
these spaces are reflexive. $(l^{1})^{*}=l^{\infty}$, but $(l^{\infty})^{*}$
is much bigger than $l^{1}$. Also note that $(l^{p})^{*}\neq l^{q}$
except for $p=q=2$ where $l^{2}$ is a Hilbert space.
\end{example}
Hilbert space
\begin{itemize}
\item vector space over $\mathbb{C}$
\item norm forms an inner product $\iprod{\cdot}{\cdot}$
\item complete with respect to $\norm{\cdot}=\iprod{\cdot}{\cdot}^{1/2}$
\item $H^{*}=H$ 
\item every Hilbert space has a basis (proved by Zorn's lemma)
\end{itemize}
The identification $H=H^{*}$ is due to Riesz, and the corresponding
map is given by\[
h\mapsto\iprod{h}{\cdot}\in H^{*}\]
This can also be seen by noting that $H$ is unitarily equivalent
to $l^{2}(A)$ and the latter is reflecxive.

Let $H$ be a Hilbert space. The set of all bounded operators $B(H)$
on $H$ is a Banach space. We ask two questions: What is $B(H)^{*}$?
Is $B(H)$ the dual space of some Banach space?

The first question extremely difficult and we will discuss that later.
We now show that \[
B(H)=T_{1}(H)^{*}\]
where we denote by $T_{1}(H)$ the trace class operators.

Let $\rho:H\rightarrow H$ be a compact self-adjoint operator. Assume
$\rho$ is positive, i.e. $\iprod{x}{\rho x}\geq0$ for all $x\in H$.
By the spectral theorem of compact operators,\[
\rho=\sum\lambda_{k}P_{k}\]
where $\lambda_{1}\geq\lambda_{2}\geq\cdots\rightarrow0$, and $P_{k}$
is the finite dimensional eigenspace of $\lambda_{k}$. $\rho$ is
a trace class operator, if \[
\sum\lambda_{k}<\infty\]
in which case we may assume $\sum\lambda_{k}=1$.

In general, we want to get rid of the assumption that $\rho\geq0$.
Hence we work, instead, with $\psi:H\rightarrow H$ so that $\psi$
is a trace class operator if \[
\sqrt{\psi^{*}\psi}\]
is a trace class operator.
\begin{lem}
$T_{1}(H)$ is a two-sided ideal in $B(H)$. 
\begin{enumerate}
\item Let $(e_{n})$ be an ONB in $H$. Define the trace of $A\in T_{1}(H)$
as is\[
trace(A)=\sum\iprod{e_{n}}{Ae_{n}}\]
then $trace(A)$ is independent of the choice of ONB in $H$.
\item $trace(AB)=trace(BA)$
\item $T_{1}(H)$ is a Banach space with the trace norm $\norm{\rho}=trace(\rho)$.\end{enumerate}
\begin{lem}
Let $\rho\in T_{1}(H)$. Then \[
A\mapsto trace(A\rho)\]
is a state on $B(H)$. 
\end{lem}
\end{lem}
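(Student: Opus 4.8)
The plan is to verify the two defining properties of a state directly, using only the facts about $T_1(H)$ assembled in the preceding lemma — that it is a two-sided ideal in $B(H)$, that the trace is ONB-independent, and that $\mathrm{trace}(XY)=\mathrm{trace}(YX)$ — while keeping in mind that, as in the discussion just before the statement, $\rho$ is a \emph{positive} trace class operator normalized so that $\mathrm{trace}(\rho)=1$. First I would check that $\omega(A):=\mathrm{trace}(A\rho)$ is well defined and linear: since $T_1(H)$ is a two-sided ideal, $A\rho\in T_1(H)$ for every $A\in B(H)$, so $\mathrm{trace}(A\rho)$ is a finite scalar, and linearity in $A$ is inherited from linearity of $A\mapsto A\rho$ and of the trace. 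The normalization axiom is then immediate: $\omega(1_{B(H)})=\mathrm{trace}(\rho)=1$.

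The substance is the positivity axiom $\omega(A^{*}A)\ge 0$ for all $A\in B(H)$. Here I would invoke the spectral decomposition $\rho=\sum_k\lambda_k P_k$ with $\lambda_k\ge 0$ and $P_k$ the finite-dimensional spectral projections, together with cyclicity of the trace, to write $\omega(A^{*}A)=\mathrm{trace}(A^{*}A\rho)=\sum_k\lambda_k\,\mathrm{trace}(A^{*}AP_k)$. For each $k$, picking an orthonormal basis $\{\xi_{k,j}\}$ of $P_kH$ and extending to an ONB of $H$, the ONB-independence of the trace gives $\mathrm{trace}(A^{*}AP_k)=\sum_j\iprod{\xi_{k,j}}{A^{*}A\xi_{k,j}}=\sum_j\norm{A\xi_{k,j}}^{2}\ge 0$, so every term of the sum is nonnegative and $\omega(A^{*}A)\ge 0$. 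An equivalent, slightly slicker route is to factor $\rho=\rho^{1/2}\rho^{1/2}$ (legitimate since $\rho\ge 0$) and use cyclicity to get $\mathrm{trace}(A^{*}A\rho)=\mathrm{trace}\big((A\rho^{1/2})^{*}(A\rho^{1/2})\big)$, which is the trace of a positive operator, hence $\ge 0$ by the same ONB formula. I would also record the bound $\abs{\omega(A)}\le\norm{A}\,\mathrm{trace}(\rho)=\norm{A}$, showing $\omega$ is norm-continuous on $B(H)$ (not needed for the definition, but worth noting).

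The step I expect to be the main obstacle is not conceptual but the careful handling of the infinite sums: that $\mathrm{trace}(A^{*}A\rho)$ genuinely equals $\sum_k\lambda_k\,\mathrm{trace}(A^{*}AP_k)$ (interchange of summation with the trace), that the trace computed in the ONB adapted to $\rho$ agrees with the trace in any ONB, and that the cyclic identity $\mathrm{trace}(XY)=\mathrm{trace}(YX)$ is being applied in the admissible regime (one factor bounded, the other trace class). All of these are precisely the assertions of the preceding lemma on $T_1(H)$, so once they are cited the argument closes with no further difficulty.
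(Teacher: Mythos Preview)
Your proposal is correct and matches the paper's approach: the paper gives no proof at the lemma's location, but in the later section on normal states it carries out exactly the spectral-decomposition argument you describe, writing $s_{\rho}(a)=\sum_{k}\lambda_{k}\iprod{\xi_{k}}{a\xi_{k}}$ so that $s_{\rho}=\sum_{k}\lambda_{k}s_{\xi_{k}}$ is a convex combination of vector states. Your $\rho^{1/2}$ variant and your explicit attention to the convergence/cyclicity issues are welcome additions that the notes leave implicit.
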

\begin{rem}
Notice that $A\rho\in T_{1}(H)$ for all $A\in B(H)$. The map \[
A\mapsto trace(A\rho)\]
is in $B(H)^{*}$ means that the dual pairing \[
\iprod{A}{\rho}:=trace(A\rho)\]
satisfies\[
\abs{\iprod{A}{\rho}}\leq\norm{A}_{operator}\norm{\rho}_{trace}.\]
\end{rem}
\begin{thm}
$T_{1}^{*}(H)=B(H)$.
\end{thm}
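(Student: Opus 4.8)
The plan is to establish the duality $T_1(H)^* = B(H)$ by constructing the isometric isomorphism explicitly and then checking surjectivity. First I would define the candidate map $\Phi : B(H) \to T_1(H)^*$ by $\Phi(A)(\rho) = \operatorname{trace}(A\rho)$. That this is well-defined and bounded with $\norm{\Phi(A)} \le \norm{A}_{\text{op}}$ follows from the remark preceding the statement, since $A\rho \in T_1(H)$ whenever $A \in B(H)$ and $\rho \in T_1(H)$, together with the estimate $\abs{\operatorname{trace}(A\rho)} \le \norm{A}_{\text{op}}\norm{\rho}_{\text{trace}}$. Linearity of $\Phi$ is immediate from linearity of the trace.

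Next I would prove that $\Phi$ is isometric, i.e. $\norm{\Phi(A)} = \norm{A}_{\text{op}}$. For this, given $A \in B(H)$ and $\varepsilon > 0$, pick unit vectors $u, v \in H$ with $\abs{\iprod{v}{Au}} > \norm{A}_{\text{op}} - \varepsilon$, and test $\Phi(A)$ against the rank-one operator $\rho = \ketbra{u}{v}$, which lies in $T_1(H)$ with $\norm{\rho}_{\text{trace}} = 1$. A short computation gives $\operatorname{trace}(A\rho) = \iprod{v}{Au}$, so $\norm{\Phi(A)} \ge \norm{A}_{\text{op}} - \varepsilon$; letting $\varepsilon \to 0$ gives equality. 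In particular $\Phi$ is injective.

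The remaining and main task is surjectivity: every $\ell \in T_1(H)^*$ arises as $\Phi(A)$ for some $A \in B(H)$. The idea is to recover $A$ from its matrix entries. Given $\ell$, define a sesquilinear form $B(u,v) := \ell(\ketbra{u}{v})$ on $H \times H$. Since $\norm{\ketbra{u}{v}}_{\text{trace}} = \norm{u}\norm{v}$, this form is bounded: $\abs{B(u,v)} \le \norm{\ell}\,\norm{u}\,\norm{v}$. By the Riesz representation of bounded sesquilinear forms there is a unique $A \in B(H)$ with $\norm{A}_{\text{op}} \le \norm{\ell}$ and $B(u,v) = \iprod{v}{Au}$ for all $u,v$. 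It then remains to check that $\Phi(A) = \ell$, i.e. that $\operatorname{trace}(A\rho) = \ell(\rho)$ for all $\rho \in T_1(H)$. This holds on finite-rank operators by linearity and the definition of $A$ (expanding $\rho$ in terms of rank-one pieces $\ketbra{e_i}{e_j}$ via an ONB), and then extends to all of $T_1(H)$ by a density argument: finite-rank operators are $\norm{\cdot}_{\text{trace}}$-dense in $T_1(H)$ (truncate the singular-value expansion of $\sqrt{\psi^*\psi}$ supplied by the spectral theorem for compact operators), and both $\operatorname{trace}(A\,\cdot\,)$ and $\ell$ are $\norm{\cdot}_{\text{trace}}$-continuous.

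The main obstacle is the surjectivity step, and within it the density of finite-rank operators in the trace norm together with the verification that trace-norm convergence is strong enough to pass both functionals to the limit; the boundedness and isometry parts are routine once the rank-one test operators are in hand. One should also be slightly careful that the Riesz-type representation for sesquilinear forms is applied with the correct conjugate-linear slot, matching the convention $\iprod{x}{\cdot}$ linear used in this text.
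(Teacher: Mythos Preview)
Your proposal is correct and follows essentially the same route as the paper: both arguments recover $A$ from a bounded linear functional $\ell\in T_1(H)^*$ by pairing $\ell$ with rank-one operators $\ketbra{u}{v}$ to produce a bounded sesquilinear form, then invoke the Riesz-type representation to obtain $A$ with $\ell(\ketbra{u}{v})=\iprod{v}{Au}$, and finally check agreement with the trace pairing on rank-ones via Parseval. Your write-up is in fact more complete than the paper's sketch, which omits the isometry verification and the density extension from rank-one to all of $T_1(H)$ that you supply.
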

Let $l\in T_{1}^{*}$. How to get an operator $A$? It is supposed
to be the case such that\[
l(\rho)=trace(\rho A)\]
for all $\rho\in T_{1}$. How to pull an operator $A$ out of the
hat? The idea also goes back to Paul Dirac. It is in fact not difficult
to find $A$. Since $A$ is determined by its matrix, it suffices
to fine\[
\iprod{f}{Af}\]
which are the entries in the matrix of $A$. For any $f_{1},f_{2}\in H$,
the rank-one operator\[
\ketbra{f_{1}}{f_{2}}\]
is in $T_{1}$, hence we know what $l$ does to it, i.e. we know these
numbers $l(\ketbra{f_{1}}{f_{2}})$. Since \[
l(\ketbra{f_{1}}{f_{2}})\]
is linear in $f_{1}$ , and conjugate linear in $f_{2}$, by the Riesz
theorem for Hilbert space, there exists a unique operator $A$ such
that\[
l(\ketbra{f_{1}}{f_{2}})=\iprod{f_{2}}{Af_{1}}.\]
This defines $A$.

Now we check that $l(\rho)=trace(\rho A)$ . Take $\rho=\ketbra{f_{1}}{f_{2}}$.
Then\begin{eqnarray*}
trace(\rho A) & = & trace(\ketbra{f_{1}}{f_{2}}A)\\
 & = & trace(A\ketbra{f_{1}}{f_{2}})\\
 & = & \sum_{n}\iprod{e_{n}}{Af_{1}}\iprod{f_{2}}{e_{n}}\\
 & = & \iprod{f_{2}}{Af_{1}}\end{eqnarray*}
where the last equality follows from Parseval's indentity.
\begin{rem}
If $B$ is the dual of a Banach space, then we say that $B$ has a
predual. For example $l^{\infty}=(l^{1})^{*}$, hence $l^{1}$ is
the predual of $l^{\infty}$. Another example in Rudin's book, $H^{1}$,
hardy space of analytic functions on the disk. $(H^{1})^{*}=BMO$,
where $BMO$ referes to bounded mean oscillation. It was developed
by Charles Fefferman in 1974 who won the fields medal for this theory.
Getting hands on a specific dual space is often a big thing.
\end{rem}
Let $B$ be a Banach space and denote by $B^{*}$ its dual space.
$B^{*}$ is a Banach space as well, where the norm is defined by\[
\norm{f}_{B^{*}}=\sup_{\norm{x}=1}{\abs{f(x)}}.\]
Let $B_{1}^{*}=\{f\in B^{*}:\norm{f}\leq1\}$ be the unit ball in
$B^{*}$. 
\begin{thm}
$B_{1}^{*}$ is weak $*$ compact in $B^{*}$.
\begin{proof}
This is proved by showing $B_{1}^{*}$ is a closed subspace in $\prod_{\norm{x}=1}\mathbb{C}$,
where the latter is given its product topology, and is compact and
Hausdorff.\end{proof}
\begin{cor}
Every bounded sequence in $B^{*}$ has a convergent subsequence in
the $W^{*}$-topology.
\begin{cor}
Every bounded sequence in $B(H)$ contains a convergence subsequence
in the $W^{*}$-topology.
\end{cor}
\end{cor}
\end{thm}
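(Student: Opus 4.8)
The plan is a short deduction from the material just assembled, together with one remark on the hypothesis that makes it legitimate. The facts in hand are: (i) $T_{1}(H)$ is a Banach space under the trace norm (the Lemma above); (ii) $B(H)=T_{1}(H)^{*}$ (the duality Theorem above); and (iii) the preceding Corollary, that every bounded sequence in the dual of a Banach space has a subsequence convergent in the $W^{*}$-topology. Applying (iii) with the Banach space taken to be $B=T_{1}(H)$, and rewriting its dual by (ii) as $B^{*}=T_{1}(H)^{*}=B(H)$, immediately yields: every bounded sequence in $B(H)$ has a subsequence converging in the $W^{*}$-topology of $B(H)$ regarded as $T_{1}(H)^{*}$. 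That is the assertion.

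So the only real content is making sure the preceding Corollary genuinely applies here. Its proof rests on the theorem above that the dual unit ball $B_{1}^{*}$ is $W^{*}$-compact, together with the fact that on $B_{1}^{*}$ the $W^{*}$-topology is \emph{metrizable} — and metrizability requires $B$ to be separable, after which a compact metric space is sequentially compact. Hence I would insert the step of checking that $T_{1}(H)$ is separable. Fixing a countable orthonormal basis $(e_{n})$ of $H$ — this is where separability of $H$ enters — the operators $\sum_{i,j\leq N}c_{ij}\ketbra{e_{i}}{e_{j}}$ with $c_{ij}\in\mathbb{Q}+i\mathbb{Q}$ form a countable family, and I would show it is trace-norm dense: given $\rho\in T_{1}(H)$, first truncate to the top-left $N\times N$ block (small in trace norm, by convergence of $\sum\lambda_{k}$ applied to $\sqrt{\rho^{*}\rho}$), then rationalize the finitely many remaining entries.

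The main obstacle, such as it is, is purely the separability bookkeeping and the density estimate for finite-rank operators in trace norm; once $T_{1}(H)$ is known to be a separable Banach space, the conclusion is a formal consequence of the $W^{*}$-compactness of $B_{1}^{*}$ and the duality $B(H)=T_{1}(H)^{*}$. If one is unwilling to assume $H$ separable, the honest statement is that bounded sequences in $B(H)$ have $W^{*}$-convergent \emph{subnets}, which follows from $W^{*}$-compactness of $B_{1}^{*}$ alone, without any metrizability.
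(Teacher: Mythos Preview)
Your proposal is correct and follows the same route the paper intends: the second corollary is meant to be an immediate consequence of the first corollary together with the just-established duality $B(H)=T_{1}(H)^{*}$, and that is exactly what you do. The paper gives no further argument beyond this.

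Where you go beyond the paper is in flagging and filling the gap the notes leave implicit: weak-$*$ compactness of $B_{1}^{*}$ does not by itself give \emph{sequential} compactness, so the first corollary (and hence the second) tacitly requires the predual to be separable in order to metrize the weak-$*$ topology on the unit ball. Your sketch that $T_{1}(H)$ is separable when $H$ is, via rational finite-rank approximants in a fixed orthonormal basis, is the right way to close this, and your remark that without separability one only gets convergent subnets is the correct caveat. The paper simply does not address this point.
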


\section{Examples of representations, proof of GNS}
\begin{example}
Fourier algebra. \end{example}
\begin{itemize}
\item discrete case: \[
(a*b)_{n}=\sum_{k}a_{k}b_{n-k}\]
involution\[
(a^{*})_{n}=\bar{a}_{-n}\]
Most abelian algebras can be thought of function algebras. \[
(a_{n})\mapsto\sum a_{n}z^{n}\]
may specilize to $z=e^{it}$, $t\in\mathbb{R}\text{ mod }2\pi$. $\{F(z)\}$
is an abelian algebra of functions.\[
F(z)G(z)=\sum(a*b)_{n}z^{n}\]
Homomorphism:\begin{eqnarray*}
(l^{1},*) & \rightarrow & C(T^{1})\\
(a_{n}) & \mapsto & F(z).\end{eqnarray*}
If we want to write $F(z)$ as power series, then we need to drop
$a_{n}$ for $n<0$. Then $F(z)$ extends to an analytic function
over the unit disk. The sequence space\[
\{a_{0},a_{1},\ldots\}\]
was suggested by Hardy. The Hardy space $H_{2}$ is a Hilbert space.
Rudin has two beautiful chapters on $H^{2}$. (see chapter 16)
\item continuous case:\[
(f*g)(x)=\int f(s)g(t-s)ds.\]
\end{itemize}
\begin{rem}
$C(T^{1})$ is called the $C^{*}$-algebra completion of $l^{1}$.
$L^{\infty}(X,B,\mu)=L^{1}(\mu)^{*}$ is also a $C^{*}$-algebra.
It is called a $W$ {*} algebra, or Von Neumann algebra. The $W$
{*} refers to the fact that its topology comes from the weak {*} topology.
$B(H)$, for any Hilbert space, is a Von Neumann algebra. \end{rem}
\begin{example}
$uf=e^{i\theta}f(\theta)$, $vf=f(\theta-\varphi)$, restrict to $[0,2\pi]$,
i.e. $2\pi$ periodic functions.\begin{eqnarray*}
vuv^{-1} & = & e^{i\varphi}u\\
vu & = & e^{i\varphi}uv\end{eqnarray*}
$u,v$ generate $C^{*}$-algebra, noncummutative.
\begin{example}
(from quantum mechanics)\[
[p,q]=-iI\]
$p,q$ generate an algebra. But they can not be represented by bounded
operators. May apply bounded functions to them and get a $C^{*}$-algebra.
\begin{example}
Let $H$ be an infinite dimensional Hilbert space. $H$ is isometrically
isomorphic to a subspace of itself. For example, let $\{e_{n}\}$
be an ONB. $H_{1}=\overline{span}\{e_{2n}\}$, $H_{2}=\overline{span}\{e_{2n+1}\}$.
Let \begin{eqnarray*}
V_{1}(e_{n}) & = & e_{2n}\\
V_{2}(e_{n}) & = & e_{2n+1}\end{eqnarray*}
then we get two isometries.\begin{eqnarray*}
V_{1}V_{1}^{*}+V_{2}V_{2}^{*} & = & I\\
V_{i}^{*}V_{i} & = & I\\
V_{i}V_{i}^{*} & = & P_{i}\end{eqnarray*}
where $P_{i}$ is a self-adjoint projection, $i=1,2$. This is the
Cuntz algebra $\mathcal{O}_{2}$. More general $\mathcal{O}_{n}$.
Cuntz showed that this is a simple algebra in 1977.
\end{example}
\end{example}
\end{example}
From algebras, get representations. For abelian algebras, we get measures.
For non-abelian algebras, we get representations, and the measures
come out as a corallory of representations. 
\begin{proof}
(Sketch of the proof of $GNS$) Let $w$ be a state on $\mathfrak{A}$.
Need to construct $(\pi,H,\Omega)$.

$\mathfrak{A}$ is an algebra, and it is also a complex vector space.
We pretend that $\mathfrak{A}$ is a Hilbert space, see what is needed
for this.

We do get a homomorphism $\mathfrak{A}\rightarrow H$ which follows
from the associative law of $\mathfrak{A}$ being an algebra, i.e.
$(AB)C=A(BC)$.

For Hilbert space $H$, need an inner product. Try\[
\iprod{A_{1}}{A_{2}}=w(A_{1}^{*}A_{2})\]
Then $\iprod{A_{1}}{A_{2}}$ is linear in $A_{2}$, conjugate linear
in $A_{1}$. It also satisfies \[
\iprod{A_{1}}{A_{1}}\geq0\]
which is built into the definition of a state. But it may not be positive
definite. Therefore we take\[
H:=[H/\{A:w(A^{*}A)=0\}]^{cl}.\]

\begin{lem}
\textup{$\{A:w(A^{*}A)=0\}$ is a closed subspace of $\mathfrak{A}$.}\end{lem}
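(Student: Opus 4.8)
The plan is to verify the two assertions — that $N:=\{A\in\mathfrak{A}:w(A^{*}A)=0\}$ is a linear subspace, and that it is closed — using only the positivity of the state $w$ together with the Cauchy--Schwarz inequality for the sesquilinear form $\iprod{A}{B}_{w}:=w(A^{*}B)$.

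First I would record that $\iprod{\cdot}{\cdot}_{w}$ is a positive semi-definite sesquilinear form: linearity in the second variable and conjugate-linearity in the first are immediate from linearity of $w$ and $(cA+B)^{*}=\bar c A^{*}+B^{*}$, while $\iprod{A}{A}_{w}=w(A^{*}A)\geq 0$ is the defining positivity of a state. Exactly as in the $2\times 2$ determinant argument given earlier in the notes — now applied to a matrix that is merely positive semi-definite, hence still of non-negative determinant — this yields $\abs{w(A^{*}B)}^{2}\leq w(A^{*}A)\,w(B^{*}B)$ for all $A,B\in\mathfrak{A}$.

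Next, the subspace property. Closure under scalars is trivial, since $w((cA)^{*}(cA))=\abs{c}^{2}w(A^{*}A)$. For closure under addition, if $A,B\in N$ then Cauchy--Schwarz forces $w(A^{*}B)=w(B^{*}A)=0$, so expanding $w((A+B)^{*}(A+B))=w(A^{*}A)+w(A^{*}B)+w(B^{*}A)+w(B^{*}B)=0$; hence $A+B\in N$, and $N$ is a linear subspace. Finally, for closedness, set $\norm{A}_{w}:=w(A^{*}A)^{1/2}$; the triangle inequality for $\norm{\cdot}_{w}$ is again a consequence of the same Cauchy--Schwarz estimate (it is the seminorm attached to the semi-inner product), so $A\mapsto\norm{A}_{w}$ is continuous and $N=\norm{\cdot}_{w}^{-1}(\{0\})$ is closed in the topology this seminorm induces on $\mathfrak{A}$; equivalently, $N$ is exactly the closure of $\{0\}$, which is precisely what makes $\mathfrak{A}/N$ a genuine pre-Hilbert space whose completion is the GNS space. (If instead $\mathfrak{A}$ carries a submultiplicative $*$-norm relative to which $w$ is continuous, the same conclusion follows since then $A\mapsto w(A^{*}A)$ is continuous and $\{0\}$ is closed.)

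The only real content is the Cauchy--Schwarz step, and even that is a direct replay of the positive-matrix/determinant computation from the opening of the notes, with ``positive definite'' weakened to ``positive semi-definite''; everything else is bookkeeping. I expect no genuine obstacle — the one point to be careful about is not to assume the form is definite when invoking Cauchy--Schwarz, and to be explicit that ``closed'' refers to the topology of the seminorm $\norm{\cdot}_{w}$.
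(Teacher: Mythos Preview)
Your proposal is correct and takes essentially the same approach as the paper: the paper's proof is the single line ``this follows from the Schwartz inequality,'' and what you have written is precisely the unpacking of that sentence. Your care in distinguishing the seminorm topology from a possible ambient $*$-norm topology, and in noting that Cauchy--Schwarz needs only positive \emph{semi}-definiteness, goes beyond what the paper spells out but is exactly the content the one-liner is pointing at.
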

\begin{proof}
this follows from the Schwartz inequality.
\end{proof}
Let $\pi:\mathfrak{A}\rightarrow H$ such that $\pi(A)=A/ker$. Let
$\Omega=\pi(I)$. Therefore \[
H=\overline{span}\{\pi(A)\Omega:A\in\mathfrak{A}\}.\]
To see we do get a representation, take two typical vectors $\pi(B)\Omega$
and $\pi(A)\Omega$ in $H$, then\begin{eqnarray*}
\iprod{\pi(B)\Omega}{\pi(C)\pi(A)\Omega} & = & \iprod{\Omega}{\pi(B^{*}CA)\Omega}\\
 & = & w(B^{*}CA)\\
 & = & w((C^{*}B)^{*}A)\\
 & = & \iprod{\pi(C^{*}B)\Omega}{\pi(A)\Omega}\\
 & = & \iprod{\pi(C^{*})\pi(B)\Omega}{\pi(A)\Omega}\end{eqnarray*}
it follows that \[
\iprod{v}{\pi(C)u}=\iprod{\pi(C^{*})v}{u}\]
for any $u,v\in H$. Therefore,\[
\pi(C)^{*}=\pi(C^{*}).\]

\end{proof}
\begin{example}
$C[0,1]$, $a\mapsto a(0)$, $a^{*}a=\abs{a}^{2}$, hence $w(a^{*}a)=\abs{a}^{2}(0)\geq0$.
\[
ker=\{a:a(0)=0\}\]
and $C/ker$ is one dimensional. The reason if $\forall f\in C[0,1]$
such that $f(0)\neq0$, we have\[
f(x)\sim f(0)\]
because $f(x)-f(0)\in ker$, where $f(0)$ represent the constant
function $f(0)$ over $[0,1]$. This shows that $w$ is a pure state,
since the representation has to be irreducible.
\end{example}

\section{GNS, spectral thoery}
\begin{rem}
some dover books
\begin{itemize}
\item Stefan Banach, Theory of linear operators
\item Howard Georgi, weak interactions and modern particle theory
\item P.M. Prenter, splines and variational methods
\end{itemize}
\end{rem}

\subsection{GNS for $C^{*}$-algebras}
\begin{defn}
A representation $\pi\in Rep(\mathfrak{A},H)$ is cyclic if it has
a cyclic vector.\end{defn}
\begin{thm}
Give a representation $\pi\in Rep(\mathfrak{A},H)$, there exists
an index set $J$, closed subspaces $H_{j}\subset H$ such that
\begin{itemize}
\item (orthogonal) $H_{i}\perp H_{j}$, $\forall i\neq j$
\item (total) $\sum_{j\in J}^{\oplus}H_{j}=H$, $v_{j}\in H_{j}$ such that
the restriction of $\pi$ to $H_{j}$ is cyclic with cyclic vector
$v_{j}$. Hence on each $H_{j}$, $\pi$ is abelian.\end{itemize}
\begin{rem*}
This looks like the construction of orthonormal basis. But it's a
family of mutually orthogonal subspaces. Of course, if $H_{j}$ is
one-dimensional for all $j$, then it is a decomposition into an ONB.
Not every representation is irreducible, but every representation
can be decompossed into direct sum of cyclic representations.

We use Zorn's lemma to show total, exactly the same argument for the
existance of an ONB of any Hilbert space.\end{rem*}
\begin{thm}
(Gelfand-Naimark) Every $C^{*}$-algebra is isometrically isomorphic
to a norm-closed sub-algebra of $B(H)$, for some Hilbert space $H$.
\begin{proof}
Let $\mathfrak{A}$ be any $C^{*}$-algebra, no Hilbert space $H$
is given from outside. Let $S(\mathfrak{A})$ be the states on $\mathfrak{A}$,
which is a compact convex subset of $\mathfrak{A}^{*}$. Compactness
refers to the weak {*} topology.

We use Hahn-Banach theorem to show that there are plenty of states.
Specifically, $\forall A\in\mathfrak{A}$, $\exists w$ such that
$w(A)>0$. This is done first on 1-dimensional subspace,\[
tA\mapsto t\in\mathbb{R}\]
then extend to $\mathfrak{A}$. This is also a consequence of Krein-Millman,
i.e. $S(\mathfrak{A})=cl(PureStates)$. We will come back to this
point later in the course.

For each state $w$, get a cyclic representation $(\pi_{w},H_{w},\Omega_{w})$,
such that $\pi=\oplus\pi_{w}$ is a representation on the Hilbert
space $H=\oplus H_{w}$.\end{proof}
\begin{thm}
$\mathfrak{A}$ abelian $C^{*}$-algebra. Then $\mathfrak{A}\cong C(X)$
where $X$ is a compact Hausdorff space.
\end{thm}
\end{thm}
\begin{rem*}
Richard Kadison in 1950's reduced the axioms of $C^{*}$-algebra from
about 6 down to just one on the $C^{*}$-norm, \[
\norm{A^{*}A}=\norm{A}^{2}.\]

\end{rem*}
\end{thm}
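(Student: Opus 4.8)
The plan is to identify $X$ with the \emph{Gelfand spectrum} of $\mathfrak{A}$: the set of all nonzero multiplicative linear functionals (characters) $\varphi\colon\mathfrak{A}\to\mathbb{C}$, given the weak-$*$ topology inherited from $\mathfrak{A}^{*}$. (I take $\mathfrak{A}$ unital, as in the preceding discussion; in the non-unital case one replaces $C(X)$ by $C_{0}(X)$.) First I would note that any character is automatically contractive: since $\varphi(1_{\mathfrak{A}})=1$ and $\varphi(A)\in sp(A)$ for every $A$ (because $A-\varphi(A)1_{\mathfrak{A}}\in\ker\varphi$ is not invertible), and since $\abs{\lambda}>\norm{A}$ forces $\lambda\notin sp(A)$, we get $\abs{\varphi(A)}\le\norm{A}$, so $\norm{\varphi}=1$. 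Hence $X\subset B_{1}^{*}$, which is weak-$*$ compact by the Banach--Alaoglu theorem proved above. One checks that $X\cup\{0\}$ is weak-$*$ closed in $B_{1}^{*}$ (the identities $\varphi(AB)=\varphi(A)\varphi(B)$ and $\varphi(1_{\mathfrak{A}})\in\{0,1\}$ survive pointwise limits), and that unitality isolates $X$ as the closed subset where $\varphi(1_{\mathfrak{A}})=1$; so $X$ is compact, and Hausdorff since the weak-$*$ topology is.

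Next I would introduce the Gelfand transform $\Gamma\colon\mathfrak{A}\to C(X)$, $\Gamma(A)=\hat{A}$, with $\hat{A}(\varphi):=\varphi(A)$. Each $\hat{A}$ is weak-$*$ continuous, so $\Gamma$ lands in $C(X)$, and $\Gamma$ is linear and multiplicative because every $\varphi$ is. I also want the identification $\hat{A}(X)=sp(A)$: this follows from the fact that in a commutative unital Banach algebra the maximal ideals are exactly the kernels of characters (via Gelfand--Mazur: $\mathfrak{A}/M$ is a Banach-algebra field, hence $\cong\mathbb{C}$), so $\lambda\in sp(A)$ iff $A-\lambda 1_{\mathfrak{A}}$ lies in a maximal ideal iff $\varphi(A)=\lambda$ for some $\varphi\in X$. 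The one genuinely $C^{*}$-algebraic point is that $\Gamma$ preserves the adjoint, i.e. $\varphi(A^{*})=\overline{\varphi(A)}$ for all $\varphi$; equivalently self-adjoint elements have real spectrum. For $A=A^{*}$ one deduces this from the norm axiom: the exponential series $e^{itA}=\sum_{n\ge0}(itA)^{n}/n!$ converges in $\mathfrak{A}$, $e^{itA}$ is unitary, so $\norm{e^{itA}}=1$, and $\varphi(e^{itA})=e^{it\varphi(A)}$ with $\abs{\varphi(e^{itA})}\le1$ for all real $t$ forces $\varphi(A)\in\mathbb{R}$. Writing a general $A=R+iS$ with $R,S$ self-adjoint then gives $\varphi(A^{*})=\varphi(R)-i\varphi(S)=\overline{\varphi(A)}$.

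With these in hand, $\Gamma$ is isometric. Iterating $\norm{A^{*}A}=\norm{A}^{2}$ on a self-adjoint $A$ gives $\norm{A^{2^{n}}}=\norm{A}^{2^{n}}$, so by the spectral radius formula $r(A)=\lim_{n}\norm{A^{n}}^{1/n}=\norm{A}$; since $\norm{\hat{A}}_{\infty}=\sup\{\abs{\lambda}:\lambda\in sp(A)\}=r(A)$, this is $\norm{\hat{A}}_{\infty}=\norm{A}$ for $A=A^{*}$. For general $A$, $\norm{\hat{A}}_{\infty}^{2}=\norm{\widehat{A^{*}A}}_{\infty}=\norm{A^{*}A}=\norm{A}^{2}$. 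Thus $\Gamma$ is injective with closed range (a complete space carried isometrically into a Banach space has closed image). Finally, surjectivity comes from Stone--Weierstrass: $\Gamma(\mathfrak{A})$ is a closed subalgebra of $C(X)$ containing the constants ($\Gamma(1_{\mathfrak{A}})=1$), stable under conjugation (since $\Gamma$ is a $*$-map), and point-separating (distinct characters differ on some $A$, so some $\hat{A}$ separates them); hence it is dense and closed, so equals $C(X)$, and $\Gamma$ is the desired isometric $*$-isomorphism.

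I expect the main obstacle to be the middle step: showing characters commute with the adjoint, equivalently that self-adjoint elements have real spectrum. The topological part (Banach--Alaoglu plus a closedness check) and the final Stone--Weierstrass packaging are routine, but reality of the spectrum is exactly where $\norm{A^{*}A}=\norm{A}^{2}$ is indispensable, and one must be careful there about the convergence of $e^{itA}$ in $\mathfrak{A}$ (which holds by completeness and submultiplicativity of the norm) and about $e^{itA}$ being unitary. A secondary technical point worth isolating is the Gelfand--Mazur step underlying $\hat{A}(X)=sp(A)$, since without it the isometry computation for self-adjoint elements has nothing to bite on.
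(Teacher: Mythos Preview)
Your proof is correct and complete. The paper does not supply a proof at the location of this theorem; it only sketches the construction of the Gelfand space $X$ later, in the subsection ``The Gelfand space $X$'' of the section on Choquet and Krein--Milman. That sketch takes the same route you do: identify $X$ with the maximal ideals, equivalently the multiplicative linear functionals, note that characters are contractive so $X$ sits in the unit ball of $\mathfrak{A}^{*}$, conclude compactness from Banach--Alaoglu, and define the Gelfand transform by $\mathcal{F}(a)(\varphi)=\varphi(a)$. The paper stops there with the statement $\mathfrak{A}/\ker\mathcal{F}\simeq C(X)$ ``mod the kernel for general Banach algebras,'' and never supplies the $C^{*}$-specific steps.

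What you add beyond the paper is exactly what upgrades the general Banach-algebra picture to an isometric $*$-isomorphism in the $C^{*}$ case: the reality-of-spectrum argument via unitarity of $e^{itA}$ (so that $\Gamma$ respects the involution), the spectral-radius computation $r(A)=\norm{A}$ for self-adjoint $A$ from the $C^{*}$ identity (so that $\Gamma$ is isometric and in particular injective), and the Stone--Weierstrass argument for surjectivity. These are the standard ingredients and your handling of them is sound; your identification of the $*$-preservation step as the crux is accurate, since that is precisely where the axiom $\norm{A^{*}A}=\norm{A}^{2}$ enters in an essential way.
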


\subsection{Finishing spectral theorem on Nelson}

We get a family of states $w_{j}\in S$, corresponding measures $\mu_{j}$,
and Hilbert spaces $H_{j}=L^{2}(\mu_{j})$. Note that all the $L^{2}$
spaces are on $K=sp(A)$. So it's the same underlying set, but with
possibly different measures.

To get a single measure space with $\mu$, Nelson suggests taking
the disjoint union \[
K^{DS}=\bigcup_{j}K\times\{j\}\]
and $\mu^{DS}$ is the disjoint union of $\mu_{j}'s$. The existance
of $\mu$ follows from Riesz. Then we get\[
H=\oplus H_{j}\mapsto L^{2}(K^{DS},\mu^{DS}).\]
Notice that this map is into but not onto.

The multiplicity theory starts with breaking up each $H_{j}$ into
irreducible components.

\subsection{Examples on disintegration}
\begin{example}
$L^{2}(I)$ with Lebesgue measure. Let \[
F_{x}(t)=\begin{cases}
1 & t\geq x\\
0 & t<x\end{cases}\]
$F_{x}$ is a monotone increasing function on $\mathbb{R}$, hence
by Riesz, we get the corresponding Riemann-Stieljes measure $dF_{x}$.
\[
d\mu=\int^{\oplus}dF_{x}(t)dx.\]
i.e.\[
\int fd\mu=\int dF_{x}(f)dx=\int f(x)dx.\]
Equivalently, \[
d\mu=\int\delta_{x}dx\]
i.e.\[
\int fd\mu=\int\delta_{x}(f)dx=\int f(x)dx.\]
$\mu$ is a state, $\delta_{x}=dF_{x}(t)$ is a pure state, $\forall x\in I$.
This is a decomposition of state into direct integral of pure states.
\begin{example}
$\Omega=\prod_{t\geq0}\bar{\mathbb{R}}$, $\Omega_{x}=\{w\in\Omega:w(0)=x\}$.
Kolmogorov gives rise to $P_{x}$ by conditioning $P$ with respect
to {}``starting at $x$''.\[
P=\int^{\oplus}P_{x}dx\]
i.e.\[
P()=\int P(\cdot|\text{start at }x)dx.\]

\begin{example}
Harmonic function on $D$ \[
h\mapsto h(z)=\int()d\mu_{z}\]
Poisson integration.
\end{example}
\end{example}
\end{example}

\subsection{Noncommutative Radon-Nicodym derivative}

Let $w$ be a state, $K$ is an operator.\[
w_{k}(A)=\frac{w(\sqrt{K}A\sqrt{K})}{w(K)}\]
is a state, and $w_{k}\ll w$, i.e. $w(A)=0\Rightarrow w_{K}(A)=0$.
$K=dw/dw_{K}$. 

Check:\begin{eqnarray*}
w_{K}(1) & = & 1\\
w_{K}(A^{*}A) & = & \frac{w(\sqrt{K}A^{*}A\sqrt{K})}{w(K)}\\
 & = & \frac{w((A\sqrt{K})^{*}(A\sqrt{K}))}{w(K)}\geq0\end{eqnarray*}
see Sakai - $C^{*}$ and $W^{*}$ algebras.

\section{Choquet, Krein-Milman, decomposition of states}

The main question here is how to break up a representation into smaller
ones. The smallest representations are the irreducible ones. The next
would be multiplicity free representation.

Let $\mathfrak{A}$ be an algebra. 
\begin{itemize}
\item commutative: e.g. function algebra
\item non-commutative: matrix algebra, algebras generated by representation
of non-abelian groups
\end{itemize}
Smallest representation:
\begin{itemize}
\item \emph{irreducible -} $\pi\in Rep_{irr}(\mathfrak{A},H)$ where $H$
is 1-dimensional. This is the starting point of further analysis
\item \emph{multiplicity free - $\pi\in Rep(\mathfrak{A},H)$ assuming cyclic,
since otherwise }$\pi=\oplus\pi_{cyc}$. $\pi$ is multiplicity free
if and only if $\mathfrak{A}'$ is abelian. In general $\mathfrak{A}'$
may or may not be abelian.
\end{itemize}
Let $\mathfrak{C}\subset B(H)$ be a $*$-algebra, $\mathfrak{C}'=\{X:H\rightarrow H|XC=CX,\forall C\in\mathfrak{C}\}$.
$\mathfrak{C}'$ is also a $*$-algebra. It is obvious that $\mathfrak{C}\subset\mathfrak{C}'$
if and only if $\mathfrak{C}$ is abelian.
\begin{defn}
Let $\pi\in Rep(\mathfrak{A},H)$. $multi(\pi)=n\Leftrightarrow\pi(\mathfrak{A})'\simeq M_{n}(\mathbb{C})$.\end{defn}
\begin{example*}
Let \[
A=\left[\begin{array}{ccc}
1\\
 & 1\\
 &  & 2\end{array}\right]=\left[\begin{array}{cc}
I_{2} & 0\\
0 & 2\end{array}\right].\]
Then $AC=CA$ if and only if \[
C=\left[\begin{array}{ccc}
a & b\\
c & d\\
 &  & 1\end{array}\right]=\left[\begin{array}{cc}
B & 0\\
0 & 1\end{array}\right]\]
where $B\in M_{2}(\mathbb{C})$.
\end{example*}

\subsection{Decomposition of states}

Let $\mathfrak{A}$ be a commutative $C^{*}$-algebra containing identity.
Gelfand-Naimark's theorem for commutative $C^{*}$-algebra says that
$\mathfrak{A}\simeq C(X)$, where $X$ is a compact Hausdorff space.
$X$ is called the Gelfand space or the spectrum of $\mathfrak{A}$.
In general, for Banach $*$-algebra, also get $X$.

What is $X$?

Let $\mathfrak{A}$ be a commutative Banach $*$-algebra. The states
$S(\mathfrak{A})$ is a compact convec non empty set in the dual $\mathfrak{A}^{*}$,
which is embedded into a closed subset of $\prod_{\mathfrak{A}}\bar{\mathbb{R}}$.
$w$ is a state if $w(1)=1$; $w(A^{*}A)\geq0$; and $w$ is linear.
Note that convex linear combination of states are also states. i.e.
if $w_{1},w_{2}$ are states, then $w=tw_{1}+(1-t)w_{2}$ is also
a state.
\begin{note*}
dual of a normed vector space has its unit ball being weak $*$-compact.\end{note*}
\begin{thm}
(Krein-Milman) Let $K$ be a compact convec set in a locally compac
topological space. Then $K$ is equal to the closed convex hull of
its extreme points. i.e.\[
K=\overline{conv}(E(K)).\]
\end{thm}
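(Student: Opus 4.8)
The plan is to follow the classical two-part strategy: first establish that $K$ has at least one extreme point --- in fact that every nonempty closed face of $K$ contains an extreme point of $K$ --- and then run a Hahn--Banach separation argument to rule out any point of $K$ lying outside $\overline{conv}(E(K))$.

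First I would introduce the notion of an \emph{extreme subset} (or face) of $K$: a nonempty closed subset $F\subseteq K$ such that whenever $x,y\in K$, $t\in(0,1)$ and $tx+(1-t)y\in F$, then already $x,y\in F$. The points to record are that $K$ is itself an extreme subset, that an extreme subset of an extreme subset of $K$ is again an extreme subset of $K$, and that the singleton extreme subsets are precisely the extreme points. I would order the collection of extreme subsets by reverse inclusion; a chain then has an upper bound, namely its intersection, which is closed, is again an extreme subset by a direct check, and is nonempty because $K$ is compact and the chain has the finite intersection property. Zorn's lemma then yields a minimal extreme subset $F_0$, and I would show $F_0$ is a single point: otherwise, picking distinct $p,q\in F_0$, the Hahn--Banach theorem provides a continuous real-linear functional $\ell$ with $\ell(p)\ne\ell(q)$; since $F_0$ is compact $\ell$ attains a maximum $m$ on it, and $\{x\in F_0:\ell(x)=m\}$ is a closed extreme subset of $F_0$ (hence of $K$) properly contained in $F_0$, contradicting minimality. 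Applying the same reasoning inside an arbitrary nonempty closed face $F$ shows $F$ contains an extreme point of $K$.

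Next I would set $C:=\overline{conv}(E(K))$ and observe $C\subseteq K$, since $K$ is closed, convex and contains $E(K)$. Assuming for contradiction that some $x_0\in K\setminus C$ exists, local convexity of the ambient space together with the Hahn--Banach separation theorem gives a continuous real-linear functional $\ell$ with $\ell(x_0)>\sup_{y\in C}\ell(y)$. Let $m:=\max_{z\in K}\ell(z)$, attained by compactness of $K$; since $m\ge\ell(x_0)>\sup_C\ell$, the face $F:=\{z\in K:\ell(z)=m\}$ is nonempty, closed and disjoint from $C$. By the first part $F$ contains an extreme point $e$ of $K$, but then $e\in E(K)\subseteq C$, contradicting $F\cap C=\varnothing$. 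Hence $K=C$.

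The step I expect to be the main obstacle is the first one: pinning down the right notion of face and verifying its stability properties --- especially that a face of a face is again a face, which is exactly what allows an extreme point manufactured inside $F$ to count as an extreme point of all of $K$ --- and checking that the intersection of a chain of faces is nonempty and still a face. The separation step is then mostly bookkeeping with Hahn--Banach, the one subtlety being that it is local convexity, rather than compactness alone, that delivers strict separation of the point $x_0$ from the closed convex set $C$.
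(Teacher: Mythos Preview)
Your argument is correct and is the standard full proof. The paper's own proof is only a one-line sketch of the separation half: assume $K\supsetneqq\overline{conv}(E(K))$, separate a point of $K\setminus\overline{conv}(E(K))$ from $\overline{conv}(E(K))$ by Hahn--Banach, and ``get a contradiction.'' It does not spell out where the contradiction comes from, and in particular it omits entirely the first half of your proof---the Zorn's lemma argument on faces establishing that extreme points exist and that every closed face contains one. Your write-up supplies exactly the missing machinery (faces, transitivity of the face relation, the finite-intersection-property step, and the functional-maximum trick to shrink a non-singleton face), and your separation step is the fleshed-out version of the paper's sketch. One minor remark: the theorem statement in the paper says ``locally compact,'' which is a slip for ``locally convex''; you correctly invoke local convexity for the strict separation.
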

\begin{note*}
The dual of a normed vector space is always a Banach space, so the
theorem applies. The convex hull in an infinite dimensional space
is not always closed, so close it. A good reference to locally convex
topological space is TVS by F. Treves.
\end{note*}
The decompsition of states into pure states was developed by R.Phelps
for representation theory. The idea goes back to Choquet.
\begin{thm}
(Choquet) Let $w\in K=S(\mathfrak{A})$, there exists a measure $\mu_{w}$
{}``concentrated'' on $E(K)$, such that for affine function $f$
\[
f(w)=\int_{"E(K)"}fd\mu_{w}.\]
\end{thm}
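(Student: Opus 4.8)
The plan is to realize $w$ as the barycenter of a probability measure living on $K$ and then to push that measure as far ``outward'' toward the extreme points as it will go. Here $K=S(\mathfrak{A})$ is a weak-$*$ compact convex subset of the locally convex space $\mathfrak{A}^{*}$. For $x\in K$ call a regular Borel probability measure $\mu$ on $K$ a \emph{representing measure} for $x$, and write $x=b(\mu)$ (the \emph{barycenter}), if $\ell(x)=\int_{K}\ell\,d\mu$ for every weak-$*$ continuous affine function $\ell$ on $K$; for each $\mu$ such a point exists and is unique, by compactness of $K$ and Hahn--Banach separation. The point mass $\delta_{w}$ represents $w$, so the set of representing measures for $w$ is a nonempty convex subset of the space $P(K)$ of all probability measures on $K$, and $P(K)$ is itself weak-$*$ compact: it is a weak-$*$ closed subset of the unit ball of $C(K)^{*}$, which is weak-$*$ compact by the result on unit balls proved above.

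The first real step is to order $P(K)$ by the Choquet (dilation) order: $\mu\prec\nu$ iff $\int_{K}f\,d\mu\le\int_{K}f\,d\nu$ for every continuous convex function $f:K\to\mathbb{R}$. Since an affine function is both convex and concave, $\mu\prec\nu$ implies $b(\mu)=b(\nu)$; and Jensen's inequality $f(b(\mu))\le\int f\,d\mu$ (valid because a continuous convex $f$ is the supremum of its affine minorants) shows $\delta_{w}\prec\mu$ for \emph{every} representing measure $\mu$ of $w$. Hence the set of representing measures of $w$ is precisely $R_{w}:=\{\mu\in P(K):\delta_{w}\prec\mu\}$. I would then check, using weak-$*$ compactness of $P(K)$, that every $\prec$-chain in $R_{w}$ has an upper bound in $R_{w}$ — take a weak-$*$ cluster point of the chain and verify, one continuous convex $f$ at a time, that it dominates every member — and invoke Zorn's lemma to produce a $\prec$-maximal $\mu_{w}\in R_{w}$. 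By construction $\mu_{w}$ represents $w$, i.e. $f(w)=\int_{K}f\,d\mu_{w}$ for continuous affine $f$.

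The heart of the theorem is that a $\prec$-maximal probability measure is ``concentrated'' on $E(K)$. The tool is the upper envelope $\hat f(x):=\inf\{h(x):h\text{ continuous affine},\ h\ge f\}$ of a continuous convex $f$; Hahn--Banach gives $\hat f\ge f$, $\hat f$ is concave and upper semicontinuous, and the identity $\hat f(x)=\sup\{\int f\,d\nu:\nu\in P(K),\ b(\nu)=x\}$ shows (Bauer) that $\hat f=f$ exactly on a set whose intersection over all continuous convex $f$ equals $E(K)$. A variational argument — if $\int\hat f\,d\mu_{w}>\int f\,d\mu_{w}$ one can ``split'' the mass of $\mu_{w}$ at points where $f<\hat f$ and obtain a strictly larger measure in the $\prec$-order, contradicting maximality — yields $\int_{K}\hat f\,d\mu_{w}=\int_{K}f\,d\mu_{w}$, hence $\hat f=f$ holds $\mu_{w}$-a.e., for every continuous convex $f$. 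When $K$ is metrizable, choose a countable weak-$*$-separating family of affine functions $\ell_{n}$ with $\norm{\ell_{n}}\le1$ and set $f_{0}=\sum_{n}2^{-n}\ell_{n}^{2}$; this is continuous and \emph{strictly} convex, so $E(K)=\{f_{0}=\hat f_{0}\}$ is a Borel ($G_{\delta}$) set, $\mu_{w}$ is carried by it, and $f(w)=\int_{E(K)}f\,d\mu_{w}$ for affine $f$, as asserted.

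The main obstacle is this last point in the non-metrizable generality in which $S(\mathfrak{A})$ usually occurs: then $E(K)$ need not be Borel — which is precisely why both ``concentrated'' and ``$E(K)$'' carry quotation marks in the statement. The way out (Bishop--de Leeuw) is to reinterpret the conclusion as: $\mu_{w}$ vanishes on every Baire subset of $K$ disjoint from $E(K)$, equivalently $\mu_{w}^{*}(E(K))=1$ in outer measure, and to run the envelope argument against the whole directed family of continuous convex functions rather than a single strictly convex one. Carrying out that measure-theoretic bookkeeping, and verifying that the $\prec$-order is stable under the weak-$*$ limits used in the Zorn step, is where the genuine work lies; the remaining ingredients — Jensen, Hahn--Banach, and the weak-$*$ compactness already in hand — are routine.
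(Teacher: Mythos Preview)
Your sketch is a correct outline of the standard proof via the Choquet ordering and maximal measures, essentially as in Phelps's monograph. The paper, however, does not prove this theorem at all: it only states it, remarks that ``The proof of Choquet's theorem is not specially illuminating. It uses standard integration theory,'' refers the reader to Phelps, and then proves Krein--Milman instead. So there is nothing to compare against; you have supplied what the paper deliberately omits.

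One small point worth flagging: in the Zorn step you should be explicit that a weak-$*$ cluster point of a $\prec$-chain lies in $P(K)$ and dominates each member because the functionals $\mu\mapsto\int f\,d\mu$ for continuous convex $f$ are weak-$*$ continuous, so the inequalities pass to the limit; you allude to this but it is the one place a grader might press. Your handling of the non-metrizable case via Bishop--de Leeuw matches the paper's own caveat (in the note following the theorem) that $E(K)$ need not be Borel and one replaces it by a Borel superset of full $\mu_{w}$-measure.
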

\begin{note*}
$E(K)$ may not be Borel. In this case replace $E(K)$ be Borel set
$V\supset E(K)$ s.t.\[
\mu_{w}(V\backslash E(K))=0.\]
This is a theorem of Glimm. Examples for this case include the Cuntz
algebra, free group with 2 generators, $uvu^{-1}=u^{2}$ for wavelets.
\begin{note*}
$\mu_{w}$ may not be unique. If it is unique, $K$ is called a simplex.
The unit disk has its boundary as extreme points. But representation
of points in the interior using points on the boundary is not unique.
Therefore the unit disk is not a simplex. A triangle is. 
\end{note*}
\end{note*}
\begin{proof}
(Krein-Milman) If $K\supsetneqq\overline{conv}(E(K))$, get a linear
functional $w$, such that $w$ is zero on $\overline{conv}(E(K))$
and not zero on $w\in K\backslash\overline{conv}(E(K))$. Extend by
Hahn-Banach theorem to a linear functional to the whole space, and
get a contradiction.
\end{proof}

\subsection{The Gelfand space $X$}

Back to the question of what the Gelfand space $X$ is.

Let $\mathfrak{A}$ be a commutative Banach $*$-algebra. Consider
the closed ideals in $\mathfrak{A}$ (since $\mathfrak{A}$ is normed,
so consider closed ideals) ordered by inclusion. By zorn's lemma,
there exists maximal ideals $M$. $\mathfrak{A}/M$ is 1-dimensional,
hence $\mathfrak{A}/M=\{tv\}$ for some $v\in\mathfrak{A}$ and $t\in\mathbb{R}$.
Therefore\[
\mathfrak{A}\rightarrow\mathfrak{A}/M\rightarrow\mathbb{C}\]
the combined map $\varphi:a\mapsto a/M\mapsto t_{a}$ is a homomorphism.
$\mathfrak{A}\ni1\mapsto v:=1/M\in\mathfrak{A}/M$ then $\varphi(1)=1$. 

Conversely, the kernel of a homomorphism $\varphi$ is a maximal ideal
in $\mathfrak{A}$. Therefore there is a bijection between maximal
ideas and homomorphisms. Note that if $\varphi:\mathfrak{A}\rightarrow\mathbb{C}$
is a homomorphism then it has to be a contraction.

Let $X$ be the set of all maximal ideals $\sim$ all homomorphisms
in $\mathfrak{A}_{1}^{*}$, where $\mathfrak{A}_{1}^{*}$ is the unit
ball in $\mathfrak{A}^{*}$. Since $\mathfrak{A}_{1}^{*}$ is compact,
$X$ is closed in it, $X$ is also compact.

The Gelfand transform $\mathcal{F}:\mathfrak{A}\rightarrow C(X)$
is defined by\[
\mathcal{F}(a)(\varphi)=\varphi(a)\]
then \[
\mathfrak{A}/ker\mathcal{F}\simeq C(X)\]
(mod the kernel for general Banach algebras)
\begin{example}
$l^{1}(\mathbb{Z})$, the convolution algebra. \begin{eqnarray*}
(ab)_{n} & = & \sum_{k}a_{k}b_{n-k}\\
a_{n}^{*} & = & \bar{a}_{-n}\\
\norm{a} & = & \sum_{n}\abs{a_{n}}\end{eqnarray*}
To identity $X$ in practice, always start with a guess, and usually
it turns out to be correct. Since Fourier transform converts convolution
to multiplication, \[
\varphi_{z}:a\mapsto\sum a_{n}z^{n}\]
is a complext homormorphism. To see $\varphi_{z}$ is multiplicative,
\begin{eqnarray*}
\varphi_{z}(ab) & = & \sum(ab)_{n}z^{n}\\
 & = & \sum_{n,k}a_{k}b_{n-k}z^{n}\\
 & = & \sum_{k}a_{k}z^{k}\sum_{n}b_{n-k}z^{n-k}\\
 & = & \left(\sum_{k}a_{k}z^{k}\right)\left(\sum_{k}b_{k}z^{k}\right).\end{eqnarray*}
Thus $\{z:\abs{z}=1\}$ is a subspace in the Gelfand space $X$. Note
that we cannot use $\abs{z}<1$ since we are dealing with two-sided
$l^{1}$ sequence. If the sequences were trancated, so that $a_{n}=0$
for $n<0$ then we allow $\abs{z}<1$. 

$\varphi_{z}$ is contractive: $\abs{\varphi_{z}(a)}=\abs{\sum a_{n}z^{n}}\leq\sum_{n}\abs{a_{n}}=\norm{a}$. 

It turns out that every homorphism is obtained as $\varphi_{z}$ for
some $\abs{z}=1$, hence $X=\{z:\abs{z}=1\}$.
\begin{example}
$l^{\infty}(\mathbb{Z})$, with $\norm{a}=\sup_{n}\abs{a_{n}}$. The
Gelfand space in this case is $X=\beta\mathbb{Z}$, the Stone-Cech
compactification of $\mathbb{Z}$, which are the ultrafilters on $\mathbb{Z}$.
Pure states on diagonal operators correspond to $\beta\mathbb{Z}$.
$\beta\mathbb{Z}$ is much bigger then $p-$adic numbers. 
\end{example}
\end{example}

\section{Beginning of multiplicity}

We study Nelson's notes. Try to get the best generalization from finite
dimensional linear algebra. Nelson's idea is to get from self-adjoint
operators $\rightarrow$ cyclic representation of function algebra
$\rightarrow$ measure $\mu$ $\rightarrow$ $L^{2}(\mu)$.

Start with a single self-adjoint operator $A$ acting on an abstract
Hilbert space $H$. $H$ can be broken up into a direct sum of mutually
disjoint cyclic spaces.

Let $u\in H$. $\{f(A)u\}$, $f$ runs through some function algebra,
generates a subspace $H_{u}\subset H$. The funciton algebra might
be taken as the algebra of polynomials, then later it is extended
to a much bigger algebra contaning polynimials as a dense sub-algebra. 

The map $f\mapsto w_{u}(f):=\iprod{u}{f(A)u}$ is a state on polynomials,
and it extends to a state on $C_{c}(\mathbb{R})$. By Riesz, there
exists a unique mesure $\mu_{u}$ such that\[
w_{u}(f)=\iprod{u}{f(A)u}=\int_{\mathbb{R}}fd\mu\]
It turns out that $\mu_{w}$ is supported on $[0,\norm{A}]$, assuming
$A\geq0$. If $A$ is not positive, it can be written as the positive
part and the negative part. Therefore we get $L^{2}(\mu)$, a Hilbert
space containing polyomials as a dense subspace. Let $H_{u}=\overline{span}\{f(A)u\}$
for all polynomials $f$. Define $W:H_{u}\rightarrow L^{2}(\mu)$,
such that $Wf(A)u=f$.
\begin{lem}
(1) $W$ is well-defined, isometric; (2) $WA=M_{t}W$, i.e. $W$ intertwines
$A$ and $M_{t}$, and $W$ diagnolizes $A$. 
\begin{note*}
$WA=M_{t}W\Leftrightarrow WAW^{*}=M_{t}$. In finite dimension, it
is less emphasized that the adjoint $W^{*}$ equals the inverse $W^{-1}$.
For finite dimensional case, $M_{t}=diag(\lambda_{1},\lambda_{2},\ldots\lambda_{n})$
where the measure $\mu=\sum\delta_{\lambda_{i}}$.\end{note*}
\begin{proof}
For (1),\begin{eqnarray*}
\int\abs{f}^{2}d\mu & = & \iprod{u}{\abs{f}^{2}(A)u}\\
 & = & \iprod{u}{\bar{f}(A)f(A)u}\\
 & = & \iprod{u}{f(A)^{*}f(A)u}\\
 & = & \iprod{f(A)u}{f(A)u}\\
 & = & \norm{f(A)u}^{2}.\end{eqnarray*}
Notice that strictly speaking, $f(A^{*})^{*}=\bar{f}(A)$. Since $A^{*}=A$,
therefore $f(A^{*})^{*}=\bar{f}(A)$. $\pi(f)=f(A)$ is a representation.
i.e. $\pi(\bar{f})=f(A)^{*}$.

For (2), let $f$ be a polynomial, where $f(t)=a_{0}+a_{1}t+\cdots a_{n}t^{n}$.
Then \begin{eqnarray*}
WAf(A) & = & WA(a_{0}+a_{1}A+a_{2}A^{2}+\cdots+a_{n}A^{n})\\
 & = & W(a_{0}A+a_{1}A^{2}+a_{2}A^{3}+\cdots+a_{n}A^{n+1})\\
 & = & a_{0}t+a_{1}t^{2}+a_{2}t^{3}+\cdots+a_{n}t^{n+1}\\
 & = & tf(t)\\
 & = & M_{t}Wf(A)\end{eqnarray*}
thus $WA=M_{t}W$.
\end{proof}
\end{lem}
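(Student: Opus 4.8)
The plan is to verify the two assertions by the same computation that produced the measure $\mu$, now pushed through the polynomial functional calculus. First I would isolate the one identity on which everything rests: since $A=A^{*}$, the assignment $f\mapsto f(A)$ is a $*$-representation of the algebra of polynomials, so $f(A)^{*}=\bar f(A)$ and hence $f(A)^{*}f(A)=\abs{f}^{2}(A)$. Feeding this into the defining property $\iprod{u}{g(A)u}=\int g\,d\mu$ of $\mu$, taken at $g=\abs{f}^{2}$, gives
\[
\norm{f(A)u}^{2}=\iprod{f(A)u}{f(A)u}=\iprod{u}{\abs{f}^{2}(A)u}=\int\abs{f}^{2}\,d\mu=\norm{f}_{L^{2}(\mu)}^{2}.
\]
This single line does most of the work for part (1). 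It shows first that $W$ is well defined: if $f(A)u=g(A)u$ then $\norm{f-g}_{L^{2}(\mu)}=\norm{(f-g)(A)u}=0$, so $f=g$ as elements of $L^{2}(\mu)$, and the prescription $Wf(A)u:=f$ descends to the subspace spanned by the vectors $f(A)u$. Second, it shows $W$ is isometric on that dense subspace of $H_{u}$, hence extends uniquely to an isometry $W\colon H_{u}\to L^{2}(\mu)$. To upgrade ``isometry'' to ``unitary'' (so the word ``diagonalizes'' is literally justified) I would observe that $\mu$ is supported on the compact set $[0,\norm{A}]$, so by Stone-Weierstrass the polynomials are dense in $C[0,\norm{A}]$ and therefore dense in $L^{2}(\mu)$; since the range of the isometry $W$ is closed and contains all polynomials, it is all of $L^{2}(\mu)$.

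For part (2) the key observation is that multiplication by $A$ corresponds, at the level of polynomials, to multiplication by the independent variable: if $f$ is a polynomial then $Af(A)=g(A)$ with $g(t)=tf(t)$ again a polynomial, so
\[
WAf(A)u=Wg(A)u=g=M_{t}f=M_{t}Wf(A)u.
\]
Thus $WA=M_{t}W$ holds on the dense subspace $\{f(A)u\}$. Since $A$ is bounded and $M_{t}$ is bounded on $L^{2}(\mu)$ (again because $\mu$ lives on $[0,\norm{A}]$), both sides are bounded operators agreeing on a dense set, so the identity extends by continuity to all of $H_{u}$; rewriting it as $WAW^{*}=M_{t}$ exhibits $W$ as diagonalizing $A|_{H_{u}}$.

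The only genuinely delicate point is the well-definedness in part (1): the formula defining $W$ is prescribed on representatives rather than on vectors, so one must check it is independent of the choice. The displayed isometry identity is precisely what makes this work, which is why I would establish isometry and well-definedness together rather than as separate steps. Everything else — extension by continuity, density of polynomials in $L^{2}(\mu)$, boundedness of $M_{t}$ — is routine once one recalls that $A$ is bounded, so that all operators in sight are bounded and $\mu$ has compact support; if $A$ is merely self-adjoint one first reduces to this case by splitting $A$ into its positive and negative parts, as indicated in the text.
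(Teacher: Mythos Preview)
Your argument is correct and follows essentially the same route as the paper: the isometry identity $\norm{f(A)u}^{2}=\int\abs{f}^{2}d\mu$ via $f(A)^{*}f(A)=\abs{f}^{2}(A)$ for part~(1), and the observation $Af(A)=(tf)(A)$ for part~(2). You are more explicit than the paper in two respects: you spell out why the isometry identity implies well-definedness (the paper's proof displays only the isometry computation), and you supply the surjectivity of $W$ via Stone--Weierstrass and the boundedness/continuity extension for $WA=M_{t}W$, neither of which the paper writes down, though the lemma as stated only claims ``isometric'' rather than ``unitary.''
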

The whole Hilbert space $H$ then decomposes into a direct sum of
mutually orthogonal cyclic subspaces, each one is unitarily equivalent
to $L^{2}(\mu_{u})$ for some cyclic vector $u$. This representation
of $L^{\infty}$ onto $H=\sum^{\oplus}H_{u}$ is highly non unique.
There we enter into the multiplicity theory.

Each cyclic representation is multiplicity free. i.e. $\pi\in Rep(L^{\infty}(\mu),H_{u})$
is multiplicity if and only if $(\pi(L^{\infty}))'$ is abelian. In
general, $(\pi(L^{\infty}))'$ is not abelian, and we say $\pi$ has
multiplicity equal to $n$ if and only if $(\pi(L^{\infty}))'\simeq M_{n}(\mathbb{C})$.
This notation of multiplicity free generalizes the one in finite dimensional
linear algebra.
\begin{example}
$M_{t}:L^{2}[0,1]\rightarrow L^{2}[0,1]$, $M_{t}$ has no eigenvalues.
\begin{example}
$M_{\varphi}:L^{2}(\mu)\rightarrow L^{2}(\mu)$, with $\varphi\in L^{\infty}(\mu)$.
Claim: $\pi\in Rep(L^{\infty}(\mu),L^{2}(\mu))$ is multiplicity free,
i.e. it is maximal abelian. Suppose $B$ commutes with all $M_{\varphi}$,
define $g=B1$. Then\[
B\psi=B\psi1=BM_{\psi}1=M_{\psi}B1=M_{\psi}g=\psi g=g\psi=M_{g}\psi\]
thus $B=M_{g}$.
\end{example}
\end{example}
Examples that do have multiplicties in finite dimensional linear algebra:
\begin{example}
2-d, $\lambda I$, $\{\lambda I\}'=M_{2}(\mathbb{C})$ which is not
abelian. Hence $mult(\lambda)=2$.
\begin{example}
3-d, \[
\left[\begin{array}{ccc}
\lambda_{1}\\
 & \lambda_{1}\\
 &  & \lambda_{2}\end{array}\right]=\left[\begin{array}{cc}
\lambda_{1}I\\
 & \lambda_{2}\end{array}\right]\]
where $\lambda_{1}\neq\lambda_{2}$. The commutatant is\[
\left[\begin{array}{cc}
B\\
 & b\end{array}\right]\]
 where $B\in M_{2}(\mathbb{C})$, and $b\in\mathbb{C}$. Therefore
the commutant is isomorphic to $M_{2}(\mathbb{C})$, and multiplicity
is equal to 2.
\begin{example}
The example of $M_{\varphi}$ with repeteation. \[
M_{\varphi}\oplus M_{\varphi}:L^{2}(\mu)\oplus L^{2}(\mu)\rightarrow L^{2}(\mu)\oplus L^{2}(\mu)\]
\[
\left[\begin{array}{cc}
M_{\varphi}\\
 & M_{\varphi}\end{array}\right]\left[\begin{array}{c}
f_{1}\\
f_{2}\end{array}\right]=\left[\begin{array}{c}
\varphi f_{1}\\
\varphi f_{2}\end{array}\right]\]
the commutant is this case is isomorphic to $M_{2}(\mathbb{C})$.
If we introduces tensor product, then representation space is also
written as$L^{2}(\mu)\otimes V_{2}$, the multiplication operator
is amplified to $M_{\varphi}\otimes I$, whose commutant is represented
as $I\otimes V_{2}$. Hence it's clear that the commutatant is isomorphic
to $M_{2}(\mathbb{C})$. To check\begin{eqnarray*}
(\varphi\otimes I)(I\otimes B) & = & \varphi\otimes B\\
(I\otimes B)(\varphi\otimes I) & = & \varphi\otimes B.\end{eqnarray*}

\end{example}
\end{example}
\end{example}

\section{Completely positive maps}

The GNS construction gives a bijection between states and cyclic representations.
An extention to the GNS construction is Stinespring's completely positive
map. It appeared in an early paper by Stinspring in 1956 (PAMS). Arveson
in 1970's (Arveson 1972) reformuated Stingspring's result using tensor
product. He realized that complemetly positive maps are the key in
multivariable operator theory, and in noncommutative dynamics.

\subsection{Motivations}

Let $\mathfrak{A}$ be a $*$-algebra with identity. $w:\mathfrak{A}\rightarrow\mathbb{C}$
is a a state if $w(1)=1$, $w(A^{*}A)\geq0$. If $\mathfrak{A}$ was
a $C^{*}$-algebra, $A\geq0\Leftrightarrow sp(A)\geq0$, hence may
take $B=\sqrt{A}$ and $A=B^{*}B$. Given a state $w$, the GNS construction
gives a Hilbert space $K$, a cyclic vector $\Omega\in K$, and a
representation $\pi:\mathfrak{A}\rightarrow B(K)$, such that \[
w(A)=\iprod{\Omega}{\pi(A)\Omega}\]
\[
K=\overline{span}\{\pi(A)\Omega:A\in\mathfrak{A}\}\]
Moreover, the Hilbert space is unique up to unitary equivalence.

Stinespring modified a single axiom in the GNS construction. In stead
of a state $w:\mathfrak{A}\rightarrow\mathbb{C}$, Stinespring considered
a positive map $\varphi:\mathfrak{A}\rightarrow B(H)$, i.e. $\varphi$
maps positive elements in $\mathfrak{A}$ to positive operators in
$B(H)$. $\varphi$ is a natural extention of $w$, since $\mathbb{C}$
can be seen as a 1-dimensional Hilbert space, and $w$ is a positive
map $w:\mathfrak{A}\rightarrow B(\mathbb{C})$. He soon realized that
the $\varphi$ being a positive map is not enough to produce a Hilbert
space and a representation. It turns out that the condition to put
on $\varphi$ is complete positivity, in the sense that, for all $n\in\mathbb{N}$
\begin{equation}
\varphi\otimes I_{M_{n}}:\mathfrak{A}\otimes M_{n}\rightarrow B(H\otimes\mathbb{C}^{n})\label{eq:CP}\end{equation}
maps positive elements in $\mathfrak{A}\otimes M_{n}$ to positive
operators in $B(H\otimes\mathbb{C}^{n})$. $\varphi$ is called a
completely positive map, or a CP map. CP maps are developed primarily
for nonabelian algebras.

The algebra $M_{n}$ of $n\times n$ matrices can be seen as an $n^{2}$-dimensional
Hilbert space with an ONB given by the matrix units $\{e_{ij}\}_{i,j=1}^{n}$.
It is also a $*$-algebra generated by $\{e_{ij}\}_{i,j=1}^{n}$ such
that \[
e_{ij}e_{kl}=\begin{cases}
e_{il} & j=k\\
0 & j\neq k\end{cases}\]
Members of $\mathfrak{A}\otimes M_{n}$ are of the form\[
\sum_{i,j}A_{ij}\otimes e_{ij}.\]
In other words, $\mathfrak{A}\otimes M_{n}$ consists of precisely
the $\mathfrak{A}$-valued $n\times n$ matrices. Similarly, members
of $H\otimes\mathbb{C}^{n}$ are the $n$-tuple column vectors with
$H$-valued entries.

Let $I_{M_{n}}:M_{n}\rightarrow B(\mathbb{C}^{n})$ be the identity
representation of $M_{n}$ onto $B(\mathbb{C}^{n})$. Then, 

\[
\varphi\otimes I_{M_{n}}:\mathfrak{A}\otimes M_{n}\rightarrow B(H)\otimes B(\mathbb{C}^{n})=B(H\otimes\mathbb{C}^{n})\]
\[
\varphi\otimes I_{M_{n}}(\sum_{i,j}A_{ij}\otimes e_{ij})=\sum_{i,j}\varphi(A_{ij})\otimes e_{ij}\]
where the right-hand-side is an $n\times n$ $B(H)$-valued matrix. 
\begin{note}
The algebra $B(\mathbb{C}^{n})$ of bounded operators on $\mathbb{C}^{n}$
is generated by the rank-one operators $I_{M_{n}}(e_{ij})=\ketbra{e_{i}}{e_{j}}$.
Hence the $e_{ij}$ on the left-hand-side is seen as an element in
the algebra $M_{n}$ of $n\times n$ matrices, while on the right-hand-side,
$e_{ij}$ is seen as the rank one operator $\ketbra{e_{i}}{e_{j}}$
$B(\mathbb{C}^{n})$. With Dirac's notation, when we look at $e_{ij}$
as operators, \[
e_{i,j}(e_{k})=\ketbra{e_{i}}{e_{j}}\:\ket{e_{k}}=\begin{cases}
\ket{e_{i}} & j=k\\
0 & j\neq k\end{cases}\]
\[
e_{i,j}e_{kl}=\ketbra{e_{i}}{e_{j}}\;\ketbra{e_{k}}{e_{l}}=\begin{cases}
\ketbra{e_{i}}{e_{l}} & j=k\\
0 & j\neq k\end{cases}\]
which also proves that $I_{M_{n}}$ is in fact an algebra isomorphism.
\end{note}
The complete positivity condition in (\ref{eq:CP}) is saying that
if $\sum_{i,j}A_{ij}\otimes e_{ij}$ is a positive element in the
algebra $\mathfrak{A}\otimes M_{n}$, then the $n\times n$ $B(H)$-valued
matrix $\sum_{i,j}\varphi(A_{ij})\otimes e_{ij}$ is a positive operator
acting on the Hilbert space $H\otimes\mathbb{C}^{n}$, i.e. take any
$v=\sum_{k}v_{k}\otimes e_{k}$ in $H\otimes\mathbb{C}^{n}$,\begin{eqnarray*}
 &  & \iprod{\sum_{l}v_{l}\otimes e_{l}}{(\sum_{i,j}\varphi(A_{ij})\otimes e_{ij})(\sum_{k}v_{k}\otimes e_{k})}\\
 & = & \iprod{\sum_{l}v_{l}\otimes e_{l}}{\sum_{i,j,k}\varphi(A_{ij})v_{k}\otimes e_{ij}(e_{k})}\\
 & = & \iprod{\sum_{l}v_{l}\otimes e_{l}}{\sum_{i,j}\varphi(A_{ij})v_{j}\otimes e_{i}}\\
 & = & \sum_{i,j,l}\iprod{v_{l}}{\varphi(A_{ij})v_{j}}\iprod{e_{l}}{e_{i}}\\
 & = & \sum_{i,j}\iprod{v_{i}}{\varphi(A_{ij})v_{j}}\geq0.\end{eqnarray*}
The CP condition is illustrated in the following diagram. \[
\otimes\begin{cases}
\mathfrak{A}\rightarrow B(H): & A\mapsto\varphi(A)\\
M_{n}\rightarrow M_{n}: & x\mapsto I_{M_{n}}(X)=X\mbox{\,\ (identity representation of }M_{n})\end{cases}\]
The CP condition can be formulated more conveniently using matrix
notation: if the operator matrix $(A_{ij})\in\mathfrak{A}\otimes M_{n}$
is positive, the corresponding operator matrix $(\varphi(A_{ij}))$
is a positive operator in $B(H\otimes\mathbb{C}^{n})$; i.e. for all
$v\in H\otimes\mathbb{C}^{n}$,\begin{eqnarray*}
 &  & \iprod{\sum_{l}v_{l}\otimes e_{l}}{(\sum_{i,j}\varphi(A_{ij})\otimes e_{ij})(\sum_{k}v_{k}\otimes e_{k})}\\
 & = & \left[\begin{array}{cccc}
v_{1} & v_{2} & \cdots & v_{n}\end{array}\right]\left[\begin{array}{cccc}
\varphi(A_{11}) & \varphi(A_{12}) & \cdots & \varphi(A_{1n})\\
\varphi(A_{21}) & \varphi(A_{22}) & \cdots & \varphi(A_{2n})\\
\vdots & \vdots & \ddots & \vdots\\
\varphi(A_{n1}) & \varphi(A_{n2}) & \cdots & \varphi(A_{nn})\end{array}\right]\left[\begin{array}{c}
v_{1}\\
v_{2}\\
\vdots\\
v_{n}\end{array}\right]\geq0.\end{eqnarray*}

\subsection{CP v.s. GNS}

The GNS construction can be reformulated as a special case of the
Stinespring's theorem. Let $\mathfrak{A}$ be a $*$-algebra, given
a state $w:\mathfrak{A}\rightarrow\mathbb{C}$, there exists a triple
$(K_{w},\Omega_{w},\pi_{w})$, such that \[
w(A)=\iprod{\Omega_{w}}{\pi(A)\Omega_{w}}_{w}\]
\[
K_{w}=\overline{span}\{\pi(A)\Omega_{w}:A\in\mathfrak{A}\}.\]
The unit cyclic vector $\Omega\in K_{w}$ generates a one-dimensional
subspace $\mathbb{C}\Omega_{w}\subset K_{w}$. The 1-dimensional Hilbert
space $\mathbb{C}$ is thought of being embedded into $K_{w}$ (possibly
infinite dimensional) via the map $V:\mathbb{C}\mapsto\mathbb{C}\Omega_{w}$. 
\begin{lem}
$V$ is an isometry. $V^{*}V=I_{\mathbb{C}}:\mathbb{C}\rightarrow\mathbb{C}$,
$VV^{*}:K_{w}\rightarrow\mathbb{C}\Omega_{w}$ is the projection from
$K_{w}$ onto the 1-d subspace $\mathbb{C}\Omega$ in $K_{w}$. 
\begin{proof}
Let $t\in\mathbb{C}$, then $\norm{Vt}_{w}=\norm{t\Omega_{w}}_{w}=\abs{t}$.
Hence $V$ is an isometry. For all $\xi\in K_{w}$, \[
\iprod{\xi}{Vt}_{w}=\iprod{V^{*}\xi}{t}_{\mathbb{C}}=t\overline{V^{*}\xi}\Longleftrightarrow tV^{*}\xi=\overline{\iprod{\xi}{Vt}_{w}}\]
by setting $t=1$, we get\[
V^{*}\xi=\overline{\iprod{\xi}{V1}_{w}}=\overline{\iprod{\xi}{\Omega_{w}}_{w}}=\iprod{\Omega_{w}}{\xi}_{w}\Longleftrightarrow V^{*}=\iprod{\Omega_{w}}{\cdot}\]
Therefore, \[
V^{*}Vt=V^{*}(t\Omega_{w})=\iprod{\Omega_{w}}{t\Omega_{w}}=t,\;\forall t\in\mathbb{C}\Longleftrightarrow V^{*}V=I_{\mathbb{C}}\]
\[
VV^{*}\xi=V(\iprod{\Omega_{w}}{\xi}_{w})=\iprod{\Omega_{w}}{\xi}_{w}\Omega_{w},\;\forall\xi\in K_{w}\Longleftrightarrow VV^{*}=\ketbra{\Omega_{w}}{\Omega_{w}}.\]

\end{proof}
\end{lem}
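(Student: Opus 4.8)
The plan is to read everything off the definition of $V$ — the embedding $t\mapsto t\,\Omega_{w}$ of $\mathbb{C}$ onto the line $\mathbb{C}\Omega_{w}\subset K_{w}$ — together with the single fact that the GNS cyclic vector is normalized, $\norm{\Omega_{w}}_{w}=1$ (it is the class of $1_{\mathfrak{A}}$ and $w(1)=1$). First I would check that $V$ is isometric: for $t\in\mathbb{C}$, $\norm{Vt}_{w}=\norm{t\,\Omega_{w}}_{w}=\abs{t}\,\norm{\Omega_{w}}_{w}=\abs{t}$, which is the modulus of $t$ viewed as a vector in $\mathbb{C}$. This is immediate once the normalization of $\Omega_{w}$ is in place.

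Next I would compute the Hilbert-space adjoint $V^{*}\colon K_{w}\to\mathbb{C}$ from the defining relation $\iprod{\xi}{Vt}_{w}=\iprod{V^{*}\xi}{t}_{\mathbb{C}}$. Expanding the left-hand side by linearity in the second slot gives $t\,\iprod{\xi}{\Omega_{w}}_{w}$, while the right-hand side is $t\,\overline{V^{*}\xi}$; comparing for all $t$ and conjugating yields $V^{*}\xi=\iprod{\Omega_{w}}{\xi}_{w}$, i.e.\ $V^{*}=\bra{\Omega_{w}}$ in Dirac notation. With $V$ and $V^{*}$ explicit, the two operator identities are one-line compositions: $V^{*}Vt=V^{*}(t\,\Omega_{w})=\iprod{\Omega_{w}}{t\,\Omega_{w}}_{w}=t$, so $V^{*}V=I_{\mathbb{C}}$; and $VV^{*}\xi=V\!\bigl(\iprod{\Omega_{w}}{\xi}_{w}\bigr)=\iprod{\Omega_{w}}{\xi}_{w}\,\Omega_{w}$, which is precisely the rank-one self-adjoint projection $\ketbra{\Omega_{w}}{\Omega_{w}}$ onto $\mathbb{C}\Omega_{w}$ — this is the formula from the earlier lemma on rank-one projections, applicable because $\norm{\Omega_{w}}_{w}=1$. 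Hence $VV^{*}$ is the orthogonal projection of $K_{w}$ onto the one-dimensional subspace $\mathbb{C}\Omega_{w}$, as claimed.

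I do not expect a genuine obstacle: the statement is a formal consequence of the definitions of $V$ and of the adjoint, plus $\norm{\Omega_{w}}_{w}=1$. The only place where care is needed is the sesquilinearity convention — in these notes the inner product is linear in the second variable — which fixes on which side the conjugation bars fall in the formula for $V^{*}$; the opposite convention would interchange $\iprod{\Omega_{w}}{\cdot}$ and $\iprod{\cdot}{\Omega_{w}}$ but leave the operators $V^{*}V$ and $VV^{*}$ unchanged.
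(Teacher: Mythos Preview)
Your proposal is correct and follows essentially the same route as the paper: verify the isometry from $\norm{\Omega_{w}}_{w}=1$, compute $V^{*}$ via the adjoint relation to get $V^{*}\xi=\iprod{\Omega_{w}}{\xi}_{w}$, and then read off $V^{*}V=I_{\mathbb{C}}$ and $VV^{*}=\ketbra{\Omega_{w}}{\Omega_{w}}$ by direct composition. Your added remarks on the normalization of $\Omega_{w}$ and the sesquilinearity convention are the only extras, and they are apt.
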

It follows that \begin{eqnarray*}
w(A) & = & \iprod{\Omega_{w}}{\pi(A)\Omega_{w}}_{w}\\
 & = & \iprod{V1}{\pi(A)V1}_{\mathbb{C}}\\
 & = & \iprod{1}{V^{*}\pi(A)V1}_{\mathbb{C}}\\
 & = & V^{*}\pi(A)V.\end{eqnarray*}
In other words, $\pi(A):\Omega_{w}\mapsto\pi(A)\Omega_{w}$ sends
the unit vector $\Omega_{w}$ from the 1-dimensional subspace $\mathbb{C}\Omega_{w}$
to the vector $\pi(A)\Omega_{w}\in K_{w}$, and $\iprod{\Omega_{w}}{\pi(A)\Omega_{w}}_{w}$
cuts off the resulting vector $\pi(A)\Omega_{w}$ and only preserves
component corresponding to the 1-d subspace $\mathbb{C}\Omega_{w}$.
Notice that the unit vector $\Omega_{w}$ is obtained from embedding
the constant $1\in\mathbb{C}$ via the map $V$, $\Omega_{w}=V1$.
In matrix notation, if we identify $\mathbb{C}$ with its image $\mathbb{C}\Omega_{w}$
in $K_{w}$, then $w(A)$ is put into a matrix corner.\[
\pi(A)=\left[\begin{array}{cc}
w(A) & *\\*
 & *\end{array}\right]\]
so that when acting on vectors,\[
w(A)=\left[\begin{array}{cc}
\Omega_{w} & 0\end{array}\right]\left[\begin{array}{cc}
w(A) & *\\*
 & *\end{array}\right]\left[\begin{array}{c}
\Omega_{w}\\
0\end{array}\right]=\left[\begin{array}{cc}
\Omega_{w} & 0\end{array}\right]\pi(A)\left[\begin{array}{c}
\Omega_{w}\\
0\end{array}\right].\]
Equivalently\[
w(A)=P_{1}\pi(A):P_{1}K_{w}\rightarrow\mathbb{C}\]
where $P_{1}=VV^{*}$ is the rank-1 projection onto $\mathbb{C}\Omega_{w}$.

Stinespring's construction is a generalization of the above formulation.
Let $\mathfrak{A}$ be a $*$-algebra, given a CP map $\varphi:\mathfrak{A}\rightarrow B(H)$,
there exists a Hilbert space $K_{\varphi}$, an isometry $V:H\rightarrow K_{\varphi}$,
and a representation $\pi_{w}:\mathfrak{A}\rightarrow K_{\varphi}$,
such that \[
\varphi(A)=V^{*}\pi(A)V,\;\forall A\in\mathfrak{A}.\]
Notice that this construction start with a possibly infinite dimensional
Hilbert space $H$ (instead of the 1-dimensional Hilbert space $\mathbb{C}$),
the map $V$ embeds $H$ into a bigger Hilbert space $K_{\varphi}$.
If $H$ is identified with its image in $K$, then $\varphi(A)$ is
put into a matrix corner,\[
\left[\begin{array}{cc}
\pi(A) & *\\*
 & *\end{array}\right]\qquad\mbox{should this be}\left[\begin{array}{cc}
\varphi(A) & *\\*
 & *\end{array}\right]?\]
so that when acting on vectors,\[
\varphi(A)\xi=\left[\begin{array}{cc}
V\xi & 0\end{array}\right]\left[\begin{array}{cc}
\pi(A) & *\\*
 & *\end{array}\right]\left[\begin{array}{c}
V\xi\\
0\end{array}\right].\]
Stinespring's theorem can then be formulated alternatively as: for
every CP map $\varphi:\mathfrak{A}\rightarrow B(H)$, there is a dilated
Hilbert space $K_{\varphi}\supset H$, a representation $\pi_{\varphi}:\mathfrak{A}\rightarrow B(K_{\varphi})$,
such that \[
\varphi(A)=P_{H}\pi(A)\]
i.e. $\pi(A)$ can be put into a matrix corner. $K_{\varphi}$ is
be chosen as minimal in the sense that \[
K_{\varphi}=\overline{span}\{\pi(A)(Vh):A\in\mathfrak{A},h\in H\}.\]
\[
\xymatrix{
H\ar[r]^{V}\ar[d]_\varphi(A) & K\ar[d]^{\pi(A)} \\
H\ar[r]_{V} & K
}
\]
\begin{note}
$K_{\varphi}\supset H$ comes after the identification of $H$ with
its image in $K_{\varphi}$ under an isometric embedding $V$. We
use $\varphi(A)=P_{H}\pi(A)$ instead of  {}``$\varphi(A)=P_{H}\pi(A)P_{H}$'',
since $\varphi(A)$ only acts on the subspace $H\subset K_{\varphi}$.
\end{note}

\subsection{Stinespring's theorem}
\begin{thm}
(Stinespring) Let $\mathfrak{A}$ be a $*$-algebra. The following
are equivalent:
\begin{enumerate}
\item $\varphi:\mathfrak{A}\rightarrow B(H)$ is a completely positive map,
and $\varphi(1_{\mathfrak{A}})=I_{H}$. 
\item There exists a Hilbert space $K$, an isometry $V:H\rightarrow K$,
and a representation $\pi:\mathfrak{A}\rightarrow B(K)$ such that\[
\varphi(A)=V^{*}\pi(A)V\]
for all $A\in\mathfrak{A}$.
\item If the dilated Hilbert space is taken to be minimum, then it is unique
up to unitary equivalence. i.e. if there are $V_{i},K_{i},\pi_{i}$
such that \begin{eqnarray*}
\varphi(A) & = & V_{i}^{*}\pi_{i}(A)V_{i}\\
K_{i} & = & \overline{span}\{\pi_{i}(A)Vh:A\in\mathfrak{A},h\in H\}\end{eqnarray*}
then there exists a unitary operator $W:K_{1}\rightarrow K_{2}$ so
that\[
W\pi_{1}=\pi_{2}W\]

\end{enumerate}
\end{thm}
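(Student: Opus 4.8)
The plan is to mirror the GNS construction, replacing the scalar-valued state $w$ by the operator-valued CP map $\varphi$. For the implication $(1)\Rightarrow(2)$, I would first form the algebraic tensor product $\mathfrak{A}\otimes H$ (a complex vector space) and define a sesquilinear form on it by
\[
\iprod{A\otimes h}{B\otimes k}_{0} := \iprod{h}{\varphi(A^{*}B)k}_{H},
\]
extended sesquilinearly. The crucial point is that this form is \emph{positive semi-definite}: for a typical element $\xi=\sum_{i}A_{i}\otimes h_{i}$ one computes $\iprod{\xi}{\xi}_{0}=\sum_{i,j}\iprod{h_{i}}{\varphi(A_{i}^{*}A_{j})h_{j}}$, and this is exactly the expression shown to be $\geq 0$ in the ``Motivations'' subsection, since the matrix $(A_{i}^{*}A_{j})$ is a positive element of $\mathfrak{A}\otimes M_{n}$ (it factors as $B^{*}B$ with $B$ the row $(A_{1},\ldots,A_{n})$). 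So complete positivity is used precisely here, and nowhere else in an essential way. Then, exactly as in GNS, I set $N=\{\xi:\iprod{\xi}{\xi}_{0}=0\}$, which is a subspace by Cauchy--Schwarz, pass to the quotient $(\mathfrak{A}\otimes H)/N$, and let $K$ be its completion; the induced form is a genuine inner product on $K$.

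Next I would define the representation $\pi$ and the isometry $V$. Set $\pi(C)$ on elementary tensors by $\pi(C)(A\otimes h)=(CA)\otimes h$; this is well defined on the quotient because $\iprod{\pi(C)\xi}{\pi(C)\xi}_{0}\leq\norm{C}^{2}\iprod{\xi}{\xi}_{0}$ (a consequence of positivity of $\varphi$ applied to $A^{*}(\norm{C}^{2}-C^{*}C)A$, summed appropriately), so $\pi(C)$ is bounded and extends to $K$; the algebraic identities $\pi(CD)=\pi(C)\pi(D)$ and $\pi(C^{*})=\pi(C)^{*}$ follow from the associative law and the definition of the inner product, just as in the GNS sketch in the excerpt. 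Define $V:H\to K$ by $Vh=$ class of $1_{\mathfrak{A}}\otimes h$. Then $\iprod{Vh}{Vk}=\iprod{h}{\varphi(1_{\mathfrak{A}})k}=\iprod{h}{k}$ since $\varphi(1_{\mathfrak{A}})=I_{H}$, so $V$ is an isometry. Finally, for all $A$ and all $h,k\in H$,
\[
\iprod{Vk}{\pi(A)Vh}=\iprod{1\otimes k}{(A\otimes h)}_{0}=\iprod{k}{\varphi(A)h},
\]
which says $V^{*}\pi(A)V=\varphi(A)$. Along the way the span $\overline{\operatorname{span}}\{\pi(A)Vh\}$ is all of $K$ by construction, giving minimality for free.

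The implication $(2)\Rightarrow(1)$ is the easy direction: if $\varphi(A)=V^{*}\pi(A)V$ with $\pi$ a $*$-representation, then $\varphi(1)=V^{*}V=I_{H}$, and for the amplification one checks that $\sum_{i,j}\iprod{v_{i}}{\varphi(A_{ij})v_{j}}=\iprod{\sum_{i}\pi(\cdot)^{\text{?}}}{\cdots}$ — more cleanly, writing $A=B^{*}B$ for a positive element of $\mathfrak{A}\otimes M_{n}$ and using that $(\pi\otimes\mathrm{id})$ is still a $*$-homomorphism, one gets $\iprod{v}{(\varphi\otimes\mathrm{id})(B^{*}B)v}=\norm{(\pi\otimes\mathrm{id})(B)(V\otimes\mathrm{id})v}^{2}\geq 0$. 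For the uniqueness clause $(3)$, given two minimal dilations $(V_{i},K_{i},\pi_{i})$, I would define $W:K_{1}\to K_{2}$ on the dense set by $W(\pi_{1}(A)V_{1}h)=\pi_{2}(A)V_{2}h$ and check it is well defined and isometric by the computation
\[
\iprod{\pi_{2}(A)V_{2}h}{\pi_{2}(B)V_{2}k}_{K_{2}}=\iprod{h}{V_{2}^{*}\pi_{2}(A^{*}B)V_{2}k}=\iprod{h}{\varphi(A^{*}B)k},
\]
which depends only on $\varphi$, hence equals the same inner product computed in $K_{1}$; density of both spanning sets makes $W$ a surjective isometry, i.e. unitary, and $W\pi_{1}(C)=\pi_{2}(C)W$ is immediate from the definition on the dense set. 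The main obstacle is the very first step — verifying that complete positivity (not mere positivity) is exactly what makes the form $\iprod{\cdot}{\cdot}_{0}$ positive semi-definite on finite sums of elementary tensors; once that is in hand, the rest is a faithful transcription of the GNS argument already sketched in the text.
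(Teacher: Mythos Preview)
Your proposal is correct and follows essentially the same route as the paper: form the algebraic tensor product $\mathfrak{A}\otimes H$, use complete positivity to make the sesquilinear form $\iprod{A\otimes h}{B\otimes k}_{0}=\iprod{h}{\varphi(A^{*}B)k}_{H}$ positive semi-definite, quotient and complete to get $K$, set $Vh=[1_{\mathfrak{A}}\otimes h]$ and $\pi(C)(A\otimes h)=(CA)\otimes h$, and verify $V^{*}\pi(A)V=\varphi(A)$; the uniqueness argument via $W(\pi_{1}(A)V_{1}h)=\pi_{2}(A)V_{2}h$ is also identical. If anything you are slightly more careful than the paper, since you explicitly address boundedness of $\pi(C)$ via the inequality $C^{*}C\leq\norm{C}^{2}1$, a point the paper's proof passes over in silence.
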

\begin{proof}
For the uniqueness: define\[
W\pi_{1}(A)Vh=\pi_{2}(A)Vh\]
then $W$ is an isometry, which follows from the fact that \begin{eqnarray*}
\norm{\pi_{i}(A)Vh}_{K}^{2} & = & \iprod{\pi_{i}(A)Vh}{\pi_{i}(A)Vh}_{K}\\
 & = & \iprod{h}{V^{*}\pi_{i}(A^{*}A)Vh}_{H}\\
 & = & \iprod{h}{\varphi(A^{*}A)h}_{H}.\end{eqnarray*}
Hence $W$ extends uniquely to a unitary operator from $K_{1}$ to
$K_{2}$. To show $W$ intertwines $\pi_{i}$, notice that a typical
vector in $K_{1}$ is $\pi_{1}(A)Vh$, hence\begin{eqnarray*}
W\pi_{1}(B)\pi_{1}(A)Vh & = & W\pi_{1}(BA)Vh\\
 & = & \pi_{2}(BA)Vh\\
 & = & \pi_{2}(B)\pi_{2}(A)Vh\\
 & = & \pi_{2}(B)W\pi_{1}(A)Vh\end{eqnarray*}
therefore $W\pi_{1}=\pi_{2}W$.
\begin{note}
The norm of $\pi_{1}(B)\pi_{1}(A)Vh$ is given by\begin{eqnarray*}
\norm{\pi_{1}(B)\pi_{1}(A)Vh}^{2} & = & \iprod{\pi_{1}(B)\pi_{1}(A)Vh}{\pi_{1}(B)\pi_{1}(A)Vh}\\
 & = & \iprod{h}{V^{*}\pi_{1}(A^{*}B^{*}BA)Vh}\end{eqnarray*}
where \[
\mbox{operator}\mapsto A^{*}(\mbox{operator})A\]
is an automorphism on $B(H)$.
\end{note}
Suppose $\varphi(A)=V^{*}\pi(A)V$. Since positive elements in $\mathfrak{A}\otimes M_{n}$
are sums of the operator matrix\[
\sum_{i,j}A_{i}^{*}A_{j}\otimes e_{ij}=\left[\begin{array}{c}
A_{1}^{*}\\
A_{2}^{*}\\
\vdots\\
A_{n}^{*}\end{array}\right]\left[\begin{array}{cccc}
A_{1} & A_{2} & \cdots & A_{n}\end{array}\right]\]
it suffices to show that\[
\varphi\otimes I_{M_{n}}(\sum_{i,j}A_{i}^{*}A_{j}\otimes e_{ij})=\sum_{i,j}\varphi(A_{i}^{*}A_{j})\otimes I_{M_{n}}(e_{ij})=\sum_{i,j}\varphi(A_{i}^{*}A_{j})\otimes e_{ij}\]
is a positive operator in $B(H\otimes\mathbb{C}^{n})$, i.e. to show
that for all $v\in H\otimes\mathbb{C}^{n}$ \[
\left[\begin{array}{cccc}
v_{1} & v_{2} & \cdots & v_{n}\end{array}\right]\left[\begin{array}{cccc}
\varphi(A_{1}^{*}A_{1}) & \varphi(A_{1}^{*}A_{2}) & \cdots & \varphi(A_{1}^{*}A_{n})\\
\varphi(A_{2}^{*}A_{1}) & \varphi(A_{2}^{*}A_{2}) & \cdots & \varphi(A_{2}^{*}A_{n})\\
\vdots & \vdots & \ddots & \vdots\\
\varphi(A_{n}^{*}A_{1}) & \varphi(A_{n}^{*}A_{2}) & \cdots & \varphi(A_{n}^{*}A_{n})\end{array}\right]\left[\begin{array}{c}
v_{1}\\
v_{2}\\
\vdots\\
v_{n}\end{array}\right]\geq0.\]
This is true, since\begin{eqnarray*}
 &  & \left[\begin{array}{cccc}
v_{1} & v_{2} & \cdots & v_{n}\end{array}\right]\left[\begin{array}{cccc}
\varphi(A_{1}^{*}A_{1}) & \varphi(A_{1}^{*}A_{2}) & \cdots & \varphi(A_{1}^{*}A_{n})\\
\varphi(A_{2}^{*}A_{1}) & \varphi(A_{2}^{*}A_{2}) & \cdots & \varphi(A_{2}^{*}A_{n})\\
\vdots & \vdots & \ddots & \vdots\\
\varphi(A_{n}^{*}A_{1}) & \varphi(A_{n}^{*}A_{2}) & \cdots & \varphi(A_{n}^{*}A_{n})\end{array}\right]\left[\begin{array}{c}
v_{1}\\
v_{2}\\
\vdots\\
v_{n}\end{array}\right]\\
 & = & \sum_{i,j}\iprod{v_{i}}{\varphi(A_{i}^{*}A_{j})v_{j}}_{H}\\
 & = & \sum_{i,j}\iprod{v_{i}}{V^{*}\pi(A_{i}^{*}A_{j})Vv_{j}}_{H}\\
 & = & \sum_{i,j}\iprod{\pi(A_{i})Vv_{i}}{\pi(A_{j})Vv_{j}}_{\varphi}\\
 & = & \norm{\sum_{i}\pi(A_{i})Vv_{i}}_{\varphi}\geq0.\end{eqnarray*}

Conversely, given a completely positive map $\varphi$, we construct
$K_{\varphi}$, $V_{\varphi}$ and $\pi_{\varphi}$ where the subscript
indicates dependence on $\varphi$. Recall that $\varphi:\mathfrak{A}\rightarrow B(H)$
is a CP map if for all $n\in\mathbb{N}$, \[
\varphi\otimes I_{M_{n}}:\mathfrak{A}\otimes M_{n}\rightarrow B(H\otimes M_{n})\]
is positive, and \[
\varphi\otimes I_{M_{n}}(1_{\mathfrak{A}}\otimes I_{M_{n}})=I_{H}\otimes I_{M_{n}}.\]
The condition on the identity element can be stated using matrix notation
as\[
\left[\begin{array}{cccc}
\varphi & 0 & \cdots & 0\\
0 & \varphi & \cdots & 0\\
\vdots & \vdots & \ddots & \vdots\\
0 & 0 & \cdots & \varphi\end{array}\right]\left[\begin{array}{cccc}
1_{\mathfrak{A}} & 0 & \cdots & 0\\
0 & 1_{\mathfrak{A}} & \cdots & 0\\
\vdots & \vdots & \ddots & \vdots\\
0 & 0 & \cdots & 1_{\mathfrak{A}}\end{array}\right]=\left[\begin{array}{cccc}
I_{H} & 0 & \cdots & 0\\
0 & I_{H} & \cdots & 0\\
\vdots & \vdots & \ddots & \vdots\\
0 & 0 & \cdots & I_{H}\end{array}\right].\]

Let $K_{0}$ be the algebraic tensor product $\mathfrak{A}\otimes H$,
i.e. \[
K_{0}=span\{\sum_{finite}A_{i}\otimes\xi_{i}:A\in\mathfrak{A},\xi\in H\}.\]
Define a sesquilinear form $\iprod{\cdot}{\cdot}_{\varphi}:K_{0}\times K_{0}\rightarrow\mathbb{C}$,
\begin{equation}
\iprod{\sum_{i=1}^{n}A_{i}\otimes\xi_{i}}{\sum_{j=1}^{n}B_{j}\otimes\eta_{j}}_{\varphi}:=\sum_{i,j}\iprod{\xi_{i}}{\varphi(A_{i}^{*}B_{j})\eta_{j}}_{H}.\label{eq:sesquilinear}\end{equation}
By the completely positivity condition (\ref{eq:CP}),\[
\iprod{\sum_{i=1}^{n}A_{i}\xi_{i}}{\sum_{j=1}^{n}A_{j}\xi_{j}}_{\varphi}=\sum_{i,j}\iprod{\xi_{i}}{\varphi(A_{i}^{*}A_{j})\xi_{j}}_{H}\geq0\]
hence $\iprod{\cdot}{\cdot}_{\varphi}$ is positive semi-definite.
Let $ker:=\{v\in K_{0}:\iprod{v}{v}_{\varphi}=0\}$. Since the Schwartz
inequality holds for any sesquilinear form, it follows that $ker=\{v\in K_{0}:\iprod{s}{v}_{\varphi}=0,\mbox{ for all }s\in K_{0}\}$,
thus $ker$ is a closed subspace in $K_{0}$. Let $K_{\varphi}$ be
the Hilbert space by completing $K_{0}$ under the norm $\norm{\cdot}_{\varphi}:=\iprod{\cdot}{\cdot}_{\varphi}^{1/2}$. 

Define $V:H\rightarrow K_{0}$, where $V\xi:=1_{\mathfrak{A}}\otimes\xi$.
Then \begin{eqnarray*}
\norm{V\xi}_{\varphi}^{2} & = & \iprod{1_{\mathfrak{A}}\otimes\xi}{1_{\mathfrak{A}}\otimes\xi}_{\varphi}\\
 & = & \iprod{\xi}{\varphi(1_{\mathfrak{A}}^{*}1_{\mathfrak{A}})\xi}_{H}\\
 & = & \iprod{\xi}{\xi}_{H}\\
 & = & \norm{\xi}_{H}^{2}\end{eqnarray*}
which implies that $V$ is an isometry, and $H$ is isometrically
embeded into $K_{0}$. Claim: $V^{*}V=I_{H}$; $VV^{*}$ is a self-adjoint
projection from $K_{0}$ onto the subspace $1_{\mathfrak{A}}\otimes H$.
In fact, for any $A\otimes\eta\in K_{0}$, \begin{eqnarray*}
\iprod{A\otimes\eta}{V\xi}_{\varphi} & = & \iprod{A\otimes\eta}{1_{\mathfrak{A}}\otimes\xi}_{\varphi}\\
 & = & \iprod{\eta}{\varphi(A^{*})\xi}_{H}\\
 & = & \iprod{\varphi(A^{*})^{*}\eta}{\xi}_{H}\\
 & = & \iprod{V^{*}(A\otimes\eta)}{\xi}_{\varphi}\end{eqnarray*}
therefore,\[
V^{*}(A\otimes\eta)=\varphi(A^{*})^{*}\eta.\]
It follows that \[
V^{*}V\xi=V^{*}(1_{\mathfrak{A}}\otimes\xi)=\varphi(1_{\mathfrak{A}}^{*})^{*}\xi=\xi,\;\forall\xi\in H\Leftrightarrow V^{*}V=I_{H}.\]
Moreover, for any $A\otimes\eta\in K_{0}$,\[
VV^{*}(A\otimes\eta)=V(\varphi(A^{*})^{*}\eta)=1_{\mathfrak{A}}\otimes\varphi(A^{*})^{*}\eta.\]

For any $A\in\mathfrak{A}$, let $\pi_{\varphi}(A)(\sum_{j}B_{j}\otimes\eta_{j}):=\sum_{j}AB_{j}\otimes\eta_{j}$
and extend to $K_{\varphi}$. For all $\xi,\eta\in H$,\begin{eqnarray*}
\iprod{\xi}{V^{*}\pi(A)V\eta}_{H} & = & \iprod{V\xi}{\pi(A)V\eta}_{\varphi}\\
 & = & \iprod{1_{\mathfrak{A}}\otimes\xi}{\pi(A)1_{\mathfrak{A}}\otimes\eta}_{\varphi}\\
 & = & \iprod{1_{\mathfrak{A}}\otimes\xi}{A\otimes\eta}_{\varphi}\\
 & = & \iprod{\xi}{\varphi(1_{\mathfrak{A}}^{*}A)\eta}_{H}\\
 & = & \iprod{\xi}{\varphi(A)\eta}_{H}\end{eqnarray*}
hence $\varphi(A)=V^{*}\pi(A)V$ for all $A\in\mathfrak{A}$.
\end{proof}

\section{Comments on Stinespring's theorem}

We study objects, usually the interest is not on the object itself
but function algebras on it. These functions reveal properties of
the object.
\begin{example}
In linear algebra, there is a bijection between inner product structions
on $\mathbb{C}^{n}$ and positive-definite $n\times n$ matrices.
Specifically, $\iprod{\cdot}{\cdot}:\mathbb{C}^{n}\times\mathbb{C}^{n}\rightarrow\mathbb{C}$
is an inner product if and only if there exists a positive definite
matrix $A$ such that\begin{eqnarray*}
\iprod{v}{w} & = & v^{t}Aw\\
 & = & \left[\begin{array}{cccc}
\overline{v}_{1} & \overline{v}_{2} & \cdots & \overline{v}_{n}\end{array}\right]\left[\begin{array}{cccc}
a_{11} & a_{12} & \cdots & a_{1n}\\
a_{21} & a_{22} & \cdots & a_{2n}\\
\vdots & \vdots & \vdots & \vdots\\
a_{n1} & a_{n2} & \cdots & a_{nn}\end{array}\right]\left[\begin{array}{c}
v_{1}\\
v_{2}\\
\vdots\\
v_{n}\end{array}\right]\end{eqnarray*}
for all $v,w\in\mathbb{C}^{n}$. We think of $\mathbb{C}^{n}$ as
$\mathbb{C}$-valuded functions defined on $\{1,2,\ldots,n\}$, then
$\iprod{\cdot}{\cdot}_{A}$ is an inner product built on the function
space. 
\end{example}
This is then extended to infinite dimensional space. 
\begin{example}
If $F$ is a positive definite function on $\mathbb{R}$, then on
$K_{0}=span\{\delta_{x}:x\in\mathbb{R}\}$, $F$ defines a sesquilinear
form $\iprod{\cdot}{\cdot}_{F}:\mathbb{R}\times\mathbb{R}\rightarrow\mathbb{C}$,
where\[
\iprod{\sum_{i}c_{i}\delta_{x_{i}}}{\sum_{j}d_{j}\delta_{x_{j}}}_{F}=\sum_{i,j}\bar{c}_{i}d_{j}F(x_{i},x_{j}).\]
Let $ker(F)=\{v\in K_{0}:\iprod{v}{v}=0\}$, and $ker(F)$ is a closed
subspace in $K_{0}$. We get a Hilbert space $K$ as the completion
of $K_{0}/ker$ under the norm $\norm{\cdot}_{F}:=\iprod{\cdot}{\cdot}_{F}^{1/2}$. 
\end{example}
What if the index set is not $\{1,2,\ldots,n\}$ or $\mathbb{R}$,
but a $*$-algebra?
\begin{example}
Let $X$ be a locally compact Hausdorff space and $\mathfrak{M}$
be a $\sigma$-algebra in $X$. The space of continuous functions
with compact support $C_{c}(X)$ is an abelian $C^{*}$-algebra, where
the $C^{*}$-norm is given by $\norm{f}=\max\{{\abs{f(x)}:x\in X}\}$.
By Riesz's theorem, there is a bijection between positive linear functionals
on $C_{c}(X)$ and Borel measures. A Borel measure $\mu$ is a state
if and only it is a probability measure. 

$\mathfrak{M}$ is an abelian algebra. The associative multiplication
is defined as $AB:=A\cap B$. The identity element is $X$, since
$A\cap X=X\cap A=A$, for all $A\in\mathfrak{M}$. Let $\mu$ be a
probability measure. Then $\mu(A\cap B)\geq0$, for all $A,B\in\mathfrak{M}$,
and $\mu(X)=1$, therefore $\mu$ is a state. As before, we consider
functions on $\mathfrak{M}$, i.e. the index set is the $\sigma$-algebra
$\mathfrak{M}$. Then expressions such as $\sum_{i}c_{i}\delta_{A_{i}}$
are precisely the simple functions $\sum_{i}c_{i}\chi_{A_{i}}$. The
GNS construction starts with \[
K_{0}=span\{\delta_{A}:A\in\mathfrak{M}\}=span\{\chi_{A}:A\in\mathfrak{M}\}\]
and the sesquilinear form $\iprod{\cdot}{\cdot}_{\mu}:K_{0}\times K_{0}\rightarrow\mathbb{C}$,
where\[
\iprod{\sum_{i}c_{i}\chi_{A_{i}}}{\sum_{j}d_{j}\chi_{B_{j}}}:=\sum_{i,j}\overline{c}_{i}d_{j}\mu(A_{i}\cap B_{j}).\]
$\iprod{\cdot}{\cdot}_{\mu}$ is positive semi-definite, since \[
\iprod{\sum_{i}c_{i}\chi_{A_{i}}}{\sum_{i}c_{i}\chi_{A_{i}}}=\sum_{i,j}\overline{c}_{i}c_{j}\mu(A_{i}\cap A_{j})=\sum_{i}\abs{c_{i}}^{2}\mu(A_{i})\geq0.\]
Let $ker=\{v\in K_{0}:\iprod{v}{v}_{\mu}=0\}$, complete $K_{0}/ker$
with the corresponding norm, we actually get the Hilbert space $L^{2}(\mu)$.
\end{example}
Let $\mathfrak{A}$ be a $*$-algebra. The set of $\mathbb{C}$-valued
functions on $\mathfrak{A}$ is precisely $\mathfrak{A}\otimes\mathbb{C}$.
We may think of putting $\mathfrak{A}$ on the horizontal axis, and
at each point $A\in\mathfrak{A}$ attach a complex number to it i.e.
building functions indexed by $\mathfrak{A}$. Then members of $\mathfrak{A}\otimes\mathbb{C}$
are of the form\[
\sum_{i}A_{i}\otimes c_{i}1_{\mathbb{C}}=\sum_{i}c_{i}A_{i}=\sum_{i}c_{i}\delta_{A_{i}}\]
with finite summation over $i$. Note that $\mathbb{C}$ is natually
embedded into $\mathfrak{A}\otimes\mathbb{C}$ as $1_{\mathfrak{A}}\otimes\mathbb{C}$,
i.e. $c\mapsto c\delta_{1_{\mathfrak{A}}}$, and the latter is a 1-dimensional
subspace. In order to build a Hilbert space out of the function space,
a quadradic form is required. A state on $\mathfrak{A}$ does exactly
the job. Let $w$ be a state on $\mathfrak{A}$. Then the sesquilinear
form\[
\iprod{\sum_{i}c_{i}\delta_{A_{i}}}{\sum_{i}d_{j}\delta_{B_{j}}}_{w}:=\sum_{i,j}\overline{c_{i}}d_{j}w(A_{i}^{*}B_{j})\]
is positive semi-definite, because\[
\iprod{\sum_{i}c_{i}\delta_{A_{i}}}{\sum_{i}c_{j}\delta_{A_{j}}}_{w}=\sum_{i,j}\overline{c_{i}}c_{j}w(A_{i}^{*}A_{j})=w\left((\sum_{i}c_{i}A_{i})^{*}(\sum_{i}c_{i}A_{i})\right)\geq0.\]
A Hilbert space $K_{w}$ is obtained by taking completion of the quotient
$\mathfrak{A}\otimes\mathbb{C}/Ker(w)$. A representation $\pi$,
$\pi(A)\delta_{B}:=\delta_{BA}$ is in fact a {}``shift'' in the
index variable, and extend linearly to $K_{w}$.

In Stinespring's construction, $\mathfrak{A}\otimes\mathbb{C}$ is
replaced by $\mathfrak{A}\otimes H$. i.e. in stead of working with
$\mathbb{C}$-valued functions on $\mathfrak{A}$, one looks at $H$-valued
functions on $\mathfrak{A}$. Hence we are looking at functions of
the form\[
\sum_{i}A_{i}\otimes\xi_{i}=\sum_{i}\xi_{i}\delta_{A_{i}}\]
with finite summation over $i$. $H$ is natually embedded into $\mathfrak{A}\otimes H$
as $1_{\mathfrak{A}}\otimes H$, i.e. $\xi\mapsto\xi\delta_{1_{\mathfrak{A}}}$.
$H\delta_{1_{\mathfrak{A}}}$ is infinite dimensional, or we say that
the function $\xi\delta_{1_{\mathfrak{A}}}$ at $\delta_{1_{\mathfrak{A}}}$
has infinite multiplicity. If $H$ is separable, we are actually attaching
an $l^{2}$ sequence at every point $A\in\mathfrak{A}$ on the horizontal
axis. How to build a Hilbert space out of these $H$-valued functions?
The question depends on the choice a quadratic form. If $\varphi:\mathfrak{A}\rightarrow B(H)$
is positive, then \[
\iprod{\xi\delta_{A}}{\eta\delta_{B}}_{\varphi}:=\iprod{\xi}{\varphi(A^{*}B)\eta}_{H}\]
is positive semi-definite. When extend linearly, one is in trouble.
Since \[
\iprod{\sum_{i}\xi_{i}\delta_{A_{i}}}{\sum_{j}\eta_{j}\delta_{B_{j}}}_{\varphi}=\sum_{i,j}\iprod{\xi_{i}}{\varphi(A_{i}^{*}B_{j})\eta_{j}}_{H}\]
which is equal to \[
\left[\begin{array}{cccc}
\xi_{1} & \xi_{2} & \cdots & \xi_{n}\end{array}\right]\left[\begin{array}{cccc}
\varphi(A_{1}^{*}B_{1}) & \varphi(A_{1}^{*}B_{2}) & \cdots & \varphi(A_{1}^{*}B_{n})\\
\varphi(A_{2}^{*}B_{1}) & \varphi(A_{2}^{*}B_{1}) & \cdots & \varphi(A_{2}^{*}B_{1})\\
\vdots & \vdots & \vdots & \vdots\\
\varphi(A_{n}^{*}B_{1}) & \varphi(A_{n}^{*}B_{2}) & \cdots & \varphi(A_{n}^{*}B_{n})\end{array}\right]\left[\begin{array}{c}
\xi_{1}\\
\xi_{2}\\
\vdots\\
\xi_{n}\end{array}\right]\]
it is not clear why the matrix $(\varphi(A_{i}^{*}B_{j}))$ should
be a positive operator acting on $H\otimes\mathbb{C}^{n}$. But we
could very well put this extra requirement into an axiom, and consider
$\varphi$ being a CP map!
\begin{itemize}
\item We only assume $\mathfrak{A}$ is a $*$-algebra, may not be a $C^{*}$-algebra.
$\varphi:\mathfrak{A}\rightarrow B(H)$ is positive does not necessarily
imply $\varphi$ is completely positve. A counterexample is taking
$\mathfrak{A}=M_{2}(\mathbb{C})$, and $\varphi:\mathfrak{A}\rightarrow B(\mathbb{C}^{2})\simeq M_{2}(\mathbb{C})$
given by taking transpose, i.e. $A\mapsto\varphi(A)=A^{tr}$. Then
$\varphi$ is positive, but $\varphi\otimes I_{M_{2}}$ is not.
\item The operator matrix $(A_{i}^{*}A_{j})$, which is also written as
$\sum_{i,j}A_{i}^{*}A_{j}\otimes e_{ij}$ is a positve element in
$\mathfrak{A}\otimes M_{n}$. All positive elements in $\mathfrak{A}\otimes M_{n}$
are in such form. This notation goes back again to Dirac, for the
rank-1 operators $\ketbra{v}{v}$ are positive and all positive operators
are sums of these rank-1 operators. 
\item Given a CP map $\varphi:\mathfrak{A}\rightarrow B(H)$, we get a Hilbert
space $K_{\varphi}$, a representation $\pi:\mathfrak{A}\rightarrow B(K_{\varphi})$
and an isometry $V:H\rightarrow K_{\varphi}$, such that \[
\varphi(A)=V^{*}\pi(A)V\]
for all $A\in\mathfrak{A}$. $P=VV^{*}$ is a self-adjoint projection
from $K_{\varphi}$ to the image of $H$ under the embedding. To check
$P$ is a projection, \[
P^{2}=VV^{*}VV^{*}=V(V^{*}V)V^{*}=VV^{*}.\]
\end{itemize}
\begin{xca}
Let $\mathfrak{A}=B(H)$, $\xi_{1},\ldots,\xi_{n}\in H$. The map
$(\xi_{1},\ldots,\xi_{n})\mapsto(A\xi_{1},\ldots,A\xi_{n})\in\oplus^{n}H$
is a representation of $\mathfrak{A}$ if and only if \[
\overset{n\mbox{ times}}{\overbrace{id_{H}\oplus\cdots\oplus id_{H}}}\in Rep(\mathfrak{A},\overset{n\mbox{ times}}{\overbrace{H\oplus\cdots\oplus H}})\]
where in matrix notation, we have\begin{eqnarray*}
 &  & \left[\begin{array}{cccc}
id_{\mathfrak{A}}(A) & 0 & \cdots & 0\\
0 & id_{\mathfrak{A}}(A) & \cdots & 0\\
\vdots & \vdots & \vdots & \vdots\\
0 & 0 & \cdots & id_{\mathfrak{A}}(A)\end{array}\right]\left[\begin{array}{c}
\xi_{1}\\
\xi_{2}\\
\vdots\\
\xi_{n}\end{array}\right]\\
 & = & \left[\begin{array}{cccc}
A & 0 & \cdots & 0\\
0 & A & \cdots & 0\\
\vdots & \vdots & \vdots & \vdots\\
0 & 0 & \cdots & A\end{array}\right]\left[\begin{array}{c}
\xi_{1}\\
\xi_{2}\\
\vdots\\
\xi_{n}\end{array}\right]=\left[\begin{array}{c}
A\xi_{1}\\
A\xi_{2}\\
\vdots\\
A\xi_{n}\end{array}\right].\end{eqnarray*}
\end{xca}
\begin{note*}
In this case, we say the identity representation $id_{\mathfrak{A}}:\mathfrak{A}\rightarrow H$
has multiplicity $n$.\end{note*}
\begin{xca}
Let $V_{i}:H\rightarrow H$, and \[
V:=\left[\begin{array}{c}
V_{1}\\
V_{2}\\
\vdots\\
V_{n}\end{array}\right]:H\rightarrow\oplus_{1}^{n}H.\]
Let $V^{*}:\oplus_{1}^{n}H\rightarrow H$ be the adjoint of $V$.
Prove that $V^{*}=\left[\begin{array}{cccc}
V_{1}^{*} & V_{2}^{*} & \cdots & V_{n}^{*}\end{array}\right]$. 
\begin{proof}
Let $\xi\in H$, then \[
V\xi=\left[\begin{array}{c}
V_{1}\xi\\
V_{2}\xi\\
\vdots\\
V_{n}\xi\end{array}\right]\]
and \begin{eqnarray*}
\iprod{\left[\begin{array}{c}
\eta_{1}\\
\eta_{2}\\
\vdots\\
\eta_{n}\end{array}\right]}{\left[\begin{array}{c}
V_{1}\xi\\
V_{2}\xi\\
\vdots\\
V_{n}\xi\end{array}\right]} & = & \sum_{i}\iprod{\eta_{i}}{V_{i}\xi}\\
 & = & \sum_{i}\iprod{V_{i}^{*}\eta_{i}}{\xi}\\
 & = & \iprod{\sum_{i}V_{i}^{*}\eta_{i}}{\xi}\\
 & = & \iprod{\left[\begin{array}{cccc}
V_{1}^{*} & V_{2}^{*} & \cdots & V_{n}\end{array}^{*}\right]\left[\begin{array}{c}
\eta_{1}\\
\eta_{2}\\
\vdots\\
\eta_{n}\end{array}\right]}{\xi}\end{eqnarray*}
this shows that $V^{*}=\left[\begin{array}{cccc}
V_{1}^{*} & V_{2}^{*} & \cdots & V_{n}\end{array}^{*}\right]$.
\end{proof}
\end{xca}
Let $V$ as defined above.
\begin{xca}
The following are equivalent:
\begin{enumerate}
\item $V$ is an isometry, i.e. $\norm{V\xi}^{2}=\norm{\xi}^{2}$, for all
$\xi\in H$;
\item $\sum V_{i}^{*}V_{i}=I_{H}$;
\item $V^{*}V=I_{H}$.\end{enumerate}
\begin{proof}
Notice that \[
\norm{V\xi}^{2}=\sum_{i}\norm{V_{i}\xi}^{2}=\sum_{i}\iprod{\xi}{V_{i}^{*}V_{i}\xi}=\iprod{\xi}{\sum_{i}V_{i}^{*}V_{i}\xi}.\]
Hence $\norm{V\xi}^{2}=\norm{\xi}^{2}$ if and only if \[
\iprod{\xi}{\sum_{i}V_{i}^{*}V_{i}\xi}=\iprod{\xi}{\xi}\]
for all $\xi\in H$. Equivalently, $\sum_{i}V_{i}^{*}V_{i}=I_{H}=V^{*}V$.
\end{proof}
\end{xca}
More examples of tensor products.
\begin{xca}
Prove the following.
\begin{itemize}
\item $\oplus_{1}^{n}H\simeq H\otimes\mathbb{C}^{n}$
\item $\sum_{1}^{\oplus\infty}H\simeq H\otimes l^{2}$
\item Given $L^{2}(X,\mathfrak{M},\mu)$, $L^{2}(X,H)\simeq H\otimes L^{2}(\mu)$
where $L^{2}(X,H)$ consists of all measurable functioins $f:X\rightarrow H$
such that\[
\int_{X}\norm{f(x)}_{H}^{2}d\mu(x)<\infty\]
and\[
\iprod{f}{g}=\int_{X}\iprod{f(x)}{g(x)}_{H}d\mu(x).\]

\item Show that all the spaces above are Hilbert spaces.
\end{itemize}
\end{xca}
\begin{cor}
(Krauss, physicist) Let $dimH=n$. Then all the CP maps are of the
form\[
\varphi(A)=\sum_{i}V_{i}^{*}AV_{i}.\]
\end{cor}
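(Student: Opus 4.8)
The plan is to run $\varphi$ through Stinespring's theorem and then exploit the fact that, because $\dim H=n<\infty$, the algebra $\mathfrak A=B(H)\cong M_n(\mathbb C)$ has, up to unitary equivalence, only one representation: a multiple of the identity. So first I apply the theorem above to the unital CP map $\varphi:B(H)\to B(H)$ to obtain a Hilbert space $K$, an isometry $V:H\to K$, and a representation $\pi:\mathfrak A\to B(K)$ with $\varphi(A)=V^{*}\pi(A)V$ for all $A$. Passing to the minimal dilation $K=\overline{span}\{\pi(A)Vh:A\in\mathfrak A,\,h\in H\}$, and using that $\mathfrak A$ is $n^{2}$-dimensional while $VH$ is $n$-dimensional, we see $\dim K<\infty$; so from now on every Hilbert space in sight is finite-dimensional and every closed span is just a span.

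The key step is to pin down $\pi$. Since the dilation is minimal, $\pi(1_{\mathfrak A})=I_K$. Let $\{e_{ij}\}$ be the matrix units of $M_n(\mathbb C)$, put $P_i:=\pi(e_{ii})$, and set $\mathcal K:=P_1K$. From $e_{ii}e_{jj}=\delta_{ij}e_{ii}$ and $\sum_i e_{ii}=1_{\mathfrak A}$ the $P_i$ are mutually orthogonal projections summing to $I_K$, and from $e_{1i}e_{i1}=e_{11}$, $e_{i1}e_{1i}=e_{ii}$ the operator $\pi(e_{i1})$ restricts to a unitary $\mathcal K\to P_iK$ with inverse $\pi(e_{1i})$. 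Hence $W:H\otimes\mathcal K\to K$, $W(e_i\otimes\zeta):=\pi(e_{i1})\zeta$, is a well-defined unitary, and a one-line check on the generators $e_{ij}$ gives $W^{*}\pi(A)W=A\otimes I_{\mathcal K}$ for all $A\in M_n(\mathbb C)$. This is exactly the statement that $\pi$ is a multiple of the identity representation of $M_n$.

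Now transport $V$ through $W$: $\tilde V:=W^{*}V:H\to H\otimes\mathcal K$ is again an isometry and $\varphi(A)=\tilde V^{*}(A\otimes I_{\mathcal K})\tilde V$. Fix an orthonormal basis $\{f_i\}$ of $\mathcal K$ and define operators $V_i\in B(H)$ by $\tilde V\xi=\sum_i (V_i\xi)\otimes f_i$. Then for all $\xi,\eta\in H$,
\[
\iprod{\eta}{\varphi(A)\xi}_H=\iprod{\tilde V\eta}{(A\otimes I_{\mathcal K})\tilde V\xi}=\sum_{i,j}\iprod{V_j\eta}{AV_i\xi}_H\,\iprod{f_j}{f_i}=\sum_i\iprod{\eta}{V_i^{*}AV_i\xi}_H,
\]
using $\iprod{f_j}{f_i}=\delta_{ij}$; hence $\varphi(A)=\sum_i V_i^{*}AV_i$, a finite sum since $\dim\mathcal K<\infty$. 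The normalization $\varphi(1_{\mathfrak A})=I_H$ reads $\sum_i V_i^{*}V_i=I_H$; if one drops unitality, $V$ becomes a contraction rather than an isometry but the identity $\varphi(A)=V^{*}\pi(A)V$ and the whole argument survive verbatim, yielding the Kraus form with $\sum_i V_i^{*}V_i=\varphi(1_{\mathfrak A})\le I_H$.

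The only genuine content is the middle paragraph — the identification $\pi\cong\bigoplus_{\dim\mathcal K}\mathrm{id}_{M_n}$, i.e. the classical structure theory of the simple algebra $M_n(\mathbb C)$; once $\pi(A)=A\otimes I_{\mathcal K}$ is available, extracting the $V_i$ is pure bookkeeping with an orthonormal basis of the multiplicity space $\mathcal K$. That is where I expect the main effort to go, and it is standard.
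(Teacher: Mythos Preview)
Your proof is correct and follows the same route as the paper's sketch: apply Stinespring, identify $\pi$ as a multiple of the identity representation of $M_n(\mathbb C)$, and read off the Kraus operators from the components of $V$ against an orthonormal basis of the multiplicity space. The paper's remark is very terse---it writes down the block form $\pi(A)=\mathrm{diag}(A,\ldots,A)$ and the column $V=[V_1;\ldots;V_n]^{T}$ without justifying the first step---whereas you actually carry out the matrix-unit argument that pins down $\pi\cong\mathrm{id}\otimes I_{\mathcal K}$, which is the only nontrivial ingredient. Your version is also more careful about the number of Kraus operators: it is $\dim\mathcal K$, the multiplicity, not necessarily $n=\dim H$ as the paper's notation might suggest.
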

\begin{rem*}
This was discovered in the physics literature by Kraus. The original
proof was very intracate, but it is a corollary of Stinespring's theorem.
When $dimH=n$, let $e_{1},\ldots e_{n}$ be an ONB. $V_{i}:e_{i}\mapsto Ve_{i}\in K$
is an isometry, $i=1,2,\ldots,n$. So we get a system of isometries,
and \[
\varphi(A)=\left[\begin{array}{cccc}
V_{1}^{*} & V_{2}^{*} & \cdots & V_{n}^{*}\end{array}\right]\left[\begin{array}{cccc}
A\\
 & A\\
 &  & \ddots\\
 &  &  & A\end{array}\right]\left[\begin{array}{c}
V_{1}\\
V_{2}\\
\vdots\\
V_{n}\end{array}\right].\]
Notice that $\varphi(1)=1$ if and only if $\sum_{i}V_{i}^{*}V_{i}=1$.
\end{rem*}
Using tensor product in representations.
\begin{xca}
$(X_{i},\mathfrak{M}_{i},\mu_{i})$ $i=1,2$ are measure spaces. Let
$\pi_{i}:L^{\infty}(\mu_{i})\rightarrow L^{2}(\mu_{i})$ be the representation
such that $\pi_{i}(f)$ is the operator of multiplication by $f$
on $L^{2}(\mu_{i})$. Hence $\pi\in Rep(L^{\infty}(X_{i}),L^{2}(\mu_{i}))$,
and \[
\pi_{1}\otimes\pi_{2}\in Rep(L^{\infty}(X_{1}\times X_{2}),L^{2}(\mu_{1}\times\mu_{2}))\]
with \[
\pi_{1}\otimes\pi_{2}(\tilde{\varphi})\tilde{f}=\tilde{\varphi}\tilde{f}\]
where $\tilde{\varphi}\in L^{\infty}(X_{1}\times X_{2})$ and $\tilde{f}\in L^{2}(\mu_{1}\times\mu_{2})$.
\end{xca}
More about multiplicaity
\begin{xca*}
dsf
\end{xca*}

\section{More on the CP maps}

Positive maps have been a recursive theme in functional analysis.
An classical example is $\mathfrak{A}=C_{c}(X)$ with a positive linear
functional $\Lambda:\mathfrak{A}\rightarrow\mathbb{C}$, mapping $\mathfrak{A}$
into a 1-d Hilbert space $\mathbb{C}$.

In Stinespring's formulation, $\varphi:\mathfrak{A}\rightarrow H$
is a CP map, then we may write $\varphi(A)=V^{*}\pi(A)V$ where $\pi:\mathfrak{A}\rightarrow K$
is a representation on a bigger Hilbert space $K$containing $H$.
The containment is in the sense that $V:H\hookrightarrow K$ embeds
$H$ into $K$. Notice that \[
V\varphi(A)=\pi(A)V\Longrightarrow\varphi(A)=V^{*}\pi(A)V\]
but not the other way around. (???) In Nelson's notes, we use the
notation $\varphi\subset\pi$ for one representation being the subrepresentation
of another representation. To imitate the situation in linear algebra,
we may want to split an operator $T$ acting on $K$ into operators
action on $H$ and its complement in $K$. Let $P:K\rightarrow H$
be the orthogonal projection. In matrix language,\[
\left[\begin{array}{cc}
PTP & PTP^{\perp}\\
P^{\perp}TP & P^{\perp}TP^{\perp}\end{array}\right].\]
A better looking would be \[
\left[\begin{array}{cc}
PTP & 0\\
0 & P^{\perp}TP^{\perp}\end{array}\right]=\left[\begin{array}{cc}
\varphi_{1} & 0\\
0 & \varphi_{2}\end{array}\right]\]
hence \[
\pi=\varphi_{1}\oplus\varphi_{2}.\]
Stinespring's theorme is more general, where the off-diagonal entries
may not be zero.

\section{Krien-Milman revisited}

We study some examples of compact convex sets in locally convex topological
spaces. Some typical examples include the set of positive semi-definite
functions, taking values in $\mathbb{C}$ or $B(H)$.

The context for Krein-Milman is locally convex topological spaces.
Almost all spaces one works with are locally convex. The Krein-Milman
theorem is in all functional analysis books. Choqute's thoerem comes
later, hence it's not contained in most books. A good reference is
the book by R. Phelps. The proof of Choquet's theorem is not specially
illuminating. It uses standard integration theory.
\begin{thm}
(Krein-Milman) $K$ is a compact convex set in a locally convex topological
space. Then $K$ is equals to the closed convex hull of its extreme
points. $K=cl(conv(E(K)))$.
\end{thm}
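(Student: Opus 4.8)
The plan is to run the classical two-part argument: first produce extreme points by a Zorn's lemma argument on minimal faces, and then use Hahn--Banach separation to rule out that $\overline{\mathrm{conv}}(E(K))$ is a proper subset of $K$.

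For the first part I would introduce the notion of a \emph{closed extreme subset} (face) of $K$: a nonempty closed set $A \subseteq K$ such that whenever $x,y \in K$, $t \in (0,1)$, and $tx+(1-t)y \in A$, one has $x,y \in A$. Three observations drive the argument: (i) a closed extreme subset of a closed extreme subset of $K$ is again a closed extreme subset of $K$ (so, in particular, an extreme point of a face is an extreme point of $K$); (ii) for any continuous linear functional $f$ on the ambient space, the set $\{x \in A : f(x) = \max_A f\}$ is a closed extreme subset of $A$, and it is nonempty because $A$ is compact and $f$ continuous; (iii) the intersection of any chain, ordered by reverse inclusion, of closed extreme subsets of $K$ is again one — its nonemptiness is exactly the finite intersection property of the compact set $K$. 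By (iii) and Zorn's lemma, $K$ has a minimal closed extreme subset $M$. If $M$ contained two distinct points $x \neq y$, local convexity and Hahn--Banach give a continuous linear functional $f$ with $f(x) \neq f(y)$; then by (ii) the face of $M$ on which $f$ attains its maximum is a proper closed extreme subset of $M$, contradicting minimality. Hence $M = \{e\}$ is a single point, and by (i) it is an extreme point of $K$; in particular $E(K) \neq \emptyset$ whenever $K \neq \emptyset$.

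For the second part, set $L := \overline{\mathrm{conv}}(E(K))$. Since $K$ is closed, convex, and contains $E(K)$, we get $L \subseteq K$. Suppose toward a contradiction that some $x_0 \in K \setminus L$ exists. As $L$ is closed and convex and $\{x_0\}$ is compact and convex, the Hahn--Banach separation theorem in a locally convex space furnishes a continuous linear functional $f$ with $f(x_0) > \sup_{y \in L} f(y)$. The set $K_f := \{x \in K : f(x) = \max_K f\}$ is a nonempty closed extreme subset of $K$ (compactness of $K$), and being compact and convex it has an extreme point $e$ by the first part; by observation (i), $e \in E(K) \subseteq L$. But then $f(e) = \max_K f \geq f(x_0) > \sup_{y\in L} f(y) \geq f(e)$, a contradiction. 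Therefore $L = K$.

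The main obstacle is the first part: getting the face formalism exactly right so that all of (i), (ii), (iii) go through, and in particular invoking compactness in the correct places — for the finite intersection property that keeps the nested intersection of faces nonempty in the Zorn step, and for the attainment of the maximum of a continuous functional on a compact face. Once it is known that every closed face contains an extreme point, the separation step in the second part is routine.
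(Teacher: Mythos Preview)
Your argument is correct and is the standard textbook proof of Krein--Milman. The paper's own proof is only a two-line sketch: assume $K\supsetneq\overline{\mathrm{conv}}(E(K))$, separate by Hahn--Banach, and ``get a contradiction.'' Your second part is exactly this separation step, carried out properly with the correct strict inequality and the explicit contradiction via an extreme point of the maximizing face.

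The genuine addition in your write-up is the first part: the Zorn's lemma argument on closed extreme subsets establishing that $E(K)\neq\emptyset$ (and more generally that every nonempty compact convex set, hence every closed face, has an extreme point). The paper's sketch simply takes this for granted and never addresses it, yet without it the phrase ``get a contradiction'' has no content --- one needs an extreme point of the face $K_f$ to land back in $L$ and close the argument. So your proof follows the same route as the paper but actually fills in the step the paper elides; there is nothing to correct.
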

A convex combination of points $(\xi_{i})$ in $K$ has the form $v=\sum c_{i}\xi_{i}$,
such that $\sum c_{i}=1$. Closure refers to taking limit, allow all
limits of such convex combinations. Such a $v$ is obviously in $K$,
since $K$ was assumed to be convex. The point of the Krein-Milman's
theorem is the converse. The Krein-Milman theorem is not as useful
as another version by Choquet. 
\begin{thm}
(Choquet) $K$ is a compact convex set in a locally convex topological
space. Let $E(K)$ be the set of extreme points on $K$. Then for
all $p\in K$, there exists a Borel probability measure $\mu_{p}$,
supported on a Borel set $bE(K)\supset E(K)$, such that\[
p=\int_{bE(X)}\xi d\mu(\xi).\]

\end{thm}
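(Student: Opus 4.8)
The plan is to produce $\mu_p$ as a probability measure on $K$ that represents $p$ (i.e.\ has barycenter $p$) and is maximal in the Choquet ordering, and then to show that such maximal measures live on the extreme boundary. First recall that every Radon probability measure $\mu$ on the compact convex set $K$ has a \emph{barycenter} $b(\mu)\in K$, the unique point with $f(b(\mu))=\int_K f\,d\mu$ for every continuous affine function $f$ on $K$; existence is a finite-intersection/compactness argument, and uniqueness holds because the continuous affine functions (restrictions of continuous linear functionals, plus constants) separate points. The vector-valued statement $p=\int_{bE(K)}\xi\,d\mu_p(\xi)$ is precisely $b(\mu_p)=p$. Since $b(\delta_p)=p$, the set $M_p$ of probability measures with barycenter $p$ is a nonempty, convex, weak-$*$ compact subset of $C(K)^{*}$.

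On $M_p$ use the Choquet ordering $\mu\prec\nu\iff\int\varphi\,d\mu\le\int\varphi\,d\nu$ for every continuous convex $\varphi$ on $K$. Chains have upper bounds — a weak-$*$ cluster point of a chain still lies in $M_p$ (which is closed) and dominates the chain, by monotonicity of $\alpha\mapsto\int\varphi\,d\mu_\alpha$ — so Zorn's lemma yields a $\prec$-maximal $\mu_p\in M_p$. The engine for the rest is the upper envelope: for continuous convex $\varphi$ set $\hat\varphi(x)=\inf\{h(x):h\text{ continuous affine},\,h\ge\varphi\}$, a concave upper semicontinuous function with $\hat\varphi\ge\varphi$. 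A standard lemma (Phelps) says $\mu$ is $\prec$-maximal iff $\int\varphi\,d\mu=\int\hat\varphi\,d\mu$ for every continuous convex $\varphi$. Moreover, if $x$ is extreme then $\delta_x$ is the only probability measure with barycenter $x$, hence $\delta_x$ is maximal, giving $\hat\varphi(x)=\varphi(x)$; thus $E(K)\subseteq\bigcap_\varphi\{\hat\varphi=\varphi\}$.

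Now split into cases. If $K$ is metrizable, pick a sequence $\{f_n\}$ of continuous affine functions with $\sup|f_n|\le 1$ separating the points of $K$, and form the continuous \emph{strictly} convex function $\varphi_0=\sum_n 2^{-n}f_n^{2}$. Strict convexity forces $\{\hat\varphi_0=\varphi_0\}\subseteq E(K)$, so combined with the previous inclusion $E(K)=\{\hat\varphi_0=\varphi_0\}$; this set is a $G_\delta$ (hence Borel) because $\hat\varphi_0-\varphi_0$ is nonnegative and upper semicontinuous. Maximality of $\mu_p$ gives $\int(\hat\varphi_0-\varphi_0)\,d\mu_p=0$, hence $\mu_p(E(K))=1$, and one may take $bE(K)=E(K)$. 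In general $E(K)$ need not be Borel, and here one invokes the Bishop--de Leeuw refinement: a $\prec$-maximal Radon measure is \emph{pseudosupported} by $E(K)$, i.e.\ $\mu_p(B)=0$ for every Baire set $B$ with $B\cap E(K)=\emptyset$, equivalently $\mu_p^{*}(E(K))=1$; choosing a Borel measurable hull $bE(K)\supseteq E(K)$ with $\mu_p(bE(K))=\mu_p^{*}(E(K))=1$ completes the argument. In either case, for every continuous linear (hence affine) $f$ we get $f(p)=f(b(\mu_p))=\int_K f\,d\mu_p=\int_{bE(K)}f\,d\mu_p$, which is the asserted representation.

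The main obstacle is exactly this last case: in the non-metrizable setting $E(K)$ can fail to be Borel, so one cannot honestly say ``$\mu_p$ is supported on $E(K)$'', and the real content of the theorem is the passage to a genuine Borel carrier. Proving that $\prec$-maximal measures are pseudosupported by $E(K)$ (Bishop--de Leeuw) is the delicate point, resting on the upper-envelope characterization of maximality together with a careful measure-theoretic argument; by contrast, once the strictly convex $\varphi_0$ is in hand the metrizable case is soft, and the reduction to maximal representing measures via barycenters and Zorn's lemma is routine.
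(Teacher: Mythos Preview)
The paper does not actually prove Choquet's theorem: it explicitly says ``The proof of Choquet's theorem is not specially illuminating. It uses standard integration theory'' and points the reader to Phelps' book; the only proof given in that section is the brief Hahn--Banach argument for Krein--Milman. Your proposal is precisely the standard Phelps argument the paper defers to --- barycenters, Zorn's lemma on the Choquet ordering to produce a maximal representing measure, the upper-envelope characterization of maximality, the strictly convex $\varphi_0$ in the metrizable case, and Bishop--de Leeuw in general --- so there is nothing to compare: you have supplied the proof the paper chose to omit, and it is correct.
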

The expression in Choquet's theorem is a generalization of convex
combination. In stead of summation, it is an integral against a measure.
Since there are some bazarre cases where the extreme points $E(K)$
do not form a Borel set, the measure $\mu_{p}$ is actually supported
on $bE(K)$, such that $\mu_{p}(bE(K)-E(K))=0$. Examples of such
a decomposition include Fourier transform, Laplace transform, direct
integrals.
\begin{example}
Let $(X,\mathfrak{M},\mu)$ be a measure space, where $X$ is compact
and Hausdorff. The set of all probability measures $\mathcal{P}(X)$
is a convex set. To see this, let $\mu_{1},\mu_{2}\in\mathcal{P}(X)$
and $0\leq t\leq1$, then $t\mu_{1}+(1-t)\mu_{2}$ is a measure on
$X$, moreover $(t\mu_{1}+(1-t)\mu_{2})(X)=t+1-t=1$, hence $t\mu_{1}+(1-t)\mu_{2}\in\mathcal{P}(X)$.
Usually we don't want all probability measures, but a closed subset. 
\end{example}
We compute extreme points in the previous example.
\begin{example}
$K=\mathcal{P}(X)$ is compact convex in $C(X)^{*}$, which is identified
as the set of all measures due to Riesz. $C(X)^{*}$ is a Banach space
hence is always convex. The importance of being the dual of some Banach
space is that the unit ball is always weak $*$ compact. The weak
$*$ topology is just the cylindar topology. The unit ball $B_{1}^{*}$
sits inside the infinite product space (compact, Hausdorff) $\prod_{v\in B,\norm{v}=1}D_{1}$,
where $D_{1}=\{z\in\mathbb{C}:\abs{z}=1\}$. The weak $*$ topology
on $B_{1}^{*}$ is just the restriction of the product topology on
$\prod D_{1}$ onto $B_{1}^{*}$. 
\begin{example}
Claim: $E(K)=\{\delta_{x}:x\in X\}$, where $\delta_{x}$ is the Dirac
measure supported at $x\in X$. By Riesz, to know the measure is to
know the linear functional. $\int fd\delta_{x}=f(x)$. Hence we get
a family of measures indexed by $X$. If $X=[0,1]$, we get a continuous
family of measures. To see there really are extreme points, we do
the GNS contructioin on the algebra $\mathfrak{A}=C(X)$, with the
state $\mu\in\mathcal{P}(X)$. The Hilbert space so constructed is
simply $L^{2}(\mu)$. It's clear that $L^{2}(\delta_{x})$ is 1-dimensional,
hence the representation is irreducible. Therefore $\delta_{x}$ is
a pure state, for all $x\in X$.
\end{example}
\end{example}
\begin{note}
Extreme points: $\nu$ is an extreme point in $\mathcal{P}(X)$ if
and only if $\nu\in[\mu_{1},\mu_{2}]\Rightarrow\nu=\mu_{1}\mbox{ or }\nu=\mu_{2}$.\end{note}
\begin{example}
Let $\mathfrak{A}=B(H)$, and $K=states$. For each $\xi\in H$, the
map $A\mapsto w_{\xi}(A):=\iprod{\xi}{A\xi}$ is a state, called vector
state. Claim: $E(K)=\mbox{vector states}$. To show this, suppose
$W$ is a subspace of $H$ such that $0\varsubsetneq W\varsubsetneq H$,
and suppose $W$ is invariant under the action of $B(H)$. Then $\exists h\in H$,
$h\perp W$. Choose $\xi\in W$. The wonderful rank-1 operator (due
to Dirac) $T:\xi\mapsto h$ given by $T=\ketbra{h}{\xi}$, shows that
$h\in W$. Hence $h\perp h$ and $h=0$. Therefore $W=H$. We say
$B(H)$ acts transitively on $H$.\end{example}
\begin{note}
In general, any $C^{*}$-algebra is a closed subalgebra of $B(H)$
for some $H$. But if we choose $B(H)$, then all the pure states
are vector states.\end{note}
\begin{example}
Let $\mathfrak{A}$ be a $*$-algebra, $S(\mathfrak{A})$ be the set
of states on $\mathfrak{A}$. $w:\mathfrak{A}\rightarrow\mathbb{C}$
is a state if $w(1_{\mathfrak{A}})=1$ and $w(A)\geq0$, whenever
$A\geq0$. Let $\mathfrak{A}$ be a $*$-algebra, then the set of
completely postive maps is a compact convex set. CP maps are generalizations
of states. Back to the first example, $\mathfrak{A}=C(X)$ there is
a bijection between state $\varphi_{\mu}$ and Borel measure $\mu$.
$\varphi(a)=\int ad\mu$. We check that $\varphi(1_{\mathfrak{A}})=\varphi(1)=\int1d\mu=\mu(X)=1$;
and $\varphi(f)=\int g^{2}d\mu\geq0$ for $f\geq0$, $g^{2}=f$.
\end{example}
The next example if taken from AMS as a homework exercise.
\begin{example}
Take the two state sample space $\Omega=\prod_{1}^{\infty}\{0,1\}$
with product topology. Assign probability measure, so that we might
favor one outcome than the other. For example, let $s=x_{1}+\cdots x_{n}$,
$P_{\theta}(C_{x})=\theta^{s}(1-\theta)^{n-1}$, i.e. $s$ heads,
$(n-s)$ tails. Notice that $P_{\theta}$ is invariant under permutation
of coordinates. $x_{1},x_{2},\ldots,x_{n}\mapsto x_{\sigma(1)}x_{\sigma(2)}\ldots x_{\sigma(n)}$.
$P_{\theta}$ is a member of the set of all such invariant measures
(invariant under permutation) $P_{inv}(\Omega)$. Prove that \[
E(P_{inv}(\Omega))=[0,1]\]
i.e. $P_{\theta}$ are all the possible extreme points.\end{example}
\begin{rem}
$\sigma:X\rightarrow X$ is a measurable transformation. $\mu$ is
ergodic (probability measure) if\[
[E\in\mathfrak{M},\sigma E=E]\Rightarrow\mu(E)\in\{0,1\}\]
which intuitively says that the whole space $X$ can't be divided
into parts where $\mu$ is invariant. It has to be mixed up by the
transformation $\sigma$.
\end{rem}

\section{States and representation}

The GNS construction gives rise to a bijection between states and
representations. We consider decomposition of representations or equivalently
states.

The smallest representations are the irreducible ones. A representation
$\pi:\mathfrak{A}\rightarrow B(H)$ is irreducible, if whenever $H$
breaks up into two pieces $H=H_{1}\oplus H_{2}$, where $H_{i}$ is
invariant under $\pi(\mathfrak{A})$, one of them is zero (the other
is $H$). Equivalently, if $\pi=\pi_{1}\oplus\pi_{2}$, where $\pi_{i}=\pi\big|_{H_{i}}$,
then one of them is zero. This is similar to decomposition of natural
nubmers into product of primes. For example, $6=2\times3$, but $2$
and $3$ are primes and they do not dompose further. 

Hilbert spaces are defined up to unitary equivalence. A state $\varphi$
may have equivalent representations on different Hilbert spaces (but
unitarily equivalent), however $\varphi$ does not see the distinction,
and it can only detect equivalent classes of representations. 
\begin{example}
Let $\mathfrak{A}$ be a $*$-algebra. Given two states $s_{1}$ and
$s_{2}$, by the GNS construction, we get cyclic vector $\xi_{i}$,
and representation $\pi_{i}:\mathfrak{A}\rightarrow B(H_{i})$, so
that $s_{i}(A)=\iprod{\xi_{i}}{\pi_{i}(A)\xi_{i}}$, $i=1,2$. Suppose
there is a unitary operator $W:H_{1}\rightarrow H_{2}$, such that
for all $A\in\mathfrak{A}$, \[
\pi_{1}(A)=W^{*}\pi_{2}(A)W.\]
Then \begin{eqnarray*}
\iprod{\xi_{2}}{\pi_{2}(A)\xi_{2}}_{2} & = & \iprod{W\xi_{1}}{\pi_{2}(A)W\xi_{1}}_{1}\\
 & = & \iprod{\xi_{1}}{W^{*}\pi_{2}(A)W\xi_{1}}_{1}\\
 & = & \iprod{\xi_{1}}{\pi_{1}(A)\xi_{1}}_{1}\end{eqnarray*}
i.e. $s_{2}(A)=s_{1}(A)$. Therefore the same state $s=s_{1}=s_{2}$
has two distinct (unitarily equivalent) representations. \end{example}
\begin{rem}
A special case of states are measures. Two representations are mutually
singular $\pi_{1}\perp\pi_{2}$, if and only if two measures are mutually
singular, $\mu_{1}\perp\mu_{2}$. Later, we will follow Nelson's notes
to build Hilbert space out of equivalent classes measures. \end{rem}
\begin{lem}
(Schur) The following are equivalent.
\begin{enumerate}
\item \label{enu:schur-1-1}A representation $\pi:\mathfrak{A}\rightarrow B(H)$
is irreducible 
\item \label{enu:schur-1-2}$(\pi(\mathfrak{A}))'$ is one-dimensional,
i.e. $(\pi(\mathfrak{A}))'=cI$. \end{enumerate}
\begin{proof}
(\ref{enu:schur-1-2})$\Rightarrow$(\ref{enu:schur-1-1}). Suppose
$\pi$ is not irreducible, i.e. $\pi=\pi_{1}\oplus\pi_{2}$. Claim
that (using tensor) $M_{2}\subset(\pi(\mathfrak{A}))'$. Let \[
P_{H_{1}}=\left[\begin{array}{cc}
I_{H_{1}} & 0\\
0 & 0\end{array}\right],\quad P_{H_{2}}=1-P_{H_{1}}=\left[\begin{array}{cc}
0 & 0\\
0 & I_{H_{2}}\end{array}\right]\]
then for all $A\in\mathfrak{A}$, $P_{H_{i}}\pi(A)=\pi(A)P_{H_{i}}$,
$i=1,2$. Hence $(\pi(\mathfrak{A}))'$ has more than one dimension. 

(\ref{enu:schur-1-1})$\Rightarrow$(\ref{enu:schur-1-2}). Suppose
$(\pi(\mathfrak{A}))'$ has more than one dimension. Let $X\in(\pi(\mathfrak{A}))'$,
i.e. \[
X\pi(A)=\pi(A)X,\;\forall A\in\mathfrak{A}.\]
By taking adjoint, $X^{*}\in(\pi(\mathfrak{A}))'$. Hence $X+X^{*}$
is self-adjoint, and $X+X^{*}\neq cI$, since by hypothesis $(\pi(\mathfrak{A}))'$
has more than one dimension. Therefore $X+X^{*}$ has non trivial
spectral projection, by the spectral theorem, i.e. there is self-adjoint
projection $P(E)\notin\{0,I\}$. Let $H_{1}=P(E)H$, and $H_{2}=(I-P(E))H$.
$H_{1}$ and $H_{2}$ are both nonzero proper subspaces of $H$. Since
$P(E)$ commutes with $\pi(A)$, it follows that $H_{1}$ and $H_{2}$
are both invariant under $\pi$.\end{proof}
\begin{cor*}
To test invariant subspaces, one only needs to look at projections
in the commutant. $\pi$ is irreducible if and only if the only projections
in $(\pi(\mathfrak{A}))'$ are $0$ or $I$.\end{cor*}
\begin{rem}
In matrix notation, write\[
\pi(A)=\left[\begin{array}{cc}
\pi_{1}(A) & 0\\
0 & \pi_{2}(A)\end{array}\right].\]
If \[
\left[\begin{array}{cc}
X & Y\\
U & V\end{array}\right]\in(\pi(\mathfrak{A}))'\]
then \begin{eqnarray*}
\left[\begin{array}{cc}
X & Y\\
U & V\end{array}\right]\left[\begin{array}{cc}
\pi_{1}(A) & 0\\
0 & \pi_{2}(A)\end{array}\right] & = & \left[\begin{array}{cc}
X\pi_{1}(A) & Y\pi_{2}(A)\\
U\pi_{1}(A) & V\pi_{2}(A)\end{array}\right]\\
\left[\begin{array}{cc}
\pi_{1}(A) & 0\\
0 & \pi_{2}(A)\end{array}\right]\left[\begin{array}{cc}
X & Y\\
U & V\end{array}\right] & = & \left[\begin{array}{cc}
\pi_{1}(A)X & \pi_{1}(A)Y\\
\pi_{2}(A)U & \pi_{2}(A)V\end{array}\right].\end{eqnarray*}
Hence \begin{eqnarray*}
X\pi_{1}(A) & = & \pi_{1}(A)X\\
V\pi_{2}(A) & = & \pi_{2}(A)V\end{eqnarray*}
i.e. \[
X\in(\pi_{1}(\mathfrak{A}))',\; V\in(\pi_{2}(\mathfrak{A}))'\]
and \begin{eqnarray*}
U\pi_{1}(A) & = & \pi_{2}(A)U\\
Y\pi_{2}(A) & = & \pi_{1}(A)Y.\end{eqnarray*}
i.e. $U,Y\in int(\pi_{1},\pi_{2})$, the set of intertwing operators
of $\pi_{1}$ and $\pi_{2}$.\[
\xymatrix{
&H_{1}\ar[r]^{\pi_{1}(A)}\ar[d]^{U} & H_{1}\ar[d]_{U}&\\
&H_{2}\ar[r]^{\pi_{2}(A)}\ar@/^1pc/@{.>}[lur]^{Y} & H_{2}\ar@/_1pc/@{.>}[rul]_{Y} &\\
}
\]$\pi_{1}$ and $\pi_{2}$ are inequivalent if and only if $int(\pi_{1},\pi_{2})=0$.
If $\pi_{1}=\pi_{2}$, then we say $\pi$ has multiplicity (multiplicity
equals 2), which is equivalent to the commutant being non-abelian
(in the case where $\pi_{1}=\pi_{2}$, $(\pi(\mathfrak{A}))'\simeq M_{2}$.)
\end{rem}
\end{lem}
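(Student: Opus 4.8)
The plan is to prove both implications by contraposition, using that $(\pi(\mathfrak{A}))'$ is a $*$-algebra which is closed in the strong operator topology.

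For (\ref{enu:schur-1-2})$\Rightarrow$(\ref{enu:schur-1-1}), suppose $\pi$ is \emph{not} irreducible, so $H=H_{1}\oplus H_{2}$ with $H_{1},H_{2}$ both nonzero and both invariant under $\pi(\mathfrak{A})$. Let $P$ be the orthogonal projection onto $H_{1}$. Invariance of $H_{1}$ gives $P\pi(A)P=\pi(A)P$ and invariance of $H_{2}$ gives $(I-P)\pi(A)(I-P)=\pi(A)(I-P)$ for all $A$; subtracting these two identities yields $P\pi(A)=\pi(A)P$. Since $0\neq P\neq I$, the commutant strictly contains $\mathbb{C}I$, contradicting (\ref{enu:schur-1-2}).

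For (\ref{enu:schur-1-1})$\Rightarrow$(\ref{enu:schur-1-2}), suppose $(\pi(\mathfrak{A}))'$ strictly contains $\mathbb{C}I$ and pick $X$ in it with $X$ not a scalar. Taking adjoints in $X\pi(A)=\pi(A)X$ shows $X^{*}\in(\pi(\mathfrak{A}))'$, so the self-adjoint operators $R=(X+X^{*})/2$ and $S=(X-X^{*})/2i$ both lie in the commutant, and at least one of them, say $Y$, is not a scalar (else $X=R+iS$ would be). Applying the spectral theorem to the bounded self-adjoint $Y$, there is a Borel set $E$ for which $P(E)=\chi_{E}(Y)$ is a self-adjoint projection distinct from $0$ and $I$, since $Y$ not scalar means $\sigma(Y)$ has at least two points, which $E$ separates. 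The crucial point is that $P(E)\in(\pi(\mathfrak{A}))'$: approximate $\chi_{E}$ on the compact set $\sigma(Y)$ by a uniformly bounded sequence of polynomials $p_{n}$ converging pointwise (using the approximation of bounded measurable functions developed earlier in the excerpt), so that $p_{n}(Y)\to P(E)$ strongly by dominated convergence against each of the measures $\mu_{\varphi}$; each $p_{n}(Y)$ commutes with $\pi(A)$ because $Y$ does and the commutant is an algebra, and the commutant is strongly closed because $X_{n}\to X$ strongly together with $X_{n}\pi(A)=\pi(A)X_{n}$ forces $X\pi(A)\xi=\lim\pi(A)X_{n}\xi=\pi(A)X\xi$ for every $\xi$. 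Then $H_{1}:=P(E)H$ is a nonzero proper $\pi(\mathfrak{A})$-invariant subspace, so $\pi$ is reducible, contradicting (\ref{enu:schur-1-1}). Since the only commutant element used in this step was a projection, the argument simultaneously proves the stated corollary.

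The main obstacle is precisely this last step: transporting the relation ``commutes with every $\pi(A)$'' from $Y$ to its spectral projection $\chi_{E}(Y)$. Everything else is routine linear algebra and adjoint bookkeeping; the substance is invoking the functional calculus, already set up in the excerpt via the statement that bounded measurable functions are pointwise or uniform limits of simpler functions, in a way that respects the commutant. This works smoothly exactly because $Y$ is bounded, so $\sigma(Y)$ is compact and the approximating polynomials can be chosen uniformly bounded there.
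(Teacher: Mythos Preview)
Your proof is correct and follows essentially the same contrapositive strategy as the paper: exhibit a nontrivial projection in the commutant from a decomposition, and conversely extract a nontrivial spectral projection from a non-scalar self-adjoint element of the commutant. You are in fact more careful than the paper in two spots: the paper asserts $X+X^{*}\neq cI$ without justification (which can fail, e.g.\ if $X=iT$ with $T$ self-adjoint), whereas your split into $R$ and $S$ handles this cleanly; and you explicitly argue why $\chi_{E}(Y)$ inherits membership in the commutant via strong approximation, which the paper simply takes for granted.
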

Schur's lemma addresses all representations. We characterise the relation
between state and its GNS representation, i.e. specilize to the GNS
representation. Given a $*$ algebra $\mathfrak{A}$, the states $S(\mathfrak{A})$
forms a compact convex subset in the unit ball of the dual $\mathfrak{A}^{*}$. 

Let $\mathfrak{A}_{+}$ be the set of positive elements in $\mathfrak{A}$.
Given $s\in S(\mathfrak{A})$, let $t$ be a positive linear functional.
By $t\leq s$, we means $t(A)\leq s(A)$ for all $A\in\mathfrak{A}_{+}$.
We look for relation between $t$ and the commutant $(\pi(\mathfrak{A}))'$. 
\begin{lem}
(Schur-Sakai-Nicodym) Let $t$ be a positive linear functional, and
let $s$ be a state. There is a bijection between $t$ such that $0\leq t\leq s$,
and self-adjoint operator $A$ in the commutant with $0\leq A\leq I$.
The relation is given by \[
t(\cdot)=\iprod{\Omega}{\pi(\cdot)A\Omega}\]
\end{lem}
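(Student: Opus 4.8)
The plan is to exhibit the correspondence explicitly in both directions and then check the two assignments are mutually inverse; the content is essentially a Hilbert-space transcription of the classical Radon--Nikodym argument, with the Cauchy--Schwarz inequality for positive functionals doing the work of the derivative.

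\emph{From operators to functionals.} Given a self-adjoint $A\in(\pi(\mathfrak{A}))'$ with $0\le A\le I$, I would set $t(B):=\iprod{\Omega}{\pi(B)A\Omega}$. Linearity is immediate. Writing $A=C^{*}C$ with $C=A^{1/2}\in(\pi(\mathfrak{A}))'$ self-adjoint, and using that $C$ commutes with $\pi(B)^{*}\pi(B)$, one gets $t(B^{*}B)=\iprod{C\Omega}{\pi(B)^{*}\pi(B)C\Omega}=\norm{\pi(B)C\Omega}^{2}\ge 0$; the identical computation with $I-A=D^{*}D$, $D=(I-A)^{1/2}\in(\pi(\mathfrak{A}))'$, gives $s(B^{*}B)-t(B^{*}B)=\norm{\pi(B)D\Omega}^{2}\ge 0$. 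Since (for a general $*$-algebra) $\mathfrak{A}_{+}$ is the cone generated by the elements $B^{*}B$, this shows $0\le t\le s$.

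\emph{From functionals to operators.} Given $t$ with $0\le t\le s$, I would define $\iprod{\pi(B)\Omega}{\pi(C)\Omega}_{t}:=t(B^{*}C)$ on the dense subspace $\pi(\mathfrak{A})\Omega\subset H$. Cauchy--Schwarz for the positive form $t$ yields $\abs{t\big((B-B')^{*}C\big)}^{2}\le t\big((B-B')^{*}(B-B')\big)\,t(C^{*}C)\le s\big((B-B')^{*}(B-B')\big)\,s(C^{*}C)=\norm{\pi(B)\Omega-\pi(B')\Omega}^{2}\,\norm{\pi(C)\Omega}^{2}$, so the form is independent of representatives and bounded with constant $\le 1$; it therefore extends to a bounded sesquilinear form on $H$, and by the Riesz representation theorem $\iprod{\pi(B)\Omega}{A\pi(C)\Omega}=t(B^{*}C)$ for a unique $A\in B(H)$. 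From $\iprod{\pi(C)\Omega}{A\pi(C)\Omega}=t(C^{*}C)\ge 0$ and $\iprod{\pi(C)\Omega}{(I-A)\pi(C)\Omega}=s(C^{*}C)-t(C^{*}C)\ge 0$ on a dense set, one reads off $0\le A\le I$. For $D\in\mathfrak{A}$, $\iprod{\pi(B)\Omega}{A\pi(D)\pi(C)\Omega}=t(B^{*}DC)=t\big((D^{*}B)^{*}C\big)=\iprod{\pi(D^{*}B)\Omega}{A\pi(C)\Omega}=\iprod{\pi(B)\Omega}{\pi(D)A\pi(C)\Omega}$, and density forces $A\pi(D)=\pi(D)A$, i.e.\ $A\in(\pi(\mathfrak{A}))'$. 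Taking the first slot equal to $1_{\mathfrak{A}}$ gives $t(C)=\iprod{\Omega}{A\pi(C)\Omega}=\iprod{\Omega}{\pi(C)A\Omega}$, which is the asserted formula.

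\emph{Bijectivity and the main obstacle.} Starting from $A$, forming $t_{A}$, and running the second construction reproduces the form $\iprod{\pi(B)\Omega}{\pi(C)\Omega}_{t_{A}}=t_{A}(B^{*}C)=\iprod{\Omega}{\pi(B^{*}C)A\Omega}=\iprod{\pi(B)\Omega}{A\pi(C)\Omega}$, so the operator recovered is $A$ itself; and the displayed formula shows that $t$ is recovered from its operator. In particular $A\mapsto t_{A}$ is injective, since $t_{A}$ determines all the numbers $\iprod{\pi(B)\Omega}{A\pi(C)\Omega}$ and these vectors are dense. The one genuinely delicate point is the well-definedness step in the second construction: it is precisely the domination $t\le s$ that forces the GNS kernel of $s$ to lie inside the radical of $t$, and this is exactly what Cauchy--Schwarz delivers; everything else is bookkeeping. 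A secondary subtlety worth a remark is that for a $*$-algebra that is not a $C^{*}$-algebra one must fix the convention $\mathfrak{A}_{+}=\{\sum_{k}B_{k}^{*}B_{k}\}$, after which checking the inequalities on elements of the form $B^{*}B$ suffices.
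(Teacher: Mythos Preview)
Your argument is correct and follows the same Riesz-representation route as the paper: both directions rest on realizing $t$ as a bounded form on the dense set $\pi(\mathfrak{A})\Omega$. The paper works with the quadratic form $\pi(b)\Omega\mapsto t(b^{*}b)$, obtains $A$ with $0\le A\le I$ from Riesz, and then leaves the verification $A\in(\pi(\mathfrak{A}))'$ as an open question mark. You instead set up the full sesquilinear form $(\pi(B)\Omega,\pi(C)\Omega)\mapsto t(B^{*}C)$ and use Cauchy--Schwarz for $t$ to handle well-definedness; this buys you the commutant membership cleanly via the identity $t(B^{*}DC)=t((D^{*}B)^{*}C)$, which is exactly the step the paper does not complete. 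So your proof is in fact more complete than the paper's, while being the same strategy at heart.
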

\begin{rem}
This is an extention of the classical Radon-Nicodym derivative theorem
to the non-commutative setting. We may write $A=dt/ds$. The notation
$0\leq A\leq I$ refers to the partial order of self-adjoint operators.
It means that for all $\xi\in H$, $0\leq\iprod{\xi}{A\xi}\leq\norm{\xi}^{2}$.
See {[}\cite{MR0442701}{]}, {[}\cite{MR0512360}{]} and {[}\cite{MR1468230}{]}.\end{rem}
\begin{proof}
Easy direction, suppose $A\in(\pi(\mathfrak{A}))'$ and $0\leq A\leq I$.
As in many applications, the favorite functions one usually applies
to self-adjoint operators is the squre root function $\sqrt{\cdot}$.
So let's take $\sqrt{A}$. Since $A\in(\pi(\mathfrak{A}))'$, so is
$\sqrt{A}$. We need to show $t(a)=\iprod{\Omega}{\pi(a)A\Omega}\leq s(a)$,
for all $a\geq0$ in $\mathfrak{A}$. Let $a=b^{2}$, then\begin{eqnarray*}
t(a) & = & \iprod{\Omega}{\pi(a)A\Omega}\\
 & = & \iprod{\Omega}{\pi(b^{2})A\Omega}\\
 & = & \iprod{\Omega}{\pi(b)^{*}\pi(b)A\Omega}\\
 & = & \iprod{\pi(b)\Omega}{A\pi(b)\Omega}\\
 & \leq & \iprod{\pi(b)\Omega}{\pi(b)\Omega}\\
 & = & \iprod{\Omega}{\pi(a)\Omega}\\
 & = & s(a).\end{eqnarray*}
Conversely, suppose $t\leq s$. Then for all $a\geq0$, $t(a)\leq s(a)=\iprod{\Omega}{\pi(a)\Omega}$.
Again write $a=b^{2}$. It follows that \[
t(b^{2})\leq s(b^{2})=\iprod{\Omega}{\pi(a)\Omega}=\norm{\pi(b)\Omega}^{2}.\]
By Riesz's theorem, there is a unique $\eta$, so that \[
t(a)=\iprod{\pi(b)\Omega}{\eta}.\]

Conversely, Let $a=b^{2}$, then \[
t(b^{2})\leq s(b^{2})=\iprod{\Omega}{\pi(a)\Omega}=\norm{\pi(b)\Omega}^{2}.\]
i.e. $\pi(b)\Omega\mapsto t(b^{2})$ is a bounded quadratic form.
Therefore, there exists a unique $A\geq0$ such that\[
t(b^{2})=\iprod{\pi(b)\Omega}{A\pi(b)\Omega}.\]
It is easy to see that $0\leq A\leq I$. $A\in(\pi(\mathfrak{A}))'$????\end{proof}
\begin{cor}
Let $s$ be a state. $(\pi,\Omega,H)$ is the corresponding GNS construction.
The following are equivalent.
\begin{enumerate}
\item For all positive linear functional $t$, $t\leq s\Rightarrow t=\lambda s$
for some $\lambda\geq0$.
\item $\pi$ is irreducible.
\end{enumerate}
\end{cor}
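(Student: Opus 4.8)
The plan is to obtain both implications directly from the two structural results proved just above: the Schur--Sakai--Nikodym correspondence, which matches positive linear functionals $t$ with $0\leq t\leq s$ bijectively with self-adjoint operators $A\in(\pi(\mathfrak{A}))'$ satisfying $0\leq A\leq I$ via $t(\cdot)=\iprod{\Omega}{\pi(\cdot)A\Omega}$, and Schur's lemma together with its corollary, which says $\pi$ is irreducible exactly when the only projections in $(\pi(\mathfrak{A}))'$ are $0$ and $I$.

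For (2)$\Rightarrow$(1) I would argue as follows. Let $t$ be a positive linear functional with $t\leq s$; since positivity of $t$ means $t\geq 0$, we have $0\leq t\leq s$. By the Schur--Sakai--Nikodym lemma there is a self-adjoint $A\in(\pi(\mathfrak{A}))'$ with $0\leq A\leq I$ and $t(\cdot)=\iprod{\Omega}{\pi(\cdot)A\Omega}$. If $\pi$ is irreducible, Schur's lemma gives $(\pi(\mathfrak{A}))'=\mathbb{C}I$, so $A=\lambda I$ with $\lambda\in[0,1]$, and hence $t(a)=\iprod{\Omega}{\pi(a)\lambda\Omega}=\lambda\,s(a)$ for every $a\in\mathfrak{A}$. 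This is exactly (1).

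For (1)$\Rightarrow$(2) I would proceed by contraposition: if $\pi$ is not irreducible, the corollary to Schur's lemma produces a projection $P\in(\pi(\mathfrak{A}))'$ with $P\neq 0,I$. Since $0\leq P\leq I$ and $P$ commutes with $\pi(\mathfrak{A})$, the easy direction of Schur--Sakai--Nikodym shows $t(\cdot):=\iprod{\Omega}{\pi(\cdot)P\Omega}$ is a positive linear functional with $t\leq s$, so by (1) we get $t=\lambda s$ for some $\lambda\geq 0$, i.e.\ $\iprod{\Omega}{\pi(a)P\Omega}=\lambda\iprod{\Omega}{\pi(a)\Omega}$ for all $a$. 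Rewriting the left side using $P\pi(a)=\pi(a)P$ as $\iprod{P\Omega}{\pi(a)\Omega}$ and invoking cyclicity of $\Omega$ (density of $\{\pi(a)\Omega\}$) yields $P\Omega=\lambda\Omega$; applying $P$ once more and using $P^2=P$ forces $\lambda=\lambda^2$, so $\lambda\in\{0,1\}$, and then $P\pi(a)\Omega=\pi(a)P\Omega=\lambda\pi(a)\Omega$ on a dense set gives $P=\lambda I\in\{0,I\}$, a contradiction. The only delicate point is this converse step, where one must upgrade the scalar identity $t=\lambda s$ into the operator identity $P=\lambda I$; the decisive tool is that $\Omega$ is cyclic, which turns an equality of inner products against $\Omega$ into an equality on a dense subspace, after which the rest is just the elementary spectral fact that a projection equals its own square.
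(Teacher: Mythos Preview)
Your proof is correct and follows essentially the same route as the paper. The direction (2)$\Rightarrow$(1) is identical. For (1)$\Rightarrow$(2) the paper is terser: it invokes the \emph{bijection} in the Schur--Sakai--Nikodym lemma to assert directly that $t=\lambda s$ if and only if $A=\lambda I$, so any non-scalar $A$ in the commutant (suitably normalized to lie in $[0,I]$) immediately yields a $t\leq s$ with $t\neq\lambda s$. You instead pick a nontrivial projection $P$ and verify by hand, via cyclicity of $\Omega$, that $t=\lambda s$ forces $P=\lambda I$; this is precisely the uniqueness half of the bijection worked out in the special case of projections, so the extra argument is sound but not strictly needed once the lemma is stated as a bijection.
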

\begin{proof}
By Sakai-Nicodym derivative, $t\leq s$ if and only if there is a
self-adjoint operator $A\in(\pi(\mathfrak{A}))'$ so that \[
t(\cdot)=\iprod{\Omega}{\pi(\cdot)A\Omega}\]
Therefore $t=\lambda s$ if and only if $A=\lambda I$.

Suppose $t\leq s\Rightarrow t=\lambda s$ for some $\lambda\geq0$.
Then $\pi$ must be irreducible, since otherwise there exists $A\in(\pi(\mathfrak{A}))'$
with $A\neq cI$, hence $\mathfrak{A}\ni a\mapsto t(a):=\iprod{\Omega}{\pi(a)A\Omega}$
defines a positive linear functional, and $t\leq s$, however $t\neq\lambda s$.
Thus a contradiction to the hypothesis.

Conversely, suppose $\pi$ is irreducible. Then by Schur's lemma,
$(\pi(\mathfrak{A}))'$ is 1-dimensional. i.e. for all $A\in(\pi(\mathfrak{A}))'$,
$A=\lambda I$ for some $\lambda$. Therefore if $t\leq s$, by Sakai's
theorem, $t(\cdot)=\iprod{\Omega}{\pi(\cdot)A\Omega}$. Thus $t=\lambda s$
for some $\lambda\geq0$.\end{proof}
\begin{defn}
A state $s$ is pure if it cannot be broken up into a convex combination
of two distinct states. i.e. for all states $s_{1}$ and $s_{2}$,
$s=\lambda s_{1}+(1-\lambda)s_{2}\Rightarrow s=s_{1}\mbox{ or }s=s_{2}$. 
\end{defn}
The main theorem in this section is a corollary to Sakai's theorem.
\begin{cor}
Let $s$ be a state. $(\pi,\Omega,H)$ is the corresponding GNS construction.
The following are equivalent.
\begin{enumerate}
\item $t\leq s\Rightarrow t=\lambda s$ for some $\lambda\geq0$.
\item $\pi$ is irreducible.
\item $s$ is a pure state.
\end{enumerate}
\end{cor}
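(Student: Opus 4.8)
The plan is to build on the corollary immediately preceding this one, which already records the equivalence of (1) and (2) via the Schur--Sakai--Nicodym lemma together with Schur's lemma. It therefore suffices to close the loop by proving (1)$\Leftrightarrow$(3); the three-way equivalence then follows at once.

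First I would show (1)$\Rightarrow$(3). Suppose $s=\lambda s_{1}+(1-\lambda)s_{2}$ with $0<\lambda<1$ and $s_{1},s_{2}$ states. Then $t:=\lambda s_{1}$ is a positive linear functional, and $t\leq s$ because $s-t=(1-\lambda)s_{2}\geq 0$ on positive elements. By hypothesis (1), $t=\mu s$ for some $\mu\geq 0$; evaluating at $1_{\mathfrak{A}}$ gives $\mu=\lambda s_{1}(1_{\mathfrak{A}})=\lambda$, so $s_{1}=s$. Running the same argument with the roles of $s_{1}$ and $s_{2}$ interchanged gives $s_{2}=s$. Hence $s$ is pure.

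Next, (3)$\Rightarrow$(1). Let $s$ be pure and let $t$ be a positive linear functional with $0\leq t\leq s$. The one point that needs a little care is the elementary fact that a positive linear functional which vanishes at $1_{\mathfrak{A}}$ is identically zero: applying the Cauchy--Schwartz inequality to the positive semi-definite form $(A,B)\mapsto t(A^{*}B)$ gives $\abs{t(A)}^{2}=\abs{t(1_{\mathfrak{A}}^{*}A)}^{2}\leq t(1_{\mathfrak{A}})\,t(A^{*}A)$, so $t(1_{\mathfrak{A}})=0$ forces $t\equiv 0$. Now set $\lambda:=t(1_{\mathfrak{A}})\in[0,1]$. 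If $\lambda=0$ then $t=0=0\cdot s$. If $\lambda=1$ then $s-t\geq 0$ vanishes at $1_{\mathfrak{A}}$, so $s-t=0$ and $t=1\cdot s$. If $0<\lambda<1$, then $\lambda^{-1}t$ and $(1-\lambda)^{-1}(s-t)$ are both states, and
\[
s=\lambda\bigl(\lambda^{-1}t\bigr)+(1-\lambda)\bigl((1-\lambda)^{-1}(s-t)\bigr)
\]
exhibits $s$ as a convex combination of two states; purity of $s$ forces $s=\lambda^{-1}t$, i.e. $t=\lambda s$. In every case $t=\lambda s$ with $\lambda\geq 0$, which is exactly (1).

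Combining these two implications with the preceding corollary yields (1)$\Leftrightarrow$(2)$\Leftrightarrow$(3). The argument is essentially routine; the only mild subtlety, as flagged above, is the normalization/vanishing lemma for positive functionals, together with the verification that the rescaled summands $\lambda^{-1}t$ and $(1-\lambda)^{-1}(s-t)$ are honest states, so that the purity hypothesis can legitimately be applied to the decomposition.
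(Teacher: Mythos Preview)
Your proof is correct. The paper also uses the preceding corollary for $(1)\Leftrightarrow(2)$ and gives essentially the same argument for $(1)\Rightarrow(3)$. The one genuine difference is in closing the loop: you prove $(3)\Rightarrow(1)$ by a purely convexity-based argument (normalize $t$ and $s-t$ to states and invoke purity), whereas the paper instead proves $(3)\Rightarrow(2)$ by working inside the GNS representation --- taking a nontrivial projection $P\in(\pi(\mathfrak{A}))'$, splitting $\Omega=P\Omega\oplus(I-P)\Omega$, and reading off a convex decomposition $s=\norm{P\Omega}^{2}s_{1}+\norm{(I-P)\Omega}^{2}s_{2}$. Your route is more elementary and self-contained (it never touches the representation or the Schur--Sakai--Nicodym lemma for that implication); the paper's route is more geometric and makes explicit how reducibility of $\pi$ manifests as a convex splitting of the state via the cyclic vector.
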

\begin{proof}
By Sakai-Nicodym derivative, $t\leq s$ if and only if there is a
self-adjoint operator $A\in(\pi(\mathfrak{A}))'$ so that \[
t(a)=\iprod{\Omega}{\pi(a)A\Omega},\;\forall a\in\mathfrak{A}.\]
Therefore $t=\lambda s$ if and only if $A=\lambda I$.

We show that $(1)\Leftrightarrow(2)$ and $(1)\Rightarrow(3)\Rightarrow(2)$.

$(1)\Leftrightarrow(2)$ Suppose $t\leq s\Rightarrow t=\lambda s$,
then $\pi$ must be irreducible, since otherwise there exists $A\in(\pi(\mathfrak{A}))'$
with $A\neq cI$, hence $t(\cdot):=\iprod{\Omega}{\pi(\cdot)A\Omega}$
defines a positive linear functional with $t\leq s$, however $t\neq\lambda s$.
Conversely, suppose $\pi$ is irreducible. If $t\leq s$, then $t(\cdot)=\iprod{\Omega}{\pi(\cdot)A\Omega}$
with $A\in(\pi(\mathfrak{A}))'$. By Schur's lemma, $(\pi(\mathfrak{A}))'=\{0,\lambda I\}$.
Therefore, $A=\lambda I$ and $t=\lambda s$. 

$(1)\Rightarrow(3)$ Suppose $t\leq s\Rightarrow t=\lambda s$ for
some $\lambda\geq0$. If $s$ is not pure, then $s=cs_{1}+(1-c)s_{2}$
where $s_{1},s_{2}$ are states and $c\in(0,1)$. By hypothesis, $s_{1}\leq s$
implies that $s_{1}=\lambda s$. It follows that $s=s_{1}=s_{2}$. 

$(3)\Rightarrow(2)$ Suppose $\pi$ is not irreducible, i.e. there
is a non trivial projection $P\in(\pi(\mathfrak{A}))'$. Let $\Omega=\Omega_{1}\oplus\Omega_{2}$
where $\Omega_{1}=P\Omega$ and $\Omega_{2}=(I-P)\Omega$. Then\begin{eqnarray*}
s(a) & = & \iprod{\Omega}{\pi(a)\Omega}\\
 & = & \iprod{\Omega_{1}\oplus\Omega_{2}}{\pi(a)\Omega_{1}\oplus\Omega_{2}}\\
 & = & \iprod{\Omega_{1}}{\pi(a)\Omega_{1}}+\iprod{\Omega_{2}}{\pi(a)\Omega_{2}}\\
 & = & \norm{\Omega_{1}}^{2}\iprod{\frac{\Omega_{1}}{\norm{\Omega_{1}}}}{\pi(a)\frac{\Omega_{1}}{\norm{\Omega_{1}}}}+\norm{\Omega_{2}}^{2}\iprod{\frac{\Omega_{2}}{\norm{\Omega_{2}}}}{\pi(a)\frac{\Omega_{2}}{\norm{\Omega_{2}}}}\\
 & = & \norm{\Omega_{1}}^{2}\iprod{\frac{\Omega_{1}}{\norm{\Omega_{1}}}}{\pi(a)\frac{\Omega_{1}}{\norm{\Omega_{1}}}}+(1-\norm{\Omega_{1}}^{2})\iprod{\frac{\Omega_{2}}{\norm{\Omega_{2}}}}{\pi(a)\frac{\Omega_{2}}{\norm{\Omega_{2}}}}\\
 & = & \lambda s_{1}(a)+(1-\lambda)s_{2}(a).\end{eqnarray*}
Hence $s$ is not a pure state.
\end{proof}

\section{Normal states}

The thing that we want to do with representations comes down to the
smallest ones, i.e. the irreducible representations. Let $\mathfrak{A}$
be a $*$-algebra, a representation $\pi:\mathfrak{A}\rightarrow B(H)$
generates a $*$-subalgebra $\pi(\mathfrak{A})$ in $B(H)$. By taking
norm closure, one gets a $C^{*}$-algebra. 

An abstract $C^{*}$-algebra is a Banach $*$-algebra with the axiom
$\norm{a^{*}a}=\norm{a}^{2}$. By Gelfand and Naimark's theorem, all
abstract $C^{*}$-algebras are isometrically isomorphic to closed
subalgebras of $B(H)$, for some Hilbert space $H$. The construction
of $H$ comes down to states $S(\mathfrak{A})$ on $\mathfrak{A}$
and the GNS construction. Let $\mathfrak{A}_{+}$ be the positive
elements in $\mathfrak{A}$. $s\in S(\mathfrak{A})$, $s:\mathfrak{A}\rightarrow\mathbb{C}$
and $s(\mathfrak{A}_{+})\subset[0,\infty)$. For $C^{*}$-algebra,
positive elements can be written $f=(\sqrt{f})^{2}$ by the spectral
theorem. In general, positive elements have the form $a^{*}a$. There
is a bijection between states and GNS representations $Rep(\mathfrak{A},H)$,
where $s(A)=\iprod{\Omega}{\pi(A)\Omega}$.
\begin{example}
$\mathfrak{A}=C(X)$ where $X$ is a compact Hausdorff space. $s_{\mu}$
given by $s_{\mu}(a)=\int ad\mu$ is a state. The GNS constructuion
gives $H=L^{2}(\mu)$, $\pi(f)$ is the operator of multiplication
by $f$ on $L^{2}(\mu)$. $\{\varphi1:\varphi\in C(X)\}$ is dense
in $L^{2}$, where $1$ is the cyclic vector. $s_{\mu}(f)=\iprod{\Omega}{\pi(f)\Omega}=\int1f1d\mu=\int fd\mu$,
which is also seen as the expectation of $f$ in case $\mu$ is a
probability measure.
\end{example}
Breaking up representations corresponds to breaking up states. Irreducibe
representations correspond to pure states which are extreme points
in the states.

Schur's lemma addresses all representations. It says that a representation
$\pi:\mathfrak{A}\rightarrow B(H)$ is irreducible if and only if
$(\pi(\mathfrak{A}))'$ is 1-dimensional. When specialize to the GNS
representation of a given state $s$, this is also equivalent to saying
that for all positive linear functional $t$, $t\leq s\Rightarrow t=\lambda s$
for some $\lambda\geq0$. This latter equivalence is obtained by using
a more general result, which relates $t$ and self-adjoint operators
in the commutant $(\pi(\mathfrak{A}))'$. Specifically, there is a
bijection between $t\leq s$ and $0\leq A\leq I$ for $X_{t}\in\mathbf{(\pi(\mathfrak{A}))'}$,
so that \[
t(\cdot)=\iprod{\Omega}{\pi(\cdot)X_{t}\Omega}\]

If instead of taking the norm closure, but using the strong operator
topology, ones gets a Von Neumann algebra. Von Neumann showed that
the weak closure of $\mathfrak{A}$ is equal to $\mathfrak{A}''$
\begin{cor}
$\pi$ is irreducible $\Longleftrightarrow$ $(\pi(\mathfrak{A}))'$
is 1-dimensional $\Longleftrightarrow$ $(\pi(\mathfrak{A}))''=B(H)$.
\end{cor}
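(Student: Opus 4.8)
The first biconditional is precisely Schur's lemma, already established above, so the plan is to prove the second one: $(\pi(\mathfrak{A}))' = \mathbb{C}I$ if and only if $(\pi(\mathfrak{A}))'' = B(H)$. Two elementary facts about commutants will carry the argument. First, for any subset $\mathfrak{C} \subseteq B(H)$ one has the triple-commutant identity $\mathfrak{C}''' = \mathfrak{C}'$: indeed $\mathfrak{C} \subseteq \mathfrak{C}''$ trivially, and applying the operation $'$ (which reverses inclusions) gives $\mathfrak{C}''' \subseteq \mathfrak{C}'$, while applying the inclusion $\mathfrak{D} \subseteq \mathfrak{D}''$ to $\mathfrak{D} = \mathfrak{C}'$ gives $\mathfrak{C}' \subseteq \mathfrak{C}'''$. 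Second, I would need $B(H)' = \mathbb{C}I$.

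For the latter I would reuse the rank-one operator trick already exploited in the excerpt for the transitivity of $B(H)$ on $H$. If $T$ commutes with every bounded operator, then for each unit vector $v$ it commutes with $P_v = \ketbra{v}{v}$, and applying $T P_v = P_v T$ to $v$ yields $Tv = \iprod{v}{Tv}\,v$; thus every vector is an eigenvector of $T$, and a one-line linear-independence argument (from $T(v+w) = \lambda_{v+w}(v+w) = \lambda_v v + \lambda_w w$ one forces $\lambda_v = \lambda_w$ when $v,w$ are independent) gives $T = cI$. Equivalently, one may observe that the identity representation $id\colon B(H) \to B(H)$ is irreducible by that same transitivity statement, and then $B(H)' = \mathbb{C}I$ is immediate from Schur's lemma. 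A third route, matching the remark in the text, is Von Neumann's double commutant theorem, but it is not actually needed here.

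With these two facts in hand the remaining argument is a formal manipulation. If $(\pi(\mathfrak{A}))' = \mathbb{C}I$, then taking commutants and using $(\mathbb{C}I)' = B(H)$ gives $(\pi(\mathfrak{A}))'' = B(H)$. Conversely, if $(\pi(\mathfrak{A}))'' = B(H)$, then taking commutants, using $B(H)' = \mathbb{C}I$ and the triple-commutant identity, gives $(\pi(\mathfrak{A}))' = (\pi(\mathfrak{A}))''' = B(H)' = \mathbb{C}I$, i.e. the commutant is one-dimensional. Chaining this with Schur's lemma for the first biconditional finishes the proof.

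The only point requiring any care — and the closest thing to an obstacle — is the verification that $B(H)' = \mathbb{C}I$: one must be sure the eigenvector argument really pins down a single scalar valid for all vectors simultaneously, which is exactly where linear independence of two vectors is invoked. Everything else is bookkeeping with the order-reversing operation $\mathfrak{C} \mapsto \mathfrak{C}'$ and its stabilization after three applications.
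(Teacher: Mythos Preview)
Your argument is correct. The paper does not supply an explicit proof of this corollary; it simply states it immediately after remarking that ``Von Neumann showed that the weak closure of $\mathfrak{A}$ is equal to $\mathfrak{A}''$,'' which suggests the double commutant theorem as the intended backdrop. Your proof is in fact more self-contained: as you correctly observe, the full double commutant theorem is unnecessary here, and the two elementary ingredients --- the formal identity $\mathfrak{C}''' = \mathfrak{C}'$ and the computation $B(H)' = \mathbb{C}I$ via rank-one operators (or via transitivity plus Schur) --- are all that is required. This is a cleaner route than invoking the deeper topological statement about weak closures.
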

More general states in physics come from the mixture of particle states,
which correspond to composite system. There are called normal states
in mathematics.

$\rho:H\rightarrow H$ where $\rho\in T_{1}H$ (trace class operators)
with $\rho>0$ and $tr(\rho)=1$. Define state $s_{\rho}(a)=tr(a\rho)$.
Since $\rho$ is compact, by spectral theorem of compact operators,
\[
\rho=\sum_{k}\lambda_{k}P_{k}\]
such that $\lambda_{1}>\lambda_{2}>\rightarrow0$; $\sum\lambda_{k}=1$
and $P_{k}=\ketbra{\xi_{k}}{\xi_{k}}$ are the rank-1 projections. 
\begin{itemize}
\item $s_{\rho}(I)=tr(\rho)=1$
\item $s_{\rho}(a)=tr(a\rho)=\iprod{\xi_{k}}{a\sum_{k}\lambda_{k}P_{k}\xi_{k}}=\sum_{k}\bra{\xi_{k}}\lambda_{k}(\ketbra{\xi_{k}}{\xi_{k}})\ket{a\xi_{k}}=\sum_{k}\lambda_{k}\braket{\xi_{k}}{a\xi_{k}}$,
i.e. \[
s_{\rho}=\sum_{k}\lambda_{k}s_{\xi_{k}}\]
$s_{\rho}$ is a convex combination of pure states $s_{\xi_{k}}$.
\item oberserve that $tr(\ketbra{\xi}{\eta})=\iprod{\eta}{\xi}$. In fact,
take any onb $\{e_{n}\}$ then \begin{eqnarray*}
tr(\ketbra{\xi}{\eta}) & = & \sum_{n}\bra{e_{n}}(\ketbra{\xi}{\eta})\ket{e_{n}}\\
 & = & \sum_{n}\braket{e_{n}}{\xi}\braket{\eta}{e_{n}}\\
 & = & \iprod{\eta}{\xi}\end{eqnarray*}
where the last line comes from Parseval identity.
\item If we drop the condition $\rho\geq0$ then we get the duality $(T_{1}H)^{*}=B(H)$.
\end{itemize}

\section{Kadison-Singer conjecture}

\subsection{Dictionary of OP and QM}
\begin{itemize}
\item states - unit vectors $\xi\in H$. These are all the pure states on
$B(H)$.
\item observable - self-adjoint operators $A=A^{*}$
\item measurement - spectrum
\end{itemize}
The spectral theorem was developed by Von Neumann and later improved
by Dirac and others. A self-adjoint operator $A$ corresponds to a
quantum observable, and result of a quantum measurement can be represented
by the spectrum of $A$. 
\begin{itemize}
\item simple eigenvalue: $A=\lambda\ketbra{\xi_{\lambda}}{\xi_{\lambda}}$,
 \[
s_{\xi_{\lambda}}(A)=\iprod{\xi_{\lambda}}{A\xi_{\lambda}}=\lambda\in sp(A)\subset\mathbb{R}.\]

\item compact operator: $A=\sum_{\lambda}\lambda\ketbra{\xi_{\lambda}}{\xi_{\lambda}}$,
such that $(\xi_{\lambda})$ is an ONB of $H$. If $\xi=\sum c_{\lambda}\xi_{\lambda}$
is a unit vector, then\[
s_{\xi}(A)=\iprod{\xi}{A\xi}=\sum_{\lambda}\lambda\abs{c_{\lambda}}^{2}\]
where $({c_{\lambda}}^{2})_{\lambda}$ is a probability distribution
over the spectrum of $A$, and $s_{\xi}$ is the expectation value
of $A$.
\item more general, allow continuous spectrum: \begin{eqnarray*}
A & = & \int\lambda E(d\lambda)\\
A\xi & = & \int\lambda E(d\lambda)\xi\end{eqnarray*}
and\[
s_{\xi}(A)=\iprod{\xi}{A\xi}=\int\lambda\norm{E(d\lambda)\xi}^{2}.\]
We may write the unit vector $\xi$ as \[
\xi=\int\overset{\xi_{\lambda}}{\overbrace{E(d\lambda)\xi}}\]
so that\[
\norm{\xi}^{2}=\int\norm{E(d\lambda)\xi}^{2}=\int c(d\lambda)^{2}=1\]
where $c(d\lambda)^{2}=\norm{E(d\lambda)\xi}^{2}$. It is clear that
$(c(d\lambda))_{\lambda}$ is a probability distribution on spectrum
of $A$. $s_{\xi}(A)$ is again seen as the expectation value of $A$
with respect $(c(d\lambda))_{\lambda}$, since\[
s_{\xi}(A)=\iprod{\xi}{A\xi}=\int\lambda c(d\lambda)^{2}.\]

\end{itemize}

\subsection{Kadison-Singer conjecture (see Palle's private conversation)}

Dirac gave a lecture at Columbia university in the late 1950's, in
which he claimed without proof that pure states on $l^{\infty}$ extends
uniquely on $B(H)$. Two students Kadison and Singer sitting in the
audience were skeptical about whether Dirac knew what it meant to
be an extension. They later formuated the conjecture in a joint paper.
\begin{rem*}
Isadon Singer. Atiyah(abel price)-Singer(nobel price)
\end{rem*}
Let $H$ be a Hilbert space with an ONB $\{e_{n}\}_{n=1}^{\infty}$.
$P_{n}=\ketbra{e_{n}}{e_{n}}$ is a rank-one projection. Denote by
$D$ the set of all diagonal operators, i.e. $D$ is the span of \[
\sum\lambda_{n}P_{n},\;(\lambda_{n})\in l^{\infty}\]
and $D$ is a subalgebra of $B(H)$. 
\begin{conjecture}
Does every pure state on the subalgebra $D\approx l^{\infty}$ extend
uniquely to a pure state on $B(H)$?
\end{conjecture}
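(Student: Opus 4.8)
The plan is to trade the operator-algebra statement for a finite-dimensional linear-algebra statement and then settle the latter by a probabilistic argument built on real stable polynomials. First, a pure state $\omega$ on $D$ always extends to \emph{some} pure state on $B(H)$: the states of $B(H)$ restricting to $\omega$ form a nonempty weak-$*$ compact convex face of the state space, so by Krein--Milman it has an extreme point, which is automatically extreme in the whole state space. Hence the content is \emph{uniqueness}. Identifying $D\cong\ell^{\infty}(\mathbb{N})\cong C(\beta\mathbb{N})$, so that pure states on $D$ are the points of $\beta\mathbb{N}$, uniqueness is equivalent, by a classical argument of Anderson, to the \emph{paving conjecture}: for every $\varepsilon>0$ there is an $r=r(\varepsilon)\in\mathbb{N}$ so that for all $n$ and every self-adjoint $T\in M_{n}(\mathbb{C})$ with zero diagonal there is a partition $\{1,\dots,n\}=A_{1}\sqcup\cdots\sqcup A_{r}$ with $\norm{P_{A_{k}}TP_{A_{k}}}\le\varepsilon\norm{T}$, where $P_{A}$ is the coordinate projection onto $\mathrm{span}\{e_{i}:i\in A\}$. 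This rests on the faithfulness of the conditional expectation $B(H)\to D$: the obstruction to a unique extension is exactly the failure to make the off-diagonal part uniformly small after compressing by a suitable diagonal subprojection.

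\textbf{From paving to a frame partition.} Next I would reduce paving of arbitrary zero-diagonal self-adjoints to paving of projections (a compression/averaging argument on $\tfrac12(I+T/\norm{T})$-type blocks), and then reduce that to Weaver's conjecture $\mathrm{KS}_{2}$: there are universal constants $\eta\ge 2$ and $\theta>0$ such that whenever $w_{1},\dots,w_{m}\in\mathbb{C}^{d}$ satisfy $\sum_{i}\ketbra{w_{i}}{w_{i}}=I_{d}$ and $\norm{w_{i}}^{2}\le 1/\eta$ for all $i$, the index set splits as $S_{1}\sqcup S_{2}$ with $\norm{\sum_{i\in S_{j}}\ketbra{w_{i}}{w_{i}}}\le 1-\theta$ for $j=1,2$. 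This follows from a probabilistic statement: if $v_{1},\dots,v_{m}$ are \emph{independent} $\mathbb{C}^{d}$-valued random vectors, each with finite support, with $\sum_{i}\mathbb{E}[\ketbra{v_{i}}{v_{i}}]=I_{d}$ and $\mathbb{E}\norm{v_{i}}^{2}\le\epsilon$, then $\norm{\sum_{i}\ketbra{v_{i}}{v_{i}}}\le(1+\sqrt{\epsilon})^{2}$ with positive probability. To deduce $\mathrm{KS}_{2}$ one takes $v_{i}$ valued in $\mathbb{C}^{d}\oplus\mathbb{C}^{d}$, equal to $(\sqrt{2}\,w_{i},0)$ or $(0,\sqrt{2}\,w_{i})$ each with probability $\tfrac12$; the two direct summands are then independent, so reading off which summand occurs gives a partition with $\norm{\sum_{i\in S_{j}}\ketbra{w_{i}}{w_{i}}}\le\tfrac12(1+\sqrt{2/\eta})^{2}$ for both $j$, which is $\le 1-\theta$ once $\eta$ is large.

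\textbf{Interlacing families.} The heart is the probabilistic bound, which I would prove by the method of \emph{interlacing families of polynomials}. Index by the outcomes of $(v_{1},\dots,v_{m})$ the random characteristic polynomial $\chi(x)=\det\!\big(xI_{d}-\sum_{i}\ketbra{v_{i}}{v_{i}}\big)$. Three facts are needed. (i) Each $\chi$, and every average of it over the outcomes of any suffix of coordinates, is real-rooted: the polynomial $\det\!\big(xI_{d}+\sum_{i}z_{i}\ketbra{a_{i}}{a_{i}}\big)$ is real stable in $(x,z_{1},\dots,z_{m})$ by the Borcea--Brändén criterion (a sum of positive semidefinite matrices with $I$ present), the averaging acts through operators $\lambda+\mu\,\partial_{z_{i}}$ that preserve real stability, and a diagonal specialization of the $z_{i}$ preserves real-rootedness in $x$. (ii) The resulting common-interlacing structure makes the family an interlacing family, so at least one outcome has $\lambda_{\max}\!\big(\sum_{i}\ketbra{v_{i}}{v_{i}}\big)$ no larger than the largest root of the expected polynomial $\mu(x):=\mathbb{E}[\chi(x)]$. (iii) One bounds that root by writing $\mu$ as a differential operator applied to a real stable polynomial,
\[
\mu(x)=\Bigl(\prod_{i=1}^{m}\bigl(1-\partial_{z_{i}}\bigr)\Bigr)\det\!\Bigl(xI_{d}+\sum_{i=1}^{m}z_{i}\ketbra{a_{i}}{a_{i}}\Bigr)\Big|_{z=0},
\]
and then tracking an upper ``barrier'' on the largest root as the operators $1-\partial_{z_{i}}$ are applied one at a time, using monotonicity and convexity of the logarithmic derivatives of real stable polynomials to control the per-step rise; optimizing the barrier parameters under $\sum_{i}\mathbb{E}\norm{v_{i}}^{2}=d$ and $\mathbb{E}\norm{v_{i}}^{2}\le\epsilon$ yields the bound $(1+\sqrt{\epsilon})^{2}$.

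\textbf{The main obstacle.} The serious difficulty is step (iii): quantitatively controlling how far the largest root moves under each $1-\partial_{z_{i}}$. Real-rootedness and the interlacing-family bookkeeping are clean once the real-stability framework is in place, and the reductions (to paving, then to Weaver's conjecture) are by now routine; but the multivariate barrier estimate is delicate --- the barrier must be chosen so that the per-step increase telescopes to a quantity independent of both $m$ and $d$, and the constants must be sharp enough to recover ``$\le 1-\theta$'' in $\mathrm{KS}_{2}$. That in turn fixes the paving number $r(\varepsilon)$, and hence the uniqueness of the pure-state extension claimed in the conjecture.
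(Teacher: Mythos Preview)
The paper does not prove this statement at all: it is presented there as an open \emph{conjecture}. These are lecture notes from 2009--2010, and the text explicitly says that existence of a pure-state extension follows from Hahn--Banach together with Krein--Milman applied to the (weak-$*$ compact convex) set $E(s)$ of extending states, and then remarks: ``But which one to choose? It's the uniqueness part that is the famous conjecture.'' So there is nothing in the paper to compare your argument against.

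What you have written is a faithful outline of the Marcus--Spielman--Srivastava resolution (2013), together with the standard chain of reductions: Anderson's equivalence of unique extension with the paving conjecture, the reduction of paving to Weaver's $\mathrm{KS}_{2}$, and then the probabilistic bound via interlacing families of real stable polynomials and the multivariate barrier method. As a high-level sketch this is correct, and you have correctly identified the genuinely hard step as the quantitative barrier estimate controlling the root shift under each $1-\partial_{z_i}$. One small imprecision: your reduction from paving of zero-diagonal self-adjoints to paving of projections is glossed over (``a compression/averaging argument on $\tfrac12(I+T/\norm{T})$-type blocks''); the actual reduction, due to Casazza--Edidin--Kalra--Paulsen, is a bit more delicate than that phrase suggests and would need to be filled in carefully. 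But none of this bears on a comparison with the paper, which simply records the problem as open.
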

The difficulty lies in the fact that it's hard to find all states
on $l^{\infty}$. (the dual of $l^{\infty}$????) It is conceivable
that a pure state on $l^{\infty}$ may have two unit vectors on $B(H)$.
\begin{lem}
Pure states on $B(H)$ are unit vectors. Let $u\in H$ such that $\norm{u}=1$.
Then \[
w_{u}(A)=\iprod{u}{Au}\]
is a pure state. All pure states on $B(H)$ are of this form.\end{lem}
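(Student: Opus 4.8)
The plan is to prove the two assertions separately: first, that every unit vector $u\in H$ gives a pure state $w_u$; second, that conversely every pure state on $B(H)$ arises this way. For the first claim I would invoke the GNS-based criterion already established in this section: a state $s$ is pure iff its GNS representation is irreducible. So I would compute the GNS representation attached to $w_u$ explicitly. Since $B(H)$ acts transitively on $H$ (given any $\xi,\eta\in H$ with $\xi\neq 0$, the rank-one operator $\ketbra{\eta}{\xi}$ sends $\xi$ to a multiple of $\eta$ — the argument already used in the excerpt to show vector states are extreme points of the state space), the identity representation of $B(H)$ on $H$ is irreducible. One then checks that $(\mathrm{id},H,u)$ is already a GNS triple for $w_u$: $u$ is a unit cyclic vector (cyclicity follows from transitivity, since $\overline{B(H)u}=H$), and $w_u(A)=\iprod{u}{Au}$ by definition. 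By uniqueness of the GNS construction up to unitary equivalence, the GNS representation of $w_u$ is (equivalent to) the irreducible identity representation, hence $w_u$ is pure.

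For the converse, let $s$ be an arbitrary pure state on $B(H)$, with GNS triple $(\pi,K,\Omega)$. By the purity criterion, $\pi$ is irreducible, so $(\pi(B(H)))'=\mathbb{C}I_K$ by Schur's lemma. The strategy is to show $\pi$ must be unitarily equivalent to the identity representation of $B(H)$ on $H$. Here is where I would exploit the abundance of projections in $B(H)$: pick any rank-one projection $P_0=\ketbra{\xi_0}{\xi_0}\in B(H)$ with $\norm{\xi_0}=1$. Then $\pi(P_0)$ is a nonzero self-adjoint projection on $K$ (nonzero because $\pi$ is faithful — or at least nonzero on $P_0$, since $\pi(P_0)\Omega$ would otherwise force $s(P_0)=0$ and one can route around this by choosing $\xi_0$ so that $s(P_0)>0$, which is possible since $\sum$ over an ONB of $s(\ketbra{e_n}{e_n})=s(I)=1$). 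I would then argue that $\pi(P_0)K$ is one-dimensional: if $v_1,v_2$ are orthonormal vectors in $\pi(P_0)K$, the operator $\ketbra{v_1}{v_2}$ on $K$ would lie in $(\pi(B(H)))'$ — because one can realize it as a limit/combination of $\pi(\ketbra{\xi_0}{\eta})$-type elements using that $P_0 A P_0$ ranges over $\mathbb{C}P_0$ — contradicting $(\pi(B(H)))'=\mathbb{C}I$. Once $\dim\pi(P_0)K=1$, fix a unit vector $v_0\in\pi(P_0)K$ and define $W:H\to K$ by $W(A\xi_0)=\pi(A)v_0$ for $A\in B(H)$; transitivity makes this densely defined, and the computation $\norm{\pi(A)v_0}^2=\iprod{v_0}{\pi(P_0A^*AP_0)v_0}=\norm{A\xi_0}^2$ (using $P_0A^*AP_0=\norm{A\xi_0}^2P_0$) shows $W$ is a well-defined isometry, hence unitary onto $K$ (onto by irreducibility/cyclicity of $v_0$), and $W$ intertwines $\mathrm{id}$ with $\pi$. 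Transporting $\Omega$ back through $W$ yields a unit vector $u=W^{-1}\Omega\in H$ with $s(A)=\iprod{u}{Au}=w_u(A)$.

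The main obstacle I anticipate is the step showing $\dim\pi(P_0)K=1$ — i.e. that an irreducible representation of $B(H)$ restricted to the "corner" $P_0B(H)P_0\cong\mathbb{C}$ behaves correctly. The clean way is: the family $\{\pi(\ketbra{\xi_0}{e_n})\}_n$ (for an ONB $\{e_n\}$ with $e_1=\xi_0$) together with their adjoints generates enough operators that their commutant computation forces one-dimensionality; equivalently, one shows $v_0$ is cyclic for $\pi$ and that $\pi(\ketbra{e_i}{e_j})$ acts on the vectors $\pi(\ketbra{e_k}{\xi_0})v_0$ exactly like the matrix units $\ketbra{e_i}{e_j}$ act on an orthonormal family in $H$, thereby building the intertwiner $W$ directly without separately isolating the $\dim=1$ fact. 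I would adopt whichever bookkeeping is shortest; the underlying mechanism in every version is transitivity of $B(H)$ on $H$ plus Schur's lemma, both available from the preceding material.
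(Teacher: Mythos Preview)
The paper does not actually supply a proof of this lemma; it is stated and immediately followed by a remark about equivalence classes of unit vectors. So there is nothing to compare against, and I will assess your argument on its own.

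Your forward direction is correct: $(\mathrm{id},H,u)$ is a GNS triple for $w_u$, the identity representation is irreducible by the transitivity argument already given in the Krein--Milman section, and the purity criterion (pure $\Leftrightarrow$ GNS irreducible) closes it out.

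The converse, however, has a genuine gap --- and it is not one you can repair, because the statement as written is false when $\dim H=\infty$. The problem is your justification that some rank-one projection $P_0$ satisfies $s(P_0)>0$. You argue that $\sum_n s(\ketbra{e_n}{e_n})=s(I)=1$, but this identity requires $s$ to be $\sigma$-weakly continuous (i.e.\ normal). A general state on $B(H)$ is only finitely additive on orthogonal projections, not countably additive. In fact there exist pure states on $B(H)$ that annihilate every compact operator: take any pure state on the Calkin algebra $B(H)/K(H)$ and pull it back along the quotient map. The resulting GNS representation $\pi$ kills all of $K(H)$, so $\pi(P_0)=0$ for every rank-one $P_0$, your alternative ``$\pi$ is faithful'' also fails, and the construction of the intertwiner $W$ never gets started. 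These non-normal pure states are exactly the objects under discussion in the surrounding Kadison--Singer material: the pure-state extensions to $B(H)$ of Dirac measures at points of $\beta\mathbb{N}\setminus\mathbb{N}$ are not vector states.

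What your argument does establish, correctly, is that every \emph{normal} pure state on $B(H)$ is a vector state, and that every irreducible representation of $B(H)$ which does not vanish on $K(H)$ is unitarily equivalent to the identity. The lemma in the notes should be read with the implicit hypothesis ``normal'' (or with $H$ finite-dimensional); that is almost certainly what was intended.
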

\begin{rem*}
It is in fact the equivalent class of unit vectors that are the pure
states on $B(H)$. Since \[
\iprod{e^{i\theta}u}{Ae^{i\theta}u}=\iprod{u}{Au}.\]
Equivalently, pure states sit inside the projective vector space.
In $\mathbb{C}^{n+1}$, this is $\mathbb{C}P^{n}$. 
\end{rem*}
Since $l^{\infty}$ is an abelian algebra, by Gelfand's theorem, $l^{\infty}\simeq C(X)$
for some compact Hausdorff space $X$. $X=\beta\mathbb{N}$, the Stone-Cech
compactification of $\mathbb{N}$. Points in $\beta\mathbb{N}$ are
called ultrafilters. Pure states on $l^{\infty}$ correspond to pure
states on $C(\beta\mathbb{N})$, i.e. Dirac measures on $\beta\mathbb{N}$.

Let $s$ be a pure state on $l^{\infty}$. Use Hahn-Banach theorem
to extend $s$, as a linear functional, from $l^{\infty}$ to $\tilde{s}$
on the Banach space $B(H)$. However, Hahn-Banach theorem doesn't
gurantee the extension is a \emph{pure state}. Let $E(s)$ be the
set of all states on $B(H)$ which extend $s$. Since $\tilde{s}\in E(s)$,
$E(s)$ is nonempty. $E(s)$ is compact convex in the weak $*$ topology.
By Krein-Milman's theorem, $E(s)=\mbox{closure(Extreme Points)}$.
Any extreme point will then be a pure state extension of $s$. But
which one to choose? It's the uniqueness part that is the famous conjecture.

\chapter{Appliations to Groups}

\section{More on representations}

\subsection{Motivations}

Every group $G$ is also a $*$ semigroup (Nagy), wherer \[
g^{*}:=g^{-1}.\]
It is obvious that $g**=g$. But $G$ is not a complex $*$ algebra
yet, in particular, multiplication by a complex numbert is not defined.
However, we may work with $\mathbb{C}$-valued functions on $G$,
which is clearly a $*$ algebra. 
\begin{itemize}
\item multiplication\[
(g\otimes c_{g})(h\otimes c_{h})=gh\otimes c_{g}c_{h}\]

\item scalar multiplication \[
t(g\otimes c_{g})=g\otimes tc_{g},\;\forall t\in\mathbb{C}\]

\item $*$ operation\[
(g\otimes c_{g})^{*}=g^{-1}\otimes\overline{c_{g^{-1}}}\]
\end{itemize}
\begin{note}
The $*$ operation so defined is the only choice in order to have
the properties $(ab)^{*}=b^{*}a^{*}$ and $(ta)^{*}=\bar{t}a^{*}$,
$t\in\mathbb{C}$.

What if $(g\otimes c_{g})^{*}:=g*\otimes c_{g^{*}}$, without the
complex conjugation? Then $(t(g\otimes c_{g}))^{*}\neq\bar{t}(g\otimes c_{g})^{*}$.

What is $(g\otimes c_{g})^{*}:=g\otimes\overline{c_{g}}$, without
taking $g^{*}$? This also satisfies $(g\otimes c_{g})^{**}=(g\otimes c_{g})^{*}$.
But\begin{alignat*}{1}
((g\otimes c_{g})(h\otimes c_{h}))^{*} & =(gh\otimes c_{g}c_{h})^{*}=gh\otimes\overline{c_{g}}\overline{c_{h}}\\
(h\otimes c_{h}){}^{*}(g\otimes c_{g})^{*} & =(h\otimes\overline{c_{h}})(g\otimes\overline{c_{g}})=hg\otimes\overline{c_{h}}\overline{c_{g}}.\end{alignat*}

\end{note}
So the idea is instead of working with $G$, look at the $*$-algebra
$\mathfrak{A}=span\{g\otimes c_{g}\simeq c_{g}:g\in G\}$. There is
a bijection between representation of $G$ and representation of $\mathfrak{A}$.
Suppose $U\in Rep(G,H)$, then it extends to a representation of $\mathfrak{A}$
as\[
U:g\otimes c_{g}\mapsto U_{g}\otimes c_{g}=c_{g}U_{g}.\]
Conversely, if $T\in Rep(\mathfrak{A},H)$, then restriction to $g\otimes1$
gives a representation of $G$.

\subsection{Group - algebra - representation}

Everything we say about algebras is also true for groups. In physics,
we are interested in representation of symmetry groups, which preserve
inner product or energy. We always want unitary representations. Irreducible
representation amounts to elementary particles which can not be broken
up further. In practice, quite a lot work goes into finding irreducible
representations of symmetry groups. The idea is to go from groups
to algebras and then to representations. \[
G\rightarrow\mathfrak{A}\rightarrow\pi\]

\begin{itemize}
\item $\pi_{G}\in Rep(G,H)$\[
\begin{cases}
\pi(g_{1}g_{2}) & =\pi(g_{1})\pi(g_{2})\\
\pi(e_{G}) & =I_{H}\\
\pi(g)^{*} & =\pi(g^{-1})\end{cases}\]

\item $\pi_{\mathfrak{A}}\in Rep(\mathfrak{A},H)$\[
\begin{cases}
\pi(A_{1}A_{2}) & =\pi(A_{1})\pi(A)_{2}\\
\pi(1_{\mathfrak{A}}) & =I_{H}\\
\pi(A)^{*} & =\pi(A^{*})\end{cases}\]
\end{itemize}
\begin{caseenv}
\item $G$ is discrete $\longrightarrow$ $\mathfrak{A}=G\otimes l^{1}$\begin{eqnarray*}
\left(\sum_{g}a(g)g\right)\left(\sum_{g}b(h)h\right) & = & \sum_{g,h}a(g)b(h)gh\\
 & = & \sum_{g'}\sum_{h}a(g'h^{-1})b(h)g'\end{eqnarray*}
\[
\left(\sum_{g}c(g)g\right)^{*}=\left(\sum_{g}\overline{c(g^{-1})}g\right)\]
where $c^{*}(g)=\overline{c(g^{-1})}$. The multiplication of functions
in $\mathfrak{A}$ is a generalization of convolutions.
\item $G$ is locally compact $\longrightarrow$ $\mathfrak{A}=G\otimes L^{1}(\mu)\simeq L^{1}(G)$.
\\
\\
Existance of Haar measure: easy proof for compact groups, and extend
to locally compact cases. For non compact groups, the left / right
Haar measures could be different. If they are always equal, the group
is called unimodular. Many non compact groups have no Haar measure.\\
\\
Let $\mathfrak{B}(G)$ be the Borel $\sigma$-aglebra of $G$,
$E\in\mathfrak{B}(G)$. Left Haar: $\lambda_{L}(gE)=\lambda_{L}(E)$;
right Haar: $\lambda_{R}(Eg)=\lambda_{R}(Eg)$. Theorem: the two measures
are equivalent\[
\lambda_{L}\ll\lambda_{R}\ll\lambda_{L}\]
\[
\triangle_{G}=\frac{d\lambda_{L}}{d\lambda_{R}}:G\rightarrow G\]
is called the modular function, $\triangle_{G}$ is a homomorphism,
$\triangle_{G}(gh)=\triangle_{G}(g)\triangle_{G}(h)$.\\
\\
$L^{1}(G)$\begin{eqnarray*}
(\varphi_{1}\star\varphi_{2})(g) & = & \int\varphi_{1}(gh^{-1})\varphi_{2}(h)d\lambda_{R}(h)\\
\varphi^{*}(g) & = & \overline{\varphi(g^{-1})}\triangle(g^{-1})\end{eqnarray*}
Take $C^{*}$ completion in both cases!

\begin{note*}
(1) for the case $l^{1}(G)$, the counting measure in unimodular,
hence $\triangle(g)$ does not appear; (2) In $\varphi_{1}(gh^{-1})$,
$h^{-1}$ appears since operation on points is dual to operation on
functions. A change of variable shows\[
\int\varphi(gh^{-1})d\lambda_{R}(g)=\int\varphi(g')d\lambda_{R}(g'h)=\int\varphi(g')d\lambda_{R}(g')\]
(3) $L^{1}(G)$ is a Banach algebra. Fubini's theorem shows that $f\star g\in L^{1}(G)$,
for all $f,g\in L^{1}(G)$.
\end{note*}
\end{caseenv}
There is a bijection between representations of groups and representations
of algebras. Given a unitary representation $\pi\in Rep(G,H)$, take
a Haar measre $dg$ in $L^{1}(G)$, then we get the group algebra
representation \begin{eqnarray*}
\pi_{L^{1}(G)}(\varphi) & = & \int_{G}\varphi(g)\pi(g)dg\\
\pi_{L^{1}(G)}(\varphi_{1}\star\varphi_{2}) & = & \pi_{L^{1}(G)}(\varphi_{1})\pi_{L^{1}(G)}(\varphi_{2})\\
\pi_{L^{1}(G)}(\varphi)^{*} & = & \pi_{L^{1}(G)}(\varphi^{*})\end{eqnarray*}
Conversely, given a representation of $L^{1}(G)$, let $(\varphi_{i})$
be a sequence in $L^{1}$ such that $\varphi_{i}\rightarrow\delta_{g}$
. Then \[
\int\varphi_{i}(h)\pi(h)gdh\rightarrow\pi(g)\]
i.e. the limit is a representation of $G$.

\section{Some examples}

\subsection{ax+b group}

\[
\left[\begin{array}{cc}
a & b\\
0 & 1\end{array}\right]\: a\in\mathbb{R}_{+},\: b\in\mathbb{R}\]
 \[
\left[\begin{array}{cc}
a' & b'\\
0 & 1\end{array}\right]\left[\begin{array}{cc}
a & b\\
0 & 1\end{array}\right]=\left[\begin{array}{cc}
a'a & a'b+b'\\
0 & 1\end{array}\right]\]
\[
\left[\begin{array}{cc}
a & b\\
0 & 1\end{array}\right]^{-1}=\left[\begin{array}{cc}
\frac{1}{a} & -\frac{b}{a}\\
0 & 1\end{array}\right]\]
The multiplication $aa'$ can be made into addition, by taking $a=e^{t}$,
$a'=e^{t'}$ so that $e^{t}e^{t'}=e^{t+t'}$. This is a transformation
group\[
x\mapsto ax+b\]
where composition gives\[
x\mapsto ax+b\mapsto a'(ax+b)+b'=aa'x+(a'b+b')\]
Left Haar measure: \[
\int f(h^{-1}g)g^{-1}dg=\int f(g')(hg')^{-1}d(hg')=\int f(g')g'^{-1}dg^{-1}\]
 \[
d\lambda_{L}=g^{-1}dg=\left[\begin{array}{cc}
\frac{1}{a} & -\frac{b}{a}\\
0 & 1\end{array}\right]\left[\begin{array}{cc}
da & db\\
0 & 0\end{array}\right]=\frac{dadb}{a^{2}}\]
check: \[
g=\left[\begin{array}{cc}
a & b\\
0 & 1\end{array}\right],\: h=\left[\begin{array}{cc}
a' & b'\\
0 & 1\end{array}\right],\: h^{-1}g=\left[\begin{array}{cc}
\frac{1}{a'} & -\frac{b'}{a'}\\
0 & 1\end{array}\right]\left[\begin{array}{cc}
a & b\\
0 & 1\end{array}\right]=\left[\begin{array}{cc}
\frac{a}{a'} & \frac{b-b'}{a'}\\
0 & 1\end{array}\right]\]
\begin{eqnarray*}
\int f(h^{-1}g)d\lambda_{L}(g) & = & \int f(\frac{a}{a'},\frac{b-b'}{a'})\frac{dadb}{a^{2}}\\
 & = & \int f(s,t)\frac{d(a's)d(a't+b')}{(a's)(a's)}\\
 & = & \int f(s,t)\frac{dsdt}{s^{2}}\end{eqnarray*}
where with a change of variable \[
s=\frac{a}{a'},da=a'ds\]
\[
t=\frac{b-b'}{a'},\: db=a'dt\]
\[
\frac{dadb}{a^{2}}=\frac{a'^{2}dsdt}{(sa')^{2}}=\frac{dsdt}{s^{2}}\]
Right Haar:\begin{eqnarray*}
\int f(gh^{-1})(dg)g^{-1} & = & \int f(g')d(g'h)(g'h)^{-1}\\
 & = & \int f(g')(dg')(hh^{-1})g'^{-1}\\
 & = & \int f(g')(dg')g'^{-1}\end{eqnarray*}
 \[
d\lambda_{R}=(dg)g^{-1}=\left[\begin{array}{cc}
da & db\\
0 & 0\end{array}\right]\left[\begin{array}{cc}
\frac{1}{a} & -\frac{b}{a}\\
0 & 1\end{array}\right]=\frac{dadb}{a}\]
check: \[
g=\left[\begin{array}{cc}
a & b\\
0 & 1\end{array}\right],\: h=\left[\begin{array}{cc}
a' & b'\\
0 & 1\end{array}\right],\: gh^{-1}=\left[\begin{array}{cc}
a & b\\
0 & 1\end{array}\right]\left[\begin{array}{cc}
\frac{1}{a'} & -\frac{b'}{a'}\\
0 & 1\end{array}\right]=\left[\begin{array}{cc}
\frac{a}{a'} & -\frac{ab'}{a'}+b\\
0 & 1\end{array}\right]\]
\begin{eqnarray*}
\int f(gh^{-1})d\lambda_{R}(g) & = & \int f(\frac{a}{a'},-\frac{ab'}{a'}+b)\frac{dadb}{a}\\
 & = & \int f(s,t)\frac{a'dsdt}{a's}\\
 & = & \int f(s,t)\frac{dsdt}{s}\end{eqnarray*}
where with a change of variable \[
s=\frac{a}{a'},da=a'ds\]
\[
t=-\frac{ab'}{a'}+b,\: db=dt\]
\[
\frac{dadb}{a}=\frac{a'dsdt}{a's}=\frac{dsdt}{s}\]

\section{Induced representation}

Two questions involved: (1) How to get a representation of a group
$G$ from a representation of the a subgroup $\Gamma\subset G$? (2)
Given a representation of a group $G$, how to test whether it is
induced from a representation of a subgroup $\Gamma$? The main examples
we have looked at so far are
\begin{itemize}
\item $ax+b$
\item Heisenberg
\item $SL_{2}(\mathbb{R})$
\item Lorents
\item Poincare
\end{itemize}
Among these, the $ax+b$, Heisenberg and Poincare groups are semi-direct
product groups. Their representations are induced from a smaller normal
subgroup. It is extremely easy to find representations of abelian
subgroups. Unitary representation of abelian subgroups are one-dimensional,
but the induced representation on an enlarged Hilbert space is infinite
dimensional. See the appendix for a quick review of semi-direct product.
\begin{example}
The $ax+b$ group ($a>0$). $G=\{(a,b)\}$ where $(a,b)=\left[\begin{array}{cc}
a & b\\
0 & 1\end{array}\right]$. The multiplication rule is given by \begin{eqnarray*}
(a,b)(a',b') & = & (aa',b+ab')\\
(a,b)^{-1} & = & (\frac{1}{a},-\frac{b}{a}).\end{eqnarray*}
The subgroup $\Gamma=\{(1,b)\}$ is one-dimensional, normal and abelian. 
\begin{itemize}
\item abelian: $(1,b)(1,c)=(1,c+b)$
\item normal: $(x,y)(1,b)(x,y)^{-1}=(1,xb)$, note that this is also $Ad_{g}$
acting on the normal subgrup $\Gamma$
\item The other subgroup $\{(a,0)\}$ is isomorphic to the multicative group
$(\mathbb{R}_{+},\times)$. Because we have \[
(a,0)(a',0)=(aa',0)\]
by the group multiplication rule above. 
\item Notice that $(\mathbb{R}_{+},\times)$ is not a normal subgroup, since
$(a,b)(x,0)(\frac{1}{a},-\frac{b}{a})=(ax,b)(\frac{1}{a},-\frac{b}{a})=(x-bx+b)$.
\end{itemize}
The multiplicative group $(\mathbb{R}_{+},\times)$ acts on the additive
group $(\mathbb{R},+)$ by\begin{eqnarray*}
\varphi:(\mathbb{R}_{+},\times) & \mapsto & Aut((\mathbb{R},+))\\
\varphi_{a}(b) & = & ab\end{eqnarray*}
check:\begin{eqnarray*}
(a,b)(a',b') & = & (aa',b+\varphi_{a}(b'))\\
 & = & (aa',b+ab')\\
(a,b)^{-1} & = & (a^{-1},\varphi_{a^{-1}}(b^{-1}))\\
 & = & (a^{-1},a^{-1}(-b))=(\frac{1}{a},-\frac{b}{a})\\
(a,b)(1,x)(a,b^{-1}) & = & (a,b+\varphi_{a}(x))(a,b^{-1})\\
 & = & (a,b+ax)(\frac{1}{a},-\frac{b}{a})\\
 & = & (1,b+ax-b)\\
 & = & (1,ax)\\
 & = & \varphi_{a}(x)\end{eqnarray*}

\begin{example}
The Lie algebra of $G$ is given by $X=\left[\begin{array}{cc}
1 & 0\\
0 & 0\end{array}\right]$, $=\left[\begin{array}{cc}
0 & 1\\
0 & 0\end{array}\right]$. We check that\[
e^{tX}=\left[\begin{array}{cc}
e^{t} & 0\\
0 & 1\end{array}\right]\]
which is subgroup $(\mathbb{R}_{+},\times)$; and \[
e^{sY}=I+sY+0+\cdots+0=\left[\begin{array}{cc}
1 & s\\
0 & 1\end{array}\right]\]
which is subgroup $(\mathbb{R},+)$. We also have $[X,Y]=Y$.

Form $L^{2}(\mu_{L})$ where $\mu_{L}$ is the left Haar measure.
Then $\pi:g\rightarrow\pi(g)f(x)=f(g^{-1}x)$ is a unitary representation.
Specifically, if $g=(a,b)$ then\[
f(g^{-1}x)=f(\frac{x}{a},\frac{y-b}{a}).\]
Differentiate along the $a$ direction we get\begin{eqnarray*}
\tilde{X}f & = & \frac{d}{da}\big|_{a=1,b=0}f(\frac{x}{a},\frac{y-b}{a})=(-x\frac{\partial}{\partial x}-y\frac{\partial}{\partial y})f(x,y)\\
\tilde{Y}f & = & \frac{d}{db}\big|_{a=1,b=0}f(\frac{x}{a},\frac{y-b}{a})=-\frac{\partial}{\partial y}f(x,y)\end{eqnarray*}
therefore we have the vector field \begin{eqnarray*}
\tilde{X} & = & -x\frac{\partial}{\partial x}-y\frac{\partial}{\partial y}\\
\tilde{Y} & = & -\frac{\partial}{\partial y}\end{eqnarray*}
or equivalently we get the Lie algebra representation $d\pi$ on $L^{2}(\mu_{L})$.
Notice that\begin{eqnarray*}
[\tilde{X},\tilde{Y}] & = & \tilde{X}\tilde{Y}-\tilde{Y}\tilde{X}\\
 & = & (-x\frac{\partial}{\partial x}-y\frac{\partial}{\partial y})(-\frac{\partial}{\partial y})-(-\frac{\partial}{\partial y})(-x\frac{\partial}{\partial x}-y\frac{\partial}{\partial y})\\
 & = & x\frac{\partial^{2}}{\partial x\partial y}+y\frac{\partial^{2}}{\partial y^{2}}-(x\frac{\partial^{2}}{\partial x\partial y}+\frac{\partial}{\partial y}+y\frac{\partial^{2}}{\partial y^{2}})\\
 & = & -\frac{\partial}{\partial y}\\
 & = & \tilde{Y}.\end{eqnarray*}
Notice that $\tilde{X}$ and $\tilde{Y}$ can be obtained by the exponential
map as well.\begin{eqnarray*}
\tilde{X}f & = & \frac{d}{dt}\big|_{t=0}f(e^{-tX}x)\\
 & = & \frac{d}{dt}\big|_{t=0}f((e^{-t},1)(x,y))\\
 & = & \frac{d}{dt}\big|_{t=0}f(e^{-t}x,e^{-t}y+1)\\
 & = & (-x\frac{\partial}{\partial x}-y\frac{\partial}{\partial y})f(x,y)\\
\tilde{Yf} & = & \frac{d}{dt}\big|_{t=0}f(e^{-tY}x)\\
 & = & \frac{d}{dt}\big|_{t=0}f((1,-t)(x,y))\\
 & = & \frac{d}{dt}\big|_{t=0}f(x,y-t)\\
 & = & -\frac{\partial}{\partial y}f(x,y)\end{eqnarray*}

\begin{example}
We may parametrize the Lie algebra of the ax+b group using $(x,y)$
variables. Build the Hilbert space $L^{2}(\mu_{L})$. The unitary
representation $\pi(g)f(\sigma)=f(g^{-1}\sigma)$ induces the follows
representations of the Lie algebra\begin{eqnarray*}
d\pi(s)f(\sigma) & = & \frac{d}{dx}\big|_{s=0}f(e^{-sX}\sigma)=\tilde{X}f(\sigma)\\
d\pi(t)f(\sigma) & = & \frac{d}{dy}\big|_{t=0}f(e^{-tY}\sigma)=\tilde{Y}f(\sigma).\end{eqnarray*}
Hence in the paramter space $(s,t)\in\mathbb{R}^{2}$ we have two
usual derivative operators $\partial/\partial s$ and $\partial/\partial t$,
where on the manifold we have\begin{eqnarray*}
\frac{\partial}{\partial s} & = & -x\frac{\partial}{\partial x}-y\frac{\partial}{\partial y}\\
\frac{\partial}{\partial t} & = & -y\frac{\partial}{\partial y}\end{eqnarray*}
The usual positive Laplacian on $\mathbb{R}^{2}$ translates to \begin{eqnarray*}
-\triangle & = & (\frac{\partial}{\partial s})^{2}+(\frac{\partial}{\partial t})^{2}\\
 & = & -[(\tilde{X})^{2}+(\tilde{Y})^{2}]\\
 & = & (-x\frac{\partial}{\partial x}-y\frac{\partial}{\partial y})(-x\frac{\partial}{\partial x}-y\frac{\partial}{\partial y})+(-y\frac{\partial}{\partial y})^{2}\\
 & = & -[(x^{2}\frac{\partial^{2}}{\partial x^{2}}+2xy\frac{\partial^{2}}{\partial x\partial y}+y^{2}\frac{\partial^{2}}{\partial y^{2}}+x\frac{\partial}{\partial x}+y\frac{\partial}{\partial y})+y^{2}\frac{\partial^{2}}{\partial y^{2}}]\\
 & = & -(x^{2}\frac{\partial^{2}}{\partial x^{2}}+2xy\frac{\partial^{2}}{\partial x\partial y}+(y^{2}+1)\frac{\partial^{2}}{\partial y^{2}}+x\frac{\partial}{\partial x}+y\frac{\partial}{\partial y}).\end{eqnarray*}
This is in fact an elliptic operator, since the matrix\[
\left[\begin{array}{cc}
x^{2} & xy\\
xy & y^{2}+1\end{array}\right]\]
has trace $trace=x^{2}+y^{2}+1\geq1$, and $\det=x^{2}\geq0$. If
instead we have {}``$y^{2}$'' then the determinant is the constant
zero.

Notice the term {}``$y^{2}+1$'' is essential for $\triangle$ being
elliptic. Also notice that all the coefficients are analytic functions
in the $(x,y)$ variables.
\end{example}
\end{example}
\end{example}
\begin{note}
Notice the $\Gamma$ is unimodular, hence it is just a copy of $\mathbb{R}$.
Its invariant measure is the Lebesgue measure on $\mathbb{R}$.\end{note}
\begin{example}
Heisenberg group $G=\{a,b,c\}$ where\[
(a,b,c)=\left[\begin{array}{ccc}
1 & a & c\\
0 & 1 & b\\
0 & 0 & 1\end{array}\right]\]
The multiplication rule is given by\begin{eqnarray*}
(a,b,c)(a',b',c') & = & (a+a',b+b',c+ab'+c')\\
(a,b,c)^{-1} & = & (-a,-b,-c+ab)\end{eqnarray*}
The subgroup $\Gamma=\{(0,b,c)\}$ where \[
(1,b,c)=\left[\begin{array}{ccc}
1 & 0 & c\\
0 & 1 & b\\
0 & 0 & 1\end{array}\right]\]
is two dimensional, abelian and normal.
\begin{itemize}
\item abelian: $(0,b,c)(0,b',c')=(0,b+b',c+c')$ 
\item normal: \begin{eqnarray*}
(a,b,c)(0,x,y)(a,b,c)^{-1} & = & (a,b,c)(0,x,y)(-a,-b,-c+ab)\\
 & = & (a,b+x,c+y+ax)(-a,-b,-c+ab)\\
 & = & (0,x,y+ax+ab-ab)\\
 & = & (0,x,ax+y)\end{eqnarray*}
Note that this is also $Ad_{g}$ acting on the Lie algebra of $\Gamma$.
\end{itemize}
The additive group $(\mathbb{R},+)$ acts on $\Gamma=\{(0,b,c)\}\simeq(\mathbb{R}^{2},+)$
by\begin{eqnarray*}
\varphi:(\mathbb{R},+) & \rightarrow & Aut(\Gamma)\\
\varphi(a)\left[\begin{array}{c}
c\\
b\end{array}\right] & = & \left[\begin{array}{cc}
1 & a\\
0 & 1\end{array}\right]\left[\begin{array}{c}
c\\
b\end{array}\right]\\
 & = & \left[\begin{array}{c}
c+ab\\
b\end{array}\right]\end{eqnarray*}
check: \begin{eqnarray*}
(a,(b,c))(a',(b',c')) & = & (a+a',(b,c)+\varphi(a)(b',c'))\\
 & = & (a+a',(b,c)+(b',c'+ab'))\\
 & = & (a+a',b+b',c+c'+ab)\\
(a,(b,c))^{-1} & = & (-a,\varphi_{a^{-1}}(-b,-c))\\
 & = & (-a,(-b,-c+ab))\\
 & = & (-a,-b,-c+ab)\\
(a,b,c)(0,b',c')(a,b,c)^{-1} & = & (a,b+b',c+c'+ab')(-a,-b,-c+ab)\\
 & = & (0,b',c'+ab')\\
 & = & \varphi_{a}\left[\begin{array}{c}
c'\\
b'\end{array}\right]\end{eqnarray*}

\end{example}

\subsection{Induced representation}

This also goes under the name of {}``Mackey machine''. Its modern
formulation is in the context of completely positive map.

Let $G$ be a locally compact group, and $\Gamma\subset G$ is closed
subgroup. 

\begin{longtable}{c|c|c}
\hline 
group & right Haar measure & modular function\tabularnewline
\hline 
$G$ & $dg$ & $\triangle$\tabularnewline
\hline 
$\Gamma$ & $d\xi$ & $\delta$\tabularnewline
\hline
\end{longtable}Recall the modular functions come in when the translation was put
on the wrong side, i.e.\[
\int_{G}f(gx)dx=\triangle(g^{-1})\int_{G}f(x)dx\]
or equivalently,\[
\triangle(g)\int_{G}f(gx)dx=\int_{G}f(x)dx\]
similar for $d\xi$ on the subgroup $\Gamma$.

Form the quotient $M=\Gamma\backslash G$. Let $\pi:G\rightarrow\Gamma\backslash G$
be the quotient map or the covering map. $M$ carries a transitive
$G$ action. 
\begin{note}
$M$ is called \emph{fundamental domain} or homogeneous space. $ $$M$
is a group if and only if $\Gamma$ is a normal subgroup in $G$.
In general, $M$ may not be a group, but it is still a very important
manifold.
\begin{note}
$\mu$ is called an invariant measure on $M$, if $\mu(Eg)=\mu(E)$,
$\forall g\in G$. $\mu$ is said to be quasi-invariant, if $\mu(E)=0$
$\Leftrightarrow$ $\mu(Eg)=0$, $\forall g$. In general there is
no invariant measure on $M$, but only quasi-invariant measures. $M$
has an invariant measure if and only if $M$ is unimodular (Heisenberg
group). Not all groups are unimodular, a typical example is the $ax+b$
group.
\end{note}
\end{note}
Define $\tau:C_{c}(G)\rightarrow C_{c}(M)$ by\[
(\tau\varphi)(\pi(x))=\int_{\Gamma}\varphi(\xi x)d\xi.\]

\begin{lem}
$\tau$ is surjective.\end{lem}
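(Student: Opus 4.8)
The plan is to construct, for a prescribed $\psi\in C_{c}(M)$, an explicit preimage $\varphi\in C_{c}(G)$ by normalizing a fixed auxiliary bump function supported near a compact lift of $\mathrm{supp}\,\psi$. Before that I would record why $\tau$ makes sense: right-invariance of the Haar measure $d\xi$ on $\Gamma$ shows that $(\tau\varphi)(\pi(x))$ depends only on the coset $\Gamma x$ (substitute $\eta=\xi\gamma$ when $x$ is replaced by $\gamma x$, $\gamma\in\Gamma$); the set $\{\xi\in\Gamma:\xi x\in\mathrm{supp}\,\varphi\}$ is a closed subset of the compact set $(\mathrm{supp}\,\varphi)x^{-1}$, hence compact, so the integral defining $\tau\varphi$ is finite; $\mathrm{supp}(\tau\varphi)\subset\pi(\mathrm{supp}\,\varphi)$ is compact; and continuity of $\tau\varphi$ follows from dominated convergence (the $\xi$-support stays in a fixed compact set as $x$ ranges over a compact neighborhood) together with the uniform continuity of $\varphi$.

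The one genuinely topological ingredient — and the step I expect to be the main obstacle — is that the quotient map $\pi\colon G\to\Gamma\backslash G$ is open, and the consequence that every compact $L\subset M$ can be written as $\pi(K)$ for some compact $K\subset G$: cover $L$ by finitely many images $\pi(U_{i})$ of open sets $U_{i}\subset G$ with compact closure (possible since $G$ is locally compact and $\pi$ is open), and take $K=\bigcup_{i}\overline{U_{i}}$. Everything else is bookkeeping once this lemma is in hand.

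Now, given $\psi\in C_{c}(M)$ with $L=\mathrm{supp}\,\psi$, choose a compact $K\subset G$ with $\pi(K)\supseteq L$ and, by Urysohn's lemma, pick $g\in C_{c}(G)$ with $g\ge0$ and $g>0$ on $K$. Set $h:=\tau g\in C_{c}(M)$; by the coset-invariance already noted, $h(\pi(k))=\int_{\Gamma}g(\xi k)\,d\xi>0$ for every $k\in K$, so $h>0$ on $\pi(K)\supseteq L$. Define
\[
\varphi(x)=\begin{cases}
g(x)\,\psi(\pi(x))/h(\pi(x)) & \text{if }h(\pi(x))>0,\\
0 & \text{if }\pi(x)\notin L.
\end{cases}
\]
The two open sets $\{x:h(\pi(x))>0\}$ and $\pi^{-1}(M\setminus L)$ cover $G$, since $\pi(x)\in L$ forces $h(\pi(x))>0$; on their overlap $\psi(\pi(x))=0$, so the two prescriptions agree there. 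Hence $\varphi$ is continuous, with $\mathrm{supp}\,\varphi\subset\mathrm{supp}\,g$ compact, i.e. $\varphi\in C_{c}(G)$.

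Finally I would verify $\tau\varphi=\psi$ directly. For $\xi\in\Gamma$ one has $\pi(\xi x)=\pi(x)$, so the factors $\psi(\pi(\xi x))$ and $h(\pi(\xi x))$ are constant in $\xi$ and pull out of the integral over $\Gamma$; thus where $h(\pi(x))>0$,
\[
(\tau\varphi)(\pi(x))=\frac{\psi(\pi(x))}{h(\pi(x))}\int_{\Gamma}g(\xi x)\,d\xi=\frac{\psi(\pi(x))}{h(\pi(x))}\,h(\pi(x))=\psi(\pi(x)),
\]
while where $\pi(x)\notin L$ every integrand $\varphi(\xi x)$ vanishes, so $(\tau\varphi)(\pi(x))=0=\psi(\pi(x))$. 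Therefore $\tau$ is surjective.
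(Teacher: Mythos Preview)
Your proof is correct and is the standard argument (essentially the one in Folland's \emph{A Course in Abstract Harmonic Analysis}). The paper itself does not supply a proof of this lemma: it states the result and then, in the subsequent notes, discusses why $\tau\varphi$ is well-defined as a function on $M$ and interprets $\tau$ as a conditional expectation, but never verifies surjectivity. Your write-up therefore fills a genuine gap. The only minor comment is that your compact lift $K=\bigcup_i\overline{U_i}$ need not satisfy $\pi(K)=L$ exactly, only $\pi(K)\supseteq L$; you use only the inclusion, so this is harmless, but you might want to phrase the claim accordingly.
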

\begin{note}
Since $\varphi$ has compact support, the integral is well-defined.
$\tau$ is called conditional expectation. It is simply the summation
of $\varphi$ over the orbit $\Gamma x$. This is because if $\xi$
runs over $\Gamma$, $\xi x$ runs over $\Gamma x$. $\tau\varphi$
may also be interpreted as taking average, only it does not divide
out the total mass, but that only differs by a constant. 
\begin{note}
We may also say $\tau\varphi$ is a $\Gamma$-periodic extention,
by looking at it as a function defined on $G$. Then we check that\[
\tau\varphi(\xi_{1}x)=\int_{\Gamma}\varphi(\xi\xi_{1}x)d\xi=\tau\varphi(x)\]
because $d\xi$ is a right Haar measure. Thus $\tau\varphi$ is $\Gamma$-periodic
in the sense that\[
\tau\varphi(\xi x)=\tau\varphi(x),\:\forall\xi\in\Gamma.\]

\end{note}
\end{note}
\begin{example}
$G=\mathbb{R}$, $\Gamma=\mathbb{Z}$ with $d\xi$ being the counting
measure on $\mathbb{Z}$. \[
(\tau\varphi)(\pi(x))=\int_{\Gamma}\varphi(\xi x)d\xi=\sum_{z\in\mathbb{Z}}\varphi(z+x)\]
As a consequence, $\tau\varphi$ is left translation invariant by
integers, i.e. $\tau\varphi$ is $\mathbb{Z}$-periodic, \begin{eqnarray*}
(\tau\varphi)(\pi(z_{0}+x)) & = & \sum_{z\in\mathbb{Z}}\varphi(z_{0}+z+x)\\
 & = & \sum_{z\in\mathbb{Z}}\varphi(z+x)\end{eqnarray*}
Since $\varphi$ has compact support, $\varphi(z+x)$ vanishes for
all but a finite number of $z$. Hence it is a finite summation, and
it is well-defined.
\end{example}
Let $L$ be a unitary representation of $\Gamma$ on a Hilbert space
$V$. The task now is to get a unitary representaton $U^{ind}$ of
$G$ on an enlarged Hilbert space $H$. i.e. given $L\in Rep(\Gamma,V)$,
get $U^{ind}\in Rep(G,H)$ with $H\supset V$. 

Let $F_{*}$ be the set of function $f:G\rightarrow V$ so that\[
f(\xi g)=\rho(\xi)^{1/2}L_{\xi}(f(g))\]
where $\rho=\delta/\triangle$.
\begin{note}
If $f\in F_{*}$, then $f(\cdot g')\in F_{*}$, as \[
f(\xi gg')=\rho(\xi)^{1/2}L_{\xi}(f(gg')).\]
It follows that $F_{*}$ is invariant under right translation by $ $$g\in G$,
i.e. $(R_{g}f)(\cdot)=f(\cdot g)\in F_{*}$, $\forall f\in F_{*}$.
Eventually, we want to define $(U_{g}^{ind}f)(\cdot):=f(\cdot g)$,
not on $F_{*}$ but pass to a subspace.
\begin{note}
The factor $\rho(\xi)^{1/2}$ comes in, since later we will defined
an inner product $\iprod{\cdot}{\cdot}_{new}$ on $F_{*}$ so that
$\norm{f(\xi g)}_{new}=\norm{f(g)}_{new}$. Let's ignore $\rho(\xi)^{1/2}$
for a moment. 

$L_{\xi}$ is unitary implies that $\norm{f(\xi g)}_{V}=\norm{L_{\xi}f(g)}_{V}=\norm{f(g)}_{V}$.
Notice that Hilbert spaces exist up to unitary equivalence, $L_{\xi}f(g)$
and $f(g)$ really are the same function. As $\xi$ running throught
$\Gamma$, $\norm{f(\xi g)}$ is a constant on the orbit $\Gamma g$.
It follows that $f(\xi g)$ is in fact a $V$-valued function defined
on the quotient $M=\Gamma\backslash G$. 

We will later use these functions as multiplication operators.
\end{note}
\end{note}
\begin{example}
The Heisenberg group is unimodular, $\rho=1$. 
\begin{example}
For the $ax+b$ group, \begin{eqnarray*}
d\lambda_{R} & = & \frac{dadb}{a}\\
d\lambda_{L} & = & \frac{dadb}{a^{2}}\\
\triangle & = & \frac{d\lambda_{L}}{d\lambda_{R}}=\frac{1}{a}\end{eqnarray*}
On the abelian normal subgroup $\Gamma=\{(1,b)\}$, where $a=1$,
$\triangle(\xi)=1$. $\Gamma$ is unimodular, hence $\delta(\xi)=1$.
Therefore, $\rho(\xi)=\delta(\xi)/\triangle(\xi)=1$, $\forall\xi\in\Gamma$.
\end{example}
\end{example}
For all $f\in F_{*}$, the map $\mu_{f,f}:C_{c}(M)\rightarrow\mathbb{C}$
given by \[
\mu_{f,f}:\tau\varphi\mapsto\int_{G}\norm{f(g)}_{V}^{2}\varphi(g)dg\]
is a positive linear functional. By Riesz, there exists a unique Radon
measure $\mu_{f,f}$ on $M$, such that \[
\int_{G}\norm{f(g)}_{V}^{2}\varphi(g)dg=\int_{M}(\tau\varphi)d\mu_{f,f}.\]

\begin{note}
Recall that given a measure space $(X,\mathfrak{M},\mu)$, let $f:X\rightarrow Y$.
Define a linear functional $\Lambda:C_{c}(Y)\rightarrow\mathbb{C}$
by \[
\Lambda\varphi:=\int\varphi(f(x))d\mu(x)\]
$\Lambda$ is positive, hence by Riesz's theorem, there exists a unique
regular Borel measure $\mu_{f}$ on $Y$ so that\[
\Lambda\varphi=\int_{Y}\varphi d\mu_{f}=\int_{X}\varphi(f(x))d\mu(x).\]
It follows that $\mu_{f}=\mu\circ f^{-1}$. 
\begin{note}
Under current setting, we have a covering map $\pi:G\rightarrow\Gamma\backslash G=:M$,
and the right Haar measure $\mu$ on $G$. Thus we may define a measure
$\mu\circ\pi^{-1}$. However, given $\varphi\in C_{c}(M)$, $\varphi(\pi(x))$
may not have compact support, or equivalently, $\pi^{-1}(E)$ is $\Gamma$
periodic. For example, take $G=\mathbb{R}$, $\Gamma=\mathbb{Z}$,
$M=\mathbb{Z}\backslash\mathbb{R}$. Then $\pi^{-1}([0,1/2))$ is
$\mathbb{Z}$-periodic, which has infinite Lebesgue measure. What
we really need is some map so that the inverse of a subset of $M$
is restricted to a single $\Gamma$ period. This is essentially what
$\tau$ does. Taking $\tau\varphi\in C_{c}(\Gamma)$, get the inverse
image $\varphi\in C_{c}(G)$. Even if $\varphi$ is not restricted
in a single $\Gamma$ period, $\varphi$ always has compact support. 
\end{note}
\end{note}
Hence we get a family of measures indexed by elements in $F_{*}$.
If choosing $f,g\in F_{*}$ then we get complex measures $\mu_{f,g}$.
(using polarization identity)
\begin{itemize}
\item Define $\norm{f}^{2}:=\mu_{f,f}(M)$, $\iprod{f}{g}:=\mu_{f,g}(M)$
\item Complete $F_{*}$ with respect to this norm to get an enlarged Hilbert
space $H$.
\item Define induced representation $U^{ind}$ on $H$ as\[
U_{g}^{ind}f(x)=f(xg)\]
$U_{g}^{ind}$ is unitary. \[
\norm{U^{ind}f}_{H}=???\]
\end{itemize}
\begin{note}
$\mu_{f,g}(M)=\int_{M}\tau\varphi d\xi$, where $\tau\varphi\equiv1$.
What is $\varphi$ then? It turns out that $\varphi$ could be constant
1 over a single $\Gamma$-period, or $\varphi$ could spread out to
a finite number of $\Gamma$-periods. Draw a picture here!! Therefore,
in this case \begin{eqnarray*}
\norm{f}^{2} & = & \int_{G}\norm{f(g)}_{V}^{2}\varphi(g)dg\\
 & = & \int_{1-period}\norm{f(g)}_{V}^{2}\varphi(g)dg\\
 & = & \int_{1-period}\norm{f(g)}_{V}^{2}dg\\
 & = & \int_{M}\norm{f(g)}_{V}^{2}dg\end{eqnarray*}

\end{note}
Define $P(\psi)f(x):=\psi(\pi(x))f(x)$, for $\psi\in C_{c}(M)$,
$f\in H$, $x\in G$. $P(\psi)$ is the abelian algebra of multiplication
operators. Observe that\[
U_{g}^{ind}P(\psi)U_{g^{-1}}^{ind}=P(\psi(\cdot g))\]
check: \begin{eqnarray*}
U_{g}^{ind}P(\psi)f(x) & = & U_{g}^{ind}\psi(\pi(x))f(x)\\
 & = & \psi(\pi(xg))f(xg)\\
P(\psi(\cdot g))U_{g}^{ind}f(x) & = & P(\psi(\cdot g))f(xg)\\
 & = & \psi(\pi(xg))f(xg)\end{eqnarray*}

Conversely, how to recognize induced representation?
\begin{thm}
(imprimitivity) Let $G$ be a locally compact group with a closed
subgroup $\Gamma$. Let $M=\Gamma\backslash G$. Suppose the system
$(U,P)$ satisfies the covariance relation,\[
U_{g}P(\psi)U_{g^{-1}}=P(\psi(\cdot g)).\]
Then, there exists a unitary representation $L\in Rep(\Gamma,V)$
such that $U\simeq ind_{\Gamma}^{G}(L)$.
\end{thm}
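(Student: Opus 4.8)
The strategy is to convert the pair $(U,P)$ into the data $(V,L)$ of an induced representation by \emph{disintegrating} $H$ over the homogeneous space $M=\Gamma\backslash G$. First I would use the spectral theorem for the abelian algebra $P(C_{c}(M))$ --- equivalently, for the projection-valued measure $E\mapsto P(E)$ on $M$ --- to realize $H$ as a direct integral $H\cong\int_{M}^{\oplus}H_{m}\,d\mu(m)$ over some Radon measure $\mu$ on $M$, so that $P(\psi)$ becomes multiplication by $\psi\circ\pi$. The whole point is then to show that the covariance relation forces this disintegration to be precisely the one underlying $ind_{\Gamma}^{G}(L)$ for a suitable $L$.

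\emph{Step 1: quasi-invariance, constancy of the fibres, and extraction of $L$.} From $U_{g}P(\psi)U_{g^{-1}}=P(\psi(\cdot g))$ one gets $U_{g}P(E)U_{g^{-1}}=P(Eg^{-1})$ for Borel $E\subset M$, so $U_{g}$ carries $P(E)H$ isometrically onto $P(Eg^{-1})H$; hence $\mu$ and its right translate $\mu\circ R_{g}$ are mutually absolutely continuous ($\mu$ is $G$-quasi-invariant, with Radon--Nikodym cocycle governed by $\rho=\delta/\triangle$ --- this is where the modular functions enter), and $U_{g}$ decomposes as a measurable field of unitaries $H_{m}\to H_{mg^{-1}}$ weighted by $(d\mu\circ R_{g}/d\mu)^{1/2}$. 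Since $G$ acts transitively on $M$, all fibres $H_{m}$ are unitarily equivalent; fix the base point $m_{0}=\Gamma e$, put $V:=H_{m_{0}}$, and note that the stabilizer of $m_{0}$ is exactly $\Gamma$ ($m_{0}g=m_{0}\iff g\in\Gamma$). Thus for $\xi\in\Gamma$ the operator $U_{\xi}$ maps $H_{m_{0}}$ to itself; after dividing out the scalar factor $\rho(\xi)^{1/2}$ this yields a unitary $L_{\xi}\in B(V)$, and the cocycle identity for the field of unitaries together with $U_{\xi_{1}\xi_{2}}=U_{\xi_{1}}U_{\xi_{2}}$ shows that $\xi\mapsto L_{\xi}$ is a (measurable, hence by standard arguments continuous) unitary representation of $\Gamma$ on $V$.

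\emph{Step 2: the intertwining unitary.} Choose a Borel section $s\colon M\to G$ of the quotient map $\pi$, with the associated cocycle $\beta(m,g)\in\Gamma$ defined by $s(m)g=\beta(m,g)\,s(mg)$. Using $s$ to trivialize the field $\{H_{m}\}$, an element $h\in H$ becomes a $\mu$-square-integrable section $m\mapsto\tilde h(m)\in V$. Define $W\colon H\to F_{*}$ by
\[
(Wh)(g):=\rho(\xi)^{1/2}\,L_{\xi}\bigl(\tilde h(\pi(g))\bigr),\qquad g=\xi\,s(\pi(g))\ \ (\xi\in\Gamma),
\]
i.e. $Wh$ is the unique $\Gamma$-equivariant element of $F_{*}$ extending $\tilde h$. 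One checks that $\norm{Wh}_{H}^{2}=\mu_{Wh,Wh}(M)=\int_{M}\norm{\tilde h(m)}_{V}^{2}\,d\mu(m)=\norm h^{2}$, using the defining relation of the measures $\mu_{f,f}$ and the quasi-invariance from Step 1, so $W$ is isometric; it is onto because the recipe is reversible (restrict a function in $F_{*}$ to the image of $s$). Finally $W P(\psi)=P^{ind}(\psi)W$ is immediate, since both sides are multiplication by $\psi\circ\pi$, and $W U_{g}=U^{ind}_{g}W$ follows from the covariance relation plus the cocycle identity for $\beta$: translating $h\mapsto U_{g}h$ corresponds to $\tilde h(\cdot)\mapsto(\text{RN factor})\cdot(\text{fibre unitary})\,\tilde h(\cdot g)$, which is exactly right translation $f(\cdot)\mapsto f(\cdot g)$ on $F_{*}$. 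Hence $U\simeq ind_{\Gamma}^{G}(L)$.

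\textbf{Main obstacle.} The real work is in Step 1: producing the direct-integral disintegration of $H$ over $M$ and proving that the field of fibre Hilbert spaces is constant along the (transitive) $G$-orbit, with all the measurable-selection issues --- existence of a Borel section $s$, measurability of $\xi\mapsto L_{\xi}$ and of the field of unitaries --- handled carefully. Intertwined with this is the bookkeeping of the modular/Radon--Nikodym factors $\delta$, $\triangle$, $\rho=\delta/\triangle$: getting the exponents and the left/right sides exactly right is what makes $W$ an honest unitary rather than merely a bounded bijection. The independence of $L$ (up to unitary equivalence) from the auxiliary choices --- the measure $\mu$ within its class and the section $s$ --- is then a routine concluding remark.
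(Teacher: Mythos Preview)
The paper states the imprimitivity theorem without proof (it is announced immediately before the Heisenberg example and left as a black box), so there is no argument in the text to compare against. Your outline is the classical Mackey route and is correct in spirit: extend $\psi\mapsto P(\psi)$ to a projection-valued measure on $M$, disintegrate $H$ as $\int_{M}^{\oplus}H_{m}\,d\mu(m)$, use the covariance relation to show $\mu$ is quasi-invariant and that $U_{g}$ decomposes into fibre unitaries, exploit transitivity of the $G$-action to identify all fibres with $V:=H_{m_{0}}$, read off $L$ from the action of the stabilizer $\Gamma$, and then transport everything to $F_{*}$ via a Borel section $s$.

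Two points deserve care beyond what you flagged. First, the step ``all fibres are unitarily equivalent because $G$ acts transitively'' is the delicate one: you need not just pointwise equivalence but a \emph{measurable} trivialization of the field $\{H_{m}\}$, and this is where one typically invokes either von Neumann's measurable selection theorem or the assumption that $G$ is second countable (so that $M$ is standard Borel and a Borel section $s$ exists). Without a countability hypothesis the argument can fail. Second, your formula for $W$ already presupposes that the fibre unitaries $H_{m_{0}}\to H_{m}$ coming from $U_{s(m)}$ have been used to identify each $H_{m}$ with $V$; it would be cleaner to make that identification explicit before writing down $W$, since the verification of $WU_{g}=U^{ind}_{g}W$ unwinds precisely to the cocycle identity $\beta(m,g_{1}g_{2})=\beta(m,g_{1})\beta(mg_{1},g_{2})$ and you want that dependence visible. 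The modular-function bookkeeping you mention is genuine but routine once the measurable framework is in place.
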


\section{Example - Heisenberg group}

Heisenberg group $G=\{(a,b,c)\}$ where\[
(a,b,c)=\left[\begin{array}{ccc}
1 & a & c\\
0 & 1 & b\\
0 & 0 & 1\end{array}\right]\]
The multiplication rule is given by\begin{eqnarray*}
(a,b,c)(a',b',c') & = & (a+a',b+b',c+c'+ab')\\
(a,b,c)^{-1} & = & (-a,-b,-c+ab)\end{eqnarray*}
The subgroup $\Gamma=\{(0,b,c)\}$ where \[
(1,b,c)=\left[\begin{array}{ccc}
1 & 0 & c\\
0 & 1 & b\\
0 & 0 & 1\end{array}\right]\]
is two dimensional, abelian and normal.
\begin{itemize}
\item abelian: $(0,b,c)(0,b',c')=(0,b+b',c+c')$ 
\item normal: \begin{eqnarray*}
(a,b,c)(0,x,y)(a,b,c)^{-1} & = & (a,b,c)(0,x,y)(-a,-b,-c+ab)\\
 & = & (a,b+x,c+y+ax)(-a,-b,-c+ab)\\
 & = & (0,x,y+ax+ab-ab)\\
 & = & (0,x,ax+y)\end{eqnarray*}
i.e. $Ad:G\rightarrow GL(\mathfrak{n})$, as \begin{eqnarray*}
Ad(g)(n): & = & gng^{-1}\\
(x,y) & \mapsto & (ax+y)\end{eqnarray*}
the orbit is a 2-d transformation.
\end{itemize}
Fix $h\in\mathbb{R}\backslash\{0\}$. Recall the Schrodinger representation
of $G$ on $L^{2}(\mathbb{R})$ \[
U_{g}f(x)=e^{ih(c+bx)}f(x+a)\]
We show that the Schrodinger representation is induced from a unitary
representation $L$ on the subgroup $\Gamma$. 
\begin{enumerate}
\item Let $L\in Rep(\Gamma,V)$ where $\Gamma=\{(0,b,c)\}$, $V=\mathbb{C}$,
\[
L_{\xi(b,c)}=e^{ihc}.\]
The complex exponential comes in since we want a unitary representation.
The subgroup $\{(0,0,c)\}$ is the center of $G$. What is the induced
representation? Is it unitarily equivalent to the Schrodinger representation?
\item Look for the family $F_{*}$ of functions $f:G\rightarrow\mathbb{C}$
($V$ is the 1-d Hilbert space $\mathbb{C}$), such that\[
f(\xi(b,c)g)=L_{\xi}f(g).\]
Since \begin{eqnarray*}
f(\xi(b,c)g) & = & f((0,b,c)(x,y,z))=f(x,b+y,c+z)\\
L_{\xi(b,c)}f(g) & = & e^{ihc}f(x,y,z)\end{eqnarray*}
$f(x,y,z)$ satisfies\[
f(x,b+y,c+z)=e^{ihc}f(x,y,z).\]
i.e. we may translate the $y,z$ variables by arbitrary amount, and
the only price to pay is multiplicative factor $e^{ihc}$. Therefore
$f$ is really a function defined on the quotient \[
M=\Gamma\backslash G\simeq\mathbb{R}.\]
$M=\{(x,0,0)\}$ is identified with $\mathbb{R}$, and the invariant
measure on the homogeneous space $M$ is simply the Lebesgue measure.
It is almost clear at this point why the induced representation is
unitarily equivalent to the Schrodinger representation on $L^{2}(\mathbb{R})$. 
\item $\tau\varphi\mapsto\int_{G}\norm{f(g)}_{V}^{2}\varphi(g)dg$ induces
a measure $\mu_{f,f}$ on $M$. This can be seen as follows.\begin{eqnarray*}
\int_{G}\norm{f(g)}_{V}^{2}\varphi(g)dg & = & \int_{G\simeq\mathbb{R}^{3}}\abs{f(x,y,z)}^{2}\varphi(x,y,z)dxdydz\\
 & = & \int_{M\simeq\mathbb{R}}\left(\int_{\Gamma\simeq\mathbb{R}^{2}}\abs{f(x,y,z)}^{2}\varphi(x,y,z)dydz\right)dx\\
 & = & \int_{\mathbb{R}}\abs{f(x,y,z)}^{2}\left(\int_{\mathbb{R}^{2}}\varphi(x,y,z)dydz\right)dx\\
 & = & \int_{\mathbb{R}}\abs{f(x,y,z)}^{2}(\tau\varphi)(\pi(g))dx\\
 & = & \int_{\mathbb{R}}\abs{f(x,y,z)}^{2}(\tau\varphi)(x)dx\\
 & = & \int_{\mathbb{R}}\abs{f(x,0,0)}^{2}(\tau\varphi)(x)dx\end{eqnarray*}
where\begin{eqnarray*}
(\tau\varphi)(\pi(g)) & = & \int_{\Gamma}\varphi(\xi g)d\xi\\
 & = & \int_{\mathbb{R}^{2}}\varphi((0,b,c)(x,y,z))dbdc\\
 & = & \int_{\mathbb{R}^{2}}\varphi(x,b+y,c+z)dbdc\\
 & = & \int_{\mathbb{R}^{2}}\varphi(x,b,c)dbdc\\
 & = & \int_{\mathbb{R}^{2}}\varphi(x,y,z)dydz\\
 & = & (\tau\varphi)(x).\end{eqnarray*}
Hence $\Lambda:C_{c}(M)\rightarrow\mathbb{C}$ given by\[
\Lambda:\tau\varphi\mapsto\int_{G}\norm{f(g)}_{V}^{2}\varphi(g)dg\]
is a positive linear functional, therefore\[
\Lambda=\mu_{f,f}\]
i.e.\[
\int_{\mathbb{R}^{3}}\abs{f(x,y,z)}^{2}\varphi(x,y,z)dxdydz=\int_{\mathbb{R}}(\tau\varphi)(x)d\mu_{f,f}(x).\]

\item Define\[
\begin{alignedat}{1}\norm{f}_{ind}^{2} & :=\mu_{f,f}(M)=\int_{M}\abs{f}^{2}d\xi=\int_{\mathbb{R}}\abs{f(x)}^{2}dx=\int_{\mathbb{R}}\abs{f(x,0,0)}^{2}dx\\
U_{g}^{ind}f(g') & :=f(g'g)\end{alignedat}
\]
By definition, if $g=g(a,b,c)$, $g'=g'(x,y,z)$ then\begin{eqnarray*}
U_{g}^{ind}f(g') & = & f(g'g)\\
 & = & f((x,y,z)(a,b,c))\\
 & = & f(x+a,y+b,z+c+xb)\end{eqnarray*}
and $U^{ind}$ is a unitary representation by the definition of $\norm{f}_{ind}$.
\item To see $U^{ind}$ is unitarily equivalent to the Schrodinger representation
on $L^{2}(\mathbb{R})$, define\begin{eqnarray*}
W:H^{ind} & \rightarrow & L^{2}(\mathbb{R})\\
(Wf)(x) & = & f(x,0,0)\end{eqnarray*}
If put other numbers into $f$, as $f(x,y,z)$, the result is the
same, since $f\in H^{ind}$ is really defined on the quotient $M=\Gamma\backslash G\simeq\mathbb{R}$.
\\
\\
$W$ is unitary: \[
\norm{Wf}_{L^{2}}^{2}=\int_{\mathbb{R}}\abs{Wf}^{2}dx=\int_{\mathbb{R}}\abs{f(x,0,0)}^{2}dx=\int_{\Gamma\backslash G}\abs{f}^{2}d\xi=\norm{f}_{ind}^{2}\]
Intertwining: let $U_{g}$ be the Schrodinger representation. \begin{eqnarray*}
U_{g}(Wf) & = & e^{ih(c+bx)}f(x+a,0,0)\\
WU_{g}^{ind}f & = & W\left(f((x,y,z)(a,b,c))\right)\\
 & = & W\left(f(x+a,y+b,z+c+xb)\right)\\
 & = & W\left(e^{ih(c+bx)}f(x+a,y,z)\right)\\
 & = & e^{ih(c+bx)}f(x+a,0,0)\end{eqnarray*}

\item Since $\{U,L\}'\subset\{L\}'$, then the system $\{U,L\}$ is reducible
implies $L$ is reducible. Equivalent, $\{L\}$ is irreducible implies
$\{U,L\}$ is irreducible. Consequently, $U_{g}$ is irrducible. Since
$U_{g}$ is the induced representation, if it is reducible, $L$ would
also be reducible, but $L$ is 1-dimensional.\end{enumerate}
\begin{note}
The Heisenberg group is a non abelian unimodular Lie group, so the
Haar measure on $G$ is just the product measure $dxdydz$ on $\mathbb{R}^{3}$.
Conditional expectation becomes integrating out the variables correspond
to subgroup. For example, given $f(x,y,z)$ conditioning with respect
to the subgroup $(0,b,c)$ amounts to integrating out the $y,z$ variable
and get a function $\tilde{f}(x)$, where\[
\tilde{f}(x)=\iint f(x,y,z)dydz.\]

\end{note}

\subsection{ax+b group}

$a\in\mathbb{R}_{+}$, $b\in\mathbb{R}$, $g=(a,b)=\left[\begin{array}{cc}
a & b\\
0 & 1\end{array}\right]$. \[
U_{g}f(x)=e^{iax}f(x+b)\]
could also write $a=e^{t}$, then \[
U_{g}f(x)=e^{ie^{t}x}f(x+b)\]
\[
U_{g(a,b)}f(x)=e^{iae^{x}f(x+b)}\]
\begin{eqnarray*}
[\frac{d}{dx},ie^{x}] & = & ie^{x}\\
{}[A,B] & = & B\end{eqnarray*}
or\[
U_{(e^{t},b)}f=e^{ite^{x}}f(x+b)\]
\[
\left[\begin{array}{cc}
0 & b\\
0 & 1\end{array}\right]\]
1-d representation. $L=e^{ib}$. Induce $ind_{L}^{G}\simeq\mbox{Schrodinger}$.

\subsection{$ax+b$ gruop}

\section{Coadjoint orbits}

It turns out that only a small family of representations are induced.
The question is how to detect whether a representation is induced.
The whole theory is also under the name of {}``Mackey machine''.
The notion of {}``machine'' refers to something that one can actually
compute in practice. Two main examples are the Heisenberg group and
the $ax+b$ group.

What is the mysteries parameter $h$ that comes into the Schrodinger
representation? It is a physical constant, but how to explain it in
mathematical theory?

\subsection{review of some Lie theory}
\begin{thm}
Every Lie group is diffeomorphic to a matrix group.
\end{thm}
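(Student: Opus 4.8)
The plan is to reduce the statement to the level of Lie algebras, where the corresponding fact --- every finite-dimensional real Lie algebra is isomorphic to a subalgebra of $\mathfrak{gl}(n,\mathbb{R})$ for some $n$ --- is Ado's theorem, and then to transport this information back up to the group by exponentiation together with the structure theory of connected Lie groups.

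First I would fix a Lie group $G$ with Lie algebra $\mathfrak{g}=T_{e}G$. By Ado's theorem there is a faithful finite-dimensional representation $\rho:\mathfrak{g}\hookrightarrow\mathfrak{gl}(n,\mathbb{R})$. Let $\tilde{G}$ be the simply connected Lie group with Lie algebra $\mathfrak{g}$, i.e. the universal cover of the identity component $G_{0}$. The Lie algebra homomorphism $\rho$ integrates to a Lie group homomorphism $\tilde{\rho}:\tilde{G}\rightarrow GL(n,\mathbb{R})$ with $d\tilde{\rho}_{e}=\rho$. Since $\rho$ is injective and the exponential maps of $\tilde{G}$ and of $GL(n,\mathbb{R})$ intertwine $\rho$ with $\tilde{\rho}$, the map $\tilde{\rho}$ is an immersion whose image is an (immersed) subgroup of $GL(n,\mathbb{R})$ with Lie algebra $\rho(\mathfrak{g})$; near the identity it is a diffeomorphism onto a submanifold. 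This already yields the local form of the statement: every Lie group is locally diffeomorphic to a matrix group.

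For the global diffeomorphism I would invoke the Cartan-Iwasawa-Malcev theorem: the connected group $G_{0}$ is diffeomorphic, as a smooth manifold, to $K\times\mathbb{R}^{k}$, where $K$ is a maximal compact subgroup. Compact Lie groups are matrix groups by the Peter-Weyl theorem --- the regular representation decomposes into finite-dimensional pieces, and since a compact Lie group has no arbitrarily small subgroups, a finite bundle of these pieces is already faithful --- so $K$ embeds as a closed subgroup of some $GL(m,\mathbb{R})$. Then $K\times\mathbb{R}^{k}$ embeds as a closed subgroup of $GL(m+k,\mathbb{R})$, block-diagonally, realizing the $\mathbb{R}^{k}$ factor by a strictly upper-triangular (hence unipotent) block; the resulting matrix group is diffeomorphic to $G_{0}$. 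For a Lie group with several components one writes $G$ as a disjoint union of copies of $G_{0}$ indexed by the (countable) component group and assembles the analogous picture.

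The main obstacle is Ado's theorem itself, whose proof is genuinely algebraic and delicate: the nilpotent radical is handled through the universal enveloping algebra and a derivation argument, while the semisimple part uses Weyl's complete reducibility theorem, and there is no soft functional-analytic substitute. A secondary point worth stressing is that ``diffeomorphic'' is the correct word, not ``isomorphic'': the universal cover of $SL_{2}(\mathbb{R})$ admits no faithful finite-dimensional representation whatsoever, so the homomorphism $\tilde{\rho}$ above need not be injective; it is only the underlying smooth manifold of $G$ that is being matched with a matrix group, and that is precisely what the structure theorem delivers.
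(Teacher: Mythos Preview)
The paper does not actually prove this theorem; it is stated as a review fact in the section on coadjoint orbits and the text moves on immediately to the exponential map and examples. So there is no proof in the paper to compare against.

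Your argument is essentially sound for the diffeomorphism claim, and you are right to stress that only a diffeomorphism is being asserted, not an isomorphism. Two remarks. First, the Ado--theorem detour in your opening two paragraphs is not needed: it buys you only a local statement, and your global argument via the Cartan--Iwasawa--Malcev decomposition $G_{0}\cong K\times\mathbb{R}^{k}$ together with Peter--Weyl for $K$ is self-contained and does all the work. You could drop the Ado discussion entirely without loss. Second, your treatment of the disconnected case is a little quick: what you need is that a countable disjoint union of copies of $K\times\mathbb{R}^{k}$ is itself diffeomorphic to a matrix group, and for that it suffices to take $\mathbb{Z}\times K\times\mathbb{R}^{k}$ (or $\mathbb{Z}/n\mathbb{Z}\times K\times\mathbb{R}^{k}$ in the finite case) as a direct-product Lie group and embed the cyclic factor in the obvious way. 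It would be worth saying this explicitly, since $\pi_{0}(G)$ itself need not be linear, and it is only because you are matching manifolds rather than groups that the problem evaporates.
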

The exponential function maps a neighborhood of $0$ into a connected
component of $G$ containing the identity element. For example, the
Lie algebra of the Heisenberg group is\[
\left[\begin{array}{ccc}
0 & * & *\\
0 & 0 & *\\
0 & 0 & 0\end{array}\right]\]
All the Lie groups the we will ever encounter come from a quadratic
form. Given a quadratic form \[
\varphi:V\times V\rightarrow\mathbb{C}\]
there is an associated group that fixes $\varphi$, i.e. we consider
elements $g$ such that \[
\varphi(gx,gy)=\varphi(x,y)\]
and define $G(\varphi)$ as the collection of these elements. $G(\varphi)$
is clearly a group. Apply the exponential map and the product rule,\[
\frac{d}{dt}\big|_{t=0}\varphi(e^{tX}x,e^{tX}y)=0\Longleftrightarrow\varphi(Xx,y)+\varphi(x,Xy)=0\]
hence \[
X+X^{tr}=0\]
The determinant and trace are related so that\[
\det(e^{tX})=e^{t\cdot trace(X)}\]
thus $\det=1$ if and only if $trace=0$. It is often stated in differential
geometry that the derivative of the determinant is equal to the trace.
\begin{example}
$\mathbb{R}^{n}$, $\varphi(x,y)=\sum x_{i}y_{i}$. The associated
group is the orthogonal group $O_{n}$.
\end{example}
There is a famous cute little trick to make $O_{n-1}$ into a subgroup
of $O_{n}$. $O_{n-1}$ is not normal in $O_{n}$. We may split the
quadratic form into\[
\sum_{i=1}^{n}x_{i}^{2}+1\]
where $1$ corresponds to the last coordinate in $O_{n}$. Then we
may identity $O_{n-1}$ as a subgroup of $O_{n}$ \[
g\mapsto\left[\begin{array}{cc}
g & 0\\
0 & I\end{array}\right]\]
where $I$ is the identity operator. 

Claim: $O_{n}/O_{n-1}\simeq S^{n-1}$. How to see this? Let $u$ be
the unit vector corresponding to the last dimension, look for $g$
that fixes $u$ i.e. $gu=u$. Such $g$ forms a subgroup of $O_{n}$,
and it is called isotropy group. \[
I_{n}=\{g:gu=u\}\simeq O_{n-1}\]
For any $g\in O_{n}$, $gu=?$. Notice that for all $v\in S^{n-1}$,
there exists $g\in O_{n}$ such that $gu=v$. Hence \[
g\mapsto gu\]
in onto $S^{n-1}$. The kernel of this map is $I_{n}\simeq O_{n-1}$,
thus \[
O_{n}/O_{n-1}\simeq S_{n}\]
Such spaces are called homogeneous spaces.
\begin{example}
visualize this with $O_{3}$ and $O_{2}$.
\end{example}
Other examples of homogeneous spaces show up in number theory all
the time. For example, the Poincare group $G/\mbox{discrete subgroup}$.

$G$, $N\subset G$ nornal subgroup. The map $g\cdot g^{-1}:G\rightarrow G$
is an automorphism sending identity to identity, hence if we differentiate
it, we get a transformation in $GL(\mathfrak{g})$. i.e. we get a
family of maps $Ad_{g}\in GL(\mathfrak{g})$ indexed by elements in
$G$. $g\mapsto Ad_{g}\in GL(\mathfrak{g})$ is a representation of
$G$, hence if it is differentiated, we get a representation of $\mathfrak{g}$,
$ad_{g}:\mathfrak{g}\mapsto End(\mathfrak{g})$ acting on the vector
space $\mathfrak{g}$.

$gng^{-1}\in N$. $\forall g$, $g\cdot g^{-1}$ is a transformation
from $N$ to $N$, define $Ad_{g}(n)=gng^{-1}$. Differentiate to
get $ad:\mathfrak{n}\rightarrow\mathfrak{n}$. $\mathfrak{n}$ is
a vector space, has a dual. Linear transformation on vector space
passes to the dual space. \begin{eqnarray*}
\varphi^{*}(v^{*})(u) & = & v^{*}(\varphi(u))\\
 & \Updownarrow\\
\iprod{\Lambda^{*}v^{*}}{u} & = & \iprod{v^{*}}{\Lambda u}.\end{eqnarray*}
In order to get the transformation rules work out, have to pass to
the adjoint or the dual space.\[
Ad_{g}^{*}:\mathfrak{n}^{*}\rightarrow\mathfrak{n}^{*}\]
the coadjoint representation of $\mathfrak{n}$.

Orbits of co-adjoint representation acounts precisely to equivalence
classes of irrducible representations.
\begin{example}
Heisenberg group $G=\{(a,b,c)\}$ with \[
(a,b,c)=\left[\begin{array}{ccc}
1 & a & c\\
0 & 1 & b\\
0 & 0 & 1\end{array}\right]\]
normal subgroup $N=\{(0,b,c)\}$\[
(0,b,c)=\left[\begin{array}{ccc}
1 & 0 & c\\
0 & 1 & b\\
0 & 0 & 1\end{array}\right]\]
with Lie algebra $\mathfrak{n}=\{(b,c)\}$\[
(0,\xi,\eta)=\left[\begin{array}{ccc}
1 & 0 & c\\
0 & 1 & b\ \\
0 & 0 & 1\end{array}\right]\]
$Ad_{g}:\mathfrak{n}\rightarrow\mathfrak{n}$ given by\begin{eqnarray*}
gng^{-1} & = & (a,b,c)(0,y,x)(-a,-b,-c+ab)\\
 & = & (a,b+y,c+x+ay)(-a,-b,-c+ab)\\
 & = & (0,y,x+ay)\end{eqnarray*}
hence $Ad_{g}:\mathbb{R}^{2}\rightarrow\mathbb{R}^{2}$\[
Ad_{g}:\left[\begin{array}{c}
x\\
y\end{array}\right]\mapsto\left[\begin{array}{c}
x+ay\\
y\end{array}\right].\]
The matrix of $Ad_{g}$ is (before taking adjoint) is\[
Ad_{g}=\left[\begin{array}{cc}
1 & a\\
0 & 1\end{array}\right].\]
The matrix for $Ad_{g}^{*}$ is\[
Ad_{g}^{*}=\left[\begin{array}{cc}
1 & 0\\
a & 1\end{array}\right].\]
We use $[\xi,\eta]^{T}$ for the dual $\mathfrak{n}^{*}$; and use
$[x,y]^{T}$ for $\mathfrak{n}$. Then\[
Ad_{g}^{*}:\left[\begin{array}{c}
\xi\\
\eta\end{array}\right]\mapsto\left[\begin{array}{c}
\xi\\
a\xi+\eta\end{array}\right]\]
What about the orbit? In the example of $O_{n}/O_{n-1}$, the orbit
is $S^{n-1}$. 

For $\xi\in\mathbb{R}\backslash\{0\}$, the orbit of $Ad_{g}^{*}$
is \[
\left[\begin{array}{c}
\xi\\
0\end{array}\right]\mapsto\left[\begin{array}{c}
\xi\\
\mathbb{R}\end{array}\right]\]
i.e. vertical lines with $x$-coordinate $\xi$. $\xi=0$ amounts
to fixed point, i.e. the orbit is a fixed point.

The simplest orbit is when the orbit is a fixed point. i.e.\[
Ad_{g}^{*}:\left[\begin{array}{c}
\xi\\
\eta\end{array}\right]\mapsto\left[\begin{array}{c}
\xi\\
\eta\end{array}\right]\in V^{*}\]
where if we choose \[
\left[\begin{array}{c}
\xi\\
\eta\end{array}\right]=\left[\begin{array}{c}
0\\
1\end{array}\right]\]
it is a fixed point. 

The other extreme is to take any $\xi\neq0$, then \[
Ad_{g}^{*}:\left[\begin{array}{c}
\xi\\
0\end{array}\right]\mapsto\left[\begin{array}{c}
\xi\\
\mathbb{R}\end{array}\right]\]
i.e. get vertical lines indexed by the $x$-coordinate $\xi$. In
this example, a cross section is a subset of $\mathbb{R}^{2}$ that
intersects each orbit at precisely one point. Every cross section
in this example is a Borel set in $\mathbb{R}^{2}$.

We don't always get measurable cross sections. An example is the construction
of non-measurable set as was given in Rudin's book. Cross section
is a Borel set that intersects each coset at precisely one point.

Why does it give all the equivalent classes of irreducible representations?
Since we have a unitary representation $L_{n}\in Rep(N,V)$, $L_{n}:V\rightarrow V$
and by construction of the induced representation $U_{g}\in Rep(G,H)$,
$N\subset G$ normal such that\[
U_{g}L_{n}U_{g^{-1}}=L_{gng^{-1}}\]
i.e. \[
L_{g}\simeq L_{gng^{-1}}\]
now pass to the Lie algebra and its dual\[
L_{n}\rightarrow LA\rightarrow LA^{*}.\]

\end{example}

\section{Gaarding space}

We talked about how to detect whether a representation is induced.
Given a group $G$ with a subgroup $\Gamma$ let $M:=\Gamma\backslash G$.
The map $\pi:G\rightarrow M$ is called a covering map, which sends
$g$ to its equivalent class or the coset $\Gamma g$. $M$ is given
its projective topology, so $\pi$ is coninuous. When $G$ is compact,
many things simplify. For example, if $G$ is compact, any irreducible
representation is fnite dimensional. But many groups are not compact,
only locally compact. For exmaple, ax+b, $H_{3}$, $SL_{n}$.

Specialize to Lie groups. $G$ and subgroup $H$ have Lie algebras
$\mathfrak{g}$ and $\mathfrak{h}$ respectively.\[
\mathfrak{g}=\{X:e^{tx}\in G,\forall t\in\mathbb{R}\}\]
Almost all Lie algebras we will encounter come from specifying a quadratic
form $\varphi:G\times G\rightarrow\mathbb{C}$. $\varphi$ is then
uniquely determined by a Hermitian matrix $A$ so that\[
\varphi(x,y)=x^{tr}\cdot Ay\]
Let $G=G(\varphi)=\{g:\varphi(gx,gy)=\varphi(x,y)\}$, then \[
\frac{d}{dt}\big|_{t=1}\varphi(e^{tX}x,e^{tX}y)=0\]
and with an application of the product rule,\begin{eqnarray*}
\varphi(Xx,y)+\varphi(x,Xy) & = & 0\\
(Xx)^{tr}\cdot Ay+x^{tr}\cdot AXy & = & 0\end{eqnarray*}
 \[
X^{tr}A+AX=0\]
hence \[
\mathfrak{g}=\{X:X^{tr}A+AX=0\}.\]

Let $U\in Rep(G,H)$, for $X\in\mathfrak{g}$, $U(e^{tX})$ is a one
parameter continuous group of unitary operator, hence by Stone's theorem,
\[
U(e^{tX})=e^{itH_{X}}\]
for some self-adjoint operator $H_{X}$ (possibly unbounded). We often
write \[
dU(X):=iH_{X}\]
to indicate that $dU(X)$ is the directional derivative along the
direction $X$. Notice that $H_{X}^{*}=H_{X}$ but \[
(iH_{X})^{*}=-(iH_{X})\]
i.e. $dU(X)$ is skew adjoint.
\begin{example}
$G=\{(a,b,c)\}$ Heisenberg group. $\mathfrak{g}=\{X_{1}\sim a,X_{2}\sim b,X_{3}\sim c\}$.
Take the Schrodinger representation $U_{g}f(x)=e^{ih(c+bx)}f(x+a)$,
$f\in L^{2}(\mathbb{R})$. 
\begin{itemize}
\item $U(e^{tX_{1}})f(x)=f(x+t)$\begin{eqnarray*}
\frac{d}{dt}\big|_{t=0}U(e^{tX_{1}})f(x) & = & \frac{d}{dx}f(x)\\
dU(X_{1}) & = & \frac{d}{dx}\end{eqnarray*}

\item $U(e^{tX_{2}})f(x)=e^{ih(tx)}f(x)$\begin{eqnarray*}
\frac{d}{dt}\big|_{t=0}U(e^{tX_{2}})f(x) & = & ihxf(x)\\
dU(X_{2}) & = & ihx\end{eqnarray*}

\item $U(e^{tX_{3}})f(x)=e^{iht}f(x)$\begin{eqnarray*}
\frac{d}{dt}\big|_{t=0}U(e^{tX_{3}})f(x) & = & ihf(x)\\
dU(X_{2}) & = & ihI\end{eqnarray*}
Notice that $dU(X_{i})$ are all skew adjoint.\begin{eqnarray*}
[dU(X_{1}),dU(X_{2})] & = & [\frac{d}{dx},ihx]\\
 & = & ih[\frac{d}{dx},x]\\
 & = & ih\end{eqnarray*}
In case we want self-adjoint operators, replace $dU(X_{i})$ by$-idU(X_{i})$
and get\begin{eqnarray*}
-idU(X_{1}) & = & \frac{1}{i}\frac{d}{dx}\\
-idU(X_{2}) & = & hx\\
-idU(X_{1}) & = & hI\end{eqnarray*}
\[
[\frac{1}{i}\frac{d}{dx},hx]=\frac{h}{i}.\]

\end{itemize}
\end{example}
What is the space of functions that $U_{g}$ acts on? L. Gaarding
/gor-ding/ (Sweedish mathematician) looked for one space that always
works. It's now called the Gaarding space.

Start with $C_{c}(G)$, every $\varphi\in C_{c}(G)$ can be approximated
by the so called Gaarding functions, using the convolution argument.
Define convolution as\begin{eqnarray*}
\varphi\star\psi(g) & = & \int\varphi(gh)\psi(h)d_{R}h\\
\varphi\star\psi(g) & = & \int\varphi(h)\psi(g^{-1}h)d_{L}h\end{eqnarray*}
Take an approximation of identity $\zeta_{j}$, so that\[
\varphi\star\zeta_{j}\rightarrow\varphi,\; j\rightarrow0.\]
Define Gaarding space as functions given by\[
U(\varphi)v=\int\varphi(h)U(h)vd_{L}h\]
where $\varphi\in C_{c}(G)$, $v\in H$, or we say\[
U(\varphi):=\int\varphi(h)U(h)vd_{L}h.\]
Since $\varphi$ vanishes outside a compact set, and since $U(h)v$
is continuous and bounded in $\norm{\cdot}$, it follows that $U(\varphi)$
is well-defined.
\begin{lem}
$U(\varphi_{1}\star\varphi_{2})=U(\varphi_{1})U(\varphi_{2})$ ($U$
is a representation of the group algebra)\end{lem}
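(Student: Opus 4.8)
The plan is to verify the identity $U(\varphi_1\star\varphi_2)=U(\varphi_1)U(\varphi_2)$ by direct computation, unwinding both sides using the definition $U(\varphi)v=\int\varphi(h)U(h)v\,d_Lh$ and the convolution formula on $C_c(G)$. The key algebraic facts to exploit are that $U$ is a unitary representation of $G$, so $U(h_1)U(h_2)=U(h_1h_2)$, and that the left Haar measure is left-invariant, so $\int f(h)\,d_Lh=\int f(gh)\,d_Lh$ for all $g\in G$.

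First I would fix $v\in H$ and compute $U(\varphi_1)U(\varphi_2)v$. Writing $U(\varphi_2)v=\int\varphi_2(h_2)U(h_2)v\,d_Lh_2$, applying $U(\varphi_1)$, and moving $U(\varphi_1)$ inside the integral (justified since $U(\varphi_1)$ is a bounded operator and the integrand is continuous with compact support, so the $H$-valued integral behaves well under bounded maps), I get
\[
U(\varphi_1)U(\varphi_2)v=\int\int\varphi_1(h_1)\varphi_2(h_2)U(h_1)U(h_2)v\,d_Lh_1\,d_Lh_2=\int\int\varphi_1(h_1)\varphi_2(h_2)U(h_1h_2)v\,d_Lh_1\,d_Lh_2.
\]
Then I would substitute $g=h_1h_2$, i.e. $h_1=gh_2^{-1}$, in the inner integral over $h_1$; by left-invariance of $d_Lh_1$ this gives $\int\varphi_1(gh_2^{-1})U(g)v\,d_Lg$ for the inner integral, so after interchanging the order of integration (Fubini, legitimate because everything is compactly supported and continuous, hence integrable) the double integral becomes $\int\big(\int\varphi_1(gh_2^{-1})\varphi_2(h_2)\,d_Lh_2\big)U(g)v\,d_Lg$. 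Comparing the inner bracket with the left-convolution formula $\varphi_1\star\varphi_2(g)=\int\varphi_1(h)\varphi_2(g^{-1}h)\,d_Lh$ stated just above — after the substitution $h=gh_2^{-1}$, equivalently $h_2=g^{-1}h$, which again preserves $d_L$ up to the left-invariance already used — identifies it as $(\varphi_1\star\varphi_2)(g)$, and hence the whole expression equals $U(\varphi_1\star\varphi_2)v$. Since $v$ was arbitrary, the operator identity follows.

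The main obstacle is bookkeeping with the Haar measure: one must be careful about which side the translation lands on and whether the left Haar measure (as opposed to the right Haar measure used elsewhere in the text) is exactly the one making the relevant substitution measure-preserving, so that no modular function $\triangle$ appears. Concretely, the convolution in the displayed definition uses $d_Lh$, and the substitutions $h_1=gh_2^{-1}$ and $h_2=g^{-1}h$ are both \emph{left} translations, so left-invariance of $d_L$ is precisely what is needed and no correction factor arises; I would state this explicitly. A secondary, purely technical point is justifying the vector-valued Fubini theorem and the passage of the bounded operator $U(\varphi_1)$ through the integral; since $\varphi_i\in C_c(G)$ and $h\mapsto U(h)v$ is norm-continuous and bounded on the (compact) supports, the integrands are continuous with compact support in the product group, so these manipulations are routine and I would not belabor them.
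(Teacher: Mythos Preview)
Your argument is correct and is essentially the same computation as the paper's: Fubini plus the substitution exploiting Haar invariance together with $U(h_1)U(h_2)=U(h_1h_2)$. The only cosmetic difference is direction --- the paper starts from $U(\varphi_1\star\varphi_2)$ and unfolds the convolution to reach the product, whereas you start from $U(\varphi_1)U(\varphi_2)v$ and collapse it to the convolution --- and you are somewhat more careful than the paper in tracking which Haar measure is being used and why no modular factor appears.
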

\begin{proof}
Use Fubini,\begin{eqnarray*}
\int\varphi_{1}\star\varphi_{2}(g)U(g)dg & = & \iint\varphi_{1}(h)\varphi(h^{-1}g)U(g)dhdg\\
 & = & \iint\varphi_{1}(h)\varphi(g)U(hg)dhdg\:(dg\mbox{ is r-Haar}g\mapsto hg)\\
 & = & \iint\varphi_{1}(h)\varphi(g)U(h)U(g)dhdg\\
 & = & \int\varphi_{1}(h)U(h)dh\int\varphi_{2}(g)U(g)dg\end{eqnarray*}
Choose $\varphi$ to be an approximation of identity, then \[
\int\varphi(g)U(g)vdg\rightarrow U(e)v=v\]
i.e. any vector $v\in H$ can be approximated by functions in the
Gaarding space. It follows that\[
\{U(\varphi)v\}\]
is dense in $H$.\end{proof}
\begin{lem}
$U(\varphi)$ can be differented, in the sense that\[
dU(X)U(\varphi)v=U(\tilde{X}\varphi)v\]
where we use $\tilde{X}$ to denote the vector field.\end{lem}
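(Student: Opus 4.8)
The plan is to compute the directional derivative $dU(X)U(\varphi)v = \frac{d}{dt}\big|_{t=0} U(e^{tX})U(\varphi)v$ directly from the integral definition $U(\varphi)v = \int \varphi(h) U(h) v \, d_L h$, and to show that differentiating in $t$ passes through the integral and lands on the function $\varphi$ rather than on the (generally nondifferentiable) vector-valued map $h \mapsto U(h)v$. This is the whole point of the Gaarding construction: smoothness is imported from $\varphi \in C_c(G)$.

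First I would write $U(e^{tX})U(\varphi)v = U(e^{tX} \star \varphi)v$ using the previous lemma (that $U$ is a representation of the group algebra), where $e^{tX}\star\varphi$ denotes the left translate of $\varphi$ by $e^{-tX}$; concretely, using left Haar measure and the change of variables $h \mapsto e^{-tX}h$,
\[
\bigl(U(e^{tX})U(\varphi)v\bigr) = \int \varphi(e^{-tX} h)\, U(h) v \, d_L h .
\]
Then I would differentiate under the integral sign at $t=0$. Since $\varphi \in C_c(G)$ is smooth (or at least $C^1$) with compact support, $\frac{d}{dt}\big|_{t=0}\varphi(e^{-tX}h) = -(\widetilde{X}\varphi)(h)$ where $\widetilde{X}$ is the left-invariant vector field associated to $X$ (up to the sign convention fixed earlier in the notes); the difference quotients $\frac{1}{t}(\varphi(e^{-tX}h) - \varphi(h))$ are uniformly bounded and supported in a fixed compact set for small $t$, and $\|U(h)v\| = \|v\|$ is bounded, so dominated convergence justifies interchanging $\frac{d}{dt}$ and $\int$. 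This yields $dU(X)U(\varphi)v = \int (\widetilde{X}\varphi)(h) U(h) v \, d_L h = U(\widetilde{X}\varphi)v$, with the sign absorbed into the convention for $\widetilde X$.

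The main obstacle is the justification of differentiation under the integral sign for a Banach-space-valued integral, and — more subtly — the claim that $U(\varphi)v$ actually lies in the domain of the (possibly unbounded) skew-adjoint generator $dU(X)$ coming from Stone's theorem. The clean way to handle this is to verify directly that $t \mapsto U(e^{tX})U(\varphi)v$ is differentiable at $t=0$ with the stated limit (the dominated-convergence estimate above gives exactly this strong limit), and then invoke the characterization that $w$ is in the domain of the Stone generator precisely when $\lim_{t\to 0} \frac{1}{t}(U(e^{tX})w - w)$ exists in norm, with the limit equal to $dU(X)w$. One should also remark that the same argument iterates, so $U(\varphi)v$ lies in the domain of every $dU(X_1)\cdots dU(X_n)$ — i.e. the Gaarding space consists of $C^\infty$ vectors — though for the present lemma only the first derivative is needed.
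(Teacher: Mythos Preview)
Your approach is essentially the same as the paper's: move $U(e^{tX})$ inside the integral, change variables so that the $t$-dependence sits on $\varphi$ rather than on $U(h)v$, and then differentiate. The paper's proof is a terse sketch that stops after writing $U(g)v_{\varphi}=\int\triangle(g)\varphi(g^{-1}h)U(h)v\,dh$ and saying ``set $g=e^{tX}$''; you have filled in the analytic details (dominated convergence for the vector-valued integral, and the observation that existence of the strong limit $\lim_{t\to 0}\tfrac{1}{t}(U(e^{tX})w-w)$ is exactly the Stone-theorem criterion for $w\in\mathrm{dom}\,dU(X)$), which is the right way to make the argument rigorous. One small point of comparison: the paper keeps track of the modular function $\triangle(g)$ as a correction term in the non-unimodular case, whereas you silently use left-invariance of $d_Lh$ under the substitution $h\mapsto e^{-tX}h$; with left Haar measure and left translation your bookkeeping is correct and no $\triangle$ appears, but it is worth being aware that the notes' convention for $\tilde X$ is via \emph{right} translation, so you should check that the sign and modular factor match the stated identity rather than hiding them in ``the convention for $\tilde X$''.
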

\begin{proof}
need to prove \[
\lim_{t\rightarrow0}\frac{1}{t}\left[(U(e^{tX})-I)U(\varphi)v\right]=U(\tilde{X}\varphi)v.\]
Let $v_{\varphi}:=U(\varphi)v$, need to look at in general $U(g)v_{\varphi}$.\begin{eqnarray*}
U(g)v_{\varphi} & = & U(g)\int\varphi(h)U(h)vdh\\
 & = & \int\varphi(h)U(gh)dh\\
 & = & \int\triangle(g)\varphi(g^{-1}h)U(h)dh\end{eqnarray*}
set $g=e^{tX}$.\end{proof}
\begin{note}
If assuming unimodular, $\triangle$ does not show up. Otherwise,
$\triangle$ is some correction term which is also differnetiable.
$\tilde{X}$ acts on $\varphi$ as $\tilde{X}\varphi$. $\tilde{X}$
is called the derivative of the translation operator $e^{tX}$.
\begin{note}
Schwartz space is the Gaarding space for the Schrodinger representation.
\end{note}
\end{note}

\section{Decomposition of representation}

We study some examples of duality. 
\begin{itemize}
\item $G=T$, $\hat{G}=\mathbb{Z}$\begin{eqnarray*}
\chi_{n}(z) & = & z^{n}\\
\chi_{n}(zw) & = & z^{n}w^{n}=\chi_{n}(z)\chi_{n}(w)\end{eqnarray*}

\item $G=\mathbb{R}$, $\hat{G}=\mathbb{R}$ \begin{eqnarray*}
\chi_{t}(x) & = & e^{itx}\end{eqnarray*}

\item $G=\mathbb{Z}/n\mathbb{Z}\simeq\{0,1,\cdots,n-1\}$. $\hat{G}=G$.
\\
This is another example where $\hat{G}=G$. \\
Let $\zeta=e^{i2\pi/n}$ be the primitive $n^{th}$-root of unity.
$k\in\mathbb{Z}_{n}$, $l=\{0,1,\ldots,n-1\}$\[
\chi_{l}(k)=e^{i\frac{2\pi kl}{n}}\]

\end{itemize}
If $G$ is a locally compact abelian group, $\hat{G}$ is the set
of 1-dimensional representations. \[
\hat{G}=\{\chi:g\mapsto\chi(g)\in T,\chi(gh)=\chi(g)\chi(h)\}.\]
$\hat{G}$ is also a group, with group operation defined by $(\chi_{1}\chi_{2})(g):=\chi_{1}(g)\chi_{2}(g)$.
$\hat{G}$ is called the group characters.
\begin{thm}
(Pontryagin) If $G$ is a locally compact abelian group, then $G=\hat{\hat{G}}$.\end{thm}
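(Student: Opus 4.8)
The plan is to prove Pontryagin duality for a locally compact abelian (LCA) group $G$ by constructing the canonical evaluation map and showing it is a topological group isomorphism onto the double dual. First I would define the natural map $\alpha:G\to\hat{\hat{G}}$ by $\alpha(g)(\chi):=\chi(g)$ for $\chi\in\hat{G}$. One checks immediately that $\alpha(g)$ is a character of $\hat{G}$ (continuity in $\chi$ follows from the definition of the compact-open topology on $\hat{G}$, since for fixed $g$ the evaluation $\chi\mapsto\chi(g)$ is continuous), and that $\alpha$ is a group homomorphism because $\alpha(gh)(\chi)=\chi(gh)=\chi(g)\chi(h)=\alpha(g)(\chi)\alpha(h)(\chi)$. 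The theorem then amounts to four assertions: $\alpha$ is injective, $\alpha$ is continuous, $\alpha$ is open onto its image, and $\alpha$ is surjective.

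The key analytic input is the existence of a Haar measure on $G$ (available for all locally compact groups, as noted in the excerpt) and the Fourier transform $\mathcal{F}:L^1(G)\to C_0(\hat{G})$, together with the Plancherel theorem extending $\mathcal{F}$ to a unitary $L^2(G)\to L^2(\hat{G})$ once the Haar measures are normalized compatibly. Injectivity of $\alpha$ reduces to showing that characters separate points of $G$: if $g\neq e$ then some $\chi\in\hat{G}$ has $\chi(g)\neq 1$. This follows from the Gelfand theory of the commutative $C^*$-algebra $C^*(G)$ (the $C^*$-completion of $L^1(G)$, as discussed in the excerpt for $l^1(\mathbb{Z})$): its Gelfand space is exactly $\hat{G}$, and the Gelfand transform together with the regularity of Haar measure forces enough characters to exist. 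Continuity of $\alpha$ is a routine check against the compact-open topology. Openness of $\alpha$ onto its image uses that $\alpha(G)$, being a continuous image with the quotient topology matching, can be shown to be closed and that the induced map is a homeomorphism via the Fourier inversion formula applied to functions supported near $e$.

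The main obstacle — and the technical heart of the proof — is surjectivity of $\alpha$: showing every character of $\hat{G}$ arises from a point of $G$. The standard route is: (i) establish the Fourier inversion theorem on a suitable dense subspace (e.g. the Fourier algebra $A(G)=\mathcal{F}(L^1(\hat{G}))$), (ii) show $\alpha(G)$ is a closed subgroup of $\hat{\hat{G}}$, and (iii) if $\alpha(G)\subsetneq\hat{\hat{G}}$, produce by the separation argument applied to the LCA group $\hat{\hat{G}}$ a nontrivial character of $\hat{\hat{G}}$ vanishing on $\alpha(G)$; such a character, by duality for $\hat G$, would be a nonzero element $\chi\in\hat{G}$ with $\chi(g)=1$ for all $g\in G$, i.e. $\chi$ trivial — a contradiction. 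This last step is circular-looking and must be organized carefully: one proves first the ``easy'' half (that $\alpha$ is an injective continuous homomorphism onto a closed subgroup) for all LCA groups, then bootstraps. I would assemble the argument in the order: Haar measure $\Rightarrow$ Fourier transform and convolution algebra structure on $L^1(G)$ $\Rightarrow$ Gelfand theory identifying $\hat{G}$ as the spectrum $\Rightarrow$ points separated by characters (injectivity) $\Rightarrow$ Plancherel and Fourier inversion $\Rightarrow$ $\alpha(G)$ closed and the induced topology correct $\Rightarrow$ surjectivity by the annihilator/separation argument. I expect the Fourier inversion theorem and the verification that $\alpha$ is a homeomorphism onto its (closed) image to consume the bulk of the work; the purely algebraic parts are immediate.
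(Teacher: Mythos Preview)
The paper does not prove this theorem; it is stated without proof, followed only by a historical note about its appearance in the \emph{Annals} in the 1930s and Von Neumann's role in rewriting the manuscript. So there is nothing to compare your proposal against on the paper's side.

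Your outline is the standard route (essentially the proof one finds in Rudin's \emph{Fourier Analysis on Groups} or Folland's \emph{Abstract Harmonic Analysis}), and the ingredients you list are the right ones. A couple of points worth sharpening if you carry it out: the step showing $\alpha(G)$ is closed in $\hat{\hat{G}}$ is genuinely delicate and is usually done by first proving that $\alpha(G)$ is locally compact in the induced topology (via Fourier inversion and the fact that $\alpha$ is a homeomorphism onto its image), then invoking the general fact that a locally compact subgroup of a Hausdorff group is closed. You gesture at this but the logical order matters. Second, your worry about circularity in the surjectivity step is well-placed: the clean way to avoid it is to apply the separation-by-characters result to the quotient $\hat{\hat{G}}/\alpha(G)$ rather than to $\hat{\hat{G}}$ itself, using only the already-established injectivity half of duality (for arbitrary LCA groups) rather than full duality. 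With those two points handled carefully, your sketch would assemble into a correct proof.
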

\begin{note}
This result first appears in 1930s in the annals of math, when John
Von Neumann was the editor of the journal at the time. The original
paper was hand written. Von Neumann rewrote it, since then the theorem
became very popular.
\end{note}
There are many groups that are not locally compact abelian. We want
to study the duality question in general. Examples are 
\begin{itemize}
\item compact group
\item fintie group (abelian, or not)
\item $H_{3}$ locally compact, nonabelian, unimodular
\item ax+b locally compact, nonabelian, non-unimodular
\end{itemize}
If $G$ is not abelian, $\hat{G}$ is not a group. We would like to
decompose $\hat{G}$ into irreducible representations. The big names
under this development are Krein (died 7 years ago), Peter-Weyl, Weil,
Segal. 

Let $G$ be a group (may not be abelian). The right regular representation
is defined as\[
R_{g}f(\cdot)=f(\cdot g)\]
$R_{g}$ is a unitary operator acting on $L^{2}(\mu_{R})$, where
$\mu_{R}$ is the right invariant Haar measure.
\begin{thm}
(Krein, Weil, Segal) Let $G$ be locally compact unimodular (abelian
or not). Then the right regular representation decomposes into a direct
integral of irreducible representations \[
R_{g}=\int_{\hat{G}}^{\oplus}irrep\: d\mu\]

\end{thm}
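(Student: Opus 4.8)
The plan is to convert $R$ into a representation of the group $C^*$-algebra, break it into cyclic pieces with the decomposition theorem already proved, and then use the Choquet decomposition of states to resolve each cyclic piece into irreducibles; the role of unimodularity is to make this machinery run cleanly. Assume $G$ is second countable so that $\mathfrak{A}:=C^*(G)$ is separable and direct integrals are well behaved. First I would pass from the group to its algebra: since $G$ is unimodular, $\mu_R=\mu_L=:dg$, so $L^1(G)$ is an involutive Banach algebra with the isometric involution $\varphi^*(g)=\overline{\varphi(g^{-1})}$ (no modular factor), and $R_gf(\cdot)=f(\cdot g)$ integrates to a nondegenerate $*$-representation $R(\varphi)=\int_G\varphi(g)R_g\,dg$ of $L^1(G)$ on $L^2(dg)$, which extends to $\mathfrak{A}$. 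Using a Gaarding-type approximate identity $\zeta_j$ with $R(\zeta_j)\to I$ strongly, the passage from $G$ to $\mathfrak{A}$ loses nothing: irreducible representations of $G$ correspond to irreducible representations of $\mathfrak{A}$, i.e. to the pure states on $\mathfrak{A}$, which is what we mean by $\hat G$.

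Next I would decompose the representation space. By the decomposition theorem proved earlier (the Zorn's-lemma argument mirroring the construction of an ONB), write $L^2(dg)=\sum_{j\in J}^{\oplus}H_j$ with each $R\big|_{H_j}$ cyclic, say with unit cyclic vector $v_j$; by GNS this piece is exactly the data of the state $\omega_j(A)=\iprod{v_j}{R(A)v_j}$ on $\mathfrak{A}$. Now invoke Choquet's theorem: $S(\mathfrak{A})$ is a compact convex subset of $\mathfrak{A}^*$, so there is a Borel probability measure $\nu_j$ carried by a Borel set $bE(S(\mathfrak{A}))\supseteq E(S(\mathfrak{A}))$ with $\omega_j=\int\omega\,d\nu_j(\omega)$ in the barycentric sense. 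By the corollary to the Sakai--Radon--Nikodym lemma, the points of $E(S(\mathfrak{A}))$ are precisely the pure states, i.e. those $\omega$ whose GNS representation $\pi_\omega$ is irreducible; hence $\nu_j$-almost every $\omega$ in the support yields an element of $\hat G$.

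Finally I would assemble the direct integral: the claim to verify is that the GNS space of the barycenter $\omega_j$ is the direct integral $\int^{\oplus}H_{\pi_\omega}\,d\nu_j(\omega)$ with $R\big|_{H_j}$ carried to the diagonal field $\omega\mapsto\pi_\omega$; summing over $j$ and pushing the $\nu_j$ forward to a single measure $\mu$ on $\hat G$ then gives $R_g=\int_{\hat G}^{\oplus}\mathrm{irrep}\,d\mu$. The hard part is exactly this last step: one must set up measurable fields of Hilbert spaces and of operators, check measurability of $\omega\mapsto\pi_\omega(g)$, and prove the identification of GNS-of-a-barycenter with the direct integral of the GNS's of the extreme points — this is the disintegration-of-measures / direct-integral content foreshadowed in the examples above, and it carries the real analytic weight (an alternative packaging is to realize the same $\mu$ as the spectrum of the center of the commutant $R(G)'$, $Z\cong L^\infty(\hat G,\mu)$). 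A secondary subtlety, again controlled by unimodularity in the nicest (type I) situations, is that the fibers can be taken honestly \emph{irreducible} rather than merely factorial; without unimodularity one only gets factor representations and the Duflo--Moore operators intrude into the Plancherel formula.
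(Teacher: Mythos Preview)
The paper does not actually prove this theorem; it is stated as a classical result (attributed to Krein, Weil, Segal) and then illustrated by examples ($G=T$, $G=\mathbb{R}$, $G=H_3$, $G=ax+b$), with the measure $\mu$ identified as the Plancherel measure. So there is no paper-proof to compare your outline against.

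Your outline is a reasonable sketch of one standard route, and you correctly flag the step ``GNS of a barycenter $\simeq$ direct integral of GNS of the extreme points'' as the crux. But this is where the real gap sits, and it is more serious than a measurability check. A Choquet decomposition $\omega_j=\int\omega\,d\nu_j(\omega)$ does \emph{not} by itself yield a spatial direct-integral decomposition of $(\pi_{\omega_j},H_{\omega_j})$; for that one needs $\nu_j$ to be an \emph{orthogonal} measure in the sense of Tomita, which amounts to choosing an abelian von~Neumann subalgebra of the commutant $\pi_{\omega_j}(\mathfrak{A})'$ and diagonalizing over its spectrum. The Choquet theorem alone gives no control over which boundary measure you get, and for a general $C^*$-algebra the state space is not a simplex, so the extremal decomposition is not unique and need not be orthogonal. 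The honest argument goes through the central decomposition (diagonalizing over the center $Z$ of $R(G)''$), which always produces \emph{factor} representations in the fibers; getting \emph{irreducible} fibers then requires $G$ to be type~I.

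This brings up a second issue: you conflate unimodularity with the type~I property. These are independent. Discrete groups are always unimodular, yet the free group $F_2$ is not type~I (the paper itself alludes to this when discussing Glimm's theorem and non-Borel $E(K)$); conversely the $ax+b$ group is non-unimodular but type~I. Unimodularity is what makes the involution on $L^1(G)$ isometric and makes the Plancherel \emph{formula} $\|f\|_2^2=\int_{\hat G}\mathrm{tr}(\hat f(\pi)^*\hat f(\pi))\,d\mu(\pi)$ clean; it is not what forces the fibers to be irreducible rather than factorial. The theorem as stated in the notes is informal on exactly this point.
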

where $\mu$ is called the Plancherel measure. 
\begin{example}
$G=T$, $\hat{G}=\mathbb{Z}$. Irreducible representations $\{e^{in(\cdot)}\}_{n}\sim\mathbb{Z}$
\begin{eqnarray*}
(U_{y}f)(x) & = & f(x+y)\\
 & = & \sum_{n}\hat{f}(n)\chi_{n}(x+y)\\
 & = & \sum_{n}\hat{f}(n)e^{i2\pi n(x+y)}\end{eqnarray*}
\[
(U_{y}f)(0)=f(y)=\sum_{n}\hat{f}(n)e^{i2\pi ny}\]
The Plancherel measure in this case is the counting measure.
\begin{example}
$G=\mathbb{R}$, $\hat{G}=\mathbb{R}$. Irreducible representations
$\{e^{it(\cdot)}\}_{t\in\mathbb{R}}\sim\mathbb{R}$.\begin{eqnarray*}
(U_{y}f)(x) & = & f(x+y)\\
 & = & \int_{\mathbb{R}}\hat{f}(t)\chi_{t}(x+y)dt\\
 & = & \int_{\mathbb{R}}\hat{f}(t)e^{it(x+y)}dt\end{eqnarray*}
\[
(U_{y}f)(0)=f(y)=\int_{\mathbb{R}}\hat{f}(t)e^{ity}dt\]
where the Plancherel measure is the Lebesgue measure on $\mathbb{R}$.
\end{example}
\end{example}
As can be seen that Fourier series and Fourier integrals are special
cases of the decomposition of the right regular representation $R_{g}$
of a unimodular locally compact group. $\int^{\oplus}$ $\Longrightarrow$
$\norm{f}=\norm{\hat{f}}$. This is a result that was done 30 years
earlier before the non abelian case. Classical function theory studies
other types of convergence, pointwise, uniform, etc. 
\begin{example}
$G=H_{3}$. $G$ is unimodular, non abelian. $\hat{G}$ is not a group. 

Irreducible representations: $\mathbb{R}\backslash\{0\}$ Schrodinger
representation, $\{0\}$ 1-d trivial representation

Decomposition: \[
R_{g}=\int_{\mathbb{R}\backslash\{0\}}^{\oplus}U_{irrep}^{h}hdh\]
For all $F\in L^{2}(G)$,\[
(U_{g}f)(e)=\int^{\oplus}U^{h}f\: hdh,\quad U^{h}\mbox{ irrep}\]
 \begin{eqnarray*}
F(g) & = & (R_{g}F)(e)\\
 & = & \int_{\mathbb{R}\backslash\{0\}}^{\oplus}e^{ih(c+bx)}f(x+a)(U^{h}F)\: hdh\\
\hat{F}(h) & = & \int_{G}(U_{g}^{h}F)dg\end{eqnarray*}
Plancherel measure: $hdh$ and the point measure $\delta_{0}$ at
zero.
\begin{example}
$G$ $ax+b$ group, non abelian. $\hat{G}$ not a group. 3 irreducible
representations: $+,-,0$ but $G$ is not unimodular.
\end{example}
\end{example}
The duality question may also be asked for discrete subgroups. This
leads to remarkable applications in automorphic functions, automorphic
forms, p-adic nubmers, compact Riemann surface, hypobolic geometry,
etc. 
\begin{example}
Cyclic group of order $n$. $G=\mathbb{Z}/n\mathbb{Z}\simeq\{0,1,\cdots,n-1\}$.
$\hat{G}=G$. This is another example where the dual group is identical
to the group itself. Let $\zeta=e^{i2\pi/n}$ be the primitive $n^{th}$-root
of unity. $k\in\mathbb{Z}_{n}$, $l=\{0,1,\ldots,n-1\}$\[
\chi_{l}(k)=e^{i\frac{2\pi kl}{n}}\]
In this case, Segal's theorem gives finite Fourier transform. $U:l^{2}(\mathbb{Z})\rightarrow l^{2}(\hat{\mathbb{Z}})$
where\[
Uf(l)=\frac{1}{\sqrt{N}}\sum_{k}\zeta^{kl}f(k)\]

\end{example}

\section{Summary of induced reprep, $d/dx$ example}

We study decomposition of group representations. Two cases: abelian
and non abelian. The non abelian case may be induced from the abelian
ones.

non abelian
\begin{itemize}
\item semi product $G=HN$ oftne $H,N$ are normal.
\item $G$ simple. $G$ does not have normal subgroups. Lie algebra does
not have any ideals. \end{itemize}
\begin{example}
$SL_{2}(\mathbb{R})$ (non compact)\[
\left(\begin{array}{cc}
a & b\\
c & d\end{array}\right),\; ad-bc=1\]
with Lie algebra \[
sl_{2}(\mathbb{R})=\{X:tr(X)=0\}\]
$sl_{2}$ is generated by \[
\left(\begin{array}{cc}
0 & 1\\
1 & 0\end{array}\right)\;\left(\begin{array}{cc}
0 & -1\\
1 & 0\end{array}\right)\;\left(\begin{array}{cc}
1 & 0\\
0 & -1\end{array}\right).\]
$\left(\begin{array}{cc}
0 & -1\\
1 & 0\end{array}\right)$ generates the one-parameter group $\left(\begin{array}{cc}
\cos t & -\sin t\\
\sin t & \cos t\end{array}\right)\simeq T$ whose dual group is $\mathbb{Z}$, where \[
\chi_{n}(g(t))=g(t)^{n}=e^{itn}.\]
May use this to induce a representation of $G$. This is called principle
series. Need to do something else to get all irreducible representations.

A theorem by Iwasawa states that simple matrix group (Lie group) can
be decomposed into \[
G=KAN\]
where $K$ is compact, $A$ is abelian and $N$ is nilpotent. For
example, in the $SL_{2}$ case, \[
SL_{2}(\mathbb{R})=\left(\begin{array}{cc}
\cos t & -\sin t\\
\sin t & \cos t\end{array}\right)\left(\begin{array}{cc}
e^{s} & 0\\
0 & e^{-s}\end{array}\right)\left(\begin{array}{cc}
1 & u\\
0 & 1\end{array}\right).\]
The simple groups do not have normal subgroups. The representations
are much more difficult.
\end{example}

\subsection{Induced representation}

Suppose from now on that $G$ has a normal abelian subgrup $N\vartriangleleft G$,
and $G=H\ltimes N$ The $N\simeq\mathbb{R}^{d}$ and $N^{*}\simeq(\mathbb{R}^{d})^{*}=\mathbb{R}^{d}$.
In this case\[
\chi_{t}(\nu)=e^{it\nu}\]
for $\nu\in N$ and $t\in\hat{N}=N^{*}$. Notice that $\chi_{t}$
is a 1-d irreducible representation on $\mathbb{C}$. 

Let $\mathcal{H}_{t}$ be the space of functions $f:G\rightarrow\mathbb{C}$
so that\[
f(\nu g)=\chi_{t}(\nu)f(g).\]
On $H_{t}$, define inner product so that\[
\norm{f}_{H_{t}}^{2}:=\int_{G}\abs{f(g)}^{2}=\int_{G/N}\norm{f(g)}^{2}dm\]
where $dm$ is the invariant measure on $N\backslash G\simeq H$.

Define $U_{t}=ind_{N}^{G}(\chi_{t})\in Rep(G,H_{t})$. Define $U_{t}(g)f(x)=f(xg)$,
for $f\in H_{t}$. Notice that the representation space of $\chi_{t}$
is $\mathbb{C}$, 1-d Hilbert space, however, the representation space
of $U_{t}$ is $\mathcal{H}_{t}$ which is infinite dimensional. $U_{t}$
is a family of irreducible representations indexed by $t\in N\simeq\hat{N}\simeq\mathbb{R}^{d}$.
\begin{note}
Another way to recognize induced representations is to see these functions
are defined on $H$, not really on $G$.

Define the unitary transformation $W_{t}:\mathcal{H}_{t}\rightarrow L^{2}(H)$.
Notice that $H\simeq N\backslash G$ is a group, and it has an invariant
Haar measure. By uniqueness on the Haar measure, this has to be $dm$.
It would be nice to cook up the same space $L^{2}(H)$ so that all
induced representations indexed by $t$ act on it. In other words,
this Hilbert space $L^{2}(H)$ does not depend on $t$. $W_{t}$ is
defined as\[
W_{t}F_{t}(h)=F_{t}(h).\]

So what does the induced representation look like in $L^{2}(H)$ then?
Recall by definition that \[
U_{t}(g):=W_{t}ind_{\chi_{t}}^{G}(g)W_{t}^{*}\]
and the following diagram commutes.

\[\xymatrix{
\mathcal{H}_{t} \ar[r]^{ind_{\chi_{t}}^{G}} \ar[d]_{W_{t}} & \mathcal{H}_{t} \ar[d]^{W_{t}} \\
L^{2}(H) \ar[r]^{U_{t}} & L^{2}(H) 
}\]

Let $f\in L^{2}(H)$. \begin{eqnarray*}
U_{t}(g)f(h) & = & W_{t}ind_{\chi_{t}}^{G}(g)W_{t}^{*}f(h)\\
 & = & (ind_{\chi_{t}}^{G}(g)W_{t}^{*}f)(h)\\
 & = & (W_{t}^{*}f)(hg).\end{eqnarray*}
Since $G=H\ltimes N$, $g$ is uniquely decomposed into $g=g_{N}g_{H}$.
Hence $hg=hg_{N}g_{H}=g_{N}g_{N}^{-1}hg_{N}g_{H}=g_{N}\tilde{h}g_{H}$
and \begin{eqnarray*}
U_{t}(g)f(h) & = & (W_{t}^{*}f)(hg)\\
 & = & (W_{t}^{*}f)(g_{N}\tilde{h}g_{H})\\
 & = & \chi_{t}(g_{N})(W_{t}^{*}f)(\tilde{h}g_{H})\\
 & = & \chi_{t}(g_{N})(W_{t}^{*}f)(g_{N}^{-1}hg_{N}g_{H})\end{eqnarray*}
This last formula is called the Mackey machine. 

The Mackey machine does not cover many important symmetry groups in
physics. Actually most of these are simple groups. However it can
still be applied. For example, in special relativity theory, we have
the Poincare group $\mathcal{L}\ltimes\mathbb{R}^{4}$ where $\mathbb{R}^{4}$
is the normal subgroup. The baby version of this is when $\mathcal{L}=SL_{2}(\mathbb{R})$.
V. Bargman fomulated this baby version. Wigner poineered the Mackey
machine, long before Mackey was around. 

Once we get unitary representations, differentiate it and get self-adjoint
algebra of operators (possibly unbounded). These are the observables
in quantum mechanics.\end{note}
\begin{example}
$\mathbb{Z}\subset\mathbb{R}$, $\hat{\mathbb{Z}}=T$. $\chi_{t}\in T$,
$\chi_{t}(n)=e^{itn}$. Let $\mathcal{H}_{t}$ be the space of functions
$f:\mathbb{R}\rightarrow\mathbb{C}$ so that \[
f(n+x)=\chi_{t}(n)f(x)=e^{int}f(x).\]
Define inner product on $\mathcal{H}_{t}$ so that \[
\norm{f}_{\mathcal{H}_{t}}^{2}:=\int_{0}^{1}\abs{f(x)}^{2}dx.\]
Define $ind_{\chi_{t}}^{\mathbb{R}}(y)f(x)=f(x+y)$. Claim that $\mathcal{H}_{t}\simeq L^{2}[0,1]$.
The unitary transformation is given by $W_{t}:\mathcal{H}_{t}\rightarrow L^{2}[0,1]$
\[
(W_{t}F_{t})(x)=F_{t}(x).\]
Let's see what $ind_{\chi_{t}}^{\mathbb{R}}(y)$ looks like on $L^{2}[0,1]$.
For any $f\in L^{2}[0,1]$, \begin{eqnarray*}
(W_{t}ind_{\chi_{t}}^{G}(y)W_{t}^{*}f)(x) & = & (ind_{\chi_{t}}^{G}(y)W_{t}^{*}f)(x)\\
 & = & (W_{t}^{*}f)(x+y)\end{eqnarray*}
Since $y\in\mathbb{R}$ is uniquely decomposed as $y=n+x'$ for some
$x'\in[0,1)$, therefore\begin{eqnarray*}
(W_{t}ind_{\chi_{t}}^{G}(y)W_{t}^{*}f)(x) & = & (W_{t}^{*}f)(x+y)\\
 & = & (W_{t}^{*}f)(x+n+x')\\
 & = & (W_{t}^{*}f)(n+(-n+x+n)+x')\\
 & = & \chi_{t}(n)(W_{t}^{*}f)((-n+x+n)+x')\\
 & = & \chi_{t}(n)(W_{t}^{*}f)(x+x')\\
 & = & e^{itn}(W_{t}^{*}f)(x+x')\end{eqnarray*}
\end{example}
\begin{note}
Are there any functions in $\mathcal{H}_{t}$? Yes, for example, $f(x)=e^{itx}$.
If $f\in\mathcal{H}_{t}$, $\abs{f}$ is 1-periodic. Therefore $f$
is really a function defined on $\mathbb{Z}\backslash\mathbb{R}\simeq[0,1]$.
Such a function has the form \[
f(x)=(\sum c_{n}e^{i2\pi nx})e^{itx}=\sum c_{n}e^{i(2\pi n+t)x}.\]
Any 1-periodic function $g$ satisfies the boundary condition $g(0)=g(1)$.
$f\in\mathcal{H}_{t}$ has a modified boundary condition where $f(1)=e^{it}f(0)$. 
\end{note}

\section{Connection to Nelson's spectral theory}

In Nelson's notes, a normal representation has the form (counting
multiplicity)\[
\rho=\sum^{\oplus}n\pi\big|_{H_{n}},\; H_{n}\perp H_{m}\]
where \[
n\pi=\pi\oplus\cdots\oplus\pi\:(\mbox{n times})\]
is a representation acting on the Hilbert space \[
\sum^{\oplus}K=l_{\mathbb{Z}_{n}}^{2}\otimes K.\]
In matrix form, this is a diagonal matrix with $\pi$ repeated on
the diagonal $n$ times. $n$ could be $1,2,\ldots,\infty$. We apply
this to group representations.

Locally compact group can be divided into the following types.
\begin{itemize}
\item abelian
\item non-abelian: unimodular, non-unimodular
\item non-abelian: Mackey machine, semidirect product e.g. $H_{3}$, $ax+b$;
simple group $SL_{2}(\mathbb{R})$. Even it's called simple, ironically
its representation is much more difficult than the semidirect product
case.
\end{itemize}
We want to apply these to group representations.

Spectral theorem says that given a normal operator $A$, we may define
$f(A)$ for quite a large class of functions, actually all measurable
functions. One way to define $f(A)$ is to use the multiplication
version of the spectral theorem, and let \[
f(A)=\mathcal{F}f(\hat{A})\mathcal{F}^{-1}.\]
The other way is to use the projection-valued meaure version of the
spectral theorem, write \begin{eqnarray*}
A & = & \int\lambda P(d\lambda)\\
f(A) & = & \int f(\lambda)P(d\lambda).\end{eqnarray*}
The effect is $\rho$ is a representation of the abelian algebra of
measurable functions onto operators action on some Hilbert space.
\begin{eqnarray*}
\rho:f\mapsto\rho(f) & = & f(A)\\
\rho(fg) & = & \rho(f)\rho(g)\end{eqnarray*}
To imitate Fourier transform, let's call $\hat{f}:=\rho(f)$. Notice
that $\hat{f}$ is the multiplication operator.
\begin{example}
$G=(\mathbb{R},+)$, group algebra $L^{1}(\mathbb{R})$. Define Fourier
transform \[
\hat{f}(t)=\int f(x)e^{-itx}dx.\]
$\{e^{itx}\}_{t}$ is a family of 1-dimensional irreducible representation
of $(\mathbb{R},+)$. 
\begin{example}
Fix $t$, $H=\mathbb{C}$, $\rho(\cdot)=e^{it(\cdot)}\in Rep(G,H)$.
From the group representation $\rho$, we get a group algebra representation
$\tilde{\rho}\in Rep(L^{1}(\mathbb{R}),H)$ defined by\[
\tilde{\rho}(f)=\int f(x)\rho(x)dx=\int f(x)e^{itx}dx\]
It follows that \begin{eqnarray*}
\hat{f}(\rho) & := & \tilde{\rho}(f)\\
\widehat{f\star g} & = & \widehat{f\star g}=\hat{f}\hat{g}\end{eqnarray*}
i.e. Fourier transform of $f\in L^{1}(\mathbb{R})$ is a representation
of the group algebra $L^{1}(\mathbb{R})$ on to the 1-dimensional
Hilbert space $\mathbb{C}$. The range of Fourier transform in this
case is 1-d abelian algebra of multiplication operators, multiplication
by complex numbers. 
\begin{example}
$H=L^{2}(\mathbb{R})$, $\rho\in Rep(G,H)$ so that \[
\rho(y)f(x):=f(x+y)\]
i.e. $\rho$ is the right regular representation. The representation
space $H$ in this case is infinite dimensional. From $\rho$, we
get a group algebra representation $\tilde{\rho}\in Rep(L^{1}(\mathbb{R}),H)$
where \[
\tilde{\rho}(f)=\int f(y)\rho(y)dy.\]
Define \[
\hat{f}(\rho):=\hat{\rho}(f)\]
then $\hat{f}(\rho)$ is an operator acting on $H$.\begin{alignat*}{1}
\hat{f}(\rho)g=\tilde{\rho}(f)g & =\int f(y)\rho(y)g(\cdot)dy\\
 & =\int f(y)(R_{y}g)(\cdot)dy\\
 & =\int f(y)g(\cdot+y)dy.\end{alignat*}
If we have used the left regular representation, instead of the right,
then \begin{alignat*}{1}
\hat{f}(\rho)g=\tilde{\rho}(f)g & =\int f(y)\rho(y)g(\cdot)dy\\
 & =\int f(y)(L_{y}g)(\cdot)dy\\
 & =\int f(y)g(\cdot-y)dy.\end{alignat*}
Hence $\hat{f}(\rho)$ is the left or right convolution operator.
\end{example}
\end{example}
\end{example}
Back to the general case. Given a locally compact group $G$, form
the group algebra $L^{1}(G)$, and define the left and right convolutions
as

\begin{alignat*}{1}
(\varphi\star\psi)(x) & =\int\varphi(g)\psi(g^{-1}x)d_{L}g=\int\varphi(g)(L_{g}\psi)d_{L}g\\
(\varphi\star\psi)(x) & =\int\varphi(xg)\psi(g)d_{R}g=\int(R_{g}\varphi)\psi(g)d_{R}g\end{alignat*}
Let $\rho(g)\in Rep(G,H)$, define $\tilde{\rho}\in Rep(L^{1}(G),H)$
given by \[
\tilde{\rho}(\psi):=\int_{G}\psi(g)\rho(g)dg\]
and write \[
\hat{\psi}(\rho):=\tilde{\rho}(\psi).\]
$\hat{\psi}$ is an analog of Fourier transform. If $\rho$ is irreducible,
the operators $\hat{\psi}$ forms an abelian algebra. In general,
the range of this generalized Fourier transform gives rise to a non
abelian algebra of operators.

For example, if $\rho(g)=R_{g}$ and $H=L^{2}(G,d_{R})$, then \[
\tilde{\rho}(\psi)=\int_{G}\psi(g)\rho(g)dg=\int_{G}\psi(g)R_{g}dg\]
and\begin{alignat*}{1}
\tilde{\rho}(\psi)\varphi & =\int_{G}\psi(g)\rho(g)\varphi dg=\int_{G}\psi(g)(R_{g}\varphi)dg\\
 & =\int_{G}\psi(g)\varphi(xg)dg\\
 & =(\varphi\star\psi)(x)\end{alignat*}

\begin{example}
$G=H_{3}\sim\mathbb{R}^{3}$. $\hat{G}=\{\mathbb{R}\backslash\{0\}\}\cup\{0\}$.
$0\in\hat{G}$ corresponds to the trivial representation, i.e. $g\mapsto Id$
for all $g\in G$.\[
\rho_{h}:G\rightarrow L^{2}(\mathbb{R})\]
\[
\rho_{h}(g)f(x)=e^{ih(c+bx)}f(x+a)\simeq ind_{H}^{G}(\chi_{h})\]
where $H$ is the normal subgroup $\{b,c\}$. It is not so nice to
work with $ind_{H}^{G}(\chi_{h})$ directly, so instead, we work with
the equivalent representations, i.e. Schrodinger representation. See
Folland's book on abstract harmonic analysis.\[
\hat{\psi}(h)=\int_{G}\psi(g)\rho_{h}(g)dg\]
Notice that $\hat{\psi}(h)$ is an operator acting on $L^{2}(\mathbb{R})$.
Specifically,\begin{eqnarray*}
\hat{\psi}(h) & = & \int_{G}\psi(g)\rho_{h}(g)dg\\
 & = & \iiint\psi(a,b,c)e^{ih(c+bx)}f(x+a)dadbdc\\
 & = & \iint\left(\int\psi(a,b,c)e^{ihc}dc\right)f(x+a)e^{ihbx}dadb\\
 & = & \iint\hat{\psi}(a,b,h)f(x+a)e^{ihbx}dadb\\
 & = & \int\left(\int\hat{\psi}(a,b,h)e^{ihbx}db\right)f(x+a)da\\
 & = & \int\hat{\psi}(a,hx,h)f(x+a)da\\
 & = & \left(\hat{\psi}(\cdot,h\cdot,h)\star f\right)(x)\end{eqnarray*}
Here the $\hat{\psi}$ on the right hand side in the Fourier transform
of $\psi$ in the usual sense. Therefore the operator $\hat{\psi}(h)$
is the one so that \[
L^{2}(\mathbb{R})\ni f\mapsto\left(\hat{\psi}(\cdot,h\cdot,h)\star f\right)(x).\]
If $\psi\in L^{1}(G)$, $\hat{\psi}$ is not of trace class. But if
$\psi\in L^{1}\cap L^{2}$, then $\hat{\psi}$ is of trace class.\[
\int_{\mathbb{R}\backslash\{0\}}^{\oplus}tr\left(\hat{\psi}^{*}(h)\hat{\psi}(h)\right)d\mu=\int\abs{\psi}^{2}dg=\int\bar{\psi}\psi dg\]
where $\mu$ is the Plancherel measure. 

If the group $G$ is non unimoduler, the direct integral is lost (not
orthogonal). These are related to coherent states from physics, which
is about decomposing Hilbert into non orthogonal pieces.
\end{example}
Important observables in QM come in pairs (dual pairs). For example,
position - momentum; energy - time etc. The Schwartz space $S(\mathbb{R})$
has the property that $\widehat{S(\mathbb{R})}=S(\mathbb{R})$. We
look at the analog of the Schwartz space. $h\mapsto\hat{\psi}(h)$
should decrease faster than any polynomials. 

Take $\psi\in L^{1}(G)$, $X_{i}$ in the Lie algebra, form $\triangle=\sum X_{i}^{2}$.
Require that\[
\triangle^{n}\psi\in L^{1}(G),\;\psi\in C^{\infty}(G).\]
For $\triangle^{n}$, see what happens in the transformed domain.
Notice that\[
\frac{d}{dt}\big|_{t=0}\left(R_{e^{tX}}\psi\right)=\tilde{X}\psi\]
where $X\mapsto\tilde{X}$ represents the direction vector $X$ as
a vector field.

Let $G$ be any Lie group. $\varphi\in C_{c}^{\infty}(G)$, $\rho\in Rep(G,H)$.
\[
d\rho(X)v=\int(\tilde{X}\varphi)(g)\rho(g)vdg\]
where \[
v=\int\varphi(g)\rho(g)wdg=\rho(\varphi)w.\mbox{ generalized convolution}\]
If $\rho=R$, the n\[
v=\int\varphi(g)R(g)\]
\[
\tilde{X}(\varphi\star w)=(X\varphi)\star w.\]

\begin{example}
$H_{3}$\begin{eqnarray*}
a & \rightarrow & \frac{\partial}{\partial a}\\
b & \mapsto & \frac{\partial}{\partial b}\\
c & \mapsto & \frac{\partial}{\partial c}\end{eqnarray*}
get standard Laplace operator. $\{\rho_{h}(\varphi)w\}\subset L^{2}(\mathbb{R})$
. $"="$ due to Dixmier. $\{\rho_{h}(\varphi)w\}$ is the Schwartz
space. \[
\left(\frac{d}{dx}\right)^{2}+(ihx)^{2}+(ih)^{2}=\left(\frac{d}{dx}\right)^{2}-(hx)^{2}-h^{2}\]
Notice that\[
-\left(\frac{d}{dx}\right)^{2}+(hx)^{2}+h^{2}\]
is the Harmonic oscilator. Spectrum = $h\mathbb{Z}_{+}$.
\end{example}

\chapter{Unbounded Operators}

\section{Unbounded operators, definitions}

We study unitary representation of Lie groups, one there is a representation,
it can be differentiated and get a representation of the Lie algebra.
These operators are the observables in quantum mechanics. They are
possibly unbounded.

\subsection{Domain}
\begin{example}
$d/dx$ and $M_{x}$ in QM, acting on $L^{2}$ with dense domain the
Schwartz space.
\end{example}
An alternative way to get a dense domain, a way that works for all
representations, is to use Garding space, or $C^{\infty}$ vectors.
Let $u\in H$ and define\[
u_{\varphi}:=\int\varphi(g)U_{g}udg\]
where $\varphi\in C_{c}^{\infty}$, and $U\in Rep(G,H)$. Let $\varphi_{\epsilon}$
be an approximation of identity. Then for functions on $\mathbb{R}^{d}$,
$\varphi_{\epsilon}\star\psi\rightarrow\psi$ as $\epsilon\rightarrow0$;
and for $C^{\infty}$ vectors, $u_{\varphi_{\epsilon}}\rightarrow u$,
as $\epsilon\rightarrow0$ in $H$ i.e. in the $\norm{\cdot}_{H}$-
norm. The set $\{u_{\varphi}\}$ is dense in $H$. It is called the
Garding space, or $C^{\infty}$ vectors, or Schwartz space. Differentiate
$U_{g}$ and get a Lie algebra representation\[
\rho(X):=\frac{d}{dt}\big|_{t=0}U(e^{tX})=dU(X).\]

\begin{lem}
$\norm{u_{\varphi_{\epsilon}}-u}\rightarrow0$, as $\epsilon\rightarrow0$.\end{lem}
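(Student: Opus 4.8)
The plan is to exploit two ingredients: the defining properties of an approximation of the identity $\varphi_\epsilon$ (nonnegative, total mass $1$, supports shrinking to the identity element $e\in G$), and the strong continuity of the unitary representation $U$, i.e. the fact that $g\mapsto U_g u$ is norm-continuous for each fixed $u\in H$. The integral $u_{\varphi_\epsilon}=\int_G\varphi_\epsilon(g)U_g u\,dg$ is well-defined for the same reason already recorded in the text: $\varphi_\epsilon$ has compact support and $g\mapsto U_g u$ is continuous and bounded in norm.

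First I would normalize and rewrite the difference. Assuming $\varphi_\epsilon\ge0$ and $\int_G\varphi_\epsilon(g)\,dg=1$, we have $\int_G\varphi_\epsilon(g)u\,dg=u$, so
\[
u_{\varphi_\epsilon}-u=\int_G\varphi_\epsilon(g)\bigl(U_g u-u\bigr)\,dg .
\]
Applying the triangle inequality for vector-valued integrals gives
\[
\norm{u_{\varphi_\epsilon}-u}\le\int_G\varphi_\epsilon(g)\,\norm{U_g u-u}\,dg .
\]

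Next I would invoke strong continuity of $U$ at $e$: given $\delta>0$ there is an open neighborhood $V$ of $e$ with $\norm{U_g u-u}<\delta$ for all $g\in V$. Since the supports of $\varphi_\epsilon$ shrink to $\{e\}$, there is $\epsilon_0>0$ such that $\operatorname{supp}(\varphi_\epsilon)\subset V$ for all $\epsilon<\epsilon_0$; for such $\epsilon$,
\[
\norm{u_{\varphi_\epsilon}-u}\le\int_V\varphi_\epsilon(g)\,\norm{U_g u-u}\,dg\le\delta\int_G\varphi_\epsilon(g)\,dg=\delta .
\]
Since $\delta>0$ is arbitrary, $\norm{u_{\varphi_\epsilon}-u}\to0$ as $\epsilon\to0$.

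The only substantive input is the strong continuity of the unitary representation, which is part of the definition of a unitary representation of a topological group; everything else is elementary. There is essentially no obstacle here — the mildest point is pinning down the normalization/positivity conventions for $\varphi_\epsilon$, and with the conventions used in the text this is immediate. If one prefers to avoid Bochner integration entirely, the identical argument works after pairing with an arbitrary unit vector $v\in H$, estimating $\abs{\iprod{v}{u_{\varphi_\epsilon}-u}}\le\int_G\varphi_\epsilon(g)\,\norm{U_g u-u}\,dg$, and taking the supremum over $\norm{v}=1$.
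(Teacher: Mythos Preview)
Your proof is correct and follows essentially the same approach as the paper: rewrite $u_{\varphi_\epsilon}-u$ as $\int\varphi_\epsilon(g)(U_gu-u)\,dg$ using $\int\varphi_\epsilon=1$, bound the norm by $\int\varphi_\epsilon(g)\norm{U_gu-u}\,dg$, and finish by combining strong continuity of $U$ at $e$ with the shrinking supports of $\varphi_\epsilon$. The paper's proof is slightly terser but the ingredients and structure are identical.
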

\begin{proof}
Since $u_{\varphi_{\epsilon}}-u=\int\varphi_{\epsilon}(g)(u-U_{g}u)dg$,
we have\begin{eqnarray*}
\norm{u_{\varphi_{\epsilon}}-u} & = & \norm{\int\varphi_{\epsilon}(g)(u-U_{g}u)dg}\\
 & \leq & \int\varphi_{\epsilon}(g)\norm{u-U_{g}u}dg\end{eqnarray*}
where we used the fact that $\int\varphi_{\epsilon}=1$. Notice that
we always assume the representations are norm continuous in the $g$
variable, otherwise it is almost impossible to get anything interesting.
i.e. asuume $U$ being strongly continuous. So for all $\delta>0$,
there is a neigborhood $\mathcal{O}$ of $e\in G$ so that ${u-U_{g}u}<\delta$
for all $g\in\mathcal{O}$. Choose $\epsilon_{\delta}$ so that $\varphi_{\epsilon}$
is upported in $\mathcal{O}$ for all $\epsilon<\epsilon_{\delta}$.
Then the statement is proved.\end{proof}
\begin{note}
There are about 7 popular kernels in probability theory. Look at any
book on probability theory. One of them is the Cauchy kernel $\frac{x}{\pi}\frac{1}{1+x^{2}}$. 
\begin{note}
Notice that not only $u_{\varphi}$ is dense in $H$, their derivatives
are also dense in $H$.
\end{note}
\end{note}

\section{Self-adjoint extensions}

In order to apply spectral theorem, one must work with self adjoint
operators including the unbounded ones. Some examples first.

In quantum mechanics, to understand energy levels of atoms and radiation,
the engegy level comes from discrete packages. The interactions are
givn by Colum Law where\[
H=-\triangle_{\vec{r}}+\frac{c_{jk}}{\norm{r_{j}-r_{k}}}\]
and Laplacian has dimension $3\times\#(\mbox{electrons})$.

In Schrodinger's wave mechanics, one needs to solve for $\psi(r,t)$
from the equation \[
H\psi=\frac{1}{i}\frac{\partial}{\partial t}\psi.\]
If we apply spectral theorem, then $\psi(t)=e^{itH}\psi(r,t=0)$.
This shows that motion in qm is governed by unitary operators. The
two parts in Shrodinger equation are separately self-adjoint, but
justification of the sum being self-adjoint wasn't made rigorous until
1957. Kato wrote a book called perturbation theory. It is a summary
of the sum of self-adjoint operators.

In Heisenberg's matrix mechanics, he suggested that one should look
at two states and the transition probability between them. \[
\iprod{\psi_{1}}{A\psi_{2}}=\iprod{\psi_{1}(t)}{A\psi_{2}(t)},\;\forall t.\]
If $\psi(t)=e^{itH}\psi$, then it works. In Heisenberg's picture,
one looks at evolution of the observables $e^{-itH}Ae^{itH}$. In
Shrodinger's picture, one looks at evolution of states. The two point
of views are equivalent. 

Everything so far is based on application of the spectral theorm,
which requires the operators being self-adjoint in the first place.

\subsection{Self-adjoint extensions}
\begin{lem}
$R(A)^{\perp}=N(A^{*})$. $N(A^{*})^{\perp}=cl(R(A))$. 
\begin{proof}
$y\in R(A)^{\perp}$ $\Longleftrightarrow$ $\iprod{Ax}{y}=0$, $\forall x\in D(A)$
$\Longleftrightarrow$ $x\mapsto\iprod{Ax}{y}$ being the zero linear
functional. Therefore by Riesz's theorem, there exists unique $y^{*}=A^{*}y$
so that $\iprod{Ax}{y}=\iprod{x}{A^{*}y}=0$, i.e. $y\in N(A^{*})$.
\end{proof}
\end{lem}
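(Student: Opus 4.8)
The plan is to prove the two identities separately: the first by directly unwinding the definition of the adjoint of a densely defined operator, and the second by invoking the general Hilbert space identity $(M^{\perp})^{\perp}=\overline{M}$ for a linear subspace $M$, which is a consequence of the orthogonal decomposition theorem already established. (Throughout, $R(A)$ denotes the range of $A$.)

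First I would record the standing hypothesis that $A$ is densely defined, so that $A^{*}$ makes sense: a vector $y$ lies in $D(A^{*})$ exactly when $x\mapsto\iprod{Ax}{y}$ is a bounded linear functional on $D(A)$, and then $A^{*}y$ is the unique vector with $\iprod{Ax}{y}=\iprod{x}{A^{*}y}$ for all $x\in D(A)$, uniqueness coming from density of $D(A)$ (cf. Lemma \ref{lem:dense}). For the inclusion $R(A)^{\perp}\subseteq N(A^{*})$, suppose $y\perp R(A)$. Then $\iprod{Ax}{y}=0$ for all $x\in D(A)$, so the functional $x\mapsto\iprod{Ax}{y}$ is identically zero, hence bounded; thus $y\in D(A^{*})$, and comparing $\iprod{Ax}{y}=0=\iprod{x}{0}$ with the defining relation forces $A^{*}y=0$, i.e. $y\in N(A^{*})$. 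Conversely, if $y\in N(A^{*})$ then $\iprod{Ax}{y}=\iprod{x}{A^{*}y}=0$ for every $x\in D(A)$, so $y\in R(A)^{\perp}$. This gives $R(A)^{\perp}=N(A^{*})$.

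For the second identity I would first note the general fact, which follows from the projection theorem proved earlier (the bijection between closed subspaces and self-adjoint projections) together with Lemma \ref{lem:dense}, that for any linear subspace $M\subseteq H$ one has $M^{\perp}=(\overline{M})^{\perp}$ and $(M^{\perp})^{\perp}=\overline{M}$. Applying this with $M=R(A)$ and substituting the first identity, $N(A^{*})^{\perp}=(R(A)^{\perp})^{\perp}=\overline{R(A)}=cl(R(A))$.

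I do not expect a serious obstacle here. The one place that needs care is the bookkeeping of domains in the unbounded setting: in the forward direction of the first identity one must verify $y\in D(A^{*})$ before writing $A^{*}y$ at all — which is automatic because the relevant functional vanishes identically — and one should keep in mind that $R(A)$ need not be closed, which is exactly why a closure appears in the second identity but not the first.
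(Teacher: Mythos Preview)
Your proof is correct and follows essentially the same approach as the paper: for the first identity you unwind the definition of $A^{*}$ just as the paper does (the paper phrases this as an appeal to Riesz's theorem, which is the same mechanism underlying the existence of $A^{*}y$). You are in fact more careful than the paper about checking $y\in D(A^{*})$ before writing $A^{*}y$, and you supply the converse inclusion explicitly. For the second identity the paper gives no argument at all---it only records the general fact $(\text{set})^{\perp\perp}=cl(\text{span}(\text{set}))$ in a note immediately after the lemma---so your derivation via $(M^{\perp})^{\perp}=\overline{M}$ is exactly what is intended.
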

\begin{note}
In general, $(\mbox{set})^{\perp\perp}=cl(span(\mbox{set}))$.\end{note}
\begin{defn}
Define the defeciency spaces and defeciency indecies as\[
D_{+}:=R(A+iI)^{\perp}=N(A^{*}-iI)=\{v\in dom(A^{*}):A^{*}v=+iv\}\]
\[
D_{-}:=R(A-iI)^{\perp}=N(A^{*}+iI)=\{v\in dom(A^{*}):A^{*}v=-iv\}\]
\begin{eqnarray*}
d_{+} & := & dim(D_{+})\\
d_{-} & := & dim(D_{-})\end{eqnarray*}

Von Neumann's notion of defeciency space expresses the extent to which
$A^{*}$ is bigger than $A$. One wouldn't expect to get a complex
eigenvalue for a self adjoint operator. We understand $A$ not being
self adjoint by looking at its {}``wrong'' eigenvalues. This reveals
that $A^{*}$ is defined on a bigger domain. The extend that $A^{*}$
is defined on a bigger domain is refleced on the {}``wrong'' eigenvalues.
\end{defn}
\begin{tabular}{|c|c|}
\hline 
$D_{+}$ & $D_{-}$\tabularnewline
\hline
\hline 
$R(A+i)$ & $R(A-i)$\tabularnewline
\hline
\end{tabular}
\begin{defn}
Define the Caley transform \begin{eqnarray*}
C_{A}:R(A+i) & \rightarrow & R(A-i)\\
(A+i)x & \mapsto & (A-i)x\end{eqnarray*}
i.e.\[
C_{A}=(A-i)(A+i)^{-1}="\frac{A-i}{A+i}"\]
The inverse map is given by \[
A=i(1+C_{A})(1-C_{A})^{-1}="i\frac{1+C_{A}}{1-C_{A}}".\]
\end{defn}
\begin{lem}
$C_{A}$ is a partial isometry.
\begin{proof}
We check that \begin{eqnarray*}
\norm{(A+i)x}^{2} & = & \iprod{Ax+ix}{Ax+ix}\\
 & = & \norm{Ax}^{2}+\norm{x}^{2}+\iprod{Ax}{ix}+\iprod{ix}{Ax}\\
 & = & \norm{Ax}^{2}+\norm{x}^{2}+i\iprod{Ax}{x}-i\iprod{x}{Ax}\\
 & = & \norm{Ax}^{2}+\norm{x}^{2}+i\iprod{x}{Ax}-i\iprod{x}{Ax}\\
 & = & \norm{Ax}^{2}+\norm{x}^{2}\end{eqnarray*}
where the last two equations follows from $A$ being Hermitian (symmetric).
This shows that $C_{A}$ is a partial isometry.
\end{proof}
\end{lem}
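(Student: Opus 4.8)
The plan is to establish the single identity
\[
\norm{(A+i)x}=\norm{(A-i)x}\qquad\text{for all }x\in\text{dom}(A),
\]
since this one equality simultaneously shows that the Cayley transform is \emph{well defined} as a map $R(A+i)\to R(A-i)$ and that it is norm-preserving — and a partial isometry is precisely an operator that is isometric on the orthogonal complement of its kernel.

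First I would expand $\norm{(A\pm i)x}^2=\iprod{Ax\pm ix}{Ax\pm ix}$ directly. The only terms that are not manifestly $\norm{Ax}^2$ or $\norm{x}^2$ are the cross terms $\pm i\iprod{Ax}{x}\mp i\iprod{x}{Ax}$; because $A$ is symmetric (Hermitian) on its domain, $\iprod{Ax}{x}=\iprod{x}{Ax}$, so these cancel. This gives $\norm{(A+i)x}^2=\norm{Ax}^2+\norm{x}^2=\norm{(A-i)x}^2$ on $\text{dom}(A)$, which is the computation displayed above.

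Next I would use that identity twice. Taking $x\in\ker(A+i)$ forces $\norm{x}^2\le\norm{(A+i)x}^2=0$, so $A+i$ is injective; hence every $y\in R(A+i)$ has a unique preimage $x$, and $C_A y:=(A-i)x$ is a well-defined linear map. The identity then yields $\norm{C_A y}=\norm{(A-i)x}=\norm{(A+i)x}=\norm{y}$, so $C_A$ is an isometric linear bijection of $R(A+i)$ onto $R(A-i)$. Extending $C_A$ by $0$ on $D_+=R(A+i)^{\perp}$ produces an operator on $H$ that is isometric on the complement of its kernel, i.e. a partial isometry, with initial space $\overline{R(A+i)}$ and final space $\overline{R(A-i)}$.

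I do not expect a serious obstacle; the only point needing a little care is the domain bookkeeping, namely that $R(A+i)$ and $R(A-i)$ should be taken closed so that $C_A$ is a genuine partial isometry on $H$. This is legitimate in the intended setting, where $A$ is a closed symmetric operator: if $(A+i)x_n\to y$ then $\norm{x_n-x_m}\le\norm{(A+i)(x_n-x_m)}$ shows $x_n$ is Cauchy, $x_n\to x$, and $Ax_n\to y-ix$, so closedness of $A$ gives $x\in\text{dom}(A)$ with $(A+i)x=y$; thus $R(A\pm i)$ are already closed. Everything else is the elementary inner-product expansion together with the definition of a partial isometry.
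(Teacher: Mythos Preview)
Your proposal is correct and takes essentially the same approach as the paper: the core is the inner-product expansion $\norm{(A\pm i)x}^2=\norm{Ax}^2+\norm{x}^2$ using symmetry of $A$, which is exactly the computation the paper gives. In fact you supply more detail than the paper does---the well-definedness via injectivity of $A+i$, the extension by zero on $R(A+i)^\perp$, and the closedness of $R(A\pm i)$ for closed $A$---whereas the paper stops after the norm identity and simply asserts the conclusion.
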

$C_{A}$ is only well-defined on $R(A+i)$. Since $C_{A}$ is an isometry
from $R(A+i)$ onto $R(A-i)$, it follows that if $R(A+i)$ is dense
in $H$, then $C_{A}$ extends uniquely to a unitary operator $\tilde{C}_{A}$
on $H$, and thus $D_{+}=0$. The only way that $D_{+}$ is non zero
is that $R(A+i)$ is not dense in $H$. 

Failure of $A$ being self adjoint $\Longleftrightarrow$ failure
of $C_{A}$ being everywhere defined; $A$ Hermitian $\Longleftrightarrow$
$C_{A}$ is a partial isometry (by lemma above); Von Neumann's method:
look at extensions of $C_{A}$, and transform the result back to $A$.
\begin{thm}
(VN) $C_{A}$ extends to $H$ if and only if $d_{+}=d_{-}$.\end{thm}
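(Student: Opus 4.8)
The plan is to read ``$C_A$ extends to $H$'' as ``$C_A$ extends to a \emph{unitary} operator on all of $H$'', and to reduce the extension problem to a comparison of the two deficiency dimensions by using the one structural feature of unitaries that matters here: they preserve orthogonal complements. First I would record what the preceding lemma already supplies. Since $\norm{(A+i)x}^2=\norm{Ax}^2+\norm{x}^2=\norm{(A-i)x}^2$, the map $C_A:(A+i)x\mapsto(A-i)x$ is an isometry from $R(A+i)$ onto $R(A-i)$, so it extends uniquely by continuity to an isometry $\overline{C_A}$ from $\overline{R(A+i)}$ onto $\overline{R(A-i)}$. Combining this with the orthogonal decompositions
\[
H=\overline{R(A+i)}\oplus D_+,\qquad H=\overline{R(A-i)}\oplus D_-,
\]
which hold because $D_\pm=R(A\pm i)^\perp$ and taking the closure does not change an orthogonal complement, sets up the entire picture: any unitary extension must agree with $\overline{C_A}$ on $\overline{R(A+i)}$, and the only remaining freedom is its action on the summand $D_+$.

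For the forward implication I would suppose $U$ is a unitary on $H$ extending $C_A$, hence extending $\overline{C_A}$, so that $U\,\overline{R(A+i)}=\overline{R(A-i)}$. Because $U$ is unitary it carries orthogonal complements to orthogonal complements, giving
\[
U(D_+)=U\bigl(\overline{R(A+i)}^{\perp}\bigr)=\bigl(U\,\overline{R(A+i)}\bigr)^{\perp}=\overline{R(A-i)}^{\perp}=D_-.
\]
Thus $U$ restricts to a unitary isomorphism $D_+\to D_-$, and by the corollary that every Hilbert space is isomorphic to the $\ell^2$ space of an ONB index set, the existence of such an isomorphism forces $\dim D_+=\dim D_-$, i.e.\ $d_+=d_-$.

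For the converse I would assume $d_+=d_-$. Then $D_+$ and $D_-$ possess orthonormal bases of the same cardinality, so there is a unitary $W:D_+\to D_-$. Relative to the two decompositions above I would define $U:=\overline{C_A}\oplus W$; this is unitary on $H$ because it is assembled from two unitaries acting on orthogonal summands that together exhaust $H$, and it restricts to $C_A$ on $R(A+i)$. Hence $C_A$ extends to a unitary on $H$.

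The argument is essentially bookkeeping, so I do not expect a serious obstacle, but the one step that demands care is the claim that a unitary extension \emph{must} send $D_+$ onto $D_-$, rather than merely into some complement of $\overline{R(A-i)}$; this is exactly where preservation of orthogonality by $U$ is invoked, and it is what rigidly ties the extension problem to the equality $d_+=d_-$. The only fact borrowed from earlier is that two Hilbert spaces admit a unitary between them precisely when they share the same dimension, which is the content of the $\ell^2$-corollary.
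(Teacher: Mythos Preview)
Your argument is correct and is the standard von Neumann proof. The paper, however, does not supply a proof of this theorem at all: it states the result and then illustrates the converse direction with the examples $d_+=d_-=1$ (where the extensions $e_+\mapsto ze_-$, $\abs{z}=1$, are parametrized by $U_1(\mathbb{C})$) and $d_+=d_-=2$ (parametrized by $U_2(\mathbb{C})$). So there is nothing in the paper to compare against beyond these examples, which are exactly the special cases of your converse construction $U=\overline{C_A}\oplus W$ with $W\in U_n(\mathbb{C})$. Your write-up in fact fills a gap the notes leave open, including the forward implication via $U(D_+)=D_-$, which the paper does not address.
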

\begin{example}
$d_{+}=d_{-}=1$. Let $e_{\pm}$ be corresponding eigenvalues. $e_{+}\mapsto ze_{-}$
is the unitary operator sending one to the other eigenvalue. It is
clear that $\abs{z}=1$. Hence the self adjoint extension is indexed
by $U_{1}(\mathbb{C})$.
\begin{example}
$d_{+}=d_{-}=2$, get a family of extensions indexed by $U_{2}(\mathbb{C})$.
\end{example}
\end{example}
\begin{rem}
M. Stone and Von Neumann are the two pioneers who worked at the same
period. They were born at about the same time. Stone died at 1970's
and Von Neumann died in the 1950's.\end{rem}
\begin{defn}
A conjugation is an operator $J$ so that $J^{2}=1$ and it is conjugate
linear i.e. $J(cx)=\bar{c}Jx$.\end{defn}
\begin{thm}
(VN) If there exists a conjugation $J$ such that $AJ=JA$, then $d_{+}=d_{-}$.\end{thm}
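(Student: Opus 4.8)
The plan is to exhibit a dimension-preserving bijection $D_{+}\to D_{-}$, namely the restriction of $J$ itself. Once that is in place, $d_{+}=\dim D_{+}=\dim D_{-}=d_{-}$, and the earlier theorem characterizing when the Cayley transform $C_{A}$ extends to a unitary on $H$ then yields a self-adjoint extension of $A$.

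First I would unpack the hypothesis $AJ=JA$ as an identity of (possibly unbounded) operators: since $J$ is everywhere defined, $\mathrm{dom}(JA)=\mathrm{dom}(A)$, while $\mathrm{dom}(AJ)=\{x:Jx\in\mathrm{dom}(A)\}$; equality of domains forces $J(\mathrm{dom}(A))=\mathrm{dom}(A)$ (using $J^{2}=I$) together with $AJx=JAx$ on $\mathrm{dom}(A)$. The next — and really the only substantive — step is to push the intertwining relation to the adjoint, i.e. to show $J(\mathrm{dom}(A^{*}))=\mathrm{dom}(A^{*})$ and $A^{*}J=JA^{*}$ there. For $v\in\mathrm{dom}(A^{*})$ and $x\in\mathrm{dom}(A)$, write $Ax=J(AJx)$ and apply the antiunitarity relation $\langle J\xi,J\eta\rangle=\overline{\langle\xi,\eta\rangle}$ twice:
\[
\langle Ax,Jv\rangle=\overline{\langle AJx,v\rangle}=\overline{\langle Jx,A^{*}v\rangle}=\langle A^{*}v,Jx\rangle=\langle x,JA^{*}v\rangle .
\]
Boundedness of the left side in $x$ shows $Jv\in\mathrm{dom}(A^{*})$, and comparison with the defining identity $\langle Ax,Jv\rangle=\langle x,A^{*}(Jv)\rangle$ gives $A^{*}(Jv)=JA^{*}(v)$; surjectivity of $J$ on $\mathrm{dom}(A^{*})$ is automatic from $J^{2}=I$.

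With this in hand the conclusion is quick. For $v\in D_{+}$ one has $A^{*}(Jv)=J(A^{*}v)=J(iv)=-i\,Jv$ by conjugate-linearity, so $Jv\in D_{-}$; symmetrically $J(D_{-})\subseteq D_{+}$, and $J|_{D_{+}}:D_{+}\to D_{-}$ is a conjugate-linear bijection with $\langle Jx,Jy\rangle=\overline{\langle x,y\rangle}$. Hence an orthonormal basis $\{e_{\alpha}\}_{\alpha\in I}$ of $D_{+}$ is carried to an orthonormal family $\{Je_{\alpha}\}_{\alpha\in I}$ whose closed span is $J\,\overline{\mathrm{span}}\{e_{\alpha}\}=JD_{+}=D_{-}$, i.e. to an orthonormal basis of $D_{-}$; therefore $d_{+}=|I|=d_{-}$. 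The step I expect to require the most care is precisely the promotion of $AJ=JA$ to $A^{*}J=JA^{*}$: it is the conjugate-linearity of $J$ (flipping $+i$ to $-i$) that makes the deficiency spaces trade places, and the computation above quietly uses that a conjugation is isometric (antiunitary) — a property the bare definition ``$J^{2}=I$, conjugate-linear'' is tacitly assuming but does not literally state, and without which the dimension count would fail.
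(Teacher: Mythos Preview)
Your proof is correct and follows exactly the same strategy as the paper's: pass the intertwining relation from $A$ to $A^{*}$, then use conjugate-linearity of $J$ to see that $J$ carries $D_{+}$ bijectively onto $D_{-}$. You are considerably more thorough than the paper, which merely asserts (without argument) that $AJ=JA$ implies $A^{*}J=JA^{*}$ and does not address why a conjugate-linear bijection preserves dimension; your observation that antiunitarity of $J$ is being tacitly assumed is on point.
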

\begin{proof}
Claim that if $A$ commutes with $J$, so does $A^{*}$. Assuming
this is true, then we claim that $J:D_{+}\rightarrow D_{-}$ is a
bijection. Suppose $A^{*}v_{+}=iv_{+}$. Then\[
A^{*}(Jv_{+})=JA^{*}v_{+}=J(iv_{+})=-iJv_{+}.\]
\end{proof}
\begin{thm}
(VN) $A\subset A^{*}$, $A$ closed. Then $D(A^{*})=D(A)\oplus D_{+}\oplus D_{-}$.
\begin{proof}
It is clear that $D(A)$, $D_{+}$ and $D_{-}$ are subspaces of $D(A^{*})$.
$D_{+}\cap D_{-}=0$, since if $Ax=ix$ and $Ax=-ix$ implies $x=0$.
$D(A)\cap D_{+}=0$ as well, since if $x$ is in the intersection
then\[
\iprod{x}{Ax}=\iprod{x}{A^{*}x}=i\norm{x}^{2}\]
but $\iprod{x}{Ax}$ being a real number implies that $x=0$. Similarly,
$D(A)\cap D_{-}=0$. Therefore, $D(A)\oplus D_{+}\oplus D_{-}\subset D(A^{*})$.
To show the two sides are equal, use the graph norm of $A^{*}$ $\norm{\cdot}_{G}$,
show that $x\perp D(A)\oplus D_{+}\oplus D_{-}=0$ implies that $\norm{x}_{G}=0$.

Another proof: let $f\in D(A^{*})$ and we will decompose $f$ into
the direct sum of three parts. Since $H=R(A+i)\oplus D_{+}$, therefore
$(A^{*}+i)f$ decomposes into\[
(A^{*}+i)f=(A+i)f_{0}+s\]
where $f_{0}\in D(A)$ and $s\in D_{+}$. Write $s=2if_{+}$, for
some $f_{+}\in D_{+}$. (For example, take $f_{+}=s/2i$.) Thus we
have\begin{eqnarray*}
(A^{*}+i)f & = & (A+i)f_{0}+2if_{+}\\
 & \Updownarrow\\
A^{*}(f-f_{0}-f_{+}) & = & -i(f-f_{0}-f_{+})\end{eqnarray*}
Define $f_{-}=f-f_{0}-f_{+}$. Then $f_{-}\in D_{-}$, and \[
f=f_{0}+f_{+}+f_{-}.\]
It remains to show the decomposition is unique. Suppose $f_{0}+f_{+}+f_{-}=0$.
Then\begin{eqnarray*}
A^{*}(f_{0}+f_{+}+f_{-}) & = & Af_{0}+if_{+}-if_{-}\\
i(f_{0}+f_{+}+f_{-}) & = & if_{0}+if_{+}+if_{-}\end{eqnarray*}
Notice that the left hand side of the above equations are zero. Add
the two equations together, we get $(A+i)f_{0}+2if_{+}=0$. Since
$H=R(A+i)\oplus D_{+}$, $f_{+}=0$. Similarly, $f_{-}=0$ and $f_{0}=0$
as well. 
\end{proof}
\end{thm}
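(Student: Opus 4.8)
The plan is to establish the three ingredients of the decomposition separately: that each of $D(A)$, $D_{+}$, $D_{-}$ is contained in $D(A^{*})$; that their sum is direct; and that together they exhaust $D(A^{*})$. The containments are immediate: $D(A)\subset D(A^{*})$ because $A\subset A^{*}$, and every $v\in D_{\pm}$ satisfies $A^{*}v=\pm iv$ by the definition of the deficiency spaces, so in particular $v\in D(A^{*})$.

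For directness I would follow the sketch indicated in the statement. If $x\in D_{+}\cap D_{-}$ then $A^{*}x=ix$ and $A^{*}x=-ix$, forcing $x=0$. If $x\in D(A)\cap D_{\pm}$ then $\iprod{x}{Ax}=\iprod{x}{A^{*}x}=\pm i\norm{x}^{2}$; but $A$ is symmetric, so $\iprod{x}{Ax}$ is real, and again $x=0$. Hence $D(A)\oplus D_{+}\oplus D_{-}$ is a genuine internal direct sum sitting inside $D(A^{*})$, and any decomposition of a vector into this sum is automatically unique.

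The substantive step is surjectivity, that $D(A^{*})=D(A)+D_{+}+D_{-}$. The key input is the orthogonal splitting $H=R(A+iI)\oplus D_{+}$. This comes from combining the lemma $R(A+iI)^{\perp}=N(A^{*}-iI)=D_{+}$ with the closedness of $R(A+iI)$: the symmetry estimate $\norm{(A+i)x}^{2}=\norm{Ax}^{2}+\norm{x}^{2}$ proved in the Cayley-transform lemma shows $A+iI$ is bounded below, hence injective with bounded inverse on its range, and since $A$ is closed so is $A+iI$, whence $R(A+iI)$ is closed and equals $D_{+}^{\perp}$. Now given $f\in D(A^{*})$, decompose $(A^{*}+iI)f=(A+iI)f_{0}+s$ with $f_{0}\in D(A)$ and $s\in D_{+}$, and set $f_{+}:=s/2i\in D_{+}$. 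Using $A^{*}s=is$ one computes directly that $A^{*}(f-f_{0}-f_{+})=-i(f-f_{0}-f_{+})$, so $f_{-}:=f-f_{0}-f_{+}\in D_{-}$ and $f=f_{0}+f_{+}+f_{-}$. As a cross-check one can also rerun the uniqueness argument in the excerpt: if $f_{0}+f_{+}+f_{-}=0$, apply $A^{*}$ and $i\cdot\mathrm{id}$ and add to get $(A+iI)f_{0}+2if_{+}=0$, from which $f_{+}=0$ by $H=R(A+iI)\oplus D_{+}$, and symmetrically $f_{-}=0$, $f_{0}=0$.

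The main obstacle is precisely the closedness of $R(A+iI)$, which is exactly where the hypothesis that $A$ is closed is used; without it one obtains only a dense subspace in place of $R(A+iI)$ and the decomposition breaks down. Everything else is routine manipulation of the eigenvalue equations $A^{*}v=\pm iv$ together with the orthogonal complement computations already recorded in the lemmas.
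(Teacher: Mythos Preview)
Your proposal is correct and follows essentially the same route as the paper's second proof: decompose $(A^{*}+i)f$ via $H=R(A+iI)\oplus D_{+}$, peel off $f_{+}=s/2i$, and read off $f_{-}$ from the eigenvalue equation. You in fact supply a detail the paper leaves implicit, namely the justification that $R(A+iI)$ is closed from the estimate $\norm{(A+i)x}^{2}=\norm{Ax}^{2}+\norm{x}^{2}$ together with closedness of $A$. One small remark: pairwise trivial intersections alone do not force a three-term sum to be direct, so the directness really rests on the uniqueness computation you label a ``cross-check''; you should promote that to the actual argument rather than treat it as redundant.
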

\begin{example}
$A=d/dx$ on $L^{2}[0,1]$. Integration by parts shows that $A\subset A^{*}$.
\end{example}

\chapter*{Appendix}

\section*{semi-direct product}
\begin{thm}
$\begin{cases}
G=HK\\
H,K\mbox{ commute}\\
H\cap K=1\end{cases}$ $\Longleftrightarrow$ $G\simeq H\times K$.
\begin{proof}
$\Longrightarrow$ Define $\varphi:H\times K\rightarrow G$ by $(h,k)\mapsto hk$.
Then $\varphi$ is a homomorphism, because $H,K$ commute; $\varphi$is
1-1, since if $hk=1$, then $k=h^{-1}\in H\cap K$, therefore $k=1$.
This implies that $h=1$. Moreover, $\varphi$ is onto, since $G=HK$.
$\Longleftarrow$ is trivial.
\end{proof}
\end{thm}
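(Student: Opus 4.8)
The plan is to prove the nontrivial ($\Longrightarrow$) direction by writing down an explicit isomorphism and checking it is one, and to dispose of the ($\Longleftarrow$) direction by the obvious coordinate identification. For the forward direction I would take $H,K$ to be subgroups of $G$ satisfying $G=HK$, $hk=kh$ for all $h\in H$ and $k\in K$, and $H\cap K=\{1\}$, and I would define $\varphi\colon H\times K\to G$ by $\varphi(h,k)=hk$. Then I would verify, in order, that $\varphi$ is a homomorphism, that it is injective, and that it is surjective.

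For the homomorphism property I would compute $\varphi\big((h_1,k_1)(h_2,k_2)\big)=\varphi(h_1h_2,k_1k_2)=h_1h_2k_1k_2$ and compare it with $\varphi(h_1,k_1)\varphi(h_2,k_2)=h_1k_1h_2k_2$; the two agree exactly because the elementwise commutation hypothesis permits the swap $k_1h_2=h_2k_1$. This is the single place where the commuting hypothesis is genuinely used. For injectivity I would show the kernel is trivial: if $hk=1$ then $k=h^{-1}\in H\cap K=\{1\}$, hence $k=1$ and then $h=1$. Surjectivity is immediate from $G=HK$. So $\varphi$ is a bijective homomorphism, i.e. an isomorphism, giving $G\simeq H\times K$.

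For the converse I would start from an isomorphism $G\simeq H\times K$ and identify $H$ with $H\times\{1_K\}$ and $K$ with $\{1_H\}\times K$ inside the product. These are subgroups whose product is all of $H\times K$, they commute elementwise since the two coordinates multiply independently, and they meet only in $(1_H,1_K)$; transporting these three facts back through the isomorphism yields the stated conditions on $G$.

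I do not expect any real obstacle. The only points needing care are stating the hypotheses precisely — that $H$ and $K$ are subgroups and that \emph{commute} means $hk=kh$ for all $h\in H$, $k\in K$ (not merely $HK=KH$ as sets) — and observing that this elementwise commutation is precisely what makes $\varphi$ multiplicative; everything else is a routine verification.
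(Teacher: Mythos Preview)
Your proposal is correct and follows exactly the paper's approach: define $\varphi(h,k)=hk$, check it is a homomorphism via elementwise commutation, injective via $H\cap K=\{1\}$, and surjective via $G=HK$. You have simply spelled out a few details (the explicit homomorphism computation and the converse direction) that the paper leaves implicit.
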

We modify the above theorem. 

Suppose$\begin{cases}
G=HK\\
H\vartriangleleft G\\
H\cap K=1\end{cases}$. Define $H\rtimes K=\{(h,k):h\in H,k\in K\}$ with multiplication
given by\[
(h,k_{1})(h_{2},k_{2}):=(h_{1}k_{1}h_{2}k_{1}^{-1},k_{1}k_{2}).\]
This turns $H\rtimes K$ into a group, so that $G\simeq H\rtimes K$.
If $H,K$ commute, then $k_{1}h_{2}k_{1}^{-1}=h_{2}$ and we are back
to the product group. The direct product is always abelian. Here $K$
acts on $H$ by conjugation in this case. 

Turn it around, start with two groups $H,K$ with a map $\varphi:K\rightarrow Aut(H)$,
build a bigger group $G=\tilde{H}\tilde{K}$.
\begin{thm}
The following are equivalent.
\begin{enumerate}
\item $\begin{cases}
H,K\mbox{ groups}\\
K\mbox{ acts on }H\mbox{ by }\varphi\end{cases}$
\item $\begin{cases}
G=\tilde{H}\tilde{K}\\
\tilde{H}\vartriangleleft G\\
\tilde{H}\simeq H,\tilde{K}\simeq K\\
\tilde{H}\cap\tilde{K}=1\end{cases}$ Morever, $K$ acts on $H$ via $\varphi$ translates to $\tilde{K}$
acts on $\tilde{H}$ by conjugation.
\end{enumerate}
\end{thm}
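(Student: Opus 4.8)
The plan is to establish the two implications separately: $(1)\Rightarrow(2)$ by the explicit ``external'' semi-direct product construction, and $(2)\Rightarrow(1)$ by transporting conjugation back through the identifications.

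For $(1)\Rightarrow(2)$, given groups $H,K$ and a homomorphism $\varphi:K\to\mathrm{Aut}(H)$, I would put $G:=H\times K$ as a set with the product $(h_{1},k_{1})(h_{2},k_{2}):=(h_{1}\varphi_{k_{1}}(h_{2}),\,k_{1}k_{2})$. First I check $G$ is a group: associativity is a direct computation that uses both that each $\varphi_{k}$ is a homomorphism and that $k\mapsto\varphi_{k}$ is a homomorphism; the identity is $(e_{H},e_{K})$; and $(h,k)^{-1}=(\varphi_{k^{-1}}(h^{-1}),k^{-1})$. Next, set $\tilde H:=\{(h,e_{K}):h\in H\}$ and $\tilde K:=\{(e_{H},k):k\in K\}$. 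The maps $h\mapsto(h,e_{K})$ and $k\mapsto(e_{H},k)$ are injective homomorphisms, giving $\tilde H\simeq H$ and $\tilde K\simeq K$; clearly $\tilde H\cap\tilde K=\{(e_{H},e_{K})\}$; and the factorization $(h,k)=(h,e_{K})(e_{H},k)$ shows $G=\tilde H\tilde K$. Finally, the single computation
\[
(e_{H},k)(h,e_{K})(e_{H},k)^{-1}=(\varphi_{k}(h),e_{K})
\]
shows at once that $\tilde H\vartriangleleft G$ and that, under the identifications above, conjugation by $\tilde k$ on $\tilde H$ is exactly $\varphi_{k}$.

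For $(2)\Rightarrow(1)$, suppose $G=\tilde H\tilde K$ with $\tilde H\vartriangleleft G$, $\tilde H\cap\tilde K=1$, and isomorphisms $\tilde H\simeq H$, $\tilde K\simeq K$. I would first record that every $g\in G$ factors uniquely as $g=\tilde h\tilde k$: from $\tilde h_{1}\tilde k_{1}=\tilde h_{2}\tilde k_{2}$ one gets $\tilde h_{2}^{-1}\tilde h_{1}=\tilde k_{2}\tilde k_{1}^{-1}\in\tilde H\cap\tilde K=1$. Since $\tilde H$ is normal, conjugation by any $\tilde k\in\tilde K$ restricts to an automorphism of $\tilde H$, so transporting through $\tilde H\simeq H$ and $\tilde K\simeq K$ defines $\varphi_{k}\in\mathrm{Aut}(H)$ by $\widetilde{\varphi_{k}(h)}:=\tilde k\,\tilde h\,\tilde k^{-1}$. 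That $k\mapsto\varphi_{k}$ is a homomorphism is immediate from $(\tilde k_{1}\tilde k_{2})\tilde h(\tilde k_{1}\tilde k_{2})^{-1}=\tilde k_{1}(\tilde k_{2}\tilde h\tilde k_{2}^{-1})\tilde k_{1}^{-1}$, which is precisely the data of (1); for the ``Moreover'' clause one checks that $(h,k)\mapsto\tilde h\tilde k$ is an isomorphism from the external product $H\rtimes_{\varphi}K$ onto $G$, multiplicativity following from $\tilde k\,\tilde h'\,\tilde k^{-1}=\widetilde{\varphi_{k}(h')}$ together with uniqueness of the factorization.

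The main obstacle, such as it is, is purely bookkeeping: the associativity check in $(1)\Rightarrow(2)$ and the multiplicativity of $(h,k)\mapsto\tilde h\tilde k$ in $(2)\Rightarrow(1)$ both hinge on correctly tracking which side $K$ acts on and where the inverses sit --- the same kind of care the notes emphasize elsewhere (e.g.\ in the placement of $g^{-1}$ and modular factors in the convolution formulas). No deep idea is needed beyond the uniqueness of the $\tilde h\tilde k$ decomposition, which is exactly what makes the action well defined.
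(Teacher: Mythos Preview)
Your proposal is correct and follows essentially the same route as the paper: for $(1)\Rightarrow(2)$ you build the external semi-direct product on the set $H\times K$ with the twisted multiplication $(h_{1},k_{1})(h_{2},k_{2})=(h_{1}\varphi_{k_{1}}(h_{2}),k_{1}k_{2})$, then verify the inverse formula, the subgroup identifications $\tilde H=\{(h,1)\}$, $\tilde K=\{(1,k)\}$, their trivial intersection, normality of $\tilde H$, and that conjugation by $\tilde K$ recovers $\varphi$ --- exactly the checklist the paper runs through. You are in fact slightly more complete than the paper: you explicitly mention the associativity check (the paper omits it), and you supply the converse $(2)\Rightarrow(1)$ via unique factorization and transport of conjugation, which the paper does not spell out at all. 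One small remark: your single computation $(e_{H},k)(h,e_{K})(e_{H},k)^{-1}=(\varphi_{k}(h),e_{K})$ only directly shows $\tilde K$ normalizes $\tilde H$; normality in all of $G$ then follows because you already have $G=\tilde H\tilde K$, whereas the paper conjugates by an arbitrary $(x,y)$ to get $(x\varphi_{y}(h)x^{-1},1)$ in one step.
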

Once we get $G=\tilde{H}\tilde{K},$from previous discussion, we have
$G\simeq\tilde{H}\rtimes\tilde{K}$. $\tilde{H}\rtimes\tilde{K}$
is unique up to isomorphism. (consider the ax+b group embeded into
a matrix group with dimension greater than $2\times2$.)

How does it work? 

Define $G=\{(h,k):h\in H,k\in K\}$ with multiplication and inverse
given by\begin{eqnarray*}
(h_{1},k_{1})(h_{2},k_{2}) & = & (h_{1}\varphi_{k_{1}}(h_{2}),k_{1}k_{2})\\
(h,k)^{-1} & = & (\varphi_{k^{-1}}(h^{-1}),k^{-1})\end{eqnarray*}
Then $G$ is a group.

check:
\begin{itemize}
\item the inverse is correct\begin{eqnarray*}
(a,b)(\varphi_{b^{-1}}(a^{-1}),b^{-1}) & = & (a\varphi_{b}(\varphi_{b^{-1}}(a^{-1})),bb^{-1})\\
 & = & (a\varphi_{b}\circ\varphi_{b^{-1}}(a^{-1}),bb^{-1})\\
 & = & (a\varphi_{1}(a^{-1}),1)\\
 & = & (aa^{-1},1)\\
 & = & (1,1)\end{eqnarray*}

\item $H\simeq\tilde{H}=\{(h,1):h\in H\}$ is a subgroup in $G$\begin{eqnarray*}
(a,1)(b,1) & = & (ab,1)\\
(a,1)^{-1} & = & (\varphi_{1^{-1}}(a^{-1}),1^{-1})\\
 & = & (a^{-1},1)\end{eqnarray*}

\item $K\simeq\tilde{K}=\{(1,k):k\in K\}$ is a subgroup in $G$\begin{eqnarray*}
(1,a)(1,b) & = & (1,ab)\\
(1,b)^{-1} & = & (1,b^{-1})\end{eqnarray*}

\item $\tilde{H}\cap\tilde{K}=\{1\}$ obvious
\item $\tilde{H}$ is normal in $G$ \begin{eqnarray*}
(x,y)(h,1)(x,y)^{-1} & = & (x\varphi_{y}(h),y)(\varphi_{y^{-1}}(x^{-1}),y^{-1})\\
 & = & (x\varphi_{y}(h)\varphi_{y}(\varphi_{y^{-1}}(x^{-1})),yy^{-1})\\
 & = & (x\varphi_{y}(h)x^{-1},1)\end{eqnarray*}

\item $\tilde{K}$ acts on $\tilde{H}$ by conjugation\begin{eqnarray*}
(1,y)(h,1)(1,y)^{-1} & = & (\varphi_{y}(h),y)(1,y^{-1})\\
 & = & (\varphi_{y}(h),1)\end{eqnarray*}

\end{itemize}
\newpage{}

\bibliographystyle{plain}
\bibliography{FAnotes}

\end{document}